\newtheorem{theorem}{Theorem}[section]
\newtheorem{lemma}[theorem]{Lemma}
\newtheorem{proposition}[theorem]{Proposition}
\newtheorem{corollary}[theorem]{Corollary}
\newtheorem{prop}[theorem]{Proposition}
\theoremstyle{definition}
\newtheorem{example}[theorem]{Example}
\theoremstyle{remark}
\newtheorem{remark}[theorem]{Remark}
\theoremstyle{plain}
\newenvironment{definition}[1][Definition.]{\begin{trivlist}
\item[\hskip \labelsep {\bfseries #1}]}{\end{trivlist}\par}
\newcommand{\C}{\mathbb C}
\newcommand{\R}{\mathbb R}
\newcommand{\RC}{\mathbb K}
\newcommand{\N}{\mathbb N}
\newcommand{\con}[1]{\overline{#1}}
\newcommand{\pd}[2]{\frac{\partial #1}{\partial #2}}
\newcommand{\clos}[1]{\overline{#1}}
\newcommand{\bd}{\partial}
\newcommand{\inter}[1]{\mathring{#1}}
\newcommand{\form}[1]{\Omega^{#1}}
\newcommand{\ball}{B}
\newcommand{\pdisc}{\Delta}
\newcommand{\pmetric}{\rho}
\newcommand{\emetric}{d}
\newcommand{\hol}{H}
\newcommand{\cts}{C}
\newcommand{\ccts}{C_{\mathbb C}}
\renewcommand{\sp}[2]{\left<#1,#2\right>}
\newcommand{\sprod}[2]{\sp{#1}{#2}}
\newcommand{\dprod}[2]{#1\cdot #2}
\newcommand{\grad}[1]{#1'}
\newcommand{\cgrad}[1]{\nabla #1}
\newcommand{\dil}[2]{#1_{(#2)}}
\def\blfootnote{\xdef\@thefnmark{}\@footnotetext}
\title{\bf{The Levi Problem in $\C^n$: A Survey}
\blfootnote{{\bf Mathematics Subject Classification:} 32E40, 32-01 }\blfootnote{{\bf Keywords and phrases:} classical Levi problem, survey }}
\author{\normalsize {Harry J. Slatyer}}
\date{}
\begin{document}
\maketitle
\begin{quotation} 
{\small \sl \noindent
We discuss domains of holomorphy and several notions of pseudoconvexity (drawing parallels with the corresponding notions from geometric convexity), and present a mostly self-contained solution to the Levi problem. We restrict
our attention to domains of $\C^n$.
}
\end{quotation}
\section{Introduction}
\label{chap:intro}
\subsection{Content}
The purpose of this paper is to provide a reasonably self-contained exposition of a solution to the so-called Levi problem, together with a comprehensive and detailed survey of some related classical concepts. We include
almost all of the interesting proofs in full generality, without any additional smoothness or boundedness assumptions. The only major exception is the proof of H\"ormander's theorem on solvability
of the inhomogeneous Cauchy-Riemann equations \cite{hormander}, which we omit in view of length restrictions and the fact that many self-contained proofs may be found elsewhere (see \cite[chapter 4]{krantz},
for example). 
We take care to motivate the definitions of holomorphic convexity and pseudoconvexity by
first discussing the analogous notions from geometric convexity, with the intention of providing the reader with some intuitive understanding of these properties. Thus, this paper is targeted towards
students specialising in complex analysis and geometry, and also professional mathematicians wishing to broaden their knowledge of these areas. 

Given a holomorphic function on some domain (a connected open set), it is natural to ask whether there exists a holomorphic function defined on a larger domain which agrees with the original
function on its domain -- that is, we seek a \emph{holomorphic extension} of the original function. In some cases the original domain can be such that \emph{any} holomorphic function necessarily admits
a holomorphic extension to a strictly larger domain.
For instance, in 1906 Hartogs showed that any function holomorphic on a domain of $\C^n$ (with $n\geq 2$) obtained by removing a compact set from another domain extends to a function
holomorphic on
the larger domain \cite{hartogs1906} -- note the contrast to the $1$-dimensional case, where there exist holomorphic functions with compactly contained singularities ($z\mapsto z^{-1}$, for example).
 This type
of result begs the following question: which domains have the property that any holomorphic function defined on these domains necessarily admits a holomorphic extension to a larger domain? Traditionally
we actually pose the inverse problem,
and ask on which domains there exists a holomorphic function that \emph{does not} extend holomorphically to points outside the domain. Such a domain is known as a \emph{domain of holomorphy},
because it is the most natural domain of existence of some holomorphic function. One may verify that every domain of the complex plane is a domain of holomorphy -- that is, given any domain
in $\C$ there exists a function holomorphic on that domain which does not extend to a holomorphic function on any larger domain. Upon passing to multiple complex variables however, this is no longer
the case (as shown by Hartogs' theorem on removal of compact singularities, for example).

Unfortunately it is typically difficult to verify directly from the definition whether a given domain is a domain of holomorphy, so it is desirable to obtain a more easily verified equivalent condition.
Such a condition is provided by the solution to the so-called Levi problem. Named for Levi's pioneering work in his 1911 paper \cite{levi11}, the Levi problem
is to show that the domains of holomorphy are precisely the \emph{pseudoconvex} domains. Pseudoconvexity, a local property of domains
which generalises the notion of convexity, is typically more easy to directly verify than whether a domain is a domain of holomorphy.
For domains with twice-differentiable boundaries there is an equivalent notion of pseudoconvexity, known as \emph{Levi pseudoconvexity}, which is particularly simple to verify for many
domains \cite{levi,levi11}.
It is relatively easy to show that domains of holomorphy are pseudoconvex, and that for domains with twice-differentiable boundaries Levi pseudoconvexity
is equivalent to pseudoconvexity. The problem of showing that the
(Levi) pseudoconvex domains are domains of holomorphy, which completes the solution to the Levi problem, is much more difficult -- the condition of Levi pseudoconvexity was introduced in 1910-1911 \cite{levi,levi11}, and 
was not proved
to be sufficient for a domain to be a domain of holomorphy until 1942 when Oka demonstrated the fact for $2$-dimensional space \cite{oka42}. For arbitrary dimensions 
the result was not obtained until 1953-1954, when Bremermann \cite{bremermann}, Norguet \cite{norguet} and Oka \cite{oka}  all presented independent proofs.

In this paper we discuss the above concepts in detail and present
a solution to the Levi problem.
We use definitions and methods from Boas \cite{boas}, H\"ormander \cite{hormanderbook}, Krantz \cite{krantz}, Range \cite{range}, Shabat \cite{itca} and Vladimirov \cite{vlad}
to show that domains of holomorphy are pseudoconvex, and that the various types of pseudoconvex domains are identical. To show that pseudoconvex domains are domains of holomorphy we follow the general method
of Oka \cite{oka}, which is to show first that strictly pseudoconvex domains are domains of holomorphy 
and then use the Behnke-Stein theorem \cite{behnkestein} together with the fact that pseudoconvex domains may be approximated
by an increasing sequence of strictly pseudoconvex domains \cite{lelong,oka}. 
While Oka shows the first step using his principle for joining two domains of holomorphy, we instead follow the argument of Boas \cite{boas} and apply
H\"ormander's theorem on solvability of the inhomogeneous Cauchy-Riemann equations \cite{hormander}. 

\subsection{Notation}
\label{blabs}
Before commencing in earnest our discussion of the Levi problem we list some notation and terminology that will occur frequently throughout the paper:
\begin{description}[font=\normalfont]
\item[]{$\R$, $\C$, $\N$, $\N_0$ -- the sets of real numbers, complex numbers, positive integers and non-negative integers}
\item[]{$\Re(z)$, $\Im(z)$ -- the real and imaginary parts of $z\in\C$}
\item[]{$|x|$, $\emetric(x,y)$, $x\cdot y$ -- the Euclidean norm, metric and dot product for $x,y\in\R^n$}
\item[]{$|z|$, $\|z\|$, $\emetric(z,w)$, $\pmetric(z,w)$, $\sprod{x}{y}$ -- the Euclidean norm, $L^\infty$ norm, Euclidean metric, $L^\infty$ metric and scalar product for $z,w\in\C^n$}
\item[]{$\ball(x,r)$, $\ball(z,r)$, $\pdisc(z,s)$ -- the ball of radius $r>0$ about $x\in\R^n$, about $z\in\C^n$ (with respect to the Euclidean metric), and the polydisc of (vector) radius $s$ about
$z$}
\item[]{$S^c, \clos{S}, \bd S, \inter{S}, S_{(r)}$ -- for a subset $S$ of some metric space, the complement, closure, boundary, interior and $r$-enlargement (where $r>0$) of $S$ (note that for subsets of $\C^n$, enlargements are always with respect to the $L^\infty$ metric)}
\item[]{$\emetric(x,S)$, $\emetric(T,S)$ -- for a point $x$ and subsets $S$ and $T$ of some space with metric $d(\cdot,\cdot)$, the distance from $x$ to $S$ ($\inf_{s\in S}\{d(x,s)\}$) and
the distance from $T$ to $S$ ($\inf_{t\in T}\{\emetric(t,S)\}$)}
\item[]{Neighbourhood of a point or set -- an open set containing that point or set}
\item[]{$\|f\|_B$ -- for a function $f\colon A\to\RC$ (where $\RC$ is a normed space) and subset $B\subset A$, the supremum of the norm of $f$ on $B$ (note that for functions mapping into $\R$ or $\C$, the Euclidean
norm is used)} 
\item[]{$\hol(C)$ -- the set of functions holomorphic on a set $C\subset\C^n$ (note that for a set which is not open, a function is holomorphic on that set if it is holomorphic on some neighbourhood of the set)}
\item[]{$\cts^k$, $\ccts^k$ -- the classes of $k$-times continuously differentiable real-valued functions and $k$-times continuously differentiable complex valued functions (when regarded as maps between real spaces)}
\item[]{$\grad{f}$, $\cgrad{g}$ -- the derivative (gradient) of $f\colon U\subset\R^n\to\R$ and $g\colon V\subset\C^n\to\C$}
\item[]{$\Delta_\delta f(z)$ -- the Levi form of $f\colon U\subset\C^n\to\R$ (see Proposition~\ref{leviformpsh})}
\end{description}

\section{Convexity}
\label{chap:convexity}
As described in Section~\ref{chap:intro}, the domains of holomorphy are precisely the domains
which satisfy any of several more general analogues of geometric convexity. In this preliminary section we present
several definitions of geometric convexity in the interests of motivating the generalised definitions which will appear later in the exposition. We omit the proofs in this section since they
are neither particularly difficult nor relevant.
\subsection{Convex sets}
We first give the usual definition:
\begin{definition}
A set $S\subset\R^n$ is \textbf{convex} if for every pair of points $x,y\in S$ the segment $\{x+t(y-x)\colon t\in [0,1]\}$ is a subset of $S$.\index{convex set}
\end{definition}
For example, open and closed balls in $\R^n$ are convex. It is immediate from the definition that the intersection of a collection of convex sets is convex, and that the union of an increasing sequence
of convex sets (that is, a sequence $\{S_j\}_{j\geq 1}$ such that $S_j\subset S_{j+1}$ for all $j\geq 1$) is convex. 
A less trivial property which will be useful later is the following:
\begin{proposition}
\label{suphyp}
Let $U\subset\R^n$ be open and convex. Then for all $a\in\bd U$ there exists a supporting hyperplane $P$ for $U$ containing $a$ (that is, $P$ is a hyperplane such that $a\in P$ and $P\cap U=\emptyset$).
\end{proposition}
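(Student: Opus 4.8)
The plan is to obtain the supporting hyperplane as a limit of hyperplanes produced by the nearest-point projection onto the closed convex set $\clos U$, which for convex sets satisfies the familiar ``obtuse angle'' inequality. If $U=\emptyset$ the statement is vacuous (then $\bd U=\emptyset$), so I assume $U\neq\emptyset$ and fix a point $b\in U=\inter U$. The whole argument takes place in $\clos U$, which is closed and convex, and then transfers the conclusion to the boundary point $a$.

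First I would produce a sequence of points approaching $a$ from outside $\clos U$. Consider $c_t=a+t(a-b)$ for $t>0$. Since $a=\tfrac{1}{1+t}c_t+\tfrac{t}{1+t}b$ is a proper convex combination of $c_t$ and $b$, and $b\in\inter U$, the standard fact that the half-open segment from an interior point of a convex set to a point of its closure lies entirely in the interior would force $a\in\inter U=U$ as soon as some $c_t$ belonged to $\clos U$; this contradicts $a\in\bd U$. Hence $c_t\notin\clos U$ for every $t>0$, while $c_t\to a$ as $t\to 0^+$. Put $x_j=c_{1/j}$.

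Next, for each $j$ let $p_j$ be the point of $\clos U$ nearest to $x_j$; this exists and is unique because $\clos U$ is closed and convex. As $x_j\notin\clos U$ we have $p_j\neq x_j$, so $u_j:=(x_j-p_j)/|x_j-p_j|$ is a well-defined unit vector, and minimality of $p_j$ gives $u_j\cdot(y-p_j)\le 0$ for every $y\in\clos U$ (otherwise sliding slightly from $p_j$ toward $y$ inside $\clos U$ would strictly decrease the distance to $x_j$). Moreover, since $a\in\clos U$ forces $|p_j-x_j|\le|a-x_j|$, we get $|p_j-a|\le|p_j-x_j|+|x_j-a|\le 2|x_j-a|\to 0$, so $p_j\to a$. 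The unit vectors $u_j$ lie on the compact sphere, so a subsequence converges to a unit vector $u$.

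Finally I would pass to the limit along that subsequence in $u_j\cdot(y-p_j)\le 0$, obtaining $u\cdot(y-a)\le 0$ for all $y\in\clos U$, and set $P=\{z\in\R^n:u\cdot(z-a)=0\}$, a hyperplane through $a$. To see $P\cap U=\emptyset$: if $z\in U$ then openness of $U$ gives $z+\varepsilon u\in U\subset\clos U$ for small $\varepsilon>0$, whence $u\cdot\big((z+\varepsilon u)-a\big)\le 0$ yields $u\cdot(z-a)\le-\varepsilon<0$, so $z\notin P$. The main obstacle is the projection step itself -- establishing existence and uniqueness of the nearest point in $\clos U$ and the obtuse-angle inequality -- together with the bookkeeping needed to transport the hyperplane from $p_j$ to $a$ via the compactness argument; the construction of the exterior sequence and the final verification are routine.
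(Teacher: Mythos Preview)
Your argument is correct. The paper, however, does not supply a proof of this proposition: at the start of Section~\ref{chap:convexity} the author explicitly states that all proofs in that section are omitted, so there is nothing to compare against. For the record, the route you take---approach $a$ from outside $\clos U$ along the ray from an interior point, apply the metric projection onto the closed convex set $\clos U$ to obtain outward normals via the obtuse-angle inequality, and then pass to a subsequential limit on the unit sphere---is a standard and clean way to establish the supporting-hyperplane theorem without invoking Hahn--Banach or any separation machinery. The only steps that need care are exactly the ones you flagged (existence/uniqueness of the projection and the variational inequality it satisfies), and those are routine in finite dimensions.
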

Another intuitive fact is that convexity of a domain is determined locally at the boundary:
\begin{proposition}
\label{conlocal}
Let $U\subset\R^n$ be a domain. Then $U$ is convex if and only if for each $a\in\bd U$ there is a neighbourhood $V\subset\R^n$ of $a$ such that $U\cap V$ is convex.
\end{proposition}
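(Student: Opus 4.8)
The plan is to prove the two implications separately. The forward direction is immediate: if $U$ is convex, then for every $a\in\bd U$ one may take $V=\R^n$, so that $U\cap V=U$ is convex (alternatively, any open ball about $a$ works, since $U\cap V$ is then an intersection of convex sets). For the converse I would fix an arbitrary point $a\in U$ and study the ``star'' set $A=\{\,x\in U : [a,x]\subset U\,\}$, aiming to prove $A=U$; since $a$ is arbitrary this is exactly convexity of $U$. The set $A$ is nonempty, as $a\in A$, and it is open in $U$: if $[a,x]\subset U$ then, $[a,x]$ being compact and $U$ open, some enlargement $[a,x]_{(\varepsilon)}$ is contained in $U$, and every segment $[a,x']$ with $|x'-x|<\varepsilon$ lies in that enlargement, so such $x'$ belong to $A$. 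Because $U$ is a domain and hence connected, it then suffices to show that $A$ is closed in $U$.

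The closedness of $A$ is the heart of the matter and the step I expect to be the main obstacle, since here the local convexity hypothesis must enter in an essential way. The argument I would run is by contradiction: suppose some $x\in U$ lies in $\clos{A}$ but $[a,x]\not\subset U$. Parametrise $\gamma(t)=a+t(x-a)$ and let $t_0$ be the smallest parameter with $\gamma(t_0)\notin U$; this exists because $\gamma^{-1}(U)$ is open in $[0,1]$ and contains both endpoints, and moreover $0<t_0<1$ with $\gamma([0,t_0))\subset U$. Then $p:=\gamma(t_0)$ is a limit of points of $U$ yet does not lie in $U$, so $p\in\bd U$. By local convexity at $p$ there is a ball $B(p,r)$ for which $C:=U\cap B(p,r)$ is convex (shrink the given neighbourhood of $p$ to a ball and intersect); this $C$ is open and convex with $p$ on its boundary, so Proposition~\ref{suphyp} supplies a hyperplane $P$ through $p$ with $P\cap C=\emptyset$, and since $C$ is convex (hence connected) it lies entirely in one of the two open half-spaces bounded by $P$.

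Finally I would exploit that the line through $a$ and $x$ passes through $p$. It is not contained in $P$, because it meets $C$ (for instance at $\gamma(t)$ for $t<t_0$ sufficiently close to $t_0$); hence it meets $P$ only at $p$ and therefore crosses from one side of $P$ to the other there. Since $\gamma(t)\in C$ lies strictly on the ``$C$-side'' of $P$ for $t$ slightly below $t_0$, the point $\gamma(t_0+\delta)$ lies strictly on the opposite side for all sufficiently small $\delta>0$, while still lying in $B(p,r)$ and satisfying $t_0+\delta\le1$. Now choose $x_k\in A$ with $x_k\to x$: the points $a+(t_0+\delta)(x_k-a)$ lie on the segments $[a,x_k]\subset U$ and converge to $\gamma(t_0+\delta)$, so for $k$ large they belong to $U\cap B(p,r)=C$ yet lie strictly on the side of $P$ not containing $C$ --- the desired contradiction. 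Hence $[a,x]\subset U$, so $A$ is closed in $U$; this completes the converse. Note that, thanks to the availability of Proposition~\ref{suphyp} in $\R^n$, the argument works directly in all dimensions with no reduction to the plane.
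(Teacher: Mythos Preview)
Your argument is correct. The paper explicitly omits all proofs in Section~\ref{chap:convexity}, stating that they ``are neither particularly difficult nor relevant,'' so there is no proof in the text to compare against.

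For the record, your approach is a clean and standard one: the star-shaped set $A$ about a fixed base point is easily seen to be open, and the use of the supporting hyperplane from Proposition~\ref{suphyp} to derive a contradiction at the first exit point $p=\gamma(t_0)$ is exactly the right way to leverage local convexity. The only place one might pause is the final step, where you need the approximating points $a+(t_0+\delta)(x_k-a)$ to lie simultaneously in $B(p,r)$ (hence in $C$) and strictly on the wrong side of $P$; but both conditions are open and hold at the limit $\gamma(t_0+\delta)$, so this is fine. The argument is complete as written.
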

\subsection{Convex hulls}
We will find that convexity of a domain may be determined by considering the convex hulls of its compact subsets, and this fact will allow us to generalise convexity in Section~\ref{chap:hol}. 
\begin{definition}
Let $C\subset\R^n$ be closed. The \textbf{convex hull} of $C$ is the intersection of all closed convex subsets of $\R^n$ containing $C$, and is denoted $\hat C_L$.\index{convex hull}
\end{definition}
\begin{proposition}
\label{conholcon}
Let $U\subset\R^n$ be open. Then $U$ is convex if and only if for all compact $K\subset U$ the convex hull $\hat K_L$ is a subset of $U$.
\end{proposition}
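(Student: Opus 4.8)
The plan is to prove the two implications separately. For the forward direction, suppose $U$ is convex and let $K \subset U$ be compact. Since $U$ is convex and contains $K$, and the convex hull $\hat K_L$ is the intersection of all closed convex sets containing $K$, we need $\hat K_L \subset U$. The subtlety is that $U$ is open, not closed, so $U$ itself is not one of the sets in the intersection defining $\hat K_L$. I would handle this by observing that since $K$ is compact and contained in the open set $U$, we have $\emetric(K, U^c) = \delta > 0$ (with the convention $\delta = \infty$ if $U = \R^n$), so the closed $\delta/2$-enlargement $\clos{K_{(\delta/2)}}$ is contained in $U$. It then suffices to show $\hat K_L$ is bounded (so that one can work with a compact convex set containing $K$ and inside $U$); alternatively, and more cleanly, I would show directly that $\hat K_L$ equals the set of all finite convex combinations of points of $K$ (or rather its closure), which is contained in $U$ because $U$ is convex and hence closed under convex combinations of its points.

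For the reverse direction, suppose that for every compact $K \subset U$ we have $\hat K_L \subset U$. To show $U$ is convex, take any two points $x, y \in U$ and consider the segment $[x,y] = \{x + t(y-x) : t \in [0,1]\}$. The two-point set $K = \{x, y\}$ is compact and contained in $U$, so by hypothesis $\hat K_L \subset U$. Since the segment $[x,y]$ is a closed convex set containing $\{x,y\}$, and in fact is the smallest such set, we have $[x,y] = \hat{\{x,y\}}_L \subset U$. Hence $U$ is convex. This direction is essentially immediate once one knows the convex hull of a two-point set is the segment joining them.

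The main obstacle is the forward direction, specifically the mismatch between $U$ being open and the convex hull being defined via closed convex sets. The cleanest fix is to first establish (or recall, since such facts are stated without proof in this section) the characterization of the convex hull of a compact set $K$ as the set of convex combinations $\sum_{i=1}^m \lambda_i x_i$ with $x_i \in K$, $\lambda_i \geq 0$, $\sum \lambda_i = 1$ — this set is already closed and convex when $K$ is compact (an application of Carathéodory's theorem shows $m \leq n+1$ suffices, and a compactness argument shows the resulting set is closed), so it coincides with $\hat K_L$. Since $U$ is convex, it contains all such convex combinations of its points, hence contains $\hat K_L$. I would present the argument at the level of these observations without grinding through the Carathéodory bookkeeping, consistent with the expository style of this section.
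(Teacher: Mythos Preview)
Your argument is correct in both directions. The reverse direction is exactly the right observation: the two-point set $\{x,y\}$ is compact, its convex hull is the segment $[x,y]$, and the hypothesis places this segment inside $U$. For the forward direction, your second approach via convex combinations is the clean one: by Carath\'eodory the set of convex combinations of points of a compact $K$ is the continuous image of a compact set, hence compact and therefore closed; it is convex and contains $K$, so it equals $\hat K_L$; and a convex set is closed under finite convex combinations of its points by an easy induction, so $\hat K_L\subset U$. Your first alternative (via a $\delta/2$-enlargement of $K$) is not fully worked out as stated --- the enlargement of $K$ need not be convex, so one still has to produce a closed convex set between $K$ and $U$ --- but you rightly abandon it for the cleaner route.

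There is nothing to compare against in the paper: the author explicitly omits all proofs in this preliminary section on geometric convexity, stating that they ``are neither particularly difficult nor relevant.'' Your write-up is consistent with that expository stance, and the level of detail you propose (citing Carath\'eodory without reproducing the bookkeeping) is appropriate.
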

\begin{remark}
Basic topology, together with the observation that convex hulls of compact sets are compact and connected, implies that the condition $\hat K_L\subset U$ in the above proposition
is equivalent to requiring that $\hat K_L\cap U$ be compact.
\end{remark}
To use this fact to generalise convexity we will need a different description of the convex hull of a compact set. Notice that the level sets of affine functions $\R^n\to\R$ are the hyperplanes, and the pullbacks of intervals of the form $(-\infty,t]$ and $[t,\infty)$ (where $t\in\R$) are the closed half-spaces.
Along with the observation that the convex hull of a closed set is the intersection of all closed half-spaces containing the set, this yields:
\begin{proposition}
Let $K\subset\R^n$ be compact. Then the convex hull $\hat K_L$ is given by
\begin{equation*}\label{convexhulleqdef} \hat K_L=\{x\in\R^n\colon |A(x)|\leq \|A\|_K \mbox{ for all affine $A\colon\R^n\to\R$}\}. \end{equation*}
\end{proposition}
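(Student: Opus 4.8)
The plan is to establish the displayed formula by proving two inclusions between $\hat K_L$ and the set $E:=\{x\in\R^n\colon |A(x)|\leq \|A\|_K \text{ for all affine } A\colon\R^n\to\R\}$. The key observation, already noted in the excerpt, is that every closed half-space in $\R^n$ can be written as $\{x\colon A(x)\leq t\}$ for some affine $A$ and some $t\in\R$, and that the convex hull of a closed set coincides with the intersection of all closed half-spaces containing it; this reduces everything to manipulating affine functions and one-sided bounds.

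For the inclusion $\hat K_L\subset E$, I would fix an arbitrary affine $A\colon\R^n\to\R$ and show that the slab $\{x\colon |A(x)|\leq\|A\|_K\}$ is a closed convex set containing $K$, hence contains $\hat K_L$ by definition of the convex hull; intersecting over all such $A$ gives $\hat K_L\subset E$. (Here $\|A\|_K=\sup_{x\in K}|A(x)|$ is finite since $K$ is compact and $A$ continuous, so the slab is genuinely a proper closed convex set — this uses compactness of $K$.) For the reverse inclusion $E\subset\hat K_L$, I would argue by contraposition: suppose $x_0\notin\hat K_L$. Since $\hat K_L$ is closed and convex and $K$ is compact (hence $\hat K_L$ is nonempty and closed), a standard separation theorem — or directly Proposition~\ref{suphyp} applied after separating the point from the convex set — yields an affine function $L\colon\R^n\to\R$ and a real number $t$ with $L(y)<t$ for all $y\in\hat K_L\supset K$ but $L(x_0)>t$. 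Replacing $L$ by $L-t$ we may assume $t=0$, so $L(y)<0$ on $K$ and $L(x_0)>0$. This alone does not immediately contradict $|L(x_0)|\leq\|L\|_K$, so the remaining step is to adjust the affine functional: because $L<0$ on the compact set $K$, the quantity $m:=-\sup_{y\in K}L(y)>0$, and one can consider $A:=L+c$ for a suitable constant $c$ (equivalently rescale) to arrange that $\sup_K|A|<A(x_0)$; concretely, choosing $c$ so that the values $A(y)$ for $y\in K$ lie in a symmetric interval about $0$ of radius strictly less than $A(x_0)=L(x_0)+c$ works, since we have room coming from the strict gap between $\sup_K L<0$ and $L(x_0)>0$. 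Then $|A(x_0)|=A(x_0)>\|A\|_K$, witnessing $x_0\notin E$.

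I expect the main obstacle to be the bookkeeping in the second inclusion: the naive separating functional gives a one-sided inequality, whereas membership in $E$ is phrased via the two-sided bound $|A(x)|\leq\|A\|_K$, so one must carefully translate (and possibly rescale) the affine function to convert strict one-sided separation into a strict violation of the symmetric estimate. This is elementary but is the only place where a genuine idea beyond "unwind the definitions" is needed. Everything else — finiteness of $\|A\|_K$, convexity and closedness of slabs, nonemptiness and closedness of $\hat K_L$ — is routine and follows from compactness of $K$ together with the basic properties of convex sets recorded earlier in this section. If one prefers to avoid invoking a separation theorem directly, the same conclusion follows from Proposition~\ref{suphyp} applied to a small open convex neighbourhood construction, but the cleanest route is the Hahn–Banach / finite-dimensional separation statement for a point and a disjoint closed convex set.
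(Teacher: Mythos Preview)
Your proposal is correct and follows exactly the line the paper indicates (the paper omits detailed proofs in this section, giving only the hint that closed half-spaces are precisely the sets $A^{-1}((-\infty,t])$ and $A^{-1}([t,\infty))$ for affine $A$, and that $\hat K_L$ is the intersection of all closed half-spaces containing $K$). Your translation trick in the second inclusion is the right way to pass from one-sided separation to the two-sided estimate; concretely, once $L<0$ on $K$ and $L(x_0)>0$, taking $c$ large enough that $L+c>0$ on $K$ gives $\|L+c\|_K=\max_K L+c<L(x_0)+c=|L(x_0)+c|$, which is exactly what you need.
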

With the remark following Proposition~\ref{conholcon} in mind this implies the following, which will motivate the definition of holomorphic convexity in Section~\ref{chap:hol}:
\begin{corollary}
\label{congen}
Let $U\subset\R^n$ be open. Then $U$ is convex if and only if for all compact $K\subset U$,
$$ \{x\in U\colon |A(x)|\leq \|A\|_K\mbox{ for all affine $A\colon\R^n\to\R$}\} =\hat K_L\cap U $$
is a compact set.
\end{corollary}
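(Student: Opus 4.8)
The plan is to assemble the statement from results already in hand, so there is very little to do beyond bookkeeping. Fix a compact $K\subset U$. The preceding Proposition identifies $\hat K_L$ with the set of $x\in\R^n$ satisfying $|A(x)|\leq\|A\|_K$ for every affine $A\colon\R^n\to\R$; intersecting both sides with $U$ yields at once the displayed set equality, so no separate argument is needed there, and it remains only to prove the equivalence.

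By Proposition~\ref{conholcon}, $U$ is convex if and only if $\hat K_L\subset U$ for every compact $K\subset U$, so it suffices to show, for each such $K$, that $\hat K_L\subset U$ holds if and only if $\hat K_L\cap U$ is compact. One direction is immediate: if $\hat K_L\subset U$ then $\hat K_L\cap U=\hat K_L$, which is compact because the convex hull of a compact set is compact. For the converse, suppose $\hat K_L\cap U$ is compact. Since $U$ is open, $\hat K_L\cap U$ is open in the subspace $\hat K_L$; since it is compact it is closed in $\R^n$, hence closed in $\hat K_L$. Now $\hat K_L$ is connected, being convex, and $\hat K_L\cap U$ is nonempty whenever $K$ is (the case $K=\emptyset$ giving $\hat K_L=\emptyset\subset U$ trivially). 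A nonempty subset of a connected space that is both open and closed must be the whole space, so $\hat K_L\cap U=\hat K_L$, i.e. $\hat K_L\subset U$. This establishes the equivalence.

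There is no genuine obstacle here: the corollary is essentially a restatement of the preceding Proposition together with Proposition~\ref{conholcon} and the remark following it. The only point requiring a moment's thought — and the closest thing to a main step — is the clopen/connectedness argument in the converse above, which is precisely the content of that remark; in the write-up I would either invoke the remark directly or record this short argument explicitly, and otherwise simply cite the two results quoted above.
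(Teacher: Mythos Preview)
Your proposal is correct and follows precisely the route the paper indicates: the paper omits all proofs in this section but states in the remark following Proposition~\ref{conholcon} that the equivalence of $\hat K_L\subset U$ and compactness of $\hat K_L\cap U$ comes from basic topology together with compactness and connectedness of $\hat K_L$, which is exactly your clopen argument. There is nothing to add.
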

\subsection{The continuity principle}
In this subsection we introduce the ``continuity principle'' which, roughly speaking, 
states that when a sequence of line segments converges to some limit set, and the limits of the sequences of endpoints are in the
domain, then the entire limit set is also in the domain. 
To state the continuity principle precisely we require some notation and a definition.
If $L\subset\R^n$ is a closed line segment we let $\bd L$ denote the set consisting of its two endpoints (rather than the topological boundary). 
\begin{definition}
Let $\{S_j\}_{j\geq 1}$ be a sequence of subsets of a metric space $\RC$ and let $S\subset \RC$. Then $\{S_j\}_{j\geq 1}$ \textbf{converges to $S$} if for all $\epsilon>0$ there exists
$J\geq 1$ such that whenever $j\geq J$ we have $S_j\subset \dil{S}{\epsilon}$ and $S\subset \dil{(S_j)}{\epsilon}$ (where the subscript $\epsilon$ is for the $\epsilon$-dilation). In
this case we write $S_j\to S$ or $\lim_{j\to\infty} S_j=S$.
\end{definition}
Now we can define the continuity principle formally:
\begin{definition}
Let $U\subset\R^n$ be a domain. Then $U$ \textbf{satisfies the continuity principle} if, for every sequence of closed line segments $\{L_j\}_{j\geq 1}$ satisfying $L_j\subset U$ for all $j\geq 1$,
$\bd L_j\to B$ and $L_j\to L$ where $B\subset U$ and $L\subset\R^n$ are compact, we have $L\subset U$.\index{continuity principle}
\end{definition}
\begin{proposition}
A domain $U\subset\R^n$ is convex if and only if it satisfies the continuity principle.
\end{proposition}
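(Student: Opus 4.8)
The plan is to establish the two implications separately; the first is a direct unwinding of the definitions, and the second I would prove by contraposition using the topology of the set of ``good pairs''.

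\emph{Convex $\Rightarrow$ continuity principle.} Suppose $U$ is convex and let $\{L_j\}_{j\ge 1}$, $B$, $L$ be as in the definition, writing $L_j=[a_j,b_j]$. Since $\bd L_j=\{a_j,b_j\}\to B$ with $B$ compact, the sequences $(a_j)$ and $(b_j)$ are bounded, so after passing to a subsequence $a_j\to a$ and $b_j\to b$; as every limit point of $\{a_j,b_j\}$ lies in $B$, we have $a,b\in B\subset U$. The elementary estimate $|((1-t)a_j+tb_j)-((1-t)a+tb)|\le\max(|a_j-a|,|b_j-b|)$ shows $[a_j,b_j]\to[a,b]$, and since $L_j\to L$ as well, uniqueness of the limit (both $L$ and $[a,b]$ being closed) gives $L=[a,b]$. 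Convexity of $U$ now yields $L=[a,b]\subset U$.

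\emph{Continuity principle $\Rightarrow$ convex.} I would argue by contraposition: assume $U$ is \emph{not} convex, and introduce the set of good pairs $G:=\{(x,y)\in U\times U:[x,y]\subset U\}$. Then (i) $G$ is open in $U\times U$, since if $[x_0,y_0]\subset U$ then, $U$ being open, some $\epsilon$-neighbourhood of the compact segment $[x_0,y_0]$ lies in $U$, and the same distance estimate as above shows every $[x,y]$ with $|x-x_0|,|y-y_0|<\epsilon$ stays inside that neighbourhood; (ii) $G\ne\emptyset$, since a short enough segment inside a ball contained in $U$ gives an element of $G$; and (iii) $G\ne U\times U$, because non-convexity furnishes $a,b\in U$ with $[a,b]\not\subset U$, so $(a,b)\notin G$. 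Since $U$ is a domain, $U\times U$ is connected, so the nonempty proper open set $G$ cannot be closed; hence there is a point $(x^*,y^*)$ in the closure of $G$ in $U\times U$ with $(x^*,y^*)\notin G$. Choosing $(x_j,y_j)\in G$ with $(x_j,y_j)\to(x^*,y^*)$ and putting $L_j:=[x_j,y_j]$, we obtain $L_j\subset U$, $\bd L_j\to B:=\{x^*,y^*\}$, $L_j\to L:=[x^*,y^*]$, with $B\subset U$ and $B,L$ compact (and $x_j\ne y_j$ for large $j$, since otherwise $x^*=y^*$ and $[x^*,y^*]\subset U$, contradicting $(x^*,y^*)\notin G$). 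The continuity principle would then force $L=[x^*,y^*]\subset U$, i.e.\ $(x^*,y^*)\in G$, a contradiction; so $U$ must be convex.

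The step I expect to be the real hurdle is not any computation but the choice of $G$ itself: one's first instinct is to build the violating sequence by perturbing a single bad segment $[a,b]$, but the limiting endpoints can slide onto $\bd U$, and in fact there are non-convex domains for which \emph{no} segment near a given $[a,b]$ lies in $U$. Passing instead to a boundary point of the good-pairs set inside $U\times U$ sidesteps this entirely and hands us a violating sequence whose endpoints automatically lie in $U$. The only routine points that need care are the openness of $G$ and the non-degeneracy of the limiting segment.
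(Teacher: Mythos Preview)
The paper explicitly omits all proofs in Section~\ref{chap:convexity} (see the sentence ``We omit the proofs in this section since they are neither particularly difficult nor relevant''), so there is no in-paper argument to compare against. Your proof is correct: the forward direction is the standard Hausdorff-limit computation, and your converse via the connectedness argument on $G=\{(x,y)\in U\times U:[x,y]\subset U\}$ is clean and avoids the pitfall you correctly flag---trying to perturb a single bad segment can push the endpoints onto $\bd U$. One tiny cosmetic point: you don't need to pass to a subsequence in the forward direction, since the limit set $L$ is already given and the argument shows every subsequential limit of $(a_j,b_j)$ yields the same segment $[a,b]=L$; but what you wrote is fine as stated.
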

\subsection{Convex exhaustion functions}
In this subsection we observe that the convex domains are those on which there is a convex function (to be defined shortly) whose sublevel sets are all compact.
\begin{definition}
Let $S\subset\RC$ (where $\RC$ is a normed space) be a set. A function $f\colon S\to \R$ is an \textbf{exhaustion function} for $S$ if for all $r\in\R$, $f^{-1}((-\infty,r])$ is compact.\index{exhaustion function}
\end{definition}
It is obvious from the definition that continuous exhaustion functions for domains are those which tend to $\infty$ at each boundary point:
\begin{prop}
\label{exasdf}
Let $U\subset\RC$ (where $\RC$ is a normed space) be a domain and $f\colon U\to\R$ a continuous function. Then $f$ is an exhaustion function for $U$ if and only if whenever $\{x_j\}_{j\geq 1}\subset U$
is a sequence with $x_j\to a\in\bd U$ or $x_j\to\infty$ (that is, $|x_j|\to +\infty$) we have $f(x_j)\to+\infty$.
\end{prop}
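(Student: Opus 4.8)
The plan is to prove both implications by contradiction. The only ingredient beyond elementary point-set topology is the Heine--Borel property of $\RC$ (which in every application is $\R^n$ or $\C^n$): a subset is compact exactly when it is closed and bounded. Throughout recall that, since $U$ is a domain, $\clos U = U\cup\bd U$, and that a preimage $f^{-1}((-\infty,r])$ is automatically contained in $U$ and, by continuity of $f$, closed in $U$.

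$(\Rightarrow)$ Suppose $f$ is an exhaustion function and let $\{x_j\}_{j\geq 1}\subset U$ satisfy $x_j\to a\in\bd U$ or $x_j\to\infty$, but $f(x_j)\not\to+\infty$. Then there are $r\in\R$ and a subsequence $\{x_{j_k}\}$ with $f(x_{j_k})\leq r$ for all $k$, so $\{x_{j_k}\}$ lies in $K:=f^{-1}((-\infty,r])$, which is compact by hypothesis. Compactness forces $K$ bounded, so the alternative $x_j\to\infty$ cannot occur; hence $x_j\to a\in\bd U$, and the subsequence converges to $a$ as well. But $K$, being a compact subset of the metric space $\RC$, is closed in $\RC$, so $a\in K\subset U$ — contradicting $a\in\bd U$, since $U$ is open.

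$(\Leftarrow)$ Assume the stated sequential condition, fix $r\in\R$, and put $K:=f^{-1}((-\infty,r])$; it suffices to show $K$ is bounded and closed in $\RC$. If $K$ were unbounded there would be $\{x_j\}\subset K$ with $|x_j|\to+\infty$, i.e. $x_j\to\infty$, whence $f(x_j)\to+\infty$ by hypothesis — impossible, as $f\leq r$ on $K$. For closedness, let $\{x_j\}\subset K$ converge to some $x\in\RC$; then $x\in\clos U=U\cup\bd U$. If $x\in\bd U$ the hypothesis again gives $f(x_j)\to+\infty$, contradicting $f\leq r$ on $K$; therefore $x\in U$, and continuity of $f$ yields $f(x)=\lim_j f(x_j)\leq r$, i.e. $x\in K$. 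Thus $K$ is closed and bounded, hence compact by Heine--Borel, so $f$ is an exhaustion function.

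I expect no serious obstacle here: the argument is a routine compactness-and-contradiction exercise. The only points that deserve a moment's care are the bookkeeping about where limit points of sequences in $U$ can lie (in $U$ or on $\bd U$, never in the exterior) and the appeal to the Heine--Borel property of $\RC$, which is the one place where local compactness of the ambient space is used.
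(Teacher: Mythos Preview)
Your proof is correct. The paper does not actually prove this proposition; it merely introduces it with the remark that ``it is obvious from the definition that continuous exhaustion functions for domains are those which tend to $\infty$ at each boundary point,'' and then moves on to Example~\ref{exex}. Your argument is the natural one, and you are right to flag the Heine--Borel property explicitly: as stated, the proposition is for a general normed space $\RC$, and the $(\Leftarrow)$ direction genuinely fails in infinite dimensions (e.g.\ $U=\RC$ and $f(x)=|x|^2$ satisfies the sequential condition but has non-compact sublevel sets). Since every application in the paper has $\RC=\R^n$ or $\C^n$, this is harmless in context, but your caveat is the honest thing to say.
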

\begin{example}
\label{exex}
Let $U\subset\C^n$ be a domain. Let $f\colon U\to\R$ be given by $f(z):=|z|^2$ if $\bd U=\emptyset$ and $f(z):=|z|^2-\ln \emetric(z,\bd U)$ otherwise (where $\emetric(\cdot,\cdot)$ denotes the Euclidean metric).
We claim that $f$ is an exhaustion function for $U$. If $\bd U=\emptyset$ this is trivial, so consider the case when $\bd U\neq\emptyset$.
If $z_j\to a\in\bd U$ then $\{|z_j|^2\}_{j\geq 1}$ is bounded and $\ln\emetric(z_j,\bd U)\to-\infty$, so $f(z_j)\to+\infty$.
If $z_j\to\infty$ then for sufficiently large $j$ we have $\emetric(z_j,\bd U)\leq 2|z_j|$ and thus $f(z_j)\geq |z_j|^2-\ln |z_j|-\ln 2$, which implies $f(z_j)\to+\infty$. 
\end{example}
Thus every domain admits an exhaustion function. To characterise convexity in terms of the existence of such functions we introduce a particular class of functions:
\begin{definition}
A function $f\colon U\subset\R\to\R$ (where $U$ is an open interval) is \textbf{convex} if for all $a,b\in U$ with $a<b$ and $x\in [a,b]$ we have $f(x)\leq f(a)+(f(b)-f(a))\frac{x-a}{b-a}$. A function
$g\colon V\subset\R^n\to\R$ (where $V$ is a domain) is \textbf{convex} if for all $a\in V$ and $\delta\in\R^n$ with $|\delta|=1$ the function $x\mapsto g(a+\delta x)$ is convex on each component
of $\{x\in\R\colon a+\delta x\in V\}$.\index{convex function} 
\end{definition}

The convex functions of one variable are those which satisfy the property that for any two points on the graph of the function, the graph lies below
the straight line between those points.
The convex functions of several variables are those whose restriction to any line segment is convex. 
It may be verified that convex functions are continuous (see \cite[page 85]{vlad}). 

For twice-differentiable functions we have another condition for convexity:
\begin{prop}
\label{convexdiff}
Let $U\subset\R^n$ be a domain and $f\colon U\to\R$ a $\cts^2$ function. Then $f$ is convex if and only if for all $\delta=(\delta_1,\dots,\delta_n)\in\R^n$ and $x\in U$ we have
$$ \Delta_\delta f(x):= \sum_{j,k=1}^n \frac{\partial^2 f}{\partial x_j\partial x_k}\bigg|_x\delta_j\delta_k\geq 0.$$
\end{prop}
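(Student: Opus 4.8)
The plan is to reduce everything to the one-variable situation using the definition of a convex function of several variables. Recall that $f$ is convex precisely when, for every $a\in U$ and every unit vector $\delta\in\R^n$, the single-variable function $\phi_{a,\delta}(t):=f(a+\delta t)$ is convex on each connected component of $I_{a,\delta}:=\{t\in\R:a+\delta t\in U\}$. Since $f\in\cts^2$, each $\phi_{a,\delta}$ is $\cts^2$ on $I_{a,\delta}$, and the chain rule gives
$$ \phi_{a,\delta}''(t)=\sum_{j,k=1}^n\frac{\partial^2 f}{\partial x_j\partial x_k}\bigg|_{a+\delta t}\delta_j\delta_k=\Delta_\delta f(a+\delta t). $$
So the whole proposition will follow once I have the corresponding one-variable fact: a $\cts^2$ function $\phi$ on an open interval is convex if and only if $\phi''\geq 0$ everywhere.

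For that one-variable statement I would argue both directions directly. If $\phi''\geq 0$, then $\phi'$ is nondecreasing; for $a<b$ and $x\in[a,b]$, writing $\phi(x)-\phi(a)$ and $\phi(b)-\phi(x)$ as integrals of $\phi'$ (or applying the mean value theorem on $[a,x]$ and $[x,b]$) yields $\frac{\phi(x)-\phi(a)}{x-a}\leq\frac{\phi(b)-\phi(x)}{b-x}$, which rearranges into the chord inequality in the definition of convexity. Conversely, if $\phi$ is convex, then for any interior point $x$ and all small $h>0$ convexity applied to $x-h,x,x+h$ gives $\phi(x)\leq\tfrac12\bigl(\phi(x+h)+\phi(x-h)\bigr)$, i.e. $\phi(x+h)-2\phi(x)+\phi(x-h)\geq 0$; dividing by $h^2$ and letting $h\to 0$ — here Taylor's theorem shows the second difference quotient converges to $\phi''(x)$ — gives $\phi''(x)\geq 0$.

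Assembling the pieces: if $\Delta_\delta f(x)\geq 0$ for all $x\in U$ and all $\delta\in\R^n$, then in particular $\phi_{a,\delta}''\geq 0$ for every $a$ and every unit $\delta$ by the displayed identity, so each $\phi_{a,\delta}$ is convex on every component of its domain, hence $f$ is convex. Conversely, if $f$ is convex then each $\phi_{a,\delta}''\geq 0$, which by the same identity says $\Delta_\delta f(x)\geq 0$ for all $x\in U$ and all unit $\delta$; the homogeneity relation $\Delta_{c\delta}f(x)=c^2\,\Delta_\delta f(x)$ together with the trivial case $\delta=0$ then extends this to arbitrary $\delta\in\R^n$. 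The only points that need a little care are the bookkeeping with the connected components of $I_{a,\delta}$ (harmless, since convexity of a function on an interval is by definition convexity on each component) and the limiting argument establishing $\phi''(x)\geq 0$, which is the one place where the $\cts^2$ hypothesis is genuinely used; I do not expect either to be a real obstacle.
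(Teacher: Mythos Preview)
Your argument is correct and complete: the reduction to the one-variable case via the paper's definition, the chain-rule identity $\phi_{a,\delta}''(t)=\Delta_\delta f(a+\delta t)$, the two directions of the one-variable characterisation, and the homogeneity step for non-unit $\delta$ are all sound. The paper itself omits the proof of this proposition (it states at the start of Section~\ref{chap:convexity} that proofs in that section are omitted), so there is no approach to compare against.
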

This result implies the following, which
motivates the definition of pseudoconvexity in Section~\ref{chap:pseudo}:
\begin{proposition}
\label{convexex}
A domain $U\subset\R^n$ is convex if and only if there exists a convex exhaustion function for $U$.
\end{proposition}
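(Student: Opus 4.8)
The plan is to prove the two implications separately. For ``convex $\Rightarrow$ convex exhaustion function'' I would exhibit an explicit such function, and for the converse I would show that a convex exhaustion function forces $U$ to satisfy the continuity principle, whence $U$ is convex by the earlier proposition characterising convexity in terms of the continuity principle.

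Suppose first that $U$ is convex. If $\bd U=\emptyset$ then $U=\R^n$, and $x\mapsto|x|^2$ is a convex exhaustion function: its restriction to any line is a quadratic with positive leading coefficient, and its sublevel sets are closed balls. If $\bd U\neq\emptyset$, I would take $f(x):=|x|^2-\ln\emetric(x,\bd U)$, which is an exhaustion function for $U$ by the argument of Example~\ref{exex} (identical in $\R^n$; alternatively apply Proposition~\ref{exasdf}). Since $|x|^2$ is convex as above, the issue is convexity of $x\mapsto-\ln\emetric(x,\bd U)$. I would first check that $g(x):=\emetric(x,\bd U)$ is positive and concave on $U$. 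Positivity is clear since $U$ is open; for concavity, if $g(x_i)=r_i>0$ for $i=0,1$ then each open ball $\ball(x_i,r_i)$ lies in $U$ (it contains no boundary point and is connected), so by convexity of $U$ the ball $\ball\bigl((1-t)x_0+tx_1,\,(1-t)r_0+tr_1\bigr)$ also lies in $U$ --- each of its points is the appropriate convex combination of a point of $\ball(x_0,r_0)$ and a point of $\ball(x_1,r_1)$ --- and hence $g\bigl((1-t)x_0+tx_1\bigr)\geq(1-t)g(x_0)+tg(x_1)$. Restricting $g$ to any segment contained in $U$ thus gives a positive concave function of one real variable, and composing with the decreasing convex function $-\ln$ gives a convex one; so $-\ln g$, and therefore $f$, is convex.

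For the converse, suppose $f$ is a convex exhaustion function for $U$; I would verify that $U$ satisfies the continuity principle. Let $\{L_j\}_{j\geq1}$ be closed segments with $L_j\subseteq U$, $\bd L_j\to B$ and $L_j\to L$, where $B\subseteq U$ and $L\subseteq\R^n$ are compact (the case $U=\R^n$ being trivial). Since $B$ is a compact subset of $U$ and $f$ is continuous, there are $r\in\R$ and an open set $W\supseteq B$ with $f\leq r$ on $W$; as $\bd L_j\to B$, the endpoints of $L_j$ lie in $W$, and hence satisfy $f\leq r$, for all large $j$. For such $j$ the segment $L_j$ lies in a single connected component of the intersection of $U$ with the line carrying it, so $f|_{L_j}$ is convex and attains its maximum over $L_j$ at an endpoint; hence $f\leq r$ on all of $L_j$, i.e.\ $L_j\subseteq f^{-1}((-\infty,r])$. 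By the exhaustion property this sublevel set is compact, hence closed, and since $L_j\to L$ with the $L_j$ eventually contained in it, also $L\subseteq f^{-1}((-\infty,r])\subseteq U$, as required.

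The step I expect to be the real obstacle is the one just used: the definition of a convex function on a general domain says nothing about segments that leave the domain, so one cannot argue naively that the segment joining two points of $U$ lies in $U$. The continuity-principle formulation circumvents this because it only involves segments $L_j$ known in advance to lie in $U$, and the exhaustion property is exactly what upgrades an endpoint bound on $f$ into confinement of the whole segment --- and hence of the limit segment --- inside a fixed compact subset of $U$. (Verifying concavity of $\emetric(\cdot,\bd U)$ on a convex set in the forward direction requires a short argument as well, but a routine one.)
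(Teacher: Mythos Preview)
Your proof is correct. The paper actually omits all proofs in Section~2, giving only the hint that Proposition~\ref{convexdiff} (the $\cts^2$ criterion $\Delta_\delta f\geq 0$) ``implies'' Proposition~\ref{convexex}; presumably the intended forward argument is to exhibit a $\cts^2$ exhaustion function and check positivity of its Hessian form. Your route is genuinely different and in some ways cleaner: in the forward direction you prove directly that $\emetric(\cdot,\bd U)$ is concave on a convex $U$ and compose with $-\ln$, which avoids any differentiability issues for the distance function; in the converse you go via the continuity principle rather than trying to argue that sublevel sets of $f$ are convex (which, as you note, would be circular given the paper's ``per line component'' definition of a convex function on a domain). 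The continuity-principle argument is exactly the right device here, and it parallels the later proof of Theorem~\ref{pscty} in the plurisubharmonic setting. The only point worth making explicit is that you are using continuity of convex functions (stated just after the definition in the paper) when you choose $r$ and $W$; everything else is fully justified.
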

\subsection{Convexity of domains with twice-differentiable boundaries}
Until now we have considered arbitrary domains of $\R^n$, but for those domains with twice-differentiable boundaries we have an additional local characterisation of convexity.
\begin{definition}
Let $U\subset\R^n$ be a domain and $a\in\bd U$. If $V\subset\R^n$ is a neighbourhood of $a$ and $f\colon V\to\R$ is such that $U\cap V=f^{-1}((-\infty,0))$ then
$f$ is a \textbf{local defining function} for $U$ at $a$.\index{local defining function}

Let $k\geq 1$ (we allow $k=\infty$). The domain $U$ is said to have \textbf{$k$-times differentiable boundary} or \textbf{$\cts^k$ boundary} if at each boundary point there is a $\cts^k$ local defining function whose gradient is non-zero 
at the point.
If $a\in\bd U$ and $f\colon V\to\R$ is a $\cts^k$ local defining function for $U$ at $a$ then the \textbf{tangent space} to $\bd U$ at $a$ with respect to $f$ is
$ T_a(f):=\{\delta\in\R^n\colon \dprod{\delta}{\grad{f}(a)}=0\}.$\index{tangent space}\index{differentiable boundary}
\end{definition}
\begin{remark}
\label{defrem}
By the implicit function theorem a $k$-times differentiable boundary is locally given by the graph of a real-valued $\cts^k$ function.
More precisely, consider $a\in\R^n$ and a $\cts^k$ function $f\colon V\to\R$ (where $V$ is a neighbourhood of $a$) which is zero at $a$ and non-degenerate,
and assume (without loss of generality) that $\pd{f}{x_n}\big|_a>0$. By the implicit function theorem there are connected open sets $W\subset\R^{n-1}$
and $I\subset\R$ such that $a\in W\times I\subset V$ and a $\cts^k$ function $g\colon W\to I$ such that $f(x)=0$ if and only if $x_n=g(x_1,\dots,x_{n-1})$
for $x\in W\times I$. Moreover, using the fact that $f$ is differentiable, if $x_n>g(x_1,\dots,x_{n-1})$ then $f(x)>0$ and if $x_n<g(x_1,\dots,x_{n-1})$ then $f(x)<0$.
In particular, if $f$ is a defining function for a domain $U$ at $a\in\bd U$ then after replacing $V$
with the subset $W\times I$ we have $U\cap V=f^{-1}((-\infty,0))=\{x\in V\colon x_n<g(x_1,\dots,x_{n-1})\}$, which implies
\begin{equation} \bd U\cap V=\{x\in V\colon x_n=g(x_1,\dots,x_{n-1})\}=f^{-1}(\{0\}).\label{defnais}\end{equation}
It is also clear that $x\mapsto x_n-g(x_1,\dots,x_{n-1})$ is a $\cts^k$ local defining function for $U$ at $a$.
We will use these observations in Section~\ref{chap:pseudo}.
\end{remark}
One may verify that convexity of a domain with twice-differentiable boundary is determined by the curvatures of its local defining functions:
\begin{proposition}
\label{blakeyy}
Let $U\subset\R^n$ be a domain with $\cts^2$ boundary. Then $U$ is convex if and only if for all $a\in\bd U$ there is a $\cts^2$ local defining function $f\colon V\to\R$ (where $V$ is a neighbourhood of $a$)
such that $f'(a)\neq 0$ and $\Delta_\delta f(a)\geq 0$ for all $\delta\in T_a(f)$. 
\end{proposition}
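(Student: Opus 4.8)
The plan is to prove both directions by reducing the global statement about convexity to the local curvature condition, using Proposition~\ref{conlocal} to pass between local and global convexity and Remark~\ref{defrem} to put the boundary in graph form. For the forward direction, suppose $U$ is convex and fix $a\in\bd U$. By Remark~\ref{defrem} there is a neighbourhood $V=W\times I$ of $a$ (after an affine change of coordinates placing $a$ at the origin with $\pd{f}{x_n}|_a>0$) and a $\cts^2$ function $g\colon W\to I$ so that $U\cap V=\{x\in V\colon x_n<g(x_1,\dots,x_{n-1})\}$; replacing the defining function by $f(x)=x_n-g(x_1,\dots,x_{n-1})$ we may assume $f'(a)=(-\grad{g}(a),1)$ and that the Hessian of $f$ at $a$ is $-\,\mathrm{Hess}\,g(a)$ on the first $n-1$ coordinates and zero otherwise. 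Now I would argue that if some $\delta\in T_a(f)$ had $\Delta_\delta f(a)<0$, then for small $t\neq 0$ the points $a\pm t\delta$ would lie on the $U$-side of the boundary while their midpoint $a$ lies on $\bd U$, contradicting convexity; making this rigorous amounts to a second-order Taylor expansion of $x_{n}-g(x_1,\dots,x_{n-1})$ along the line $s\mapsto a+s\delta$ (whose direction, by $\delta\in T_a(f)$, is tangent to the graph, so the linear term vanishes) and noting that a convex set containing $a+t\delta$ and $a-t\delta$ must contain $a$, forcing $f(a)\le 0$ on one side. This gives $\Delta_\delta f(a)\ge 0$ for all $\delta\in T_a(f)$.

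For the converse, assume the local curvature condition holds at every $a\in\bd U$; by Proposition~\ref{conlocal} it suffices to show each boundary point has a convex neighbourhood intersection with $U$. The natural strategy is to perturb the given defining function to make its full Hessian positive semidefinite near $a$ (not just on the tangent space), then invoke Proposition~\ref{convexdiff} and a sublevel-set argument. Concretely, given a $\cts^2$ local defining function $f$ with $f'(a)\ne0$ and $\Delta_\delta f(a)\ge0$ for $\delta\in T_a(f)$, one can write any $\delta\in\R^n$ as a sum of a component in $T_a(f)$ and a component along $\grad{f}(a)$; a standard quadratic-forms lemma shows that for a sufficiently large constant $C>0$ the function $\tilde f:=f+C f^2$ (or, depending on taste, $e^{Cf}-1$) has $\Delta_\delta\tilde f(a)\ge0$ for all $\delta\in\R^n$, since the $C f^2$ correction contributes $2C f'(a)\cdot\delta)^2$ to the Levi form at $a$ after using $f(a)=0$, which dominates the (possibly negative) part of $\Delta_\delta f(a)$ coming from the normal direction while not affecting tangent directions. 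By continuity of second derivatives, $\Delta_\delta\tilde f\ge 0$ on a possibly smaller neighbourhood $V'$ of $a$, and on $V'$ the set $U\cap V'$ still equals $\tilde f^{-1}((-\infty,0))$ because $\tilde f$ and $f$ have the same sign near $a$ (as $1+Cf>0$ there). Shrinking $V'$ to a convex (e.g.\ ball) neighbourhood, Proposition~\ref{convexdiff} says $\tilde f$ is convex on $V'$, hence its sublevel set $U\cap V'=\tilde f^{-1}((-\infty,0))$ is an intersection of the convex sets $V'$ and $\{\tilde f<0\}$ and therefore convex; Proposition~\ref{conlocal} then yields convexity of $U$.

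The main obstacle I anticipate is the linear-algebra step in the converse: upgrading ``positive semidefinite on the tangent hyperplane'' to ``positive semidefinite on all of $\R^n$'' by adding a controlled multiple of $f'(a)\otimes f'(a)$. One must check that the quadratic form $Q(\delta)=\Delta_\delta f(a)+2C(f'(a)\cdot\delta)^2$ is $\ge0$ everywhere for large $C$; this is a compactness argument on the unit sphere --- split $\delta=\delta^\top+\lambda\nu$ with $\nu$ the unit normal, note $Q$ restricted to $|\delta^\top|=1,\lambda=0$ is $\ge0$ and the $2C\lambda^2|f'(a)|^2$ term eventually swamps any cross-terms and the bounded negative contribution of $\Delta$ when $\lambda\ne0$ --- but care is needed to handle the cross term $\sum \partial_j\partial_k f(a)\,\delta^\top_j\nu_k$, which is why one typically estimates $|\Delta_\delta f(a)|\le M|\delta|^2$ and chooses $C$ so that $Q(\delta)\ge \epsilon|\delta^\top|^2 + (2C|f'(a)|^2 - M')\lambda^2 - M''|\delta^\top|\,|\lambda|\ge0$ via the discriminant. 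The other routine-but-fiddly point is justifying the sign statements ``$f>0$ above the graph, $f<0$ below'' from differentiability alone, which is exactly what Remark~\ref{defrem} has already recorded, so I would simply cite it.
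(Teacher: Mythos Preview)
The paper explicitly omits the proof of this proposition (see the remark at the start of Section~\ref{chap:convexity}), so there is nothing to compare against; I evaluate your proposal on its own merits.

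Your forward direction is fine: with $\delta\in T_a(f)$ the first-order term in the Taylor expansion of $f$ along $s\mapsto a+s\delta$ vanishes, so $\Delta_\delta f(a)<0$ would force $f(a\pm t\delta)<0$ for small $t\neq 0$, placing both $a+t\delta$ and $a-t\delta$ in $U$ while their midpoint $a$ lies in $\bd U\subset U^c$, contradicting convexity.

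The converse, however, has a genuine gap at the continuity step. Your construction $\tilde f=f+Cf^2$ gives $\Delta_\delta\tilde f(a)=\Delta_\delta f(a)+2C\,(f'(a)\cdot\delta)^2$, and for large $C$ this is indeed $\geq 0$ for all $\delta$ --- but only \emph{semidefinite}, not definite, since for any tangent $\delta$ with $\Delta_\delta f(a)=0$ the added term vanishes too. Positive semidefiniteness of a Hessian is not an open condition, so ``by continuity $\Delta_\delta\tilde f\geq 0$ on a smaller $V'$'' does not follow. (This is exactly why the analogous complex argument in Theorem~\ref{leviisps} avoids this route and instead uses Lemma~\ref{levilemma} together with a logarithmic barrier.)

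The clean fix is to reuse the graph form you already set up in the forward direction. In the coordinates of Remark~\ref{defrem}, with $f(x)=x_n-g(x_1,\dots,x_{n-1})$, the Hessian of $f$ is independent of $x_n$, and for every $x'\in W$ the point $(x',g(x'))$ lies on $\bd U$. The hypothesis therefore gives $\Delta_\delta f\geq 0$ on the tangent space at \emph{every} point of the graph (one must note that the curvature condition is independent of the chosen defining function, so it transfers to this particular $f$); parametrising tangent vectors by their first $n-1$ components shows that the Hessian of $g$ is negative semidefinite on all of $W$. Shrinking $W$ to a convex set, Proposition~\ref{convexdiff} makes $-g$ convex, so $g$ is concave and $U\cap(W\times I)=\{x\in W\times I: x_n<g(x_1,\dots,x_{n-1})\}$ is convex. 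Proposition~\ref{conlocal} then finishes the argument.
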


\section{Domains of holomorphy}
\label{chap:hol}
In this section we define domains of holomorphy, discuss some examples and conditions, and then introduce the notion of
holomorphic convexity and show that the domains of holomorphy are precisely the holomorphically convex domains. Using this we will prove several properties of domains of holomorphy. The material
in this section is drawn from
\cite{krantz}, \cite{range} and \cite{itca}.

\subsection{Domains of holomorphy}
We must first make precise the notion of holomorphic extension:
\begin{definition}
Let $U\subset\C^n$ be open, $f\in \hol(U)$ and $a\in U^c$ (where $\hol(U)$ denotes the set of holomorphic functions on $U$). Then $f$ \textbf{extends holomorphically} to $a$ if there is a connected open neighbourhood $V$ of $a$ and a holomorphic function $g\in H(V)$
such that $f\equiv g$ on a non-empty open subset of $U\cap V$.\index{holomorphic extension}
\end{definition}
Note that we do \emph{not} require that the functions $f$ and $g$ agree on all of $U\cap V$, so it may not be the case that $f$ extends to a function holomorphic on $U\cup V$.
That is, if $f\in \hol(U)$ extends holomorphically to $a\in U^c$ then there is not necessarily a domain $W\subset\C^n$ and function $g\in H(W)$ such that $U\subset W$, $a\in W$ and $f\equiv g$ on $U$.
We use this definition of holomorphic extension to avoid the need for multi-valued extensions. For example, the function $z\mapsto \sqrt{z}$
defined on the domain $U:=\C\setminus\{x\colon x\in\R,x\geq 0\}$ extends holomorphically to the point $z=1$, but there is no way to define this function holomorphically on a domain of $\C$ which
contains both $U$ and the point $z=1$ unless we allow the function to take multiple values. 
\begin{definition}
Let $U\subset\C^n$ be a domain. Then $U$ is a \textbf{domain of holomorphy} if there exists a function $f\in \hol(U)$ which does not extend holomorphically to any point of $U^c$.\index{domain of holomorphy}
An open set $V\subset\C^n$ is an \textbf{open set of holomorphy} if each component of $V$ is a domain of holomorphy.\index{open set of holomorphy}
\end{definition}
We introduce a particular class of functions:
\begin{definition}
Let $U\subset\C^n$ be a domain and $f\in\hol(U)$ a function.
If for every domain $V$ intersecting $\bd U$ and every component $W$ of $U\cap V$
there is a sequence $\{z_j\}_{j\geq 1}\subset W$ with $f(z_j)\to\infty$ then we say that $f$ is \textbf{essentially unbounded} on $\bd U$.\index{essentially unbounded function}
\end{definition}
The following demonstrates the importance of these functions:
\begin{prop}
\label{unbdedhol}
Let $U\subset\C^n$ be a domain and $f\in \hol(U)$ a function essentially unbounded on $\bd U$. Then $U$ is a domain of holomorphy.
\end{prop}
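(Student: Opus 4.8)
The plan is to argue by contradiction: suppose $f$ is essentially unbounded on $\bd U$ but $f$ extends holomorphically to some point $a\in U^c$. By definition of holomorphic extension there is a connected open neighbourhood $V$ of $a$ and $g\in H(V)$ with $f\equiv g$ on a non-empty open subset $\Omega$ of $U\cap V$. The first step is to note that we may assume $V$ meets $\bd U$; indeed, since $a\in U^c$ and $\Omega\subset U$ is non-empty, $V$ contains points of $U$ and points not in $U$, so $V$ must intersect $\bd U$ (if $V$ missed $\bd U$ it would be a disconnected union of a subset of $U$ and a subset of $U^c$, contradicting connectedness of $V$). Now let $W$ be the component of $U\cap V$ containing $\Omega$ (more precisely, containing a point of $\Omega$; since $\Omega$ is a non-empty open subset of $U\cap V$ it lies in a single component after possibly shrinking, or we just pick one component meeting it).

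The key step is to show $f$ is bounded on $W$, which will contradict essential unboundedness. Since $f\equiv g$ on the non-empty open set $\Omega\subset W$ and both $f$ and $g$ are holomorphic on the connected open set $W$ (note $W\subset V$ so $g$ is defined and holomorphic there, and $W\subset U$ so $f$ is holomorphic there), the identity theorem for holomorphic functions of several variables gives $f\equiv g$ on all of $W$. But $g$ is continuous on $V$, hence bounded on any compact subset of $V$; choosing a small closed ball $\clos{\ball(a,r)}\subset V$, the function $g$ is bounded on $W\cap \clos{\ball(a,r)}$. This by itself is not quite enough — I need $f=g$ bounded on a neighbourhood-in-$W$ of a boundary point of $U$, and then to contradict the existence of a sequence $z_j\in W$ with $f(z_j)\to\infty$.

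To make this precise, observe that since $a\in\bd U^c\cap V$... more carefully: pick any point $b\in\bd U\cap V$ (exists by the first step), and shrink $V$ to a ball $V'=\ball(b,\epsilon)$ with $\clos{V'}\subset V$, so that $g$ is bounded on $V'$. Then every component $W'$ of $U\cap V'$ satisfies $f=g$ on $W'$ (again by the identity theorem, provided $W'$ meets the region where $f=g$; here one must take a little care, so instead I would argue that $U\cap V'$ has a component on which $f$ and $g$ agree — namely we track the component of $U\cap V$ containing $\Omega$ and intersect with $V'$, or alternatively use that $g$ agrees with $f$ on $U\cap V'$ by analytic continuation along $U\cap V$). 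On that component $W'$, $f=g$ is bounded, contradicting the requirement that some sequence in $W'$ sends $f$ to $\infty$. Hence $f$ extends to no point of $U^c$, so $U$ is a domain of holomorphy.

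The main obstacle is the bookkeeping around connectedness and the identity theorem: ensuring that the set $\Omega$ where $f=g$ propagates, via the identity theorem on connected open sets, to a full component of $U\cap V'$ meeting $\bd U$. One clean way to handle this is: let $W$ be the component of $U\cap V$ meeting $\Omega$; by the identity theorem $f\equiv g$ on $W$; since $g$ is continuous at $a\in\bd W\cap U^c$ — wait, we actually need $\bd U\cap \clos W\neq\emptyset$, which holds because $W$ is a component of $U\cap V$ with $V$ open and $a\in V\cap U^c$, forcing $\clos W$ to meet $\bd U$ inside $V$ — we may pick $b\in\bd U\cap\clos W\cap V$, take a ball around $b$ inside $V$ on which $g$ is bounded, and note $W\cap\ball(b,\delta)$ is a non-empty open subset of $U\cap\ball(b,\delta)$ lying in a component $W'$ of $U\cap\ball(b,\delta)$; by the identity theorem $f\equiv g$ on $W'$, so $f$ is bounded on $W'$, contradicting essential unboundedness applied to the domain $\ball(b,\delta)$ and component $W'$.
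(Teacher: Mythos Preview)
Your argument is correct and follows essentially the same route as the paper: assume an extension $g$ on a connected neighbourhood $V$, pass to the component $W'$ of $U\cap V$ meeting the set where $f\equiv g$, use the identity theorem to get $f\equiv g$ on $W'$, locate a boundary point $b\in\bd U\cap\bd W'\cap V$, and then on a component of $U\cap\ball(b,\delta)$ (with $\clos{\ball(b,\delta)}\subset V$) contained in $W'$ observe that $f=g$ is bounded while essential unboundedness forces an unbounded sequence there. The only cosmetic difference is that the paper pulls the contradiction by noting $g$ attains a maximum modulus on the compact set $\clos{W}\subset V$, whereas you phrase it as $f$ being bounded on $W'$; the logic is identical.
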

\begin{proof}
Suppose, for a contradiction, that $f$ extends holomorphically to $b\in U^c$, so there is a domain $V\subset\C^n$ containing $b$ and a function $g\in H(V)$ such that
$f\equiv g$ on a non-empty open set $X\subset U\cap V$. Let $W'$ be a component of $U\cap V$ containing a non-empty open subset of $X$, and let $a\in \bd W'\cap V$.
Note that $a\in\bd U$ because $\bd W'\subset \bd U\cup\bd V$ and obviously $a\not\in\bd V$.
Now let $r>0$ such that $\clos{\ball(a,r)}\subset V$ (where $\ball(a,r)$ denotes the open ball of radius $r$ about $a$) and let $W$ be a component of $U\cap\ball(a,r)$ contained in $W'$. Since
$f$ is essentially unbounded on $\bd U$ there is a sequence $\{z_j\}_{j\geq 1}\subset W\subset W'$ with $f(z_j)\to\infty$. By the uniqueness theorem it follows that $f(z_j)=g(z_j)$ for each $j\geq 1$ and thus
$g(z_j)\to\infty$. But $\clos{W}$ is a compact subset of $V$,
so $g$ attains a maximum modulus on $\clos{W}$, which is a contradiction. Therefore $f$ does not extend holomorphically to any point of $U^c$, so $U$ is a domain of holomorphy.
\end{proof}
In the next subsection we will prove the following, which strengthens Proposition~\ref{unbdedhol} and is extremely useful in practice:
\begin{theorem}
\label{usefuldh}
Let $U\subset\C^n$ be a domain and suppose for each $a\in\bd U$ there is a function $f_a\in\hol(U)$ tending to $\infty$ at $a$ (that is,
for every sequence $\{z_j\}_{j\geq 1}\subset U$ with $z_j\to a$ we have $f_a(z_j)\to\infty$). Then $U$ is a domain of holomorphy.
\end{theorem}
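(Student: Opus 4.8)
The plan is to reduce Theorem~\ref{usefuldh} to Proposition~\ref{unbdedhol} by manufacturing, out of the given family $\{f_a\}_{a\in\bd U}$, a single holomorphic function on $U$ that is essentially unbounded on $\bd U$. The naive attempt ``take $\sum_a f_a$'' fails because $\bd U$ is typically uncountable, so the first real step is to extract a countable subfamily that still controls the whole boundary. Concretely, I would fix a countable dense subset $\{a_k\}_{k\geq 1}$ of $\bd U$ together with a countable basis of balls, and argue that if $f_{a}\to\infty$ at every $a\in\bd U$, then for every domain $V$ meeting $\bd U$ and every component $W$ of $U\cap V$ some member of the countable family $\{f_{a_k}\}$ is unbounded on $W$: indeed $\bd W\cap V\cap\bd U\neq\emptyset$ (as in the proof of Proposition~\ref{unbdedhol}), pick $a$ in this set and $a_k$ close to it inside a small ball around $a$ whose closure lies in $V$, then a sequence in $W$ approaching $a_k$ witnesses $\|f_{a_k}\|_W=\infty$. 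So it suffices to combine countably many holomorphic functions, each unbounded on a prescribed countable collection of ``admissible'' components, into one essentially unbounded function.

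The second step is this combination, which is a standard normal-families / diagonal construction. Exhaust $U$ by an increasing sequence of compact sets $K_1\subset K_2\subset\cdots$ with $\bigcup_j K_j=U$ and $K_j\subset\inter{K_{j+1}}$. I would build a function of the form
\begin{equation*}
 f=\sum_{m=1}^\infty c_m\, g_m,
\end{equation*}
where each $g_m$ is one of the $f_{a_k}$'s, chosen so that the list $(g_m)$ hits every $f_{a_k}$ infinitely often, and the constants $c_m>0$ are chosen inductively: having fixed $c_1,\dots,c_{m-1}$, and having picked (using that $g_m$ is unbounded on some admissible component) a point $w_m$ in the relevant component with $|g_m(w_m)|$ enormous, first choose $c_m$ small enough that $|c_m g_m|<2^{-m}$ on $K_m$ and also small enough that $|c_m g_m(w_m)|$ still exceeds $m + \sum_{l<m}|c_l g_l(w_l)| + 1$. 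The $K_m$-bounds make the series converge locally uniformly on $U$, hence $f\in\hol(U)$ by the usual theorem on uniform limits of holomorphic functions; the telescoping lower bound at the points $w_m$, together with the tail estimate $\sum_{l>m}|c_l g_l(w_m)|\leq 1$ once $w_m\in K_l$ for all large $l$, forces $|f(w_m)|\to\infty$. A little care is needed to interleave the two demands (making $c_m$ small but $c_m g_m(w_m)$ large) — this is possible precisely because $g_m$ is genuinely unbounded on the component in question, so $|g_m(w_m)|$ can be taken as large as we like independently of the fixed value $c_m$ must respect on $K_m$.

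The final step is bookkeeping: I must arrange that for every domain $V$ meeting $\bd U$ and every component $W$ of $U\cap V$, the sequence $w_m$ enters $W$ along a subsequence with $f(w_m)\to\infty$. By step one there is some $k$ with $f_{a_k}$ unbounded on $W$; since $g_m=f_{a_k}$ for infinitely many $m$, at each such stage I simply choose $w_m\in W$ with $|f_{a_k}(w_m)|$ as large as the inductive step requires, and these $w_m$ lie in $W$ with $f(w_m)\to\infty$. Hence $f$ is essentially unbounded on $\bd U$, and Proposition~\ref{unbdedhol} gives that $U$ is a domain of holomorphy. I expect the main obstacle to be purely organisational rather than conceptual: setting up the countable indexing so that a single function simultaneously serves all (uncountably many) domains $V$ and all their components, and verifying the inductive choice of the $c_m$ is consistent — but since ``admissible component'' can be pinned down using a countable basis for the topology of $\C^n$, and unboundedness of $g_m$ on the chosen component gives the needed freedom in picking $w_m$, the construction goes through.
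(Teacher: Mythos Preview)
Your strategy is workable in spirit but takes a far longer road than the paper, and step one has a genuine gap. The paper's proof is a five-line argument via Theorem~\ref{hcdh}: given compact $K\subset U$, if $\hat K$ were not compact there would be $\{z_j\}\subset\hat K$ with $z_j\to a\in\bd U$, whence $f_a(z_j)\to\infty$ while $\|f_a\|_{\hat K}=\|f_a\|_K<\infty$, a contradiction. So $U$ is holomorphically convex, hence a domain of holomorphy. You are effectively reconstructing the essentially unbounded function of Theorem~\ref{hcdh} from scratch, when that theorem is already at your disposal.

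The gap: you assert that if $\{a_k\}$ is dense in $\bd U$ and $a\in\bd W\cap V\cap\bd U$, then some nearby $a_k$ lies in $\overline{W}$, so that a sequence in $W$ can approach it. Density alone does not guarantee this, because $\overline{W}\cap\bd U$ can be nowhere dense in $\bd U$. Concretely, let $U\subset\C$ be the complement of the comb $C:=\big(\{0\}\times[0,1]\big)\cup\bigcup_{n\geq 1}\big(\{1/n\}\times[0,1]\big)$, take $a=(0,1/2)$, $V$ a small ball about $a$, and $W$ the component of $U\cap V$ lying in $\{\Re z<0\}$. Then $\overline{W}\cap\bd U$ is just the limit tooth $\{0\}\times[0,1]$ intersected with $\overline V$, and the teeth at $x=1/n$ accumulate on it, so it has empty interior in $\bd U$. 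A countable dense subset of $\bd U$ consisting only of rational points on the teeth $\{1/n\}\times[0,1]$ misses $\overline{W}$ completely, and none of the corresponding $f_{a_k}$ need be unbounded on $W$. The repair is to index by interior points instead: for each rational $p\in U$ choose $b_p\in\bd U$ realising $\emetric(p,\bd U)$; then $f_{b_p}$ blows up along the radius of $\ball(p,\emetric(p,\bd U))\subset U$ toward $b_p$, and the argument in the proof of Theorem~\ref{hcdh} shows some such ball sits inside every admissible component $W$. With that change your steps two and three go through (though in step two you must also control $|c_l g_l(w_m)|$ for the finitely many $l>m$ with $w_m\notin K_l$, e.g.\ by adding $|c_l g_l(w_1)|,\dots,|c_l g_l(w_{l-1})|<2^{-l}$ to the inductive constraints on $c_l$).
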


We consider some consequences of this theorem.
\begin{corollary}
\label{mira}
Let $U\subset\C^n$ be a domain and suppose for each $a\in \bd U$ there is a non-vanishing function $g_a\in\hol(U)$ which tends to zero at $a$.
Then $U$ is a domain of holomorphy.
\end{corollary}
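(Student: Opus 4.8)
The plan is to reduce this immediately to Theorem~\ref{usefuldh} by passing to reciprocals. Fix $a\in\bd U$ and let $g_a\in\hol(U)$ be the given non-vanishing function with $g_a\to 0$ at $a$. Since $g_a$ has no zeros on $U$, the function $f_a:=1/g_a$ is holomorphic on $U$. I would then check that $f_a$ tends to $\infty$ at $a$ in the sense required by Theorem~\ref{usefuldh}: for any sequence $\{z_j\}_{j\geq 1}\subset U$ with $z_j\to a$ we have $g_a(z_j)\to 0$ by hypothesis, hence $|f_a(z_j)|=1/|g_a(z_j)|\to\infty$, so indeed $f_a(z_j)\to\infty$. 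Having produced, for each boundary point $a\in\bd U$, a function $f_a\in\hol(U)$ tending to $\infty$ at $a$, Theorem~\ref{usefuldh} applies directly and yields that $U$ is a domain of holomorphy.

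There is essentially no obstacle here once Theorem~\ref{usefuldh} is in hand; the only point that deserves a word is that the reciprocal of a nowhere-vanishing holomorphic function is again holomorphic, which is standard (it follows from the quotient rule together with the local power series representation, or from the fact that $w\mapsto 1/w$ is holomorphic on $\C\setminus\{0\}$ and holomorphy is preserved under composition). Note that the non-vanishing hypothesis on $g_a$ is used exactly to ensure $f_a$ is defined and holomorphic on all of $U$, and the convergence $g_a\to 0$ at $a$ is exactly what forces the blow-up of $f_a$; thus the corollary is really just a restatement of Theorem~\ref{usefuldh} phrased in terms of vanishing rather than blow-up, and invoking that theorem is the cleanest route.
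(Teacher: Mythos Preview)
Your proof is correct and follows exactly the same approach as the paper, which simply says to apply Theorem~\ref{usefuldh} to the reciprocals of the functions $g_a$. You have merely spelled out the one-line argument in more detail.
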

\begin{proof} Apply Theorem~\ref{usefuldh} to the reciprocals of the functions $g_a$.
\end{proof}
\begin{example}
Let $U\subset\C$ be a domain. If for each $a\in\bd U$ we consider the function $z\mapsto z-a$ then the hypothesis of Corollary~\ref{mira} is satisfied, so $U$ is a domain of holomorphy.
Thus all domains of $\C$ are domains of holomorphy.
\end{example}
\begin{example}
Let $U\subset\C^n$ be a convex domain (that is, $U$ is convex when regarded as a subset of $\R^{2n}$).
Let $a\in\bd U$, so 
there exists a supporting hyperplane for $U$ at $a$. That is, there exists $\delta\in\C^n$ such that whenever $\Re\sp{z-a}{\delta}=0$ we have $z\not\in U$ (where we have recalled that the
real part of the inner product $\sp{\cdot}{\cdot}$ on $\C^n$ gives the dot product when the vectors are regarded as elements of $\R^{2n}$). Thus the function
$z\mapsto \sp{z-a}{\delta}$ is holomorphic and non-vanishing on $U$ and tends to zero at $a$. By Corollary~\ref{mira} it follows that $U$ is a domain of holomorphy.
Therefore the convex domains (in particular open balls) are domains of holomorphy.
\end{example}

Next we construct a domain which is not a domain of holomorphy.
\begin{prop}
\label{reinloghol}
Let $U\subset\C^n$ be a complete Reinhardt domain with center $0$ which is not logarithmically convex. Then $U$ is a not domain of holomorphy.
\end{prop}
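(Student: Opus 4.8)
The plan is to produce a single point $p\in U^c$ to which \emph{every} $f\in\hol(U)$ extends holomorphically; by the very definition of a domain of holomorphy this shows $U$ is not one. The point $p$ is manufactured from the failure of logarithmic convexity, and the extension of an arbitrary $f$ is obtained by interpolating the Taylor expansion of $f$ at the origin --- which lies in $U$, since completeness of a Reinhardt domain with centre $0$ forces $0\in U$ and hence a full polydisc about $0$ into $U$.

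\emph{Step 1: constructing $p$.} Let $\nu(U):=\{(\ln|z_1|,\dots,\ln|z_n|)\colon z\in U,\ z_j\neq 0\text{ for all }j\}\subset\R^n$ be the logarithmic image of $U$. Since $U$ is not logarithmically convex, $\nu(U)$ is not convex, so there are $s,t\in\nu(U)$ and $\theta\in(0,1)$ with $u:=(1-\theta)s+\theta t\notin\nu(U)$. Every point of $\nu(U)$ is realised by some point of $U$, and $U$ is invariant under the coordinatewise rotations $z_j\mapsto e^{i\varphi_j}z_j$; hence there are $a,b\in U$ with positive real coordinates satisfying $a_j=e^{s_j}$ and $b_j=e^{t_j}$. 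Put $p:=(e^{u_1},\dots,e^{u_n})$, so $p_j=a_j^{1-\theta}b_j^{\theta}>0$ for every $j$. Then $p$ has no zero coordinate and $\nu(p)=u\notin\nu(U)$, so $p\notin U$.

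\emph{Step 2: extending an arbitrary $f\in\hol(U)$.} Let $c_\alpha:=\frac1{\alpha!}\partial^\alpha f(0)$, $\alpha\in\Np^n$, be the Taylor coefficients of $f$ at $0$. Because $a$ and $b$ have positive real coordinates and $U$ is open, for all sufficiently small $\epsilon>0$ the points $a':=((1+\epsilon)a_1,\dots,(1+\epsilon)a_n)$ and $b':=((1+\epsilon)b_1,\dots,(1+\epsilon)b_n)$ lie in $U$, and then completeness forces $\clos{\pdisc(0,a')}\subset U$ and $\clos{\pdisc(0,b')}\subset U$. The Cauchy estimates on these polydiscs show that $\sum_\alpha|c_\alpha|R^\alpha<\infty$ and $\sum_\alpha|c_\alpha|S^\alpha<\infty$, where $R:=(1+\tfrac\epsilon2)(a_1,\dots,a_n)$ and $S:=(1+\tfrac\epsilon2)(b_1,\dots,b_n)$. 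Set $T_j:=R_j^{1-\theta}S_j^{\theta}=(1+\tfrac\epsilon2)\,p_j>p_j$. Weighted AM--GM (Young's inequality with exponents $\tfrac1{1-\theta}$ and $\tfrac1\theta$) gives $|c_\alpha|T^\alpha\le(1-\theta)|c_\alpha|R^\alpha+\theta|c_\alpha|S^\alpha$, hence $\sum_\alpha|c_\alpha|T^\alpha<\infty$. By the Weierstrass $M$-test the series $g(z):=\sum_\alpha c_\alpha z^\alpha$ converges absolutely and uniformly on $\clos{\pdisc(0,T)}$, so $g\in\hol(\pdisc(0,T))$; and since $|p_j|<T_j$ for every $j$, $\pdisc(0,T)$ is a connected open neighbourhood of $p$. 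On a small polydisc about $0$ contained in $U$ the function $f$ equals its Taylor series, which is $g$, so $f\equiv g$ on a non-empty open subset of $U\cap\pdisc(0,T)$; thus $f$ extends holomorphically to $p$.

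Since $p\in U^c$ is independent of $f$ and $f\in\hol(U)$ was arbitrary, no holomorphic function on $U$ fails to extend past $U^c$, so $U$ is not a domain of holomorphy. The only genuinely delicate points are the bookkeeping in Step 2 --- enlarging $a$ and $b$ slightly to $R$ and $S$ so that the interpolated polyradius $T$ strictly dominates $p$ coordinatewise, and checking that the new series $g$ really agrees with $f$ on an open subset of $U$ (this is where it matters that $0\in U$). The conceptual content, namely that the polyradii of absolute convergence of a power series form a logarithmically convex set --- which is exactly why logarithmic convexity is the obstruction --- is captured entirely by the one-line Young's-inequality estimate, so nothing deeper is required.
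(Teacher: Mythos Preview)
Your proof is correct and follows essentially the same approach as the paper's: both show that the Taylor series of any $f\in\hol(U)$ about $0$ converges on a logarithmically convex region strictly larger than $U$. The paper simply cites the facts that holomorphic functions on complete Reinhardt domains have global power series expansions and that domains of convergence are logarithmically convex, whereas you unpack the latter explicitly via the Young's-inequality estimate and thereby produce a single extension point $p$ independent of $f$; this makes your version more self-contained but not conceptually different.
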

We recall that a complete Reinhardt domain with center $0$ is a domain which can be written as a union of polydiscs
centered at $0$, and a domain $V\subset\C^n$ is logarithmically convex if its logarithmic image $\{(\ln|z_1|,\dots,\ln|z_n|)\colon z\in V,z_1\dots z_n\neq 0\}$ is convex.
One may verify that a function holomorphic in a complete Reinhardt domain with center $0$ has a power series representation about $0$ in the entire domain, and that the domain of convergence of any power series
about $0$ is logarithmically convex (see \cite[subsection 7]{itca}).
\begin{proof}[Proof of Proposition~\ref{reinloghol}]
Let $f\in\hol(U)$. We know that there is a power series representation for $f$ about $0$ in $U$, and that the domain of convergence $V$ of this series is a logarithmically convex
domain containing $U$. Since $U$ is not logarithmically convex we have that $U\neq V$, and $f$ extends holomorphically to each point of $V\setminus U$.
\end{proof}
\begin{example}
With $e$ denoting Euler's number, 
let $U:=\pdisc(0,(e,e^2))\cup \pdisc(0,(e^2,e))\subset\C^2$ (where $\pdisc(z,r)$ denotes the polydisc of (vector) radius $r$ about $z\in\C^n$), so clearly $U$ is a complete Reinhardt domain with center $0$. 
The logarithmic image of $U$ is
$$
 \{(\ln|z_1|,\ln|z_2|)\colon (z_1,z_2)\in U,\,z_1z_2\neq 0\}= 
	[(-\infty,1)\times (-\infty,2)]\cup [(-\infty,2)\times (-\infty,1)],$$
which is not convex. Thus $U$ is not logarithmically convex. It follows from Proposition~\ref{reinloghol} that $U$ is not a domain of holomorphy.
\end{example}
We have defined domains of holomorphy and given some examples and a non-example, but currently we have no way to describe domains of holomorphy in simple geometric terms, and we have
yet to prove the important Theorem~\ref{usefuldh}.
\subsection{Holomorphic convexity}
We showed above that convex domains of $\C^n$ are domains of holomorphy, but clearly the converse is not true (take any non-convex domain of $\C$, for instance). Thus if we wish 
to describe domains of holomorphy with some notion of ``convexity'' we must use a more general definition. 
With Corollary~\ref{congen} in mind we introduce the following definition, where we simply replace the affine functions with holomorphic functions:
\begin{definition}
Let $U\subset\C^n$ be a domain and $K\subset U$ compact. The \textbf{holomorphically convex hull} of $K$, denoted $\hat K$, is
$$ \hat K:=\{z\in U\colon |f(z)|\leq \|f\|_K \mbox{ for all } f\in H(U)\}=\bigcap_{f\in H(U)} |f|^{-1}([0,\|f\|_K]). $$
The domain $U$ is said to be \textbf{holomorphically convex} if for all compact $K\subset U$ the holomorphically convex hull $\hat K$ is compact. 
\index{holomorphically convex hull}
\index{holomorphically convex domain}
\end{definition}
Note that the holomorphically convex hull of a particular compact set depends on the domain of which the compact set is a subset, so if this is not clear from the context
we may write $\hat K_{\hol(U)}$ to explicitly denote the holomorphically convex hull of $K$ as a subset of $U$. 
We now briefly note some properties of holomorphically convex hulls. Obviously the holomorphically convex hull of a compact set always contains the original compact set.
Notice also that since the identity map $f(z)=z$ is holomorphic, if $K$ is some compact subset of a domain and $\hat K$ is the holomorphically convex
hull then for any $z\in\hat K$ we have $|z|=|f(z)|\leq \|f\|_K$, so holomorphically convex hulls are always bounded. Furthermore, since holomorphically convex
hulls are closed in the subspace topology (this is obvious from the definition) we see that 
a holomorphically convex hull is compact if and only if it is a positive distance from the boundary of the domain, and this is true if and only if it is contained inside a compact subset of the domain. 
It is also evident from the definition 
that for any function $f\in\hol(U)$ and compact $K\subset U$
we have $\|f\|_K=\|f\|_{\hat K}$.
This immediately implies that for a compact $K\subset U$ we have $\hat{\hat K}=\hat K$ (provided $\hat K$ is compact).

We have the following relationship between domains of holomorphy and holomorphically convex domains:
\begin{theorem}
\label{hcdh}
If $U\subset\C^n$ is holomorphically convex then $U$ is a domain of holomorphy.
\end{theorem}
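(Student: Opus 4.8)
The plan is to exhibit a single function $f\in\hol(U)$, not identically zero, whose zeros cluster densely enough at \emph{every} point of $\bd U$ that $f$ can agree with no holomorphic function on any connected open set reaching $\bd U$; by the definition of a domain of holomorphy this finishes the proof. First I would use holomorphic convexity to fix an exhaustion $U=\bigcup_{m\geq 1}K_m$ by compact sets with $K_m\subset\inter{K_{m+1}}$ and $\hat K_m=K_m$ for every $m$: starting from the compacts $\{z\in U:|z|\leq m,\ \emetric(z,\bd U)\geq 1/m\}$, replace each by its holomorphically convex hull (compact by hypothesis), use $\hat{\hat K}=\hat K$, and pass to a subsequence to get the nesting. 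Next I would choose $\{z_\nu\}_{\nu\geq 1}\subset U$ with no accumulation point in $U$ and with the property that, for a fixed countable base of balls in $\C^n$, whenever such a ball $B$ meets $\bd U$ and $W$ is a connected component of $U\cap B$, infinitely many $z_\nu$ lie in $W$. This is possible because open subsets of $\C^n$ (being open in $\R^{2n}$) have only countably many components, so there are only countably many such pairs $(B,W)$; each such $W$ has $\bd W\cap B\subset\bd U$ and is therefore not relatively compact in $U$, so $W\not\subset K_m$ for every $m$, and one can pick the $z_\nu$ running over all pairs infinitely often while enforcing $z_\nu\notin K_\nu$, which prevents accumulation inside $U$ (at most $z_1,\dots,z_{m-1}$ lie in $K_m$).

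Since $z_\nu\notin K_\nu=\hat K_\nu$, there is $g_\nu\in\hol(U)$ with $|g_\nu(z_\nu)|>\|g_\nu\|_{K_\nu}$; dividing by $g_\nu(z_\nu)$ and raising to a high power yields $f_\nu\in\hol(U)$ with $f_\nu(z_\nu)=1$ and $\|f_\nu\|_{K_\nu}\leq 2^{-\nu}$. I would then set $f:=\prod_{\nu=1}^\infty(1-f_\nu)$. Since $K_m\subset K_\nu$ for $\nu\geq m$, the tail of $\sum_\nu\|f_\nu\|_{K_m}$ is dominated by $\sum 2^{-\nu}$, so the product converges uniformly on compact subsets of $U$ to $f\in\hol(U)$, with $f(z_\nu)=0$ for all $\nu$. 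It is not identically zero: on $\inter{K_m}$ the tail $\prod_{\nu\geq m}(1-f_\nu)$ is a non-vanishing holomorphic function (each $|f_\nu|\leq 2^{-\nu}<1$ there), so if $f\equiv 0$ on $U$ then the finite product $\prod_{\nu<m}(1-f_\nu)$ would vanish on a component of the nonempty open set $\inter{K_m}$ (nonempty for large $m$), forcing $f_\nu\equiv 1$ on $U$ for some $\nu$ by the uniqueness theorem, contradicting $\|f_\nu\|_{K_\nu}<1$.

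It remains to rule out holomorphic extension. Suppose $f$ extends holomorphically to $a\in U^c$: there is a connected neighbourhood $V$ of $a$ and $h\in\hol(V)$ with $f\equiv h$ on a nonempty open subset of $U\cap V$, hence, by the uniqueness theorem, on the component $W'$ of $U\cap V$ containing it. As $a\in V\setminus U$ we have $W'\neq V$, so $\bd W'\cap V$ is nonempty and, because $\bd W'\subset\bd U\cup\bd V$, contained in $\bd U$. Fix $b\in\bd W'\cap V$, a ball $\ball(b,r)$ with $\clos{\ball(b,r)}\subset V$, and a component $W$ of $U\cap\ball(b,r)$ meeting $W'$; then $W\subset W'$ and $f\equiv h$ on $W$. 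Since $\ball(b,r)$ meets $\bd U$ and $\bd W\cap\ball(b,r)$ is a nonempty subset of $\bd U$, the density property of $\{z_\nu\}$ (applied to a base ball inside $\ball(b,r)$ centred near a point of $\bd W\cap\bd U$ and to a sub-component of $W$) provides infinitely many $z_\nu\in W$. These lie in the compact set $\clos{\ball(b,r)}\subset V$ and cannot accumulate in $U$, so they accumulate at some $b^*\in\bd U\cap V$; but $h(z_\nu)=f(z_\nu)=0$ for these $\nu$, so by the uniqueness theorem $h\equiv 0$ on $V$, hence $f\equiv 0$ on $W$, hence $f\equiv 0$ on $U$ --- contradicting the previous paragraph. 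Thus $f$ extends holomorphically to no point of $U^c$, and $U$ is a domain of holomorphy.

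I expect the main obstacle to be the point-set bookkeeping in the last two steps: selecting $\{z_\nu\}$ so that it genuinely fills every boundary component of every small ball while still having no accumulation point inside $U$, and then, in view of the paper's deliberately weak notion of holomorphic extension (agreement on merely one component of $U\cap V$), localising a hypothetical extension around an honest boundary point at which a subsequence of the $z_\nu$ lying in a single component accumulates. Verifying $f\not\equiv 0$ via the non-vanishing tail product is the other step that needs genuine care rather than a routine estimate.
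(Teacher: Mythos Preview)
Your construction is careful and the exhaustion, the choice of the sequence $\{z_\nu\}$, the infinite product, and the argument that $f\not\equiv 0$ are all sound. The gap is in the very last step. When the zeros $z_\nu$ of $h$ accumulate at $b^*\in V$, you invoke ``the uniqueness theorem'' to conclude $h\equiv 0$ on $V$. That inference is valid only in one complex variable. For $n\geq 2$ the zero set of a non-zero holomorphic function is an analytic set of complex codimension~$1$, so it is perfectly possible for a non-vanishing $h\in\hol(V)$ to have a sequence of zeros accumulating at an interior point of $V$ (think of $h(z_1,z_2)=z_1$). Your infinite product guarantees only that $f$ vanishes at each $z_\nu$; nothing prevents all these zeros from lying on a single thin analytic hypersurface of $h$, in which case no contradiction arises.

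The standard repair is to force $f$ to vanish to increasingly high order at the $z_\nu$, e.g.\ by replacing $(1-f_\nu)$ with $(1-f_\nu)^{\nu}$ (and adjusting the estimate $\|f_\nu\|_{K_\nu}$ accordingly so the product still converges). Then along the subsequence converging to $b^*$ every partial derivative of $h$ of fixed order eventually vanishes at the $z_\nu$, hence at $b^*$ by continuity, and you get $h\equiv 0$ legitimately. By contrast, the paper sidesteps the issue entirely: it builds an \emph{unbounded} function via a series $\sum k^{-2}f_k^{p_k}$ with $|f(z_j)|\to\infty$, so a hypothetical holomorphic extension $g$ would have to be unbounded on the compact set $\clos{W}\subset V$, an immediate contradiction that needs no identity theorem. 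That is the essential difference between the two strategies, and it is exactly where your version breaks.
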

To prove this we will require an intermediate result:
\begin{lemma}
Let $U\subset\C^n$ be holomorphically convex. Then there is a sequence $\{K_j\}_{j\geq 1}$ of compact subsets of $U$ such that $K_j\subset\inter K_{j+1}$ and
$K_j=\hat K_j$ for all $j\geq 1$, and $U=\bigcup_{j\geq 1} \inter K_j$.\end{lemma}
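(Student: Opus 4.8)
The plan is to build the exhaustion $\{K_j\}_{j\geq 1}$ by starting from a naive exhaustion of $U$ by compact sets and then replacing each member by its holomorphically convex hull, checking at each stage that the hull is still compactly contained in $U$ and that it sits inside the interior of the next member. First I would fix a continuous exhaustion function for $U$ (Example~\ref{exex} guarantees one exists, namely $z\mapsto |z|^2-\ln\emetric(z,\bd U)$ when $\bd U\neq\emptyset$ and $z\mapsto|z|^2$ otherwise), and set $L_m:=f^{-1}((-\infty,m])$ for $m\in\N$. These are compact, $L_m\subset\inter L_{m+1}$ (since $f$ is continuous and $f<m+1$ on $L_m$), and $U=\bigcup_m \inter L_m$; this is the scaffolding.

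Next I would pass to holomorphic hulls. Since $U$ is holomorphically convex, each $\hat L_m$ is compact, hence a positive distance from $\bd U$ and contained in some compact subset of $U$. I would then define the sequence $\{K_j\}$ recursively: having chosen $K_j$ compact with $K_j=\hat K_j$ and $K_j\subset\inter L_{m_j}$ for some index $m_j$, pick $m_{j+1}>m_j$ large enough that $L_{m_{j+1}}$ contains $\hat L_{m_j}$ in its interior — this is possible because $\hat L_{m_j}$ is compact, hence contained in $\inter L_{m}$ for some $m$ by the exhaustion property — and set $K_{j+1}:=\hat L_{m_{j+1}}$. Then $K_j\subset\inter L_{m_j}\subset L_{m_j}\subset \hat L_{m_j}\subset \inter L_{m_{j+1}}\subset \hat L_{m_{j+1}}=K_{j+1}$, so $K_j\subset\inter K_{j+1}$; moreover $K_{j+1}=\hat L_{m_{j+1}}=\hat{\hat L}_{m_{j+1}}=\hat{K}_{j+1}$ by the idempotency of the hull noted before the theorem. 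Finally $\bigcup_j \inter K_j\supset\bigcup_j L_{m_j}=U$ since the $m_j$ are strictly increasing and $U=\bigcup_m L_m$, while the reverse inclusion is trivial; this gives $U=\bigcup_j\inter K_j$.

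The main obstacle — really the only nontrivial point — is ensuring that replacing $L_m$ by $\hat L_m$ does not push us "too far" toward (or past) the boundary, i.e.\ that $\hat L_m$ is genuinely compact and compactly contained in $U$. This is exactly where holomorphic convexity of $U$ is used, together with the remark (stated just before the lemma) that a holomorphically convex hull is compact iff it lies inside a compact subset of $U$; with that in hand the recursion above closes up. Care is also needed to start the recursion (choose $K_1:=\hat L_1$, with $m_1=1$), and to note that if $U=\C^n$, so $\bd U=\emptyset$, the distance-to-boundary term in the exhaustion function is simply dropped and the argument is unchanged.
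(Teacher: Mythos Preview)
Your approach is essentially the paper's---start from a naive compact exhaustion $\{L_m\}$, pass to hulls, and select a subsequence so that each hull sits in the interior of the next---but there is a bookkeeping slip in the recursion. You declare the invariant to be $K_j\subset\inter L_{m_j}$ and then set $K_{j+1}:=\hat L_{m_{j+1}}$; to propagate the invariant you would need $\hat L_{m_{j+1}}\subset\inter L_{m_{j+1}}$, which is generally false (and already the base case $K_1=\hat L_1$, $m_1=1$ violates it, since $L_1\subset\hat L_1=K_1$). The fix is to drop that invariant and simply record $K_j=\hat L_{m_j}$: choose $m_{j+1}>m_j$ with $\hat L_{m_j}\subset\inter L_{m_{j+1}}$, so that $K_j=\hat L_{m_j}\subset\inter L_{m_{j+1}}\subset\inter\hat L_{m_{j+1}}=\inter K_{j+1}$. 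This is precisely the paper's argument (the paper takes $L_j:=\{z\in U:\emetric(z,\bd U)\geq 1/j,\ |z|\leq j\}$ rather than sublevel sets of an exhaustion function, but that choice is immaterial).
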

\begin{proof}
For $j\geq 1$ define the compact set $L_j:=\{z\in U\colon \emetric(z,\bd U)\geq 1/j \text{ and } |z|\leq j\}$, so certainly $L_j\subset\inter L_{j+1}$ for all $j\geq 1$ and $U=\bigcup_{j\geq 1}\inter L_j$.
Let $K_1:=\hat L_1$, and observe that $K_1$ is a compact subset of $U$ with $K_1=\hat K_1$. Let $j_2>1$ be sufficiently large that $K_1\subset\inter L_{j_2}$ (this is possible
because $U=\bigcup_{j\geq 1}\inter L_j$), and let $K_2:=\hat L_{j_2}$, so $K_2$ is a compact subset of $U$ with $K_2=\hat K_2$, and $K_1\subset\inter L_{j_2}\subset \inter K_2$. Since
$j_2\geq 2$ we also have $\inter L_2\subset \inter K_2$.
Repeating this argument we obtain a sequence $\{K_j\}_{j\geq 1}$ of compact subsets with $K_j\subset\inter K_{j+1}$, $K_j=\hat K_j$ and $\inter L_j\subset \inter K_j$ for each $j\geq 1$. 
From the last property and the fact that $U=\bigcup_{j\geq 1}\inter L_j$ it follows that $U=\bigcup_{j\geq 1}\inter K_j$, so $\{K_j\}_{j\geq 1}$ is the required sequence.
\end{proof}
\begin{proof}[Proof of Theorem~\ref{hcdh}]
Note that if $U=\emptyset$ or $U=\C^n$ the assertion is trivial, so assume this is not the case.

In view of Proposition~\ref{unbdedhol}, to prove $U$ is a domain of holomorphy it is enough to find a function $f\in\hol(U)$ which is essentially unbounded on $\bd U$.
Let $\mathcal A:=\{a_k\}_{k\geq 1}\subset U$ be the countable set consisting of all points in $U$ with rational coordinates (that is, the real and imaginary parts of every component of every $a_k$ are rational),
and for each $k\geq 1$ let $B_k:=\ball(a_k,\emetric(a_k,\bd U))$ (note that $B_k\subset U$).
Now let $\{Q_j\}_{j\geq 1}$ be a sequence of elements of $\{B_k\}_{k\geq 1}$ such that every $B_k$ is given by $Q_j$ for
infinitely many indices $j$ (for example, let $Q_1:=B_1$, $Q_2:=B_1$, $Q_3:=B_2$, $Q_4:=B_1$, $Q_5:=B_2$, $Q_6:=B_3$, and so on). We will find a function $f\in\hol(U)$ and a sequence
$\{z_j\}_{j\geq 1}$ with $z_j\in Q_j$ for each $j\geq 1$ such that $f(z_j)\to\infty$ as $j\to\infty$. Suppose, for a moment, that we have found such a function and sequence.
Let $V$ be a domain intersecting $\bd U$ and suppose $W$ is a component of $U\cap V$. Let $a\in\bd W\cap V$ and note that we also have $a\in \bd U$. Let $r:=\emetric(a,\bd V)/2$, so
because
$\mathcal A$ is dense in $U$ we have $a_k\in W\cap\ball(a,r)$ for some $k\geq 1$. Clearly $\emetric(a_k,\bd U)<r<\emetric(a_k,\bd V)$, meaning
$B_k\subset U\cap V$ and thus $B_k\subset W$ (because $B_k$ is connected). We have $B_k=Q_{j_l}$ for infinitely many indices $j_1<j_2<\dots$, so $\{z_{j_l}\}_{l\geq 1}\subset B_k\subset W$
and $f(z_{j_l})\to\infty$. This argument applies for each component $W$ of each domain $V$ intersecting $\bd U$, so $f$ is essentially unbounded on $\bd U$.

It remains to find the function $f\in\hol(U)$ and sequence $\{z_j\}_{j\geq 1}$. Let $\{K_j\}_{j\geq 1}$ be the sequence of compact subsets of $U$ whose existence is asserted by the lemma. Passing
to a subsequence of $\{K_j\}_{j\geq 1}$ if necessary we may assume $Q_j\cap (K_{j+1}\setminus K_j)\neq\emptyset$ for all $j\geq 1$.
 Thus for all $j\geq 1$ there exists $z_j\in Q_j\cap (K_{j+1}\setminus K_j)$, and since
$z_j\not\in K_j=\hat K_j$ there exists $f_j\in\hol(U)$ such that $|f_j(z_j)|>\|f_j\|_{K_j}$, and scaling $f_j$ if necessary we may assume $|f_j(z_j)|>1\geq \|f_j\|_{K_j}$.

Let $p_1:=1$, and inductively choose $p_j\in\N$ sufficiently large that for all $j\geq 1$ we have
\begin{equation} \frac{1}{j^2}|f_j(z_j)|^{p_j}- \sum_{k=1}^{j-1} \frac{1}{k^2}|f_k(z_j)|^{p_k}\geq j \label{holmiraest}\end{equation}
(this is possible because $|f_j(z_j)|>1$). Now set, for all $z\in U$, 
$$ f(z):=\sum_{k=1}^\infty \frac{1}{k^2}f_k(z)^{p_k}. $$
Let $j\geq 1$, so if $z\in K_j$ then $z\in K_k$ for all $k\geq j$ and in particular $|f_k(z)|\leq 1$ for all $k\geq j$. By the Weierstrass $M$-test the series for $f$ 
converges uniformly on $K_j$,
so $f$ is holomorphic on $\inter K_j$. But $U=\bigcup_{j\geq 1}\inter K_j$, so $f$ is holomorphic on $U$.
For any $j\geq 1$ we have
$$ |f(z_j)| \geq \frac{1}{j^2}|f_j(z_j)|^{p_j}-\sum_{k=1}^{j-1} \frac{1}{k^2}|f_k(z_j)|^{p_k} -\sum_{k=j+1}^\infty \frac{1}{k^2}|f_k(z_j)|^{p_k}\geq j-\sum_{k=j+1}^\infty \frac{1}{k^2}\geq j-\frac{\pi^2}{6},$$
where for the second inequality we have used \eqref{holmiraest} and the fact that when $k>j$ we have $z_j\in K_k$ and thus $|f_k(z_j)|\leq 1$. Therefore $f(z_j)\to\infty$ as $j\to\infty$. From the earlier argument it follows that $f$ is essentially unbounded on $\bd U$ and thus $U$ is a domain of holomorphy.
\end{proof}
As a consequence of this we may prove Theorem~\ref{usefuldh}:
\begin{proof}[Proof of Theorem~\ref{usefuldh}]
To show $U$ is a domain of holomorphy it suffices, by Theorem~\ref{hcdh}, to show it is holomorphically convex. Let $K\subset U$ be compact, so we must show $\hat K$ is compact, and it suffices to show $\emetric(\hat K,\bd U)>0$.
Suppose this is not the case, so there is a sequence $\{z_j\}_{j\geq 1}\subset \hat K$ with $\emetric(z_j,\bd U)\to 0$. Since $\hat K$ is bounded its closure is compact,
so passing to a subsequence if necessary we may assume the sequence converges to a point $a$ in this closure, and because $\emetric(z_j,\bd U)\to 0$ we have $a\in\bd U$. By hypothesis
there is a function $f_a\in\hol(U)$ such that $f_a(z_j)\to\infty$. But $\|f_a\|_{\hat K}=\|f_a\|_K<\infty$ and $\{z_j\}_{j\geq 1}\subset \hat K$, yielding a contradiction.
Therefore $\emetric(\hat K,\bd U)>0$, so $\hat K$ is compact, meaning $U$ is holomorphically convex and is hence a domain of holomorphy.
\end{proof}

We will find that the converse to Theorem~\ref{hcdh} holds. We will need the following result on holomorphic extension to neighbourhoods
of holomorphically convex hulls:
\begin{prop}
\label{simultext}
Let $U\subset\C^n$ be a domain, let $K\subset U$ be a compact subset, let $r:=\pmetric(K,\bd U)$ (where $\pmetric(\cdot,\cdot)$ is the $L^\infty$ metric) and let $\hat K$ be the holomorphically convex hull of $K$.
Then every $f\in\hol(U)$ extends holomorphically to each point of the $r$-dilation $\dil{\hat K}{r}=\bigcup_{z\in\hat K}\pdisc(z,r)$.
\end{prop}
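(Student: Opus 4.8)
The plan is, for each $w\in\hat K$, to build a holomorphic function on the polydisc $\pdisc(w,r)$ as the sum of the Taylor series of $f$ at $w$; the point is that the Taylor coefficient functions of $f$ are holomorphic on $U$, so their size at a point of $\hat K$ is controlled by their size on $K$. First I would record two elementary geometric facts. For $z\in K$ the polydisc $\pdisc(z,r)$ is connected, contains $z\in U$, and meets no point of $\bd U$ (since $\pmetric(\eta,z)<r\le\pmetric(z,\bd U)$ for $\eta\in\pdisc(z,r)$), hence lies in $U$; and by the same reasoning, for every $t$ with $0<t<r$ the set $K_t:=\{\eta\in\C^n\colon\pmetric(\eta,K)\le t\}$ is a compact subset of $U$.

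Now fix $f\in\hol(U)$. For a multi-index $\alpha$ let $a_\alpha:=\frac1{\alpha!}D^\alpha f$, which lies in $\hol(U)$ because derivatives of holomorphic functions are holomorphic. Fix $t\in(0,r)$ and set $M_t:=\|f\|_{K_t}<\infty$. For $z\in K$, applying the iterated Cauchy integral formula over the torus $\{\eta\colon|\eta_j-z_j|=t\text{ for all }j\}\subset K_t$ gives $|a_\alpha(z)|\le M_t/t^{|\alpha|}$, hence $\|a_\alpha\|_K\le M_t/t^{|\alpha|}$. Since $a_\alpha\in\hol(U)$ and $\hat K\subset U$, the definition of the holomorphically convex hull yields $|a_\alpha(w)|\le\|a_\alpha\|_{\hat K}\le\|a_\alpha\|_K\le M_t/t^{|\alpha|}$ for every $w\in\hat K$.

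Finally, fix $w\in\hat K$ and put $g_w(\eta):=\sum_\alpha a_\alpha(w)(\eta-w)^\alpha$. Whenever $\|\eta-w\|<t$ the estimate above gives $\sum_\alpha|a_\alpha(w)|\,|\eta-w|^\alpha\le M_t\prod_{j=1}^n\bigl(1-|\eta_j-w_j|/t\bigr)^{-1}<\infty$, and since $t\in(0,r)$ was arbitrary the series converges absolutely and locally uniformly on $\pdisc(w,r)$, defining $g_w\in\hol(\pdisc(w,r))$. Because $w\in\hat K\subset U$ and $g_w$ is by construction the Taylor expansion of $f$ about $w$, we have $g_w\equiv f$ on a small polydisc about $w$, which is a non-empty open subset of $U\cap\pdisc(w,r)$. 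Given a point $p\in\dil{\hat K}{r}$: if $p\in U$ there is nothing to prove, and otherwise $p\in U^c$ with $p\in\pdisc(w,r)$ for some $w\in\hat K$, so $g_w$ on the connected open neighbourhood $\pdisc(w,r)$ of $p$ witnesses that $f$ extends holomorphically to $p$.

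The single genuine idea is in the second step — observing that each Taylor coefficient $a_\alpha$ is holomorphic on $U$ and therefore that $\|a_\alpha\|_{\hat K}\le\|a_\alpha\|_K$; everything else is routine Cauchy estimates together with convergence of a geometric majorant. The only things to watch are the degenerate case $U=\C^n$ (then $r=+\infty$ and the assertion is vacuous) and the trivial reading of the phrase ``$f$ extends holomorphically to $p$'' when $p$ already lies in $U$.
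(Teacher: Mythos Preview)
Your proof is correct and follows essentially the same approach as the paper: both arguments observe that the Taylor coefficient functions $\frac{1}{\alpha!}D^\alpha f$ are holomorphic on $U$, bound them on $\hat K$ by their supremum on $K$ via the definition of the hull, and then use Cauchy estimates on the compact set $\clos{\dil{K}{t}}$ (for $t<r$) to force convergence of the Taylor series about any $w\in\hat K$ on all of $\pdisc(w,r)$. Your treatment is slightly more explicit about the geometric preliminaries and the convergence majorant, but the structure and the one key idea are identical.
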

\begin{proof}
Let $f\in\hol(U)$ and $a\in\dil{\hat K}{r}$, so $a\in\pdisc(b,r)$ for some $b\in\hat K\subset U$. In a neighbourhood $W$ of $b$ we have the power series
$$ f(z) = \sum_{|J|=0}^\infty c_J(z-b)^J,\qquad c_J:=\frac{1}{J!}\frac{\partial^{|J|}f}{\partial z^J}\bigg|_b. $$
All partial derivatives of $f$ are holomorphic on $U$, and $b\in\hat K$, so for $J\in\N_0^n$ (where $\N_0$ is the set of non-negative integers) we have
$$ |c_J|=\frac{1}{J!}\left|\frac{\partial^{|J|}f}{\partial z^J}\bigg|_{\phantom{p}\!\!\!b}\right|\leq \frac{1}{J!}\left\|\frac{\partial^{|J|}f}{\partial z^J}\right\|_K. $$
Let $r'<r$, so $S:=\clos{\dil{K}{r'}}$ is a compact subset of $U$. For any  $p\in K$ we have $\clos{\pdisc(p,r')}\subset S\subset U$ and thus $f\in\hol(\clos{\pdisc(p,r')})$, and 
if we set $S_{p}:=\{z\in U\colon |z_k-p_k|=r',1\leq k\leq n\}$ then, since $S_p\subset S$, we have $\|f\|_{S_p}\leq \|f\|_S$. Expanding $f$
in a power series about any $p\in K$ and applying the Cauchy estimate:
$$ \frac{1}{J!}\left|\frac{\partial^{|J|}f}{\partial z^J}\bigg|_p\right|\leq \frac{\|f\|_{S_p}}{r'^{|J|}}\leq \frac{\|f\|_S}{r'^{|J|}} \implies \frac{1}{J!}\left\|\frac{\partial^{|J|}f}{\partial z^J}\right\|_K\leq \frac{\|f\|_S}{r'^{|J|}}. $$
Therefore, for all $J\in\N_0^n$,
$$ |c_J|\leq \frac{\|f\|_S}{r'^{|J|}} \implies |c_Jr'^{|J|}|\leq \|f\|_S,$$
so the power series for $f$ about $b$ converges in $\pdisc(b,r')$ (the terms of the series are bounded at the point $b+(r',\dots,r')$). 
This is true for all $r'<r$, so the series converges in $\pdisc(b,r)$. Since
a convergent power series is a holomorphic function it follows that this series yields a function holomorphic on $\pdisc(b,r)\ni a$ which agrees with $f$ on $W$. That is,
$f$ extends holomorphically to $a$.
\end{proof}
This immediately implies the following:
\begin{corollary}
If $U\subset \C^n$ is a domain of holomorphy then $U$ is holomorphically convex.
\end{corollary}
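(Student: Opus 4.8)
The plan is to read the statement off Proposition~\ref{simultext} with essentially no extra work. Let $U\subset\C^n$ be a domain of holomorphy and let $K\subset U$ be compact (we may assume $K\neq\emptyset$, the empty case being immediate); we must show $\hat K$ is compact. Recall from the remarks following the definition of holomorphic convexity that $\hat K$ is always bounded and closed in the subspace topology of $U$, and that $\hat K$ is compact if and only if $\emetric(\hat K,\bd U)>0$. So it is enough to check that $\hat K$ stays a positive distance from $\bd U$.

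If $\bd U=\emptyset$ then $U=\C^n$ and $\hat K$, being bounded and closed in $\C^n$, is compact; so assume $\bd U\neq\emptyset$ and put $r:=\pmetric(K,\bd U)>0$. Since $U$ is a domain of holomorphy there is a function $f\in\hol(U)$ which does not extend holomorphically to any point of $U^c$. On the other hand Proposition~\ref{simultext} asserts that $f$ extends holomorphically to every point of the $r$-dilation $\dil{\hat K}{r}=\bigcup_{z\in\hat K}\pdisc(z,r)$. If some $a\in\dil{\hat K}{r}$ were in $U^c$, then $f$ would extend holomorphically to $a$, contradicting the choice of $f$; hence $\dil{\hat K}{r}\subset U$.

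Finally, $\dil{\hat K}{r}\subset U$ forces $\pmetric(\hat K,\bd U)\geq r$: for each $z\in\hat K$ the polydisc $\pdisc(z,r)$ lies in $U$, so no point of $\bd U$ is within $L^\infty$-distance $r$ of $z$. Since the Euclidean and $L^\infty$ metrics on $\C^n$ are equivalent, this gives $\emetric(\hat K,\bd U)>0$, so $\hat K$ is compact and $U$ is holomorphically convex. I expect no genuine obstacle here: all of the analytic substance --- the Cauchy-estimate bound on the Taylor coefficients of $f$ at points of $\hat K$ --- is already contained in Proposition~\ref{simultext}, and the corollary is just the observation that a domain of holomorphy cannot have a holomorphic function extending past its boundary. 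The only points requiring a word of care are the degenerate case $\bd U=\emptyset$ and the harmless switch between the $L^\infty$ and Euclidean distances to the boundary.
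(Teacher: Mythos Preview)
Your proof is correct and follows exactly the approach the paper intends: the corollary is stated as an immediate consequence of Proposition~\ref{simultext}, and your argument spells out precisely this implication (the same reasoning reappears in the proof of Lemma~\ref{hulldist}). The handling of the degenerate case $\bd U=\emptyset$ and the passage between the $L^\infty$ and Euclidean metrics are both fine.
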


In particular, the domains of holomorphy and the holomorphically convex domains are identical -- this result is attributed to Cartan and Thullen \cite{cartanthullen}, and is usually known as the Cartan-Thullen
theorem. With this fact in mind we might hope that the domains of holomorphy obey some of the closure properties
of convex domains.
\begin{proposition}
\label{intscthol}
Let $\{U_\lambda\}_{\lambda\in\Lambda}$, where $\Lambda$ is some indexing set, be a set of domains of holomorphy in $\C^n$, and let $U$ be a connected component of the interior of 
$\bigcap_{\lambda\in\Lambda} U_\lambda$. Then $U$ is a domain of holomorphy.
\end{proposition}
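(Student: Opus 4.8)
The plan is to verify that $U$ is holomorphically convex and then invoke Theorem~\ref{hcdh}; trying instead to produce, for each boundary point of $U$, a holomorphic function blowing up there (so as to apply Theorem~\ref{usefuldh}) seems awkward, because a domain of holomorphy need not carry such functions. If $U=\emptyset$ or $U=\C^n$ the claim is trivial, so assume $U$ is a proper subdomain of $\C^n$, and fix a compact $K\subset U$ (if $K=\emptyset$ then $\hat K_{\hol(U)}=\emptyset$ is compact). Put $r:=\pmetric(K,\bd U)>0$. Since $U\subset U_\lambda$ for every $\lambda\in\Lambda$, we have $\bd U_\lambda\subset U_\lambda^c\subset U^c$, and hence $\pmetric(K,\bd U_\lambda)\geq\pmetric(K,U^c)=r$ for every $\lambda$; write $r_\lambda:=\pmetric(K,\bd U_\lambda)\geq r$. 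This uniform lower bound $r$ is what drives the argument.

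First I would observe that $\hat K_{\hol(U)}\subset\hat K_{\hol(U_\lambda)}$ for each $\lambda$: any $f\in\hol(U_\lambda)$ restricts to a function in $\hol(U)$ with the same supremum over $K$, so a point of $\hat K_{\hol(U)}$ automatically satisfies the defining inequalities for $\hat K_{\hol(U_\lambda)}$. Next, fixing $\lambda$, I would apply Proposition~\ref{simultext} to the domain $U_\lambda$ and the compact set $K$: every $f\in\hol(U_\lambda)$ extends holomorphically to each point of $\dil{(\hat K_{\hol(U_\lambda)})}{r_\lambda}$. Because $U_\lambda$ is a domain of holomorphy it carries a function that does not extend holomorphically to any point of $U_\lambda^c$; therefore $\dil{(\hat K_{\hol(U_\lambda)})}{r_\lambda}$ cannot meet $U_\lambda^c$, i.e.\ $\dil{(\hat K_{\hol(U_\lambda)})}{r_\lambda}\subset U_\lambda$. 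Since $r\leq r_\lambda$, this yields, for every $p\in\hat K_{\hol(U)}$ (which lies in $\hat K_{\hol(U_\lambda)}$ by the inclusion above), the containment $\pdisc(p,r)\subset U_\lambda$.

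Now let $p\in\hat K_{\hol(U)}$. As $\lambda$ was arbitrary, $\pdisc(p,r)\subset\bigcap_{\lambda\in\Lambda}U_\lambda$, and being open this polydisc lies in the interior of $\bigcap_{\lambda\in\Lambda}U_\lambda$; it is connected and contains $p\in U$, so it is contained in the component $U$. Hence $\pmetric(p,\bd U)\geq r$, and since $p$ was an arbitrary point of $\hat K_{\hol(U)}$ we conclude $\pmetric(\hat K_{\hol(U)},\bd U)\geq r>0$. Recalling that a holomorphically convex hull is compact as soon as it lies a positive distance from the boundary of its domain, $\hat K_{\hol(U)}$ is compact; as $K$ was arbitrary, $U$ is holomorphically convex, and Theorem~\ref{hcdh} finishes the proof.

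The step I expect to be the main obstacle is finding a single radius $r>0$ for which the $r$-polydisc about each point of $\hat K_{\hol(U)}$ stays in $U$: the natural choice $r=\pmetric(K,\bd U)$ is essentially forced, but verifying it requires showing such a polydisc stays inside $\bigcap_{\lambda\in\Lambda}U_\lambda$, i.e.\ inside every $U_\lambda$ at once, and a crude estimate only gives a bound depending on $\lambda$. The resolution is that Proposition~\ref{simultext}, together with the defining property of a domain of holomorphy, forces every $f\in\hol(U_\lambda)$ to extend across the $r_\lambda$-dilation of $\hat K_{\hol(U_\lambda)}$ and hence forces that dilation into $U_\lambda$, with $r_\lambda\geq r$ independent of $\lambda$. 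With that in hand, descending from the interior of the intersection to its component $U$ is a routine connectedness argument.
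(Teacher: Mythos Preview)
Your proof is correct and follows essentially the same route as the paper's: show $\hat K_{\hol(U)}\subset\hat K_{\hol(U_\lambda)}$, use the simultaneous-extension proposition (Proposition~\ref{simultext}) together with the definition of a domain of holomorphy to get $\dil{(\hat K_{\hol(U_\lambda)})}{r_\lambda}\subset U_\lambda$ with $r_\lambda\geq r:=\pmetric(K,\bd U)$, intersect over $\lambda$, and use connectedness to land in $U$. The only cosmetic difference is that the paper packages the step ``$\pmetric(\hat K_{\hol(U_\lambda)},\bd U_\lambda)=\pmetric(K,\bd U_\lambda)$'' as a separate lemma (Lemma~\ref{hulldist}) and cites it, whereas you inline that argument by invoking Proposition~\ref{simultext} directly; the content is identical.
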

To prove this we require a lemma:
\begin{lemma}
\label{hulldist}
A domain $U\subset\C^n$ is holomorphically convex if and only if for all compact $K\subset U$ the hull $\hat K$ satisfies $\pmetric(\hat K,\bd U)=\pmetric(K,\bd U)$.
\end{lemma}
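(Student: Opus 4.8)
The plan is to prove the two directions separately, with the reverse implication being essentially immediate and the forward implication carrying the real content. First observe that for any compact $K\subset U$ we always have $\hat K\supset K$, hence $\pmetric(\hat K,\bd U)\leq\pmetric(K,\bd U)$ trivially. So the distance equality in the statement is equivalent to the single inequality $\pmetric(\hat K,\bd U)\geq\pmetric(K,\bd U)$. For the reverse implication ("if the distance equality holds then $U$ is holomorphically convex"), I would argue: given compact $K\subset U$, the hull $\hat K$ is closed in $U$ and bounded (both noted in the text following the definition of holomorphic convexity), and the hypothesis gives $\pmetric(\hat K,\bd U)=\pmetric(K,\bd U)>0$; a bounded set lying a positive distance from $\bd U$ has compact closure contained in $U$, and since $\hat K$ is closed in $U$ it equals that closure, so $\hat K$ is compact. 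Hence $U$ is holomorphically convex.

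For the forward implication, assume $U$ is holomorphically convex and fix compact $K\subset U$; I must show $\pmetric(\hat K,\bd U)\geq\pmetric(K,\bd U)=:r$. This is exactly where Proposition~\ref{simultext} does the work. That proposition asserts that every $f\in\hol(U)$ extends holomorphically to each point of $\dil{\hat K}{r}=\bigcup_{z\in\hat K}\pdisc(z,r)$. The key point is that because $U$ is holomorphically convex, $\hat K$ is a compact subset of $U$, so $\hat K$ itself plays the role of a legitimate compact subset and $\dil{\hat K}{r}$ is an honest open neighbourhood of $\hat K$. Now suppose for contradiction that $\pmetric(\hat K,\bd U)<r$; then there is a point $w\in\dil{\hat K}{r}$ with $w\in U^c$, in fact $w\in\bd U$ after adjusting. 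But by Proposition~\ref{unbdedhol} (or directly from the Cartan--Thullen corollary already proved, stating domains of holomorphy are holomorphically convex, together with Theorem~\ref{hcdh}) $U$ is a domain of holomorphy, so it carries a function $f\in\hol(U)$ that does not extend holomorphically to any point of $U^c$ — contradicting the fact that $f$ extends to $w$. Therefore $\dil{\hat K}{r}\subset U$, which forces $\pmetric(\hat K,\bd U)\geq r$, completing the forward direction.

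I would then combine the two inequalities to conclude $\pmetric(\hat K,\bd U)=\pmetric(K,\bd U)$ whenever $U$ is holomorphically convex, and conversely the distance equality forces holomorphic convexity, which is the claimed biconditional. One small point of care: the statement of Proposition~\ref{simultext} is phrased for an arbitrary domain $U$ and an arbitrary compact $K\subset U$, so when I apply it I must make sure the set I feed in is genuinely compact — this is legitimate precisely under the holomorphic-convexity hypothesis, since then $\hat K$ is compact and $\widehat{\hat K}=\hat K$, so applying Proposition~\ref{simultext} with $\hat K$ in place of $K$ (its metric-distance-to-boundary being $\geq r$ once we know the result, but here we only need $\hat K$ compact and its own hull equal to itself) gives extension across $\dil{\hat K}{r}$. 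Alternatively, and perhaps more cleanly, I would apply Proposition~\ref{simultext} directly to $K$: it already yields extension of every $f\in\hol(U)$ to all of $\dil{\hat K}{r}$ with $r=\pmetric(K,\bd U)$, and then the domain-of-holomorphy property of $U$ (available since holomorphic convexity implies it by Theorem~\ref{hcdh}) forbids $\dil{\hat K}{r}$ from poking outside $U$.

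The main obstacle is conceptual rather than computational: recognising that Proposition~\ref{simultext} is exactly the right tool and that the chain "holomorphically convex $\Rightarrow$ domain of holomorphy $\Rightarrow$ no $f$ extends past the boundary" converts the extension statement into a containment $\dil{\hat K}{r}\subset U$. Once that is seen, the rest is routine point-set topology.
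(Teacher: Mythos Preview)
Your proposal is correct and matches the paper's proof essentially line for line: the reverse implication via $\pmetric(\hat K,\bd U)>0$ forcing compactness of $\hat K$, and the forward implication via Proposition~\ref{simultext} applied to $K$ together with Theorem~\ref{hcdh} to conclude $\dil{\hat K}{r}\subset U$. Your ``cleaner'' alternative of applying Proposition~\ref{simultext} directly to $K$ (rather than to $\hat K$) is exactly what the paper does, so you can drop the first variant.
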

\begin{proof}
If for every compact $K\subset U$ the hull $\hat K$ satisfies $\pmetric(\hat K,\bd U)=\pmetric(K,\bd U)$, then $\pmetric(\hat K,\bd U)>0$, so $\hat K$ is compact
and thus $U$ is holomorphically convex.

Conversely, suppose $U$ is holomorphically convex and let $K\subset U$ be compact, so $\hat K$ is compact. Clearly $K\subset\hat K$ so $\pmetric(\hat K,\bd U)\leq \pmetric(K,\bd U)$. 
Let $r:=\pmetric(K,\bd U)$. By Proposition~\ref{simultext} every function holomorphic on $U$ extends holomorphically
to each point of the $r$-dilation $\dil{\hat K}{r}$, and since $U$ is a domain of holomorphy it follows that $\dil{\hat K}{r}\subset U$. Thus $\pmetric(\hat K,\bd U)\geq r=\pmetric(K,\bd U)$,
so $\pmetric(\hat K,\bd U)=\pmetric(K,\bd U)$ as required.
\end{proof}
\begin{proof}[Proof of Proposition~\ref{intscthol}]
Let $K\subset U$ be compact. It suffices to show the holomorphically convex hull $\hat K$ is compact. Let $z\in\hat K$ and $\lambda\in\Lambda$, so for any $g\in\hol(U_\lambda)$ we have $|g(z)|\leq \|g\|_K$ (since $g\big|_U\in\hol(U)$), which implies $z\in\hat K_\lambda:=\hat K_{\hol(U_\lambda)}$.
Therefore $\hat K\subset\hat K_\lambda$ for all $\lambda\in\Lambda$, so
$\pmetric(\hat K,\bd U_\lambda)\geq \pmetric(\hat K_\lambda,\bd U_\lambda)=\pmetric(K,\bd U_\lambda)\geq \pmetric(K,\bd U)$ (using the lemma for the equality). Therefore
with $r:=\pmetric(K,\bd U)>0$ we have $\dil{\hat K}{r}\subset U_\lambda$ for all $\lambda\in\Lambda$, so $\dil{\hat K}{r}\subset \bigcap_{\lambda\in\Lambda} U_\lambda$.
Since $\dil{\hat K}{r}$ is open it is actually contained in the interior of this intersection, and since $\hat K$ is in the single component $U$ of this interior
we must have $\dil{\hat K}{r}\subset U$. That is, $\pmetric(\hat K,\bd U)\geq r>0$, so $\hat K$ is compact and thus $U$ is holomorphically convex.
\end{proof}
\begin{proposition}
\label{prodhol}
Let $U_1\subset\C^n$ and $U_2\subset\C^m$ be domains of holomorphy. Then the Cartesian product $U:=U_1\times U_2\subset\C^{n+m}$ is a domain of holomorphy.
\end{proposition}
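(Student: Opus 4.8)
The plan is to pass through the Cartan-Thullen theorem: by the corollary following Proposition~\ref{simultext}, the domains $U_1$ and $U_2$, being domains of holomorphy, are holomorphically convex, and by Theorem~\ref{hcdh} it will suffice to show that the product $U = U_1 \times U_2$ is holomorphically convex. So I would reduce the whole statement to proving that the product of two holomorphically convex domains is holomorphically convex (this is cleaner than trying to apply Theorem~\ref{usefuldh} directly, since a domain of holomorphy need not carry functions tending to infinity at each boundary point).

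To prove this, I would fix a compact $K \subset U$ and let $\pi_1 \colon \C^{n+m} \to \C^n$, $\pi_2 \colon \C^{n+m} \to \C^m$ be the coordinate projections. Setting $K_1 := \pi_1(K)$ and $K_2 := \pi_2(K)$, these are compact subsets of $U_1$ and $U_2$ respectively, and $K \subset K_1 \times K_2$. The central claim will be the inclusion of hulls
$$ \hat K_{\hol(U)} \subset \hat{(K_1)}_{\hol(U_1)} \times \hat{(K_2)}_{\hol(U_2)}, $$
which I would establish by lifting functions from the factors: given $f \in \hol(U_1)$, the composition $f \circ \pi_1$ is holomorphic on $U$ and satisfies $\|f\circ\pi_1\|_K = \|f\|_{K_1}$, so if $(z,w) \in \hat K_{\hol(U)}$ with $z\in\C^n$ and $w\in\C^m$ then $|f(z)| = |(f\circ\pi_1)(z,w)| \le \|f\circ\pi_1\|_K = \|f\|_{K_1}$; letting $f$ range over $\hol(U_1)$ gives $z \in \hat{(K_1)}_{\hol(U_1)}$, and the argument for the second coordinate is symmetric.

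To finish, I would invoke that $U_1$ and $U_2$ are holomorphically convex, so $\hat{(K_1)}_{\hol(U_1)}$ and $\hat{(K_2)}_{\hol(U_2)}$ are compact, and hence $\hat{(K_1)}_{\hol(U_1)} \times \hat{(K_2)}_{\hol(U_2)}$ is a compact subset of $U_1 \times U_2 = U$ containing $\hat K_{\hol(U)}$. By the observation recorded just after the definition of holomorphic convexity — that a holomorphically convex hull contained in a compact subset of its domain is itself compact — it follows that $\hat K_{\hol(U)}$ is compact, so $U$ is holomorphically convex and therefore a domain of holomorphy. I do not anticipate a genuine obstacle here; the only points that need a little care are the verification of the displayed hull inclusion and the final compactness bookkeeping (a set closed in $U$ and sitting inside a compact subset of $U$ is compact), both of which are routine given the machinery already developed.
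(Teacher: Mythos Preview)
Your proposal is correct and follows exactly the same approach as the paper: reduce to holomorphic convexity via the Cartan--Thullen theorem, project $K$ to $K_1$ and $K_2$, and use the inclusion $\hat K_{\hol(U)}\subset \hat{(K_1)}_{\hol(U_1)}\times\hat{(K_2)}_{\hol(U_2)}$ together with compactness of the factor hulls. The paper merely asserts this inclusion as ``easily seen'' while you spell out the lifting argument, but otherwise the proofs are identical.
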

\begin{proof}
Let $K\subset U_1\times U_2$ be compact, so it suffices to show the holomorphically convex hull $\hat K$ is compact. Let $\pi_1\colon\C^{n+m}\to\C^n$ be defined
by $\pi_1(z_1,z_2)=z_1$ (where $z_1\in\C^n$, $z_2\in\C^m$) and set $K_1:=\pi_1(K)$, and define $\pi_2\colon\C^{n+m}\to\C^m$ and $K_2:=\pi_2(K)$ analogously. Then $K_j\subset U_j$
is compact for $j=1,2$, so the hulls $\hat K_j$ are compact since the domains $U_j$ are holomorphically convex. It is easily seen that $\hat K\subset \hat K_1\times\hat K_2$, and since
the latter is compact it follows that $\hat K$ is compact, as required.
\end{proof}
Note this implies polydiscs $\pdisc(z,r)\subset\C^n$ are domains of holomorphy.

As with convex domains, the union of even two domains of holomorphy need not be a domain of holomorphy -- recall that $\pdisc(0,(e,e^2))\cup \pdisc(0,(e^2,e))\subset\C^2$
is a not a domain of holomorphy, but the polydiscs $\pdisc(0,(e,e^2))$ and $\pdisc(0,(e^2,e))$ are domains of holomorphy.
However, we will find that the union of an increasing sequence of domains of holomorphy is a domain of holomorphy:
\begin{theorem}[Behnke-Stein theorem]
\index{Behnke-Stein theorem}
\label{behnke}
Let $\{U_j\}_{j\geq 1}$ be a sequence of domains of holomorphy such that $U_j\subset U_{j+1}$ for all $j\geq 1$. Then $U:=\bigcup_{j\geq 1} U_j$ is a domain of holomorphy.
\end{theorem}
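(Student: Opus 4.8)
I would show directly that $U:=\bigcup_{j\geq 1}U_j$ is a domain of holomorphy by exhibiting a function $f\in\hol(U)$ which is essentially unbounded on $\bd U$ and then invoking Proposition~\ref{unbdedhol}. The construction parallels the one in the proof of Theorem~\ref{hcdh}, the difference being that we have no exhaustion of $U$ by $\hol(U)$-convex hulls at our disposal; instead we exploit that each $U_j$, being a domain of holomorphy, is holomorphically convex, and therefore carries an exhaustion by its own holomorphically convex compacta (the Lemma preceding the proof of Theorem~\ref{hcdh}). We may assume $U\neq\C^n$.

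First I would assemble these into a \emph{coherent} exhaustion of $U$: after passing to a subsequence of $\{U_j\}$ and interleaving the hull-exhaustions of the individual domains, obtain compacta $E_1\subset\inter E_2\subset E_2\subset\inter E_3\subset\cdots$ with $\bigcup_j E_j=U$ and, for every $j$, $E_j\subset U_j$, $E_j$ equal to its $\hol(U_j)$-convex hull, and the $\hol(U_{j+1})$-convex hull of $E_j$ contained in $E_{j+1}$ (the last being possible because that hull is a compact subset of the holomorphically convex domain $U_{j+1}$). As in the proof of Theorem~\ref{hcdh} I would fix the boundary balls $B_k:=\ball(a_k,\emetric(a_k,\bd U))$, with $\{a_k\}$ enumerating the points of $U$ with rational coordinates, together with a sequence $\{Q_j\}$ drawn from the $B_k$ in which each $B_k$ occurs infinitely often. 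For each $k$, at infinitely many stages $j$ with $Q_j=B_k$ I would select a point $z_j\in Q_j$ lying in $E_{j+1}$ but outside the $\hol(U_{j+1})$-convex hull of $E_j$; then $z_j\in E_m$ for all $m>j$, and, $z_j$ lying outside a holomorphically convex hull, there is $g_j\in\hol(U_{j+1})$ with $\|g_j\|_{E_j}<1<|g_j(z_j)|$.

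The decisive step is to replace each $g_j$, which is defined only on $U_{j+1}$, by a genuinely global function $\tilde g_j\in\hol(U)$ that agrees with $g_j$ to within any prescribed tolerance on $E_j\cup\{z_j\}$; I would obtain $\tilde g_j$ by approximating rung by rung up the tower $U_{j+1}\subset U_{j+2}\subset\cdots$ with errors summing to an arbitrarily small total, the approximation at each rung resting on an Oka-Weil-type theorem applied to a suitable holomorphically convex compact set. Granting this, the proof ends exactly as in Theorem~\ref{hcdh}: set $f:=\sum_{k\geq 1}c_k\tilde g_k^{\,p_k}$ with the $c_k$ small enough that the series converges uniformly on each $E_m$ --- so that $f\in\hol(U)$, since $U=\bigcup_m\inter E_m$ --- and with exponents $p_k$ chosen inductively so large that $|f(z_j)|\to\infty$; the density of the $B_k$ then forces $f$ to be essentially unbounded on $\bd U$. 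The main obstacle is precisely this transport of finite-level functions up to $\hol(U)$: the pair $(U_{j+1},U)$ need not be a Runge pair, so one genuinely has to use the holomorphic convexity of the intermediate domains and approximate on carefully chosen compacta --- this is the real content of the Behnke-Stein theorem, and it is where most of the work lies. A secondary, purely technical, burden is the bookkeeping behind the exhaustion $\{E_j\}$: arranging simultaneously that it exhausts $U$, that the stated convexity relations hold, and that for each $k$ infinitely many of the balls $Q_j=B_k$ do protrude from the $\hol(U_{j+1})$-convex hull of $E_j$ into $E_{j+1}$ --- which forces one to take the hull-exhaustions of the $U_j$ reaching close enough to $\bd U_j$.
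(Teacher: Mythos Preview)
Your overall plan---build an essentially unbounded holomorphic function via a series as in Theorem~\ref{hcdh}, using Oka--Weil to push functions from $\hol(U_{j+1})$ up to $\hol(U)$---is sound in outline, and you have correctly located the crux in the rung-by-rung approximation. But the exhaustion $\{E_j\}$ as you have specified it does \emph{not} support that step. To pass from $g_j\in\hol(U_{j+1})$ to an approximant in $\hol(U_{j+2})$ via Oka--Weil on $U_{j+2}$, you need an $\hol(U_{j+2})$-convex compact set containing $E_j\cup\{z_j\}$ on which $g_j$ is still holomorphic, i.e.\ one lying inside $U_{j+1}$. Your hypotheses give only $\hat{(E_{j})}_{\hol(U_{j+1})}\subset E_{j+1}\subset U_{j+1}$; they say nothing about where $\hat{(E_{j+1})}_{\hol(U_{j+2})}$ (or $\hat{(E_j\cup\{z_j\})}_{\hol(U_{j+2})}$) lands relative to $U_{j+1}$, and in general it may protrude. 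The same mismatch recurs at every rung, so the ladder never gets off the ground. The repair is not mere bookkeeping: one must first pass to a subsequence $\{V_k\}$ of $\{U_j\}$ chosen so that the $\hol(V_{k+1})$-convex hull of $\overline{V_{k-1}}$ actually stays inside $V_k$. This is precisely what the paper arranges via the boundary-distance inequality~\eqref{bsineq}, Lemma~\ref{hulldist}, and Corollary~\ref{okaweilcor}, and it is indeed the heart of the proof---as you yourself concede.

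The paper's packaging also differs from yours and is more economical. Rather than rebuild the essentially-unbounded-function machine, it proves directly that $U$ is holomorphically convex (Lemma~\ref{bslemma}): given a compact $K\subset U$ and a point $a$ too close to $\bd U$, one finds a separating function $f\in\hol(U_k)$ for a suitable $k$ (via Lemma~\ref{hulldist}) and then pushes $f$ up to $\hol(U)$ using the Runge chain. This needs a single approximation per test point rather than an infinite series with tuned exponents, and it sidesteps entirely the protrusion bookkeeping with the balls $Q_j$ that you flag at the end. Both routes, however, stand or fall on the same ingredient: compacta $K_k$ with $K_k\subset V_k$ and $K_k=\hat{(K_k)}_{\hol(V_{k+1})}$, so that $K_k$ is Runge in $V_{k+1}$ and hence in $U$. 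Your proposal identifies this ingredient but does not supply it.
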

We delay the proof of this fact until Section~\ref{chap:levi}. 

The following is immediate from the equivalence of domains of holomorphy and holomorphically convex domains:
\begin{proposition}
\label{holinvbi}
Let $U\subset\C^n$ be a domain of holomorphy and $\phi\colon U\to\C^n$ a biholomorphic mapping. Then $\phi(U)$ is a domain of holomorphy. 
\end{proposition}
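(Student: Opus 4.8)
The plan is to reduce the statement about the biholomorphic image $\phi(U)$ to holomorphic convexity of $U$, which we already know is equivalent to $U$ being a domain of holomorphy by the Cartan-Thullen theorem (Theorem~\ref{hcdh} and its converse corollary). So it suffices to show that if $U$ is holomorphically convex and $\phi\colon U\to\C^n$ is biholomorphic, then $\phi(U)$ is holomorphically convex.

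First I would fix a compact set $L\subset\phi(U)$ and aim to show its holomorphically convex hull $\hat L_{\hol(\phi(U))}$ is compact. Let $K:=\phi^{-1}(L)$, which is a compact subset of $U$ since $\phi^{-1}\colon\phi(U)\to U$ is continuous. By hypothesis $\hat K:=\hat K_{\hol(U)}$ is compact. The key step is to prove $\phi(\hat K)\supset\hat L_{\hol(\phi(U))}$; since $\phi$ is continuous, $\phi(\hat K)$ is compact, and a closed subset (the hull is closed in the subspace topology) of a compact set is compact, which finishes the argument.

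To establish the inclusion, take $w\in\hat L_{\hol(\phi(U))}$, so $|g(w)|\leq\|g\|_L$ for every $g\in\hol(\phi(U))$. Write $w=\phi(z)$ for a unique $z\in U$ (here I use that $\phi$ is a bijection onto $\phi(U)$). Given any $f\in\hol(U)$, the composition $f\circ\phi^{-1}$ is holomorphic on $\phi(U)$, so applying the hull condition to $g:=f\circ\phi^{-1}$ gives $|f(z)|=|g(w)|\leq\|g\|_L=\|f\circ\phi^{-1}\|_L=\|f\|_{\phi^{-1}(L)}=\|f\|_K$. Since this holds for all $f\in\hol(U)$, we get $z\in\hat K$, hence $w=\phi(z)\in\phi(\hat K)$, as desired.

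The only mildly delicate point is purely topological rather than analytic: one must know $\phi(U)$ is indeed a domain (open and connected) so that ``domain of holomorphy'' even applies, but this is standard — a biholomorphism is a homeomorphism onto its image, and by the inverse function theorem (the Jacobian is invertible) $\phi$ is an open map, so $\phi(U)$ is open, and it is connected as the continuous image of a connected set. I do not expect any real obstacle here; the proof is essentially the observation that holomorphic convexity is transported by biholomorphisms via precomposition with $\phi^{-1}$, exactly as affine convexity would be transported by affine maps.
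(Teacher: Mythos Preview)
Your proof is correct and follows exactly the route the paper intends: the paper does not give a detailed argument but simply notes that the result ``is immediate from the equivalence of domains of holomorphy and holomorphically convex domains,'' and your precomposition-with-$\phi^{-1}$ argument is precisely how one unpacks that remark. There is nothing to add.
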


We conclude this section by introducing a particular type of open set of holomorphy which will be extremely useful later:
\begin{definition}
Let $U\subset\C^n$ be an open set and $f_1,\dots,f_m\in\hol(U)$. If $V:=\{z\in U\colon |f_j(z)|<1,1\leq j\leq m\}$ satisfies $\clos{V}\subset U$ then
$V$ is an \textbf{analytic polyhedron}. \index{analytic polyhedron}
If $K:=\{z\in U\colon |f_j(z)|\leq 1,\,1\leq j\leq m\}$ is compact then $K$ is a \textbf{compact analytic polyhedron}. In either case the set $\{f_1,\dots,f_m\}$
is a \textbf{frame} for the (compact) analytic polyhedron.\index{frame}
\end{definition}
Note that (compact) analytic polyhedra need not be connected.
\begin{proposition}
\label{apolhol}
If $V\subset\C^n$ is an analytic polyhedron then $V$ is an open set of holomorphy.
\end{proposition}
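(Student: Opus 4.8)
The plan is to reduce the statement to Corollary~\ref{mira} (equivalently Theorem~\ref{usefuldh}) by exhibiting, for each boundary point of each component of $V$, a holomorphic function built from the frame that vanishes there without zeros. Write $V=\{z\in U\colon |f_j(z)|<1,\ 1\le j\le m\}$ with $\clos{V}\subset U$ and $f_1,\dots,f_m\in\hol(U)$. Since $\C^n$ is locally connected, every connected component $W$ of $V$ is open, hence a domain, so it suffices to show that each such $W$ is a domain of holomorphy.

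First I would record the elementary topological fact that $\bd W\subset\bd V$: a point of $\bd W$ cannot lie in $V$, since a connected neighbourhood of it inside the open set $V$ would be absorbed into the maximal connected set $W$. Hence any $a\in\bd W$ satisfies $a\in\bd V\subset\clos{V}\subset U$, so each $f_j$ is defined and continuous at $a$. As $a\notin V$ (because $V$ is open) while $a$ is a limit of points of $V$, continuity forces $|f_j(a)|\le 1$ for all $j$ and $|f_{j_0}(a)|=1$ for at least one index $j_0$ — otherwise a neighbourhood of $a$ would lie in $V$, contradicting $a\in\bd V$.

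Next I would set $g_a(z):=f_{j_0}(a)-f_{j_0}(z)$. Since $|f_{j_0}(z)|<1=|f_{j_0}(a)|$ for every $z\in W\subset V$, the function $g_a$ is holomorphic and nowhere zero on $W$, and by continuity $g_a(z)\to 0$ as $z\to a$. Applying Corollary~\ref{mira} to the family $\{g_a\}_{a\in\bd W}$ then shows that $W$ is a domain of holomorphy (if $\bd W=\emptyset$ this step is vacuous and the conclusion still holds). Since $W$ was an arbitrary component of $V$, every component of $V$ is a domain of holomorphy, i.e.\ $V$ is an open set of holomorphy.

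The main obstacle, such as it is, is not deep: it is the bookkeeping of the previous two paragraphs — verifying $\bd W\subset\bd V$ and that some $|f_{j_0}(a)|=1$ — together with the minor but essential observation that one must use the translate $f_{j_0}(a)-f_{j_0}(z)$ rather than $1-f_{j_0}(z)$, because the boundary value $f_{j_0}(a)$ need only lie on the unit circle and not equal $1$. Everything else is an immediate appeal to Corollary~\ref{mira}.
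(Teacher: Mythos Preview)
Your proof is correct and takes a genuinely different route from the paper's. The paper verifies holomorphic convexity of each component $W$ directly: given compact $K\subset W$ it sets $s:=\max_j\|f_j\|_K<1$, observes that each $f_j|_W\in\hol(W)$ forces $|f_j|\le s$ on $\hat K_{\hol(W)}$, and concludes that any accumulation point $a$ of $\hat K$ in $\bd W$ would satisfy $|f_j(a)|\le s<1$ for all $j$, placing $a\in V$ --- a contradiction. Your argument instead manufactures, for each boundary point $a$, an explicit nonvanishing function $g_a(z)=f_{j_0}(a)-f_{j_0}(z)$ tending to zero at $a$, and then appeals to Corollary~\ref{mira}. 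Both arguments extract the same information from the frame (that $\max_j|f_j|$ must climb to $1$ at $\bd V$), but the paper packages it as a one-line hull estimate needing only Theorem~\ref{hcdh}, while you package it as a peak-type function and pay for the convenience by invoking the heavier Theorem~\ref{usefuldh}. Your observation that one must translate by $f_{j_0}(a)$ rather than by $1$ is exactly the point that makes the construction work.
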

\begin{proof}
By definition there exists an open set $U\subset\C^n$ and functions $f_1,\dots,f_m\in\hol(U)$ such that $V=\{z\in U\colon |f_j(z)|<1,\,1\leq j\leq m\}$
and $\clos{V}\subset U$. Let $W$ be a component of $V$ and let $K\subset W$ be compact. We will show the holomorphically convex hull $\hat K:=\hat K_{\hol(W)}$ is compact, and it
is enough to show $\pmetric(\hat K,\bd W)>0$.
Suppose, for a contradiction, that this is not the case, so as in the proof of Theorem~\ref{usefuldh} 
there is a sequence $\{z_k\}_{k\geq 1}\subset \hat K$ with $z_k\to a\in\bd W$, and note $a\in\bd V$ because $W$
is a component of $V$.
Let $s:=\max_{1\leq j\leq m}\{\|f_j\|_K\}<1$.
If $z\in\hat K$ then $|g(z)|\leq \|g\|_K$ for all $g\in\hol(W)$ and in particular for $g:=f_j\big|_W$ ($1\leq j\leq m$), so $|f_j(z)|\leq \|f_j\|_K\leq s$.
Thus $|f_j(z_k)|\leq s$ for $1\leq j\leq m$ and $k\geq 1$, so $|f_j(a)|\leq s<1$ by continuity, meaning $a\in V$, which contradicts the fact that $a\in\bd V$. 
Therefore $\hat K$ is compact for all compact
$K\subset W$, so $W$ is a domain of holomorphy. This is true for each component $W$ of $V$, so $V$ is an open set of holomorphy.
\end{proof}
Next we have some approximation results which will be useful in Section~\ref{chap:levi}:
\begin{proposition}
\label{capolapprox}
Let $K\subset\C^n$ be a compact analytic polyhedron and let $V$ be a neighbourhood of $K$.
Then there exists an open set of holomorphy $X$ such that $K\subset X\subset V$.
\end{proposition}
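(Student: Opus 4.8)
The plan is to shrink the defining data of the compact analytic polyhedron so that the associated \emph{open} analytic polyhedron is squeezed between $K$ and $V$, and then invoke Proposition~\ref{apolhol}. By definition there is an open set $U\subset\C^n$ and functions $f_1,\dots,f_m\in\hol(U)$ with $K=\{z\in U\colon |f_j(z)|\leq 1,\ 1\leq j\leq m\}$ compact. The first step is to replace $U$ by a smaller open set $U'$ with $K\subset U'$ and $\clos{U'}\subset U\cap V$; this is possible because $K$ is compact and contained in the open set $U\cap V$, so $K$ has a relatively compact open neighbourhood whose closure sits inside $U\cap V$. Then $f_1,\dots,f_m$ are still holomorphic on $U'$.

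The second step is the heart of the argument: I would set, for a parameter $t>1$,
$$ X_t:=\{z\in U'\colon |f_j(z)|<t,\ 1\leq j\leq m\}, $$
and show that for $t$ sufficiently close to $1$ we have $\clos{X_t}\subset U'$ and $X_t\subset V$. Since $U'\subset V$ the inclusion $X_t\subset V$ is automatic. For $\clos{X_t}\subset U'$, I would argue by contradiction using compactness: if no such $t$ worked, there would be a sequence $t_k\downarrow 1$ and points $z_k\in\clos{X_{t_k}}$ with $z_k\notin U'$, hence $z_k\in\bd U'$ (as $\clos{X_{t_k}}\subset\clos{U'}$). Passing to a convergent subsequence $z_k\to a$, we get $a\in\bd U'$, but also $|f_j(a)|\leq 1$ for each $j$ by continuity (taking limits in $|f_j(z_k)|\le t_k$, valid because $f_j$ is continuous on $\clos{U'}\subset U$), so $a\in K$; since $K\subset U'$ is open this contradicts $a\in\bd U'$. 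Therefore some $t>1$ gives $\clos{X_t}\subset U'$, and with this $t$ the set $X:=X_t$ is an analytic polyhedron (with frame $\{t^{-1}f_1,\dots,t^{-1}f_m\}$, or equivalently defined directly by the strict inequalities $|f_j|<t$) satisfying $K\subset X\subset V$. The inclusion $K\subset X$ holds because $z\in K$ gives $|f_j(z)|\leq 1<t$.

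The final step is to apply Proposition~\ref{apolhol} to conclude that $X$ is an open set of holomorphy, which completes the proof. The main obstacle is the compactness argument in step two — one must be slightly careful that the points $z_k$ genuinely lie in the \emph{closure} of $X_{t_k}$ rather than $X_{t_k}$ itself (so the limiting inequality is $\le$, not $<$), and that continuity of the $f_j$ is available at the limit point $a$, which is why we arranged $\clos{U'}\subset U$ at the outset; everything else is routine topology. One could alternatively avoid the contradiction by noting $\clos{X_t}\subset\{z\in\clos{U'}\colon|f_j(z)|\le t\}$ and observing that these sets decrease to $K$ as $t\downarrow1$, so by compactness one of them lies in the open set $U'$; either packaging works.
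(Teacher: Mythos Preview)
Your proof is correct and follows essentially the same approach as the paper: both shrink the ambient open set, then use a compactness argument to find a threshold $t>1$ (the paper calls it $s$) for which the open analytic polyhedron $\{|f_j|<t\}$ is squeezed between $K$ and $V$, and finish by invoking Proposition~\ref{apolhol}. The only cosmetic difference is that the paper works with an ambient set $W\supset\clos{V}$ and runs the compactness argument on $\clos{W}\setminus V$, whereas you work with $U'\subset V$ and run it on $\bd U'$; both packagings are equivalent.
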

\begin{proof}
By definition there exists an open set $U\subset\C^n$ and functions $f_1,\dots,f_m\in\hol(U)$ such that $K=\{z\in U\colon |f_j(z)|\leq 1,\,1\leq j\leq m\}$. Passing
to a subset if necessary we may assume $V$ is bounded and that $\clos{V}\subset U$, so there is a bounded neighbourhood $W$ of $\clos{V}$ such that $\clos{W}\subset U$.
 Therefore $\clos{W}\setminus V$ is compact, and because for each $z\in \clos{W}\setminus V$ we have $z\not\in K$ and thus
$|f_j(z)|>1$ for some $1\leq j\leq m$ (and this inequality holds in a neighbourhood of $z$), there exists $s>1$ such that whenever
$z\in \clos{W}\setminus V$ we have $|f_j(z)|\geq s$ for some $1\leq j\leq m$.
Therefore for $z\in W$, if $|f_j(z)|<s$ for each $1\leq j\leq m$ then $z\in V$. That is, the analytic polyhedron
$X:=\{z\in W\colon |f_j(z)/s|<1,\,1\leq j\leq m\}$ satisfies $K\subset X\subset V$ (note that $X$ is indeed an analytic polyhedron because $\clos{X}\subset \clos{V}\subset W$).
By Proposition~\ref{apolhol}, $X$ is an open set of holomorphy.
\end{proof}
\begin{proposition}
\label{approxcapol}
Let $U\subset\C^n$ be a domain, $K\subset U$ a compact set such that $K=\hat K_{\hol(U)}$ and $V\subset U$ a neighbourhood of $K$. Then there exists a compact analytic polyhedron
$L$ with frame in $H(U)$ such that $K\subset L\subset V$.
\end{proposition}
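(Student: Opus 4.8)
The plan is to produce the frame functions by separating $K$ from the boundary of a small neighbourhood of it, which is precisely where the hypothesis $K=\hat K_{\hol(U)}$ enters. First I would fix a bounded open set $W$ with $K\subset W$ and $\clos W\subset V$; such a $W$ exists because $K$ is compact and $V$ is open. Then $\clos W$ is a compact subset of $U$, and $\bd W=\clos W\setminus W$ is a compact subset of $V$ (hence of $U$) disjoint from $K$.

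The heart of the argument is the choice of the frame. For each $w\in\bd W$ we have $w\in U\setminus\hat K_{\hol(U)}$, so by the definition of the holomorphically convex hull there is $f_w\in\hol(U)$ with $|f_w(w)|>\|f_w\|_K$; after multiplying $f_w$ by a suitable positive constant we may assume $\|f_w\|_K<1<|f_w(w)|$. Each $N_w:=\{z\in U\colon|f_w(z)|>1\}$ is an open neighbourhood of $w$, so $\{N_w\}_{w\in\bd W}$ is an open cover of the compact set $\bd W$; extracting a finite subcover $\bd W\subset N_{w_1}\cup\dots\cup N_{w_m}$ and writing $f_j:=f_{w_j}$, the candidate polyhedron is
$$ L:=\{z\in W\colon|f_j(z)|\leq 1,\ 1\leq j\leq m\}. $$

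It then remains to verify the required properties, which is routine. Since $|f_j|\leq\|f_j\|_K<1$ on $K$ and $K\subset W$ we have $K\subset L$, and $L\subset W\subset\clos W\subset V$. The delicate point is that $L$ be genuinely compact: $L$ is closed in $W$, and if a point $z\in\clos L\subset\clos W$ lay on $\bd W$ it would lie in some $N_{w_j}$, so $|f_j|>1$ on a neighbourhood of $z$---a neighbourhood disjoint from $L$, contradicting $z\in\clos L$; hence $\clos L\subset W$, so $\clos L=L$ is closed in $\C^n$ and bounded, thus compact. (This is exactly where covering $\bd W$ by the $N_{w_j}$ is used: it stops $L$ from reaching out to $\bd W$.) Finally, $L$ is by definition a compact analytic polyhedron, with respect to the open set $W$, whose frame $\{f_1,\dots,f_m\}$ lies in $\hol(U)$, as required. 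If $K=\emptyset$ then $\hat K_{\hol(U)}=\emptyset$ too, and one simply takes $L=\emptyset$ (with frame the constant function $2$, say).
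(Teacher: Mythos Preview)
Your proof is correct and follows essentially the same approach as the paper: the paper shrinks $V$ so that $\bd V$ is compact and $\clos V\subset U$, while you introduce an auxiliary bounded $W$ with $\clos W\subset V$, but the core argument---separating $K$ from the compact boundary via finitely many functions from the hull condition and then showing the resulting polyhedron stays away from that boundary---is the same. Your compactness argument (excluding $\clos L$ from $\bd W$ via the open cover $N_{w_j}$) is a slight rephrasing of the paper's sequential version, and your handling of the empty case is a harmless addition.
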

\begin{proof}
Passing to a subset of $V$ if necessary we may assume that $\bd V$ is compact and that $\clos{V}\subset U$.
Since $\hat K_{\hol(U)}=K$, for any $z\in U\setminus K$ there is a function $f\in\hol(U)$ such that $\|f\|_K<|f(z)|$ (and this inequality also holds for points in a neighbourhood of $z$),
and scaling $f$ if necessary we may assume $\|f\|_K\leq 1<|f(z)|$. By the compactness of 
$\bd V$ there are finitely many functions $f_1,\dots,f_m\in\hol(U)$ with $\|f_j\|_K\leq 1$ for
each $1\leq j\leq m$ and
such that if $z\in \bd V$ then $|f_j(z)|>1$ for some $1\leq j\leq m$. Let $L:=\{z\in V\colon |f_j(z)|\leq 1,1\leq j\leq m\}$, so clearly $K\subset L\subset V$.
Certainly $L$ is bounded and closed in the subspace topology on $V$, and $\emetric(L,\bd V)>0$ (otherwise there would be a sequence $\{z_j\}_{j\geq 1}\subset L$ with $z_j\to a\in\bd V$,
and by continuity we would have $|f_j(a)|\leq 1$ for $1\leq j\leq m$, which is a contradiction), so $L$ is compact. Therefore $L$ is the required compact analytic polyhedron.
\end{proof}
\section{Pseudoconvexity}
\label{chap:pseudo}
In the previous section the definition of a holomorphically convex domain was motivated by one of the several equivalent conditions for geometric convexity. In this section we generalise the other
characterisations of convexity given in Section~\ref{chap:convexity} to define various types of pseudoconvexity, and explore the consequences of these definitions.
Most importantly we demonstrate that the definitions of pseudoconvexity are in fact equivalent, and that every domain of holomorphy is pseudoconvex. The material for this section is synthesised from 
\cite{boas}, \cite{range}, \cite{itca} and \cite{vlad}.
\subsection{The continuity principle}
First we generalise the continuity principle introduced in Section~\ref{chap:convexity}. In that section we considered convergent sequences of line segments,
which can be viewed as the images of the interval $[-1,1]$ under affine maps. In Section~\ref{chap:hol} the definition of a convex hull was generalised by passing from affine functions to holomorphic
functions. We apply a similar method here and thus obtain our first type of pseudoconvex domain.

We first need some preliminary definitions:
\begin{definition}
A \textbf{holomorphic disc} is a continuous map $S\colon\clos{\ball(0,1)}\subset\C\to\C^n$ whose restriction to $\ball(0,1)$ is holomorphic.
The \textbf{boundary} of the holomorphic disc $S$, denoted $\bd S$, is $S(\bd\ball(0,1))$. \index{holomorphic disc}
\end{definition}
A holomorphic disc $S\colon\clos{\ball(0,1)}\to\C^n$ and the image $S(\ball(0,1))$ will often both be denoted by $S$. 
\begin{definition}
Let $U\subset\C^n$ be a domain. 
We say that $U$ \textbf{satisfies the continuity principle} if, for every sequence of holomorphic discs $\{S_j\}_{j\geq 1}$ satisfying $S_j\cup\bd S_j\subset U$ for all $j\geq 1$,
$\bd S_j\to B$ and $S_j\to T$ where $B\subset U$ and $T\subset \C^n$ are compact, we have $T\subset U$ (where we use the $L^\infty$ metric to define convergence of sets). \index{continuity principle}
\end{definition}
Note that with $B$ and $T$ defined as above we always have that $T$ is connected and $B\subset T$. This observation implies the following:
\begin{proposition}
\label{ctyint}
Let $U\subset\C^n$ and $V\subset\C^n$ be domains satisfying the continuity principle. Then each component of $U\cap V$ satisfies the continuity principle.
\end{proposition}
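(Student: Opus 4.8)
The plan is to fix a component $W$ of $U\cap V$ and check the continuity principle for $W$ straight from the definition (note $W$ is open, hence a domain, since components of open subsets of $\C^n$ are open). So I would begin with an arbitrary sequence of holomorphic discs $\{S_j\}_{j\geq 1}$ satisfying $S_j\cup\bd S_j\subset W$ for all $j\geq 1$, $\bd S_j\to B$ and $S_j\to T$, where $B\subset W$ and $T\subset\C^n$ are compact, and the goal is to show $T\subset W$.

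The first step is to feed this same sequence into the hypotheses on $U$ and on $V$ separately. Since $W\subset U$, we have $S_j\cup\bd S_j\subset U$ for all $j$, together with $\bd S_j\to B$ where $B\subset W\subset U$ is compact and $S_j\to T$; because $U$ satisfies the continuity principle this yields $T\subset U$. Repeating the argument with $V$ in place of $U$ gives $T\subset V$, so $T\subset U\cap V$.

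The second step is to upgrade ``$T\subset U\cap V$'' to ``$T\subset W$'' using connectedness. Here I would invoke the observation recorded immediately before the statement: with $B$ and $T$ as above, $T$ is connected and $B\subset T$. Moreover $B\neq\emptyset$: each $\bd S_j=S_j(\bd\ball(0,1))$ is the continuous image of a circle, hence nonempty, and since the $\epsilon$-dilation of the empty set is empty, $B=\emptyset$ would force $\bd S_j=\emptyset$ for all large $j$, a contradiction. Thus $\emptyset\neq B\subset W\cap T$, so $T$ is a connected subset of $U\cap V$ meeting the component $W$; since a connected subset of an open set that meets one of its components is contained in that component, we conclude $T\subset W$, as required.

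I do not expect a genuine obstacle here — the argument is essentially bookkeeping built on the two cited facts (the continuity principle for $U$ and for $V$, and the connectedness of the limit set $T$). The only place that calls for a line of care is verifying that $B$ is nonempty, so that $T$ actually meets $W$ and the connectedness argument can be applied; as indicated above, this follows at once from the shape of the boundary of a holomorphic disc and the definition of convergence of sets.
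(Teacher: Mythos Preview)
Your argument is correct and is exactly the one the paper has in mind: the paper does not write out a proof but simply remarks that $T$ is connected with $B\subset T$ and says this observation implies the proposition, which is precisely the connectedness step you carry out after applying the continuity principle in $U$ and $V$ separately. Your extra care in checking $B\neq\emptyset$ is a nice touch that the paper leaves implicit.
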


Next we can show that domains of holomorphy satisfy the continuity principle.
First we have a maximum modulus principle which follows directly from the maximum modulus principle of single variable complex analysis:
\begin{lemma}
Let $S\colon\clos{\ball(0,1)}\to\C^n$ be a holomorphic disc with $S\cup\bd S\subset U$ where $U\subset\C^n$ is a domain, and let $f\in\hol(U)$.
Then $\|f\|_S\leq \|f\|_{\bd S}$.
\end{lemma}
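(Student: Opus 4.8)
The plan is to reduce the multivariable statement to the classical one-variable maximum modulus principle by composing $f$ with the holomorphic disc $S$. Concretely, I would set $g := f \circ S \colon \clos{\ball(0,1)} \to \C$. Since $S\cup\bd S\subset U$, the composition makes sense on all of $\clos{\ball(0,1)}$; since $S$ is continuous on $\clos{\ball(0,1)}$ and $f$ is holomorphic (hence continuous) on a neighbourhood of $U$, the map $g$ is continuous on $\clos{\ball(0,1)}$. On the open disc $\ball(0,1)$, the restriction of $S$ is holomorphic with values in $U$, and $f$ is holomorphic on $U$, so the chain rule for holomorphic maps gives that $g$ is holomorphic on $\ball(0,1)$.

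Thus $g$ is a continuous function on the closed unit disc in $\C$ that is holomorphic on the interior. By the classical maximum modulus principle for holomorphic functions of one complex variable, the supremum of $|g|$ over $\clos{\ball(0,1)}$ is attained on the boundary circle $\bd\ball(0,1)$, i.e. $\|g\|_{\clos{\ball(0,1)}} = \|g\|_{\bd\ball(0,1)}$. Now I would simply translate this back: $\|g\|_{\clos{\ball(0,1)}} = \sup_{|\zeta|\le 1}|f(S(\zeta))| = \|f\|_{S(\clos{\ball(0,1)})}$, and by the convention that the holomorphic disc $S$ denotes both the map and its image $S(\ball(0,1))$, together with the fact that the sup of $|f|$ over the image of the closed disc equals the sup over $S\cup\bd S$, this is $\|f\|_S$ in the paper's notation (more precisely $\|f\|_{S\cup\bd S}$, but since $\bd S\subset\clos{S}$ and $f$ is continuous these agree). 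Likewise $\|g\|_{\bd\ball(0,1)} = \sup_{|\zeta|=1}|f(S(\zeta))| = \|f\|_{\bd S}$. Combining, $\|f\|_S \le \|f\|_{\bd S}$ as claimed.

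The only genuinely delicate point is bookkeeping about what "$\|f\|_S$" means when $S$ is used to denote the open-disc image versus the closed-disc image; since $f$ is continuous and $\clos{S(\ball(0,1))} \subseteq S(\clos{\ball(0,1))}$, the suprema over $S(\ball(0,1))$, over $S(\clos{\ball(0,1)})$, and over $\clos{S(\ball(0,1))}$ all coincide, so there is no real obstacle — just a need to state the convention cleanly. Everything else is an immediate invocation of the one-variable maximum principle and the elementary fact that a holomorphic function composed with a holomorphic map is holomorphic.
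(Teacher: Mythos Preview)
Your proposal is correct and is exactly the argument the paper has in mind: the paper does not write out a proof but simply remarks that the lemma ``follows directly from the maximum modulus principle of single variable complex analysis,'' which is precisely your composition $g=f\circ S$ followed by the one-variable maximum principle. Your bookkeeping about the convention $S=S(\ball(0,1))$ versus $S(\clos{\ball(0,1)})$ is accurate and resolves the only notational subtlety.
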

\begin{theorem}
\label{holcp}
If $U\subset\C^n$ is a domain of holomorphy then $U$ satisfies the continuity principle. 
\end{theorem}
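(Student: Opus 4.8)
The plan is to combine two facts established above: a domain of holomorphy is holomorphically convex (the Cartan-Thullen theorem), and the maximum modulus lemma just stated, which says that any holomorphic disc $S_j$ with $S_j\cup\bd S_j\subset U$ is contained in the holomorphically convex hull of its boundary $\bd S_j$. Together these should trap all of the discs $S_j$, and hence their limit set $T$, inside a single fixed compact subset of $U$.

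More precisely, I would first dispose of the case $\bd U=\emptyset$, in which $U=\C^n$ and $T\subset U$ is automatic. Otherwise, set $r:=\pmetric(B,\bd U)$, which is positive since $B$ is a compact subset of the open set $U$; fix $\epsilon\in(0,r)$ and let $C:=\clos{\dil{B}{\epsilon}}$. Because $B$ is bounded, $C$ is closed and bounded, hence compact, and $\pmetric(C,\bd U)\geq r-\epsilon>0$, so $C$ is a compact subset of $U$. As $U$ is holomorphically convex, the hull $\hat C_{\hol(U)}$ is again a compact subset of $U$, and therefore $\delta:=\pmetric(\hat C_{\hol(U)},\bd U)>0$.

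The heart of the argument is this: from $\bd S_j\to B$ we obtain, for all $j$ beyond some index, $\bd S_j\subset \dil{B}{\epsilon}\subset C$; then for every $f\in\hol(U)$ and every point $z$ of (the image of) $S_j$, the maximum modulus lemma gives $|f(z)|\leq\|f\|_{\bd S_j}\leq\|f\|_C$, so $z\in\hat C_{\hol(U)}$. Hence $S_j\subset\hat C_{\hol(U)}$ for all large $j$. Now I invoke $S_j\to T$: for $j$ large enough we also have $T\subset\dil{(S_j)}{\delta/2}\subset\dil{\hat C_{\hol(U)}}{\delta/2}$. Finally, every point of $\dil{\hat C_{\hol(U)}}{\delta/2}$ lies in a polydisc $\pdisc(z,\delta/2)$ with $z\in\hat C_{\hol(U)}$, and since such a $z$ satisfies $\pmetric(z,\bd U)\geq\delta$, this polydisc is contained in $U$; therefore $T\subset U$, as desired.

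I do not expect a genuine obstacle: the whole proof is a short splice of the preceding lemma with holomorphic convexity. The one point requiring a little care is the passage from ``$\bd S_j$ converges to the compact set $B\subset U$'' to ``the sets $\bd S_j$ all lie in a single compact subset of $U$'': the union $\bigcup_j\bd S_j$ need not be relatively compact in $U$, which is exactly why one first fattens $B$ to the compact set $C=\clos{\dil{B}{\epsilon}}$ before applying the maximum modulus lemma and forming the holomorphically convex hull.
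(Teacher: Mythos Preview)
Your proof is correct and follows essentially the same approach as the paper: fatten $B$ to a compact $C\subset U$, use the maximum modulus lemma to trap each $S_j$ inside $\hat C_{\hol(U)}$ for large $j$, and invoke holomorphic convexity (Cartan--Thullen) to conclude. The paper phrases this via the contrapositive---assuming $T\not\subset U$ and exhibiting a sequence in $\hat C$ converging to a point outside $U$, so that $\hat C$ fails to be compact---whereas you argue directly using $\pmetric(\hat C,\bd U)>0$; the content is the same.
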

\begin{proof}
We show the contrapositive, so suppose $U$ does not satisfy the continuity principle. Thus there is a sequence of holomorphic discs $\{S_j\}_{j\geq 1}$ satisfying $S_j\cup\bd S_j\subset U$
for all $j\geq 1$, $\bd S_j\to B$ and $S_j\to T$ where $B\subset U$ and $T\subset \C^n$ are compact, and $T\not\subset U$. Since $B\subset U$ is compact we have that
the $2r$-dilation $\dil{B}{2r}\subset U$ for some $r>0$. Let $K:=\clos{\dil{B}{r}}$, and note that $K\subset U$ is compact. We will show that 
$\hat K$ is not compact, which will show that $U$ is not holomorphically convex and hence not a domain of holomorphy. By convergence $\bd S_j\to B$ there exists
$J>0$ such that $j\geq J$ implies $\bd S_j\subset \dil{B}{r}\subset K$. By the above maximum modulus principle, for any
$j\geq J$ and $f\in\hol(U)$ we have $\|f\|_{S_j}\leq \|f\|_{\bd S_j}\leq \|f\|_K$,
meaning $S_j\subset\hat K$ for all $j\geq J$. Let $a\in T\setminus U$, so by convergence $S_j\to T$ there is a sequence $\{z_j\}_{j\geq J}$ with $z_j\to a$ and $z_j\in S_j$ for each $j$.
Thus $\{z_j\}_{j\geq J}\subset\hat K$ but $a\not\in\hat K$ (because $\hat K\subset U$ by definition), so $\hat K$ is not compact. Therefore $U$ is not holomorphically convex and is hence
not a domain of holomorphy.
\end{proof}
This result is useful for finding domains which are not domains of holomorphy. For instance: \begin{proposition}
Suppose $U\subset\C^n$ (with $n\geq 2$) is a domain such that $U^c$ is non-empty and compact. Then $U$ is not a domain of holomorphy.
\end{proposition}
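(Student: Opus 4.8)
The plan is to prove the contrapositive of the useful criterion just established: I will show that $U$ \emph{fails} the continuity principle, and then invoke Theorem~\ref{holcp} (domains of holomorphy satisfy the continuity principle) to conclude that $U$ is not a domain of holomorphy. The point of using the continuity principle rather than, say, exhibiting an explicit extension is that it converts the problem into a purely geometric one: build a sequence of holomorphic discs lying in $U$, whose boundaries converge into $U$, but whose limit reaches a point of $U^c$. The discs I will use are affine, so ``holomorphic disc'' is automatic and all the analysis is elementary.

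Here is the construction. Since $U^c$ is compact and non-empty, set $\rho_0:=\max_{z\in U^c}|z|$ and fix $\rho>\rho_0$, so that $\{z:|z|>\rho_0\}\subset U$; in particular the sphere $\Sigma:=\{z:|z|=\rho\}$ lies in $U$. Fix a point $a\in U^c$ and, using $n\ge 2$, write $z=(z_1,z')$ with $z'\in\C^{n-1}$. For $c\in\C^{n-1}$ with $|c|<1$ define the holomorphic disc
\[ S_c(\zeta):=\bigl(\rho\sqrt{1-|c|^2}\,\zeta,\ \rho c\bigr),\qquad \zeta\in\clos{\ball(0,1)}\subset\C. \]
A one-line computation gives $|S_c(\zeta)|^2=\rho^2\bigl((1-|c|^2)|\zeta|^2+|c|^2\bigr)$, so the boundary $\bd S_c=S_c(\bd\ball(0,1))$ lies on $\Sigma\subset U$, while $\min_{\zeta}|S_c(\zeta)|=\rho|c|$; hence whenever $|c|>\rho_0/\rho$ the whole image $S_c(\clos{\ball(0,1)})$ lies in $\{|z|>\rho_0\}\subset U$. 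On the other hand, taking $c^*:=a'/\rho$ (where $a=(a_1,a')$) one checks $|c^*|<1$ and $a\in S_{c^*}(\clos{\ball(0,1)})$, so that particular disc is \emph{not} contained in $U$.

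Now sweep. Pick any $d\in\C^{n-1}$ with $\rho_0/\rho<|d|<1$ and join it to $c^*$ by the segment $c(t):=(1-t)d+tc^*$, $t\in[0,1]$; then $|c(t)|<1$ throughout, $S_{c(0)}=S_d\subset U$, and $S_{c(1)}(\clos{\ball(0,1)})\not\subset U$. The set $T^*:=\{t\in[0,1]:S_{c(t)}(\clos{\ball(0,1)})\subset U\}$ is open in $[0,1]$ — each image is compact, depends uniformly continuously on $t$, and $U$ is open — and contains $0$ but not $1$. Let $\tau:=\sup\{t:[0,t]\subset T^*\}$; elementary topology of $[0,1]$ gives $0<\tau\le 1$, $[0,\tau)\subset T^*$, and $\tau\notin T^*$, so there is $b\in S_{c(\tau)}(\clos{\ball(0,1)})\cap U^c$. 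Choosing $t_j\uparrow\tau$ with $t_j\in[0,\tau)$, the discs $\tilde S_j:=S_{c(t_j)}$ satisfy $\tilde S_j\cup\bd\tilde S_j\subset U$; by uniform convergence $\bd\tilde S_j\to\bd S_{c(\tau)}=:B$, a compact subset of $\Sigma\subset U$, while $\tilde S_j\to S_{c(\tau)}(\clos{\ball(0,1)})=:T$, a compact set containing $b\notin U$. Thus $U$ violates the continuity principle, and by Theorem~\ref{holcp} it is not a domain of holomorphy.

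The routine parts are the norm identity for $S_c$, the verification that $S_c$ is a genuine holomorphic disc, the openness of $T^*$ via uniform convergence $S_{c(t)}\to S_{c(t_0)}$, and the standard facts about the supremum $\tau$. The one genuinely design-sensitive step — and the only place the hypothesis $n\ge 2$ is used — is the choice of the disc family: the discs must have their boundaries automatically pinned to a sphere sitting inside $U$ while their interiors collectively cover $U^c$, and this requires the extra complex dimension in which to ``bend'' (for $n=1$ the only such disc is the flat disc $\{|z|\le\rho\}$ and the argument correctly collapses, in keeping with every planar domain being a domain of holomorphy). Once the right family is written down, everything else is bookkeeping.
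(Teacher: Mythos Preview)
Your proof is correct, and the overall strategy --- violate the continuity principle, then invoke Theorem~\ref{holcp} --- is exactly the paper's. The execution differs, though. The paper does not pick an arbitrary $a\in U^c$; it picks one of \emph{maximal} norm $r$, rotates so that $a=(r,0,\dots,0)$, and then simply slides in the flat discs $S_j(\zeta)=(r+1/j,\zeta,0,\dots,0)$. Each $S_j$ has $|S_j(\zeta)|^2=(r+1/j)^2+|\zeta|^2>r^2$, so it lies in $U$ automatically; the limit disc passes through $a$ while its boundary sits on the sphere of radius $\sqrt{r^2+1}$, still inside $U$. No connectedness or supremum argument is needed. Your approach instead builds a whole family of discs with boundaries pinned to a fixed sphere $\Sigma\subset U$, and then runs an intermediate-value argument on the parameter segment to locate the first disc that leaves $U$. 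That extra topological step is doing the work that the paper's clever choice of $a$ gets for free. The upside of your route is that it avoids the ``without loss of generality'' rotation and works for any $a\in U^c$; the cost is a noticeably longer argument for the same conclusion.
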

\begin{proof}
Since $U^c$ is compact, the function $z\mapsto |z|$ takes its maximal value $r$ on $U^c$ at a point $a\in U^c$. We will assume that $a=(r,0,\dots,0)$ (if this is not the case we may apply a complex rotation
and appeal to the result of Proposition~\ref{holinvbi}).
For each $j\geq 1$ let $S_j\colon \clos{\ball(0,1)}\to U$ be given by $S_j(z):=(r+1/j,z,0,\dots,0)$, so clearly each $S_j$ is a holomorphic disc satisfying
$S_j\cup \bd S_j\subset U$. Furthermore, $S_j\to T$ where $T:=\{(r,z,0,\dots,0)\colon z\in\clos{\ball(0,1)}\}$ is compact
and $\bd S_j\to B$ where $B:=\{(r,z,0,\dots,0)\colon z\in\bd\ball(0,1)\}\subset U$ is also compact. But $T\not\subset U$ because $a\in T\cap U^c$,
so $U$ does not satisfy the continuity principle and hence is not a domain of holomorphy.
\end{proof}
We remark that in fact functions holomorphic on such domains extend to functions holomorphic on all of $\C^n$, but this does not follow from the above result and instead can be shown using
an integral representation formula such as that of Martinelli \cite{martinelli} and Bochner \cite{bochner} to explicitly realise the holomorphic extension (see \cite[page 172]{itca}
for the details).

\subsection{Plurisubharmonic functions}
Next we define pseudoconvexity by generalising Proposition~\ref{convexex}, which states that the convex domains are those which admit convex exhaustion functions. Recall that a convex function
is a continuous function with the property that for any two points in the graph of the function, the graph lies below the straight line between those points.
We will generalise this definition to functions of a complex variable to define (pluri)subharmonic functions, which satisfy the property that 
on each disc in the domain, the graph of the function 
lies below the graph of any harmonic function which dominates the function on the boundary of the disc. Here we formally define (pluri)subharmonic functions and give several important properties. 
We omit the proofs since they can be quite technical and may be found
in many introductory complex analysis textbooks (for example \cite[subsection 38]{itca} or \cite[section 10]{vlad}).

It will be convenient to introduce a weak notion of continuity for functions taking extended real values:
\begin{definition}
A function $f\colon S\subset\C^n\to [-\infty,\infty]$ is \textbf{upper-semicontinuous} at $s\in S$ if, for all $\alpha>f(s)$, there exists a neighbourhood $U$ of $s$ such that
$f(z)<\alpha$ for all $z\in U$. If $f$ is upper-semicontinuous at each point of $S$ then it is said to be \textbf{upper-semicontinuous} on $S$.
\index{upper-semicontinuous function}
\end{definition}
The condition of upper-semicontinuity is clearly weaker than continuity and describes functions which do not increase by more than arbitrarily small amounts in neighbourhoods of points,
but may decrease by any amount. 
We have a simple consequence of the definition which we will need later:
\begin{proposition}
Let $f\colon K\subset\C^n\to [-\infty,\infty)$ (where $K$ is compact) be upper-semicontinuous. Then $f$ is bounded above on $K$ and attains its maximum. \end{proposition}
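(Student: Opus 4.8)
The plan is to reduce both assertions to a single sequential argument exploiting compactness of $K$ together with upper-semicontinuity of $f$; no case analysis on whether the supremum is finite is needed if one works throughout in $[-\infty,\infty]$. Assume $K\neq\emptyset$, since otherwise the statement is vacuous. Set $M:=\sup_{z\in K}f(z)\in[-\infty,+\infty]$ and choose a sequence $\{z_j\}_{j\geq1}\subset K$ with $f(z_j)\to M$; such a sequence exists directly from the definition of the supremum (and trivially when $M=-\infty$). By compactness of $K$ we may pass to a subsequence and assume $z_j\to z_0$ for some $z_0\in K$.

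Next I would show $f(z_0)=M$. The inequality $f(z_0)\leq M$ is immediate. For the reverse inequality, fix any $\alpha>f(z_0)$. By upper-semicontinuity at $z_0$ there is a neighbourhood $V$ of $z_0$ with $f(z)<\alpha$ for all $z\in V$; since $z_j\to z_0$, we have $z_j\in V$ for all sufficiently large $j$, hence $f(z_j)<\alpha$ eventually, and letting $j\to\infty$ gives $M=\lim_j f(z_j)\leq\alpha$. As $\alpha>f(z_0)$ was arbitrary, this forces $M\leq f(z_0)$, so $M=f(z_0)$.

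Finally, since $f$ takes values in $[-\infty,\infty)$ we conclude $M=f(z_0)<+\infty$, which is precisely the statement that $f$ is bounded above on $K$; and the equality $f(z_0)=M=\sup_K f$ with $z_0\in K$ says that this bound is attained. The only point needing a little care is the bookkeeping with extended-real values (the degenerate possibilities $M=-\infty$ or $f(z_0)=-\infty$, where the inequalities and the limit above must be interpreted in $[-\infty,\infty]$), but these cause no genuine trouble; I do not expect any substantial obstacle in this argument.
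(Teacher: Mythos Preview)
Your argument is correct and is the standard sequential proof of this fact. The paper itself does not prove this proposition: it explicitly omits the proofs of the basic properties of upper-semicontinuous and plurisubharmonic functions, referring instead to standard textbooks, so there is no proof in the paper to compare against.
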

Now we may define subharmonic functions:
\begin{definition}
Let $f\colon U\subset \C\to [-\infty,\infty)$ (where $U$ is open) be upper-semicontinuous. Then $f$ is \textbf{subharmonic} on $U$ if, for every $a\in U$ and $r>0$ such that $\clos{\ball(a,r)}\subset U$
 and for every continuous function $\phi\colon \clos{\ball(a,r)}\to\R$
which is harmonic on $\ball(a,r)$ and satisfies $\phi(z)\geq f(z)$ for all $z\in\bd\ball(a,r)$, we have $\phi(z)\geq f(z)$ for all $z\in\ball(a,r)$.
\index{subharmonic function}

Let $g\colon V\subset\C^n\to [-\infty,\infty)$ 
(where $V$ is open) be an upper-semicontinuous function. Then $f$ is \textbf{plurisubharmonic} on $V$ if, for every $a\in V$ and $\delta\in\C^n$ with $|\delta|=1$, the function
$z\mapsto f(a+\delta z)$ is subharmonic on $\{z\in\C\colon a+\delta z\in V\}$.\index{plurisubharmonic function}
\end{definition}
It may be verified that the sum of two plurisubharmonic functions is again plurisubharmonic, and that plurisubharmonicity is a local property,
which immediately implies
the following:
\begin{proposition}
\label{pshlocal}
Let $f\colon U\subset\C^n\to [-\infty,\infty)$ 
(where $U$ is open) be upper-semicontinuous. Suppose that, for every $a\in U$ and $\delta\in\C^n$ with $|\delta|=1$, the function $z\mapsto f(a+\delta z)$ is subharmonic 
on an open set $V$ with $0\in V\subset \{z\in\C\colon a+\delta z\in U\}$. Then $f$ is plurisubharmonic on $U$.
\end{proposition}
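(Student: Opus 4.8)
The plan is to reduce everything to the fact that subharmonicity of a function of one complex variable is a \emph{local} property: a function which is subharmonic in a neighbourhood of each point of an open set is subharmonic on that whole set (this follows, for instance, from the sub-mean-value characterisation of subharmonicity). Granting this, Proposition~\ref{pshlocal} is essentially a ``slide the base point'' argument.

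Fix $a\in U$ and $\delta\in\C^n$ with $|\delta|=1$, and set $\Omega:=\{z\in\C\colon a+\delta z\in U\}$, which is open. Since $f$ is upper-semicontinuous by hypothesis, the only thing left to verify in the definition of plurisubharmonicity is that $z\mapsto f(a+\delta z)$ is subharmonic on all of $\Omega$. By the locality just recalled, it is enough to show this function is subharmonic in a neighbourhood of each point of $\Omega$.

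So let $w\in\Omega$, that is, $a':=a+\delta w\in U$. Apply the hypothesis to the point $a'$ and the same unit vector $\delta$: there is an open set $V$ with $0\in V\subset\{z\in\C\colon a'+\delta z\in U\}$ on which $z\mapsto f(a'+\delta z)$ is subharmonic. For $z\in V$ we have $a'+\delta z=a+\delta(w+z)\in U$, so $w+V\subset\Omega$, and the identity $f(a'+\delta z)=f(a+\delta(w+z))$ shows, after translating by $w$, that $\zeta\mapsto f(a+\delta\zeta)$ is subharmonic on the neighbourhood $w+V$ of $w$. As $w\in\Omega$ was arbitrary, $z\mapsto f(a+\delta z)$ is subharmonic near every point of $\Omega$, hence subharmonic on $\Omega$; and as $a$ and $\delta$ were arbitrary, $f$ is plurisubharmonic on $U$.

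The only step requiring any genuine care -- and the one I would expect to be the main obstacle -- is the locality of subharmonicity for functions on open subsets of $\C$; once that is available, the argument is merely the observation that the hypothesis, applied along the complex line $\{a+\delta z\}$ but with base point moved from $a$ to $a+\delta w$, upgrades ``subharmonic near $0$'' to ``subharmonic near $w$'', and locality then patches these local statements together along the entire line.
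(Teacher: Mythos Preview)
Your proof is correct and follows essentially the same route the paper indicates: the paper does not give a detailed argument but simply remarks that plurisubharmonicity is a local property and that Proposition~\ref{pshlocal} follows immediately. Your write-up is precisely a spelled-out version of this implication --- reducing to locality of one-variable subharmonicity and shifting the base point along the complex line --- so there is nothing to add.
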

For twice-differentiable functions there is an equivalent condition for plurisubharmonicity that further emphasises the connection with convexity:
\begin{proposition}
\label{leviformpsh}
Let $U\subset\C^n$ be open and $f\colon U\to\R$ a $\cts^2$ function (that is, $f$ is $\cts^2$ when regarded as a function of $2n$ real variables). 
Then $f$ is plurisubharmonic if and only if for all $\delta\in\C^n$ and $z\in U$ we have
$$\Delta_\delta f(z):=\sum_{j,k=1}^n\frac{\partial^2 f}{\partial z_j\partial\con z_k}\bigg|_z\delta_j\con\delta_k \geq 0.$$
\end{proposition}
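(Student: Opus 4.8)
The plan is to reduce to the one-variable characterization of subharmonicity and then transport it through the Wirtinger calculus. I would take as the only external input the classical fact (proved in the references cited for this subsection, e.g.\ \cite[subsection 38]{itca} or \cite[section 10]{vlad}) that a $\cts^2$ function $g$ on an open subset of $\C$ is subharmonic if and only if its Laplacian is non-negative everywhere, together with the identity that the Laplacian equals $4\,\partial^2 g/\partial w\,\partial\con w$.

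First I would fix $a\in U$ and $\delta\in\C^n$ with $|\delta|=1$ and consider the slice $g_{a,\delta}(w):=f(a+\delta w)$, which is $\cts^2$ on the open set $\Omega_{a,\delta}:=\{w\in\C\colon a+\delta w\in U\}$ (open since it is the preimage of $U$ under the affine map $w\mapsto a+\delta w$). A routine chain-rule computation with Wirtinger derivatives — using that this map satisfies $\partial z_j/\partial w=\delta_j$, $\partial\con z_j/\partial w=0$, and $\partial\con z_j/\partial\con w=\con\delta_j$ — gives
$$ \frac{\partial^2 g_{a,\delta}}{\partial w\,\partial\con w}\bigg|_w=\sum_{j,k=1}^n\frac{\partial^2 f}{\partial z_j\partial\con z_k}\bigg|_{a+\delta w}\delta_j\con\delta_k=\Delta_\delta f(a+\delta w). $$
By the one-variable criterion, $g_{a,\delta}$ is therefore subharmonic on $\Omega_{a,\delta}$ if and only if $\Delta_\delta f(a+\delta w)\geq 0$ for every $w\in\Omega_{a,\delta}$.

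Now I would assemble the two directions. If the Levi form is everywhere non-negative, then for every $a\in U$ and every unit $\delta$ the quantity above is $\geq 0$ on $\Omega_{a,\delta}$, so every slice is subharmonic and $f$ is plurisubharmonic by definition. Conversely, if $f$ is plurisubharmonic, then each slice $g_{a,\delta}$ is subharmonic, whence $\Delta_\delta f(a+\delta w)\geq 0$ for all $w\in\Omega_{a,\delta}$; taking $w=0$ yields $\Delta_\delta f(a)\geq 0$. Since $a\in U$ is arbitrary and $\delta$ runs over all unit vectors, and since the explicit bilinear expression for $\Delta_\delta f$ gives the homogeneity $\Delta_{t\delta}f(z)=|t|^2\Delta_\delta f(z)$ for $t\in\C$, the inequality $\Delta_\delta f(z)\geq 0$ follows for every $z\in U$ and every $\delta\in\C^n$ (the case $\delta=0$ being trivial).

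The argument is essentially bookkeeping once the one-variable fact is in hand, so I do not expect a serious obstacle; the only points needing a little care are the homogeneity step that upgrades the conclusion from the unit vectors $\delta$ appearing in the definition of plurisubharmonicity to the arbitrary $\delta\in\C^n$ appearing in the statement, and the remark that the slice domain $\Omega_{a,\delta}$ is open so that the one-variable criterion genuinely applies there. I would also note that one could shorten the verification slightly by appealing to Proposition~\ref{pshlocal}, which permits testing subharmonicity of the slices only on a neighbourhood of the origin.
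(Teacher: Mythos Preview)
Your argument is correct and is precisely the standard textbook proof. The paper itself omits the proof of this proposition, stating at the start of the subsection that the proofs ``can be quite technical and may be found in many introductory complex analysis textbooks (for example \cite[subsection 38]{itca} or \cite[section 10]{vlad})''; what you have written is essentially what one finds in those references.
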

This result motivates the following definition which will be important later:
\begin{definition}
Let $f\colon U\subset \C^n\to\R$ (where $U$ is open) be a $\cts^2$ function. Then $f$ is \textbf{strictly plurisubharmonic} if $\Delta_\delta f(z)>0$ for all $z\in U$ and
$\delta\in\C^n\setminus\{0\}$.
\end{definition}
For example, the function $z\mapsto |z|^2$ is strictly plurisubharmonic, as $\Delta_\delta |z|^2 = |\delta|^2$. Proposition~\ref{leviformpsh} implies that strictly plurisubharmonic functions are plurisubharmonic.

Next we have three results which will be used
to show that a domain satisfies the continuity principle if and only if it is pseudoconvex:
\begin{proposition}
\label{pshhol}
Let $U\subset\C^n$ be open and let $g\in\hol(U)$ be non-vanishing. Then $f\colon U\to\R$ given by $f(z):=-\ln|g(z)|$ is plurisubharmonic on $U$.
\end{proposition}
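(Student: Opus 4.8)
The plan is to reduce plurisubharmonicity of $f(z) = -\ln|g(z)|$ to the one-variable statement that $-\ln|h(\zeta)|$ is subharmonic whenever $h$ is holomorphic and non-vanishing on a disc. By Proposition~\ref{pshlocal} (more precisely, the local criterion for plurisubharmonicity), it suffices to fix $a \in U$ and a direction $\delta \in \C^n$ with $|\delta| = 1$, and show that $\zeta \mapsto f(a + \delta\zeta) = -\ln|g(a+\delta\zeta)|$ is subharmonic on some open neighbourhood of $0$ in $\{\zeta \in \C : a + \delta\zeta \in U\}$. Writing $h(\zeta) := g(a + \delta\zeta)$, this function is holomorphic on that open set (composition of $g$ with an affine map) and non-vanishing there since $g$ is non-vanishing on $U$. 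So everything comes down to: if $h$ is holomorphic and non-vanishing on an open set $W \subset \C$, then $-\ln|h|$ is subharmonic on $W$.

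For this one-variable fact I would argue locally around an arbitrary point $\zeta_0 \in W$. Since $h$ is non-vanishing, on a small disc about $\zeta_0$ there is a holomorphic logarithm: a holomorphic function $L$ with $e^{L} = h$, so that $\ln|h| = \Re L$ is harmonic there. Hence $-\ln|h| = -\Re L$ is also harmonic, and in particular subharmonic, on that disc. Subharmonicity is a local property (one can also see directly that a harmonic function satisfies the defining inequality with equality, since it coincides with its own harmonic majorant by the uniqueness/maximum principle for harmonic functions), so $-\ln|h|$ is subharmonic on all of $W$. Strictly speaking one should also check upper-semicontinuity, but $-\ln|h|$ is continuous (indeed real-analytic) on $W$ because $h$ is continuous and zero-free, so this is automatic.

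Assembling the pieces: for each $a \in U$ and each unit $\delta$, choose $\varepsilon > 0$ so small that the disc $V_\delta := \ball(0,\varepsilon) \subset \{\zeta : a + \delta\zeta \in U\}$ (possible since $U$ is open), and apply the previous paragraph with $W = V_\delta$ to conclude $\zeta \mapsto f(a+\delta\zeta)$ is subharmonic on $V_\delta$. Since $0 \in V_\delta$, Proposition~\ref{pshlocal} applies and yields that $f$ is plurisubharmonic on $U$. I do not anticipate a serious obstacle here; the only mildly delicate point is the existence of a local holomorphic logarithm of a zero-free holomorphic function, which is standard, and the observation that harmonic functions are subharmonic, which is immediate from the maximum principle. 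If one prefers to avoid invoking the logarithm, an alternative is to note $\Delta(-\ln|h|^2) = 0$ away from zeros of $h$ by a direct computation (the real Laplacian of $\ln|h|$ vanishes since $\ln|h|$ is the real part of a holomorphic function), but the logarithm argument is cleaner and keeps the proof short.
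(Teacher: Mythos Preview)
Your argument is correct and follows the standard route: reduce to one complex variable via Proposition~\ref{pshlocal}, then observe that a non-vanishing holomorphic function of one variable has a local holomorphic logarithm, so $-\ln|h|$ is locally harmonic and hence subharmonic. The upper-semicontinuity check is handled by continuity, as you note.

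There is nothing to compare against here: the paper explicitly omits the proofs of the propositions in this subsection (including Proposition~\ref{pshhol}), referring instead to \cite[subsection 38]{itca} and \cite[section 10]{vlad}. Your proof is exactly the kind of short, self-contained argument one would expect to fill that gap, and it relies only on tools already available in the paper (Proposition~\ref{pshlocal} and the basic fact that harmonic functions are subharmonic).
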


\begin{proposition}
\label{pshupper}
Let $U\subset\C^n$ be open, and suppose that for each $\lambda\in\Lambda$ (where $\Lambda$ is some indexing set) there is a plurisubharmonic function $f_\lambda\colon U\to [-\infty,\infty)$, and that
the function $f$ defined by $f(z):=\sup_{\lambda\in\Lambda}\{f_\lambda(z)\}$ maps into $[-\infty,\infty)$ and is upper-semicontinuous. Then $f$ is plurisubharmonic.
\end{proposition}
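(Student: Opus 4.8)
The plan is to reduce the statement to the one-variable case, where it becomes an almost immediate consequence of the definition of subharmonicity. Plurisubharmonicity is checked along complex lines: $f$ is plurisubharmonic on $U$ precisely when, for every $a\in U$ and $\delta\in\C^n$ with $|\delta|=1$, the function $z\mapsto f(a+\delta z)$ is subharmonic on $\{z\in\C\colon a+\delta z\in U\}$. So I would fix such $a$ and $\delta$, write $V:=\{z\in\C\colon a+\delta z\in U\}$ and set $g(z):=f(a+\delta z)$, $g_\lambda(z):=f_\lambda(a+\delta z)$ for $z\in V$. Since each $f_\lambda$ is plurisubharmonic, each $g_\lambda$ is subharmonic on $V$; since $z\mapsto a+\delta z$ is continuous and $f$ is upper-semicontinuous with values in $[-\infty,\infty)$, the composition $g$ is upper-semicontinuous on $V$ with values in $[-\infty,\infty)$; and clearly $g=\sup_{\lambda\in\Lambda}g_\lambda$ pointwise. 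Thus it suffices to show that an upper-semicontinuous pointwise supremum of subharmonic functions of one complex variable is subharmonic.

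For this, I would take $b\in V$ and $r>0$ with $\clos{\ball(b,r)}\subset V$, and a continuous $\phi\colon\clos{\ball(b,r)}\to\R$ that is harmonic on $\ball(b,r)$ and satisfies $\phi\geq g$ on $\bd\ball(b,r)$, and check that $\phi\geq g$ on $\ball(b,r)$. The point is that for each $\lambda\in\Lambda$ we have $g_\lambda\leq g\leq\phi$ on $\bd\ball(b,r)$, so the subharmonicity of $g_\lambda$ applied to this same $\phi$ yields $g_\lambda\leq\phi$ on $\ball(b,r)$; taking the supremum over $\lambda$ then gives $g=\sup_\lambda g_\lambda\leq\phi$ on $\ball(b,r)$. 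Hence $g$ is subharmonic on $V$, and since $a$ and $\delta$ were arbitrary, $f$ is plurisubharmonic on $U$.

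There is no real obstacle here. The only points requiring (minor) care are the verifications that upper-semicontinuity and the range condition $[-\infty,\infty)$ pass to the restrictions $g$ along lines, which are routine, and the observation — which does all the work — that comparing each $g_\lambda$ with the \emph{single} dominating harmonic function $\phi$ lets one pass the inequality through the supremum. One could alternatively argue directly in $\C^n$ by invoking Proposition~\ref{pshlocal}, but the line-by-line reduction above is cleaner and sidesteps any localisation subtleties.
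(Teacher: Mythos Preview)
Your argument is correct and is the standard one: reduce to one complex variable along each complex line, then observe that the defining inequality for subharmonicity passes through a pointwise supremum because a single harmonic majorant $\phi$ on the boundary of a disc dominates every $g_\lambda$ there, hence dominates each $g_\lambda$ on the interior, hence dominates $\sup_\lambda g_\lambda$.

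There is nothing to compare against: the paper explicitly omits the proof of this proposition (along with the other basic properties of plurisubharmonic functions in that subsection), referring instead to standard textbooks such as \cite{itca} and \cite{vlad}. Your proof is exactly the argument one finds in those references, so it fills the gap cleanly.
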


\begin{proposition}
\label{pshmax}
Let $U\subset\C^n$ be a domain, $S\colon \clos{\ball(0,1)}\to U$ a holomorphic disc and $f\colon U\to [-\infty,\infty)$ a plurisubharmonic function. Then 
$\sup_{z\in S}\{f(z)\}\leq \sup_{z\in\bd S}\{f(z)\}$.
\end{proposition}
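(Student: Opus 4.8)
The plan is to reduce the several-variable statement to the one-variable maximum principle for subharmonic functions. The function $f\circ S$ is defined on the closed unit disc $\clos{\ball(0,1)}$ in $\C$, and I want to show $\sup_{\clos{\ball(0,1)}} f\circ S = \sup_{\bd\ball(0,1)} f\circ S$. The quantity on the right is finite since $f\circ S$ is upper-semicontinuous on the compact set $\bd\ball(0,1)$ and hence attains a maximum there; call this value $M$. So it suffices to prove $f(S(\zeta)) \le M$ for every $\zeta \in \ball(0,1)$.

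First I would verify that $f\circ S$ is subharmonic on $\ball(0,1)$. Plurisubharmonicity of $f$ is a local property (this is exactly the content of Proposition~\ref{pshlocal}), so it is enough to check subharmonicity near each point $\zeta_0 \in \ball(0,1)$. Near such a point $S$ is a holomorphic map $\C \supset W \to \C^n$ with $S(\zeta_0) = a \in U$; writing $S(\zeta) = a + (S(\zeta)-a)$ and noting that each component of $S - a$ vanishes to some order at $\zeta_0$, one sees that $\zeta \mapsto f(S(\zeta))$ is, locally, a composition of the plurisubharmonic $f$ with a holomorphic map, which is the standard fact that plurisubharmonic functions pull back to subharmonic functions under holomorphic maps (and is implicit in the definition of plurisubharmonicity, since along any complex line $f$ restricts to a subharmonic function, and a general holomorphic disc is handled by the same averaging-inequality argument). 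I would simply invoke this as a known property of plurisubharmonic functions, consistent with the paper's stated policy of omitting the technical proofs about (pluri)subharmonic functions.

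Once $f\circ S$ is known to be subharmonic on $\ball(0,1)$ and upper-semicontinuous on $\clos{\ball(0,1)}$, the conclusion follows from the one-variable maximum principle for subharmonic functions: a subharmonic function on a bounded domain, upper-semicontinuous up to the boundary, attains its supremum on the boundary. Concretely, the constant function $\phi \equiv M$ is harmonic on $\ball(0,1)$, continuous on $\clos{\ball(0,1)}$, and dominates $f\circ S$ on $\bd\ball(0,1)$ by the definition of $M$; so by the defining property of subharmonicity (applied to the disc $\ball(0,1)$ itself, after first shrinking to $\ball(0,\rho)$ with $\rho<1$ and letting $\rho \to 1$, using upper-semicontinuity to pass to the limit) we get $f(S(\zeta)) \le M$ for all $\zeta \in \ball(0,1)$. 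Hence $\sup_{z\in S}\{f(z)\} = \sup_{\zeta \in \clos{\ball(0,1)}} f(S(\zeta)) \le M = \sup_{z\in\bd S}\{f(z)\}$, as required.

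The main obstacle is the first step — confirming that $f\circ S$ is genuinely subharmonic when $S$ is an arbitrary holomorphic disc (not just an affine parametrization of a complex line). For affine $S$ this is the definition of plurisubharmonicity verbatim; for general holomorphic $S$ one needs the stability of subharmonicity under holomorphic precomposition, which is a standard but slightly technical fact about the submean-value inequality. I would cite this rather than prove it, in keeping with the treatment of (pluri)subharmonic functions elsewhere in the paper. Everything after that is just the classical one-variable maximum principle.
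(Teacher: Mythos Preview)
Your approach is correct and is the standard one: pull back $f$ along the holomorphic disc $S$ to obtain a subharmonic function on $\ball(0,1)$, then invoke the one-variable maximum principle. The paper does not actually prove Proposition~\ref{pshmax} --- it falls in the block of results on (pluri)subharmonic functions whose proofs are explicitly omitted with a reference to standard textbooks --- so there is nothing to compare against directly, but your argument is precisely the one those references give.

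Two minor remarks. First, the step you flag as the main obstacle (that $f\circ S$ is subharmonic for a general holomorphic $S$, not just an affine one) is indeed the crux; it is usually proved either via the sub-mean-value inequality and a change of variables, or by first approximating $f$ by smooth plurisubharmonic functions (Proposition~\ref{pshapprox}) and computing the Laplacian of $f_j\circ S$ directly via the chain rule, which gives $\Delta(f_j\circ S)(\zeta) = 4\,\Delta_{S'(\zeta)} f_j(S(\zeta)) \ge 0$. Either route is fine, and citing it is consistent with the paper's treatment. Second, your limiting argument from $\ball(0,\rho)$ to $\ball(0,1)$ is correct but deserves one line of care: upper-semicontinuity on the compact closed disc guarantees that for every $\epsilon>0$ there is an annulus $\{\rho_0<|\zeta|\le 1\}$ on which $f\circ S < M+\epsilon$, which is exactly what you need to conclude $\limsup_{\rho\to 1} M_\rho \le M$.
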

We conclude this subsection with an approximation result and its converse which will be vital in the following sections:
\begin{proposition}
\label{pshapprox}
Let $U\subset\C^n$ be a domain and $f\colon U\to[-\infty,\infty)$ a plurisubharmonic function. Then $f$ is the pointwise limit
of a non-increasing sequence of $\cts^\infty$ strictly plurisubharmonic functions $f_j\colon U_j\to\R$, where $\{U_j\}_{j\geq 1}$ is a sequence
of bounded domains satisfying $U_j\subset U_{j+1}$ for each $j\geq 1$ and
$\bigcup_{j\geq 1}U_j=U$.
\end{proposition}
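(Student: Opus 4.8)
The plan is to combine the standard mollification argument for plurisubharmonic functions with the exhaustion by bounded subdomains, correcting at the end to obtain strict plurisubharmonicity. First I would fix an exhaustion of $U$ by bounded domains: choose $U_j$ to be (a component of) the set $\{z\in U\colon |z|<j \text{ and } \pmetric(z,\bd U)>1/j\}$, intersected appropriately so that $U_j\subset U_{j+1}$, $\clos{U_j}$ is a compact subset of $U_{j+1}$ (in particular $\clos{U_j}\subset U$), and $\bigcup_{j\geq 1}U_j=U$; this is the same device used in Example~\ref{exex} and in the lemma preceding Theorem~\ref{hcdh}. On a neighbourhood of $\clos{U_j}$ the function $f$ is plurisubharmonic, bounded above (by upper-semicontinuity on the compact set $\clos{U_j}$), and locally integrable, so I can mollify: set $f_j^{(0)}:=f*\chi_{\epsilon_j}$ where $\chi_\epsilon(z)=\epsilon^{-2n}\chi(z/\epsilon)$ is a standard radial smooth approximate identity and $\epsilon_j>0$ is chosen small enough (less than $\pmetric(\clos{U_j},\bd U)$) that the convolution is defined and $\cts^\infty$ on $U_j$. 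The standard facts are that $f*\chi_\epsilon$ is $\cts^\infty$ and plurisubharmonic, that $f*\chi_\epsilon$ decreases as $\epsilon\downarrow 0$ (this uses the sub-mean-value property of plurisubharmonic functions together with radial symmetry of $\chi$), and that $f*\chi_\epsilon(z)\downarrow f(z)$ pointwise as $\epsilon\downarrow 0$ (this uses upper-semicontinuity). I would cite these as properties of plurisubharmonic functions from the references already invoked in this subsection (\cite{itca}, \cite{vlad}).

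Next I would arrange monotonicity in $j$ and strictness simultaneously. Shrinking the $\epsilon_j$ if necessary I can assume $\epsilon_{j+1}<\epsilon_j$ and that on $U_j$ we have $f*\chi_{\epsilon_{j+1}}\leq f*\chi_{\epsilon_j}$; since $f*\chi_\epsilon\downarrow f$ pointwise, I can also demand the quantitative bound $f*\chi_{\epsilon_j}(z)\leq f(z)+1/j$ would not quite be pointwise achievable uniformly, so instead I rely only on the pointwise limit. To get strictness, perturb by the strictly plurisubharmonic function $|z|^2$ (see the remark after the definition of strict plurisubharmonicity, where $\Delta_\delta|z|^2=|\delta|^2$): set
$$ f_j(z):=f*\chi_{\epsilon_j}(z)+\frac{1}{j}\,|z|^2 \qquad (z\in U_j). $$
Each $f_j$ is $\cts^\infty$, and since $\Delta_\delta f_j(z)=\Delta_\delta(f*\chi_{\epsilon_j})(z)+\tfrac1j|\delta|^2\geq \tfrac1j|\delta|^2>0$ for $\delta\neq 0$ (using Proposition~\ref{leviformpsh} for the plurisubharmonic part), $f_j$ is strictly plurisubharmonic on $U_j$. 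The extra term $\tfrac1j|z|^2$ is bounded on the bounded set $U_j$ and tends to $0$ as $j\to\infty$, so it does not disturb the limit: for fixed $z\in U$, once $j$ is large enough that $z\in U_j$ we have $f_j(z)=f*\chi_{\epsilon_j}(z)+\tfrac1j|z|^2\to f(z)$.

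The one remaining point is genuine monotonicity (non-increasing) of the sequence $\{f_j\}$, not merely convergence, since the correction term $\tfrac1j|z|^2$ is itself decreasing in $j$ and the mollifications are decreasing in $j$ — so $f_j$ is a sum of two sequences each non-increasing in $j$ on the common domain, hence non-increasing where both are defined, i.e. $f_{j+1}(z)\leq f_j(z)$ for $z\in U_j$. That is exactly the assertion (the sequence is defined on the increasing domains $U_j$, and is non-increasing in the only sense that makes sense, namely on overlaps). I expect the main obstacle to be purely expository: making precise, without reproving them, the three standard properties of mollified plurisubharmonic functions (smoothness, decrease in $\epsilon$, pointwise convergence to $f$), and being careful that all convolutions are taken on neighbourhoods still inside $U$ so that $f$'s plurisubharmonicity is available there; the choice $\epsilon_j<\pmetric(\clos{U_j},\bd U)$ handles this. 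Everything else is bookkeeping with the exhaustion and the elementary perturbation by $\tfrac1j|z|^2$.
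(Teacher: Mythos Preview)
The paper does not prove this proposition: it explicitly omits all proofs in the subsection on plurisubharmonic functions, referring instead to \cite[subsection 38]{itca} and \cite[section 10]{vlad}. Your mollification-plus-$\tfrac1j|z|^2$ argument is exactly the standard proof found in those references, and it is correct as outlined. One small edge case you should dispose of explicitly: a plurisubharmonic function on a domain is locally integrable \emph{unless} it is identically $-\infty$; in the latter case the convolution is undefined, but the statement is then trivial (take, e.g., $f_j(z)=-j+\tfrac1j|z|^2$).
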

\begin{proposition}
\label{decpsh}
Let $U\subset\C^n$ be open and $f_j\colon U\to[-\infty,\infty)$ a non-increasing sequence of plurisubharmonic functions which converges
pointwise to $f\colon U\to[-\infty,\infty)$. Then $f$ is plurisubharmonic.
\end{proposition}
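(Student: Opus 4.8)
The plan is to verify directly that the pointwise limit $f$ satisfies the definition of a plurisubharmonic function, namely that it is upper-semicontinuous and that its restriction to every complex line is subharmonic. The monotonicity of the sequence is what makes both parts work.

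First I would record that $f$ takes values in $[-\infty,\infty)$ and is upper-semicontinuous. Since the sequence is non-increasing, $f=\inf_j f_j\leq f_1<\infty$, so $f$ never equals $+\infty$. For upper-semicontinuity, fix $s\in U$ and $\alpha>f(s)$; because $f_j(s)$ decreases to $f(s)<\alpha$, some index $j$ has $f_j(s)<\alpha$, and upper-semicontinuity of $f_j$ gives a neighbourhood $W$ of $s$ on which $f_j<\alpha$, whence $f\leq f_j<\alpha$ on $W$.

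Next I would reduce to one complex variable. Fix $a\in U$ and $\delta\in\C^n$ with $|\delta|=1$, put $V:=\{z\in\C\colon a+\delta z\in U\}$, and set $g_j(z):=f_j(a+\delta z)$, $g(z):=f(a+\delta z)$ on $V$. Each $g_j$ is subharmonic on $V$, the $g_j$ form a non-increasing sequence converging pointwise to $g$, and $g$ is upper-semicontinuous by the first step. By the definition of plurisubharmonicity it suffices to prove $g$ is subharmonic on $V$. So let $\clos{\ball(b,r)}\subset V$ and let $\phi\colon\clos{\ball(b,r)}\to\R$ be continuous, harmonic on $\ball(b,r)$, with $\phi\geq g$ on $\bd\ball(b,r)$; the goal is $\phi\geq g$ on $\ball(b,r)$. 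The delicate point is that $\phi\geq g=\inf_j g_j$ on the boundary circle does not by itself give $\phi\geq g_j$ there for any fixed $j$. To handle this, fix $\epsilon>0$. For each $\zeta\in\bd\ball(b,r)$ we have $g_j(\zeta)\downarrow g(\zeta)\leq\phi(\zeta)<\phi(\zeta)+\epsilon$, so some $j(\zeta)$ satisfies $g_{j(\zeta)}(\zeta)<\phi(\zeta)+\epsilon$; since $g_{j(\zeta)}-\phi$ is upper-semicontinuous on the circle, this inequality holds on a relative neighbourhood $W_\zeta$ of $\zeta$ in $\bd\ball(b,r)$. By compactness of $\bd\ball(b,r)$, finitely many $W_{\zeta_1},\dots,W_{\zeta_m}$ cover it; set $N:=\max_k j(\zeta_k)$. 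Monotonicity gives $g_N\leq g_{j(\zeta_k)}<\phi+\epsilon$ on each $W_{\zeta_k}$, hence $g_N<\phi+\epsilon$ on all of $\bd\ball(b,r)$. Now $\phi+\epsilon$ is continuous on $\clos{\ball(b,r)}$, harmonic on $\ball(b,r)$, and dominates the subharmonic function $g_N$ on the boundary, so $\phi+\epsilon\geq g_N\geq g$ on $\ball(b,r)$. Letting $\epsilon\to 0$ gives $\phi\geq g$ on $\ball(b,r)$.

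Thus $g$ is subharmonic for every choice of $a$ and $\delta$, so $f$ is plurisubharmonic. The only real obstacle is the step just described — converting the majorant inequality for the limit $g$ into one for a single member $g_N$ of the sequence — which the compactness-plus-monotonicity argument resolves; everything else is bookkeeping with the definitions.
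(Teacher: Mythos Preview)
Your argument is correct. The paper does not actually prove this proposition: it explicitly omits the proofs of the plurisubharmonic-function results in that subsection, referring instead to standard references such as \cite{itca} and \cite{vlad}. Your direct verification via upper-semicontinuity plus a Dini-type compactness argument on the boundary circle is exactly the sort of elementary proof those references give, so there is nothing further to compare.
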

\subsection{Global pseudoconvexity}
As discussed in the previous subsection, the plurisubharmonic functions of complex variables are a natural generalisation of the convex functions of real variables. Together with the characterisation
of convexity in terms of convex exhaustion functions (Proposition~\ref{convexex}), this suggests the following definition:
\begin{definition}
A domain $U\subset\C^n$ is \textbf{pseudoconvex} if it admits a continuous plurisubharmonic exhaustion function.\index{pseudoconvex domain}
\end{definition}
\begin{example}
\label{ballex}
Let $U:=\ball(a,r)$ for some $a\in\C^n$ and $r>0$. Define $f\colon U\to\R$ by $f(z):=-\ln\emetric(z,\bd U)$, so clearly $f$ is continuous and an exhaustion function for $U$ (it tends to $+\infty$ as the boundary
points are approached). One may verify that $f(z)=-\ln\inf_{w\in\bd U} \{r^{-1} |\!\sp{z-w}{a-w}\!|\} = \sup_{w\in\bd U} \{-\ln |r^{-1}\! \sp{z-w}{a-w}\!|\}$, and 
since each $z\mapsto r^{-1} \sp{z-w}{a-w}$ is holomorphic and non-vanishing on $U$ we see from Propositions \ref{pshhol} and \ref{pshupper} that $f$ is plurisubharmonic. Thus $U$ is pseudoconvex.
A similar argument shows that
polydiscs are pseudoconvex.
\end{example} 
\begin{proposition}
\label{psint}
Let $U\subset\C^n$ and $V\subset\C^n$ be pseudoconvex domains. Then each component of $U\cap V$ is pseudoconvex.
\end{proposition}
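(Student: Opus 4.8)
The plan is to build a continuous plurisubharmonic exhaustion function for a component $W$ of $U\cap V$ directly from the plurisubharmonic exhaustion functions that exist on $U$ and on $V$ by hypothesis. Let $\phi_U\colon U\to\R$ and $\phi_V\colon V\to\R$ be continuous plurisubharmonic exhaustion functions for $U$ and $V$ respectively. The natural candidate for an exhaustion function on $W$ is $\psi(z):=\max\{\phi_U(z),\phi_V(z)\}$ restricted to $W$. It is continuous as the maximum of two continuous functions, and it is plurisubharmonic: the restriction of $\phi_U$ to $W$ is plurisubharmonic on $W$ (plurisubharmonicity is a local property, so restricting to an open subset is harmless), similarly for $\phi_V$, and the pointwise maximum of two plurisubharmonic functions is plurisubharmonic by Proposition~\ref{pshupper} applied with the two-element index set $\Lambda=\{U,V\}$ (the maximum is automatically upper-semicontinuous here, in fact continuous, and clearly finite-valued).

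The remaining point is to check that $\psi$ is an exhaustion function for $W$, i.e. that $\psi(z_j)\to+\infty$ whenever $\{z_j\}_{j\ge 1}\subset W$ with $z_j\to a\in\bd W$ or $|z_j|\to\infty$, using the criterion of Proposition~\ref{exasdf}. If $|z_j|\to\infty$ then $\phi_U(z_j)\to+\infty$ (since $\phi_U$ is an exhaustion function for $U\supset W$), hence $\psi(z_j)\to+\infty$. If $z_j\to a\in\bd W$, then since $W$ is a component of $U\cap V$ we have $\bd W\subset\bd(U\cap V)\subset\bd U\cup\bd V$, so $a\in\bd U$ or $a\in\bd V$; in the first case $\phi_U(z_j)\to+\infty$, in the second $\phi_V(z_j)\to+\infty$, and either way $\psi(z_j)=\max\{\phi_U(z_j),\phi_V(z_j)\}\to+\infty$. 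Thus $\psi$ is a continuous plurisubharmonic exhaustion function for $W$, so $W$ is pseudoconvex, and since $W$ was an arbitrary component of $U\cap V$ we are done.

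The only genuinely delicate point is the boundary inclusion $\bd W\subset\bd U\cup\bd V$: one must note that a point $a\in\bd W$ cannot lie in the open set $U\cap V$, for otherwise a whole neighbourhood of $a$ would lie in $U\cap V$ and, being connected and meeting $W$, would be contained in the component $W$, contradicting $a\notin W=\inter W$. Hence $a\notin U\cap V$, which gives $a\notin U$ or $a\notin V$, and combined with $a\in\clos{W}\subset\clos{U}\cap\clos{V}$ this yields $a\in\bd U$ or $a\in\bd V$. Everything else is routine: the plurisubharmonicity of the maximum is exactly Proposition~\ref{pshupper}, and the exhaustion property is a direct application of Proposition~\ref{exasdf}. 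I expect this to be the shortest proof in the section.
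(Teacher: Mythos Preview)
Your proof is correct and takes exactly the same approach as the paper: the paper's proof is the single sentence ``It is easily verified that the pointwise maximum of two continuous plurisubharmonic exhaustion functions for $U$ and $V$ yields such an exhaustion function for each component of $U\cap V$.'' You have simply spelled out the verification that the paper leaves to the reader, including the boundary inclusion $\bd W\subset\bd U\cup\bd V$.
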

\begin{proof}
It is easily verified that the pointwise maximum of two continuous plurisubharmonic exhaustion functions for $U$ and $V$ yields such an exhaustion function for each component of $U\cap V$.
\end{proof}

We will prove that the domains satisfying the continuity principle are precisely the pseudoconvex domains and thus demonstrate the first part of the connection between domains
of holomorphy and pseudoconvexity. \begin{theorem}
\label{pscty}
If a domain $U\subset\C^n$ is pseudoconvex then it satisfies the continuity principle.
\end{theorem}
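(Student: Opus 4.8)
The plan is to prove the contrapositive: if $U$ fails the continuity principle, then $U$ admits no continuous plurisubharmonic exhaustion function, hence is not pseudoconvex. So suppose there is a sequence of holomorphic discs $\{S_j\}_{j\geq 1}$ with $S_j\cup\bd S_j\subset U$, $\bd S_j\to B$ and $S_j\to T$ for compact sets $B\subset U$ and $T\subset\C^n$, but $T\not\subset U$. Pick a point $a\in T\setminus U$; since $T$ is connected and $B\subset T\subset U\cup\{T\setminus U\}$ with $a$ in the latter, the point $a$ must in fact lie on $\bd U$ (it is a limit of points of the discs $S_j\subset U$, so $a\in\clos U$, and $a\notin U$). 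The key idea is that a continuous plurisubharmonic exhaustion function would be forced to blow up near $a$, yet the maximum principle for plurisubharmonic functions on holomorphic discs (Proposition~\ref{pshmax}) bounds its values on the discs $S_j$ by its values on $\bd S_j$, which stay in a fixed compact set.

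More precisely, here are the steps. First, suppose for contradiction that $\psi\colon U\to\R$ is a continuous plurisubharmonic exhaustion function. Since $B\subset U$ is compact and $\psi$ is continuous, $M:=\sup_{z\in B}\psi(z)<\infty$. Second, because $\bd S_j\to B$, for every $\epsilon>0$ there is $J$ such that $j\geq J$ implies $\bd S_j\subset\dil{B}{\epsilon}$; choosing $\epsilon$ small enough that $\dil{B}{\epsilon}$ has compact closure inside $U$ (possible as $B\subset U$ is compact), continuity of $\psi$ gives a uniform bound $\psi\leq M'$ on $\clos{\dil{B}{\epsilon}}$, hence $\sup_{z\in\bd S_j}\psi(z)\leq M'$ for all $j\geq J$. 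Third, apply Proposition~\ref{pshmax} to each disc $S_j$ (which satisfies $S_j\cup\bd S_j\subset U$): this yields $\sup_{z\in S_j}\psi(z)\leq\sup_{z\in\bd S_j}\psi(z)\leq M'$ for all $j\geq J$. Fourth, use the convergence $S_j\to T$: there is a sequence $z_j\in S_j$ with $z_j\to a\in\bd U$. Then $\psi(z_j)\leq M'$ for large $j$, but since $\psi$ is an exhaustion function for $U$ and $z_j\to a\in\bd U$, Proposition~\ref{exasdf} forces $\psi(z_j)\to+\infty$, a contradiction.

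I expect the only genuinely delicate point to be the bookkeeping in the second step: one must extract from $B\subset U$ compact and $\bd S_j\to B$ a single compact neighbourhood of $B$ contained in $U$ on which $\psi$ is bounded, and verify $\bd S_j$ eventually lies inside it. This is routine — choose $r>0$ with $\clos{\dil{B}{r}}\subset U$ (using $\emetric(B,\bd U)>0$), set $K:=\clos{\dil{B}{r}}$, take $M':=\max_K\psi$, and invoke the definition of $\bd S_j\to B$ with $\epsilon=r$. The rest is immediate from the cited propositions; in particular we must note that each $S_j\colon\clos{\ball(0,1)}\to\C^n$ is a holomorphic disc into $U$ so that Proposition~\ref{pshmax} genuinely applies. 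No new machinery beyond Propositions \ref{exasdf} and \ref{pshmax} is needed.
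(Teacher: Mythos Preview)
Your proposal is correct and follows essentially the same approach as the paper: bound the exhaustion function on a compact neighbourhood of $B$ that eventually contains all $\bd S_j$, invoke Proposition~\ref{pshmax} to push this bound onto the discs $S_j$, and then contradict the exhaustion property along a sequence $z_j\in S_j$ with $z_j\to a\in T\setminus U$. The only cosmetic difference is that the paper finishes by noting directly that the sublevel set $\psi^{-1}((-\infty,M'])$ is compact (hence closed in $\C^n$) and so must contain the limit $a\notin U$, whereas you first observe $a\in\bd U$ and appeal to Proposition~\ref{exasdf}; both endings are equivalent.
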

\begin{proof}
By pseudoconvexity, $U$ admits a continuous plurisubharmonic exhaustion function $f\colon U\to\R$. Let $\{S_j\}_{j\geq 1}$ be a sequence of holomorphic discs
satisfying $S_j\cup \bd S_j\subset U$ for all $j\geq 1$, $\bd S_j\to B$ and $S_j\to T$ where $B\subset U$ and $T\subset \C^n$ are compact. Suppose, for a contradiction, that $T\not\subset U$,
so there exists $a\in T\setminus U$, and by convergence $S_j\to T$ we have a sequence $\{z_j\}_{j\geq 1}\subset U$ with $z_j\in S_j$ for each $j\geq 1$ and $z_j\to a$. 
By convergence $\bd S_j\to B$ and compactness of $B$ there exists a compact $K\subset U$ and an integer $J\geq 1$ such that $\bd S_j\subset K$ for $j\geq J$. Let $M:=\|f\|_K<\infty$,
so for $j\geq J$ we have $f(z_j)\leq M$ (by Proposition~\ref{pshmax}).
Therefore $z_j\in f^{-1}((-\infty,M])$
for all $j\geq J$. But $z_j\to a$ and $f^{-1}((-\infty,M])$ is compact (since $f$ is an exhaustion function), so $a\in f^{-1}((-\infty,M])$, which contradicts the fact that $a\not\in U$.
Thus $T\subset U$, so $U$ satisfies the continuity principle.
\end{proof}
The converse is a consequence of the following intermediate results:
\begin{lemma}
\label{bdedispsh}
If $U\subset\C^n$ is a bounded non-empty domain which satisfies the continuity principle then $z\mapsto -\ln\emetric(z,\bd U)$ is plurisubharmonic (and hence $U$ is pseudoconvex).
\end{lemma}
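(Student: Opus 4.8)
The plan is to show directly that $\phi(z) := -\ln\emetric(z,\bd U)$ is plurisubharmonic on $U$ by reducing, via Proposition~\ref{pshlocal}, to checking subharmonicity of $\phi$ restricted to (small) complex discs, and to verify that subharmonicity by an argument contradicting the continuity principle. Fix $a\in U$ and a direction $\delta\in\C^n$ with $|\delta|=1$, and consider $\psi(\zeta):=\phi(a+\delta\zeta)=-\ln\emetric(a+\delta\zeta,\bd U)$ on some small disc $\ball(0,\rho)\subset\{\zeta : a+\delta\zeta\in U\}$. Since $z\mapsto\emetric(z,\bd U)$ is continuous and positive on $U$, $\psi$ is continuous, hence upper-semicontinuous, so the only thing to check is the sub-mean-value/domination property. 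So suppose $\rho'<\rho$ and $h$ is a real-valued function continuous on $\clos{\ball(0,\rho')}$, harmonic on $\ball(0,\rho')$, with $h\geq\psi$ on $\bd\ball(0,\rho')$; I must show $h\geq\psi$ on $\ball(0,\rho')$.

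The key trick is the standard one: let $\tilde h$ be a harmonic conjugate of $h$ on the (simply connected) disc $\ball(0,\rho')$, so $g:=h+i\tilde h$ is holomorphic there, and $|e^{-g(\zeta)}|=e^{-h(\zeta)}$. The inequality $h\geq\psi$ on the boundary says $e^{-h}\leq\emetric(\cdot,\bd U)$ there; we want the same inequality in the interior. Suppose not: then for some $\zeta_0\in\ball(0,\rho')$ and some boundary point $w_0\in\bd U$ we have $e^{-h(\zeta_0)}>|(a+\delta\zeta_0)-w_0|$, i.e. the closed Euclidean ball of radius $e^{-h(\zeta_0)}$ around $a+\delta\zeta_0$ already meets $U^c$. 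Now fix an arbitrary unit vector $v\in\C^n$ and build, for each $t\in[0,1]$, the holomorphic disc
$$ S_t(\zeta) := \big(a+\delta(\rho'\zeta)\big) + t\, e^{-g(\rho'\zeta)}\, v, \qquad \zeta\in\clos{\ball(0,1)}. $$
For $t$ small these discs, together with their boundaries, lie in $U$ (because on $\bd\ball(0,1)$, i.e. $|\zeta|=1$, the point $a+\delta(\rho'\zeta)$ is at Euclidean distance $\emetric(a+\delta(\rho'\zeta),\bd U)\geq e^{-h(\rho'\zeta)}=|e^{-g(\rho'\zeta)}|$ from $U^c$, so adding $t e^{-g}v$ with $t<1$ keeps us inside $U$; a short continuity/compactness argument shows $S_t\cup\bd S_t\subset U$ for all $t$ in a relatively open set containing $0$). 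Let $t^*$ be the supremum of the $t\in[0,1]$ for which $S_s\cup\bd S_s\subset U$ for all $s\leq t$; by the previous remark on boundaries, $\bd S_t\subset U$ for all $t\in[0,1]$, and the family $\{\bd S_t\}$ has compact union contained in $U$. Choosing $v$ pointing from $a+\delta\zeta_0$ toward $w_0$ and using $e^{-h(\zeta_0)}>|(a+\delta\zeta_0)-w_0|$, one sees that the full disc $S_1$ is not contained in $U$ (the point $\zeta=\zeta_0/\rho'$ already escapes), so $t^*<1$. Apply the continuity principle to a sequence $t_j\nearrow t^*$: the discs $S_{t_j}$ converge (uniformly, since $g$ is continuous on the closed disc) to $S_{t^*}$, their boundaries converge to $\bd S_{t^*}\subset U$, so the continuity principle forces $S_{t^*}\cup\bd S_{t^*}\subset U$; but then openness of $U$ and the same small-perturbation argument let us push past $t^*$, contradicting maximality. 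Hence $e^{-h}\leq\emetric(\cdot,\bd U)$ throughout $\ball(0,\rho')$, i.e. $h\geq\psi$ there, proving $\psi$ subharmonic.

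By Proposition~\ref{pshlocal} (applied with the small discs $\ball(0,\rho)$ just constructed), $\phi$ is plurisubharmonic on $U$. Finally, since $U$ is bounded, $|z|$ is bounded on $U$, and $\emetric(z,\bd U)\to 0$ as $z\to\bd U$, so $\phi(z)=-\ln\emetric(z,\bd U)\to+\infty$ at every boundary point and also controls behaviour at infinity trivially (there is none); by Proposition~\ref{exasdf}, $\phi$ is a continuous exhaustion function for $U$, and being plurisubharmonic it witnesses that $U$ is pseudoconvex. The main obstacle is the bookkeeping in the continuity-principle step: one must be careful that the discs $S_t$ and their \emph{boundaries} stay in $U$ for a genuinely open set of parameters and that the limiting disc at $t^*$ is handled correctly — in particular that $\bd S_t$ lies in $U$ for \emph{all} $t\in[0,1]$ (which is what makes the limit set $B$ in the continuity principle compact and contained in $U$), while the interior of $S_t$ is what eventually escapes. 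The rest (upper-semicontinuity of $\phi$, existence of the harmonic conjugate on the disc, the exhaustion property) is routine.
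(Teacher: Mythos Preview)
Your strategy is essentially the same as the paper's: use the harmonic conjugate to promote a dominating harmonic function to a holomorphic one, build a one-parameter family of holomorphic discs translating the slice, and invoke the continuity principle to derive a contradiction. The paper organises things slightly differently: it first writes $-\ln\emetric(z,\bd U)=\sup_{|\delta|=1}\{-\ln\emetric_\delta(z)\}$, where $\emetric_\delta$ is the distance to $\bd U$ \emph{along the complex line in direction $\delta$}, and shows each $-\ln\emetric_\delta$ is plurisubharmonic (using Proposition~\ref{pshupper} to pass to the supremum). Its discs then translate in the fixed direction $\delta$ rather than in your ad hoc direction $v$. Both routes are standard and lead to the same contradiction.

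There is, however, a genuine gap in your write-up. You assert that $g=h+i\tilde h$ is continuous on the closed disc $\clos{\ball(0,\rho')}$; this is what you need for $S_t$ to be a holomorphic disc in the sense of the paper (continuous on the closed unit disc) and for the convergence $S_{t_j}\to S_{t^*}$ to make sense. But the harmonic conjugate $\tilde h$ of a function merely continuous on the closed disc and harmonic inside need \emph{not} extend continuously to the boundary. The paper handles this by an $\epsilon$-adjustment of the harmonic function (so that the boundary inequality becomes strict) followed by shrinking the radius from $R$ to some $r<R$; then $g$ is holomorphic on $\ball(0,R)\supset\clos{\ball(0,r)}$ and hence certainly continuous there. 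You need the same manoeuvre: replace $h$ by $h+\eta$ for small $\eta>0$ (still harmonic, still failing the inequality at $\zeta_0$ if $\eta$ is small), then shrink $\rho'$ slightly so that $g$ is holomorphic on a neighbourhood of the closed disc and the boundary inequality $h+\eta>\psi$ still holds on the new circle.

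Two smaller points. First, you should normalise the additive constant in $\tilde h$ (equivalently, the phase of $v$) so that $e^{-g(\zeta_0)}v$ actually points toward $w_0$; otherwise $S_t(\zeta_0/\rho')$ moves off at the wrong angle. Second, your claim that ``the full disc $S_1$ is not contained in $U$'' is not quite right: the point $S_1(\zeta_0/\rho')$ overshoots $w_0$ and could land back in $U$. What is true (and suffices) is that for $t_0:=|w_0-(a+\delta\zeta_0)|\,e^{h(\zeta_0)}<1$ one has $S_{t_0}(\zeta_0/\rho')=w_0\in\bd U$, so $t^*\leq t_0<1$.
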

\begin{proof}
First we note that $z\mapsto |z|^2-\ln\emetric(z,\bd U)$ is a continuous exhaustion function for $U$ (see Example~\ref{exex}) and that $z\mapsto |z|^2$ is plurisubharmonic, so
plurisubharmonicity of $z\mapsto -\ln\emetric(z,\bd U)$ implies pseudoconvexity of $U$. It remains to demonstrate plurisubharmonicity of $z\mapsto -\ln\emetric(z,\bd U)$.

For $\delta\in\C^n$ with $|\delta|=1$ and $z\in U$ define $\emetric_\delta(z):=\inf_{w\in\C}\{|w|\colon z+\delta w\in\bd U\}$, and notice that
$-\ln\emetric(z,\bd U)=-\ln\inf_{|\delta|=1} \{\emetric_\delta(z)\}=\sup_{|\delta|=1}\{-\ln \emetric_\delta(z)\}$, 
so by Proposition~\ref{pshupper} it is enough to show that
$\emetric_\delta$ is plurisubharmonic for each $\delta$. 

Let $\delta\in\C^n$ with $|\delta|=1$ be fixed and for brevity set $f:=\emetric_\delta$ and  
$L_z:=\{z+\delta w\colon w\in\C\}$ (for $z\in U$). It is easily verified that $f(z)=\emetric(z,\bd U\cap L_z)$ and that $f$ is upper-semicontinuous.
Let $a\in U$ and $\delta'\in\C^n$ with $|\delta'|=1$ be arbitrary. We will show that $z\mapsto f(a+\delta' z)$ is subharmonic in a neighbourhood of $0$, and it will follow from Proposition~\ref{pshlocal}
that $f$
is plurisubharmonic on $U$. 

Suppose, for a contradiction, that $z\mapsto f(a+\delta' z)$ is not subharmonic in a neighbourhood of $0$. Then there exists $R>0$
and a continuous function $\phi\colon\clos{\ball(0,R)}\subset\C\to\R$ which is harmonic on $\ball(0,R)$ and satisfies $\phi(z)\geq f(a+\delta' z)$ for $z\in\bd\ball(0,R)$
but $f(a+\delta' z)>\phi(z)$ at a point of $\ball(0,R)$. Clearly $z\mapsto f(a+\delta' z)-\phi(z)$ is upper-semicontinuous on $\clos{\ball(0,R)}$ so it attains
its maximum value $\epsilon>0$ at a point $z_0\in\ball(0,R)$. Set $\psi(z):=-\phi(z)-\epsilon$, so $\psi$ is continuous on $\clos{\ball(0,R)}$ and
harmonic on $\ball(0,R)$. For $z\in\bd\ball(0,R)$ we have $\psi(z)+f(a+\delta'z)<0$, so by upper-semicontinuity this inequality also holds for $z\in\bd\ball(0,r)$ when $r<R$
is sufficiently close to $R$. Thus there exists $r$ with $|z_0|<r<R$ such that whenever $z\in\bd\ball(0,r)$ we have $\psi(z)<-f(a+\delta'z)$.
It is also clear that
when $z\in\ball(0,r)$
we have $\psi(z)\leq -f(a+\delta' z)$, and that $\psi(z_0)=-f(a+\delta'z_0)$.

By definition 
$f(a+\delta' z_0)=-\ln \emetric_\delta(a+\delta'z_0)=-\ln\emetric(a+\delta' z_0,\bd U\cap L_{a+\delta'z_0})$,
and $\emetric(a+\delta'z_0,\bd U\cap L_{a+\delta'z_0})=|a+\delta'z_0-b|$ for some $b\in\bd U\cap L_{a+\delta'z_0}$,
so $b=a+\delta'z_0+\delta w_0$ for some $w_0\in\C$
and $f(a+\delta' z_0)=-\ln|w_0|$. 
Let $g\in\hol(\ball(0,R))$ be a holomorphic function with real part $\psi$.
Since $|w_0|=e^{-f(a+\delta'z_0)}=e^{\psi(z_0)}$, by adding a constant imaginary number to $g$ if necessary we may assume that $e^{g(z_0)}=w_0$. 

For $t\in (0,1]$ let
$S_t\colon\clos{\ball(0,1)}\to \C^n$ be given by $S_t(z):=a+\delta'rz+\delta t e^{g(rz)}$, so each $S_t$ is a holomorphic disc. Furthermore, as $j\to\infty$ we have $S_{1-1/j}\to S_1\cup \bd S_1$
and $\bd S_{1-1/j}\to \bd S_1$. Notice that $S_t(z)\in L_{a+\delta'rz}$ for all $t\in (0,1]$ and $z\in\clos{\ball(0,1)}$, so when $|te^{g(rz)}|<\emetric_\delta(a+\delta'rz)$
we have $S_t(z)\in U$. But $|te^{g(rz)}|=te^{\psi(rz)}$, when $z\in\clos{\ball(0,1)}$ we have $\psi(rz)\leq -f(a+\delta'rz)=\ln\emetric_\delta(a+\delta'rz)$,
and when $z\in\bd\ball(0,1)$ this inequality is strict.
Therefore when $t<1$ and $z\in\clos{\ball(0,1)}$ we have $|te^{g(rz)}|<e^{\psi(rz)}\leq \emetric_\delta(a+\delta'rz)$, which implies that $S_t\cup\bd S_t\subset U$ for $t<1$. Similarly,
when $t=1$ and $z\in\bd\ball(0,1)$ we have $|te^{g(rz)}|=e^{\psi(rz)}<\emetric_\delta(a+\delta'rz)$, meaning $\bd S_1\subset U$. 
That is, we have a sequence $\{S_{1-1/j}\}_{j\geq 1}$ of holomorphic discs satisfying $S_{1-1/j}\cup\bd S_{1-1/j}\subset U$ for each $j\geq 1$, 
$S_{1-1/j}\to S_1\cup\bd S_1$ and $\bd S_{1-1/j}\to\bd S_1$ where $S_1\cup \bd S_1$ and $\bd S_1\subset U$ are compact, so by the continuity principle we have $S_1\cup\bd S_1\subset U$. But
$S_1(z_0/r)=a+\delta'z_0+\delta e^{g(z_0)}=a+\delta'z_0+\delta w_0=b\in\bd U$, which is a contradiction. Therefore $z\mapsto f(a+\delta' z)$ is subharmonic in a neighbourhood of $0$, as required.
\end{proof}
\begin{lemma}
\label{incps2}
Let $\{U_j\}_{j\geq 1}$ be a sequence of pseudoconvex domains of $\C^n$ with $U_j\subset U_{j+1}$ for each $j\geq 1$. Then $U:=\bigcup_{j\geq 1} U_j$ is pseudoconvex.
\end{lemma}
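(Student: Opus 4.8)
The plan is to build a continuous plurisubharmonic exhaustion function for $U:=\bigcup_{j\ge1}U_j$. First I would dispatch the case $\bd U=\emptyset$: then $U=\C^n$ and $z\mapsto|z|^2$ works (plurisubharmonic by Proposition~\ref{leviformpsh}, an exhaustion function since $|z|\to\infty$). So assume $\bd U\neq\emptyset$. By Example~\ref{exex} the function $\phi(z):=|z|^2-\ln\emetric(z,\bd U)$ is a continuous exhaustion function for $U$, and since $|z|^2$ is plurisubharmonic and sums of plurisubharmonic functions are plurisubharmonic, it is enough to show that $g(z):=-\ln\emetric(z,\bd U)$ is plurisubharmonic on $U$. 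As plurisubharmonicity is a local property, it suffices to prove $g$ is plurisubharmonic on a neighbourhood of each point of $U$.

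Fix $a\in U$, pick $j_0$ with $a\in U_{j_0}$ and $\rho>0$ with $\clos{\ball(a,\rho)}\subset U_{j_0}$, so that $\ball(a,\rho)\subset U_j$ for all $j\ge j_0$; for such $j$ put $g_j(z):=-\ln\emetric(z,\bd U_j)$, a continuous real-valued function on $\ball(a,\rho)$. I would then establish, for each fixed $z\in\ball(a,\rho)$, that $\{g_j(z)\}_{j\ge j_0}$ is non-increasing and tends to $g(z)$. Monotonicity is clear since $U_j\subset U_{j+1}$ makes $\emetric(z,\bd U_j)=\emetric(z,U_j^c)$ non-decreasing in $j$, and $\emetric(z,U_j^c)\le\emetric(z,U^c)$ always holds; for the limit, if $\emetric(z,U_j^c)\to L<\emetric(z,U^c)$ one chooses $w_j\in U_j^c$ with $|z-w_j|\to L$, extracts a convergent subsequence $w_{j_k}\to w$ with $|z-w|\le L<\emetric(z,U^c)$, notes $w\in U$ so $w$ lies in some open $U_m$, and derives a contradiction with $w_{j_k}\in U_{j_k}^c$.

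The key step is to show each $g_j$ is plurisubharmonic on $\ball(a,\rho)$; here I would localise the possibly unbounded domain $U_j$ to a bounded one. Set $M:=\emetric(a,\bd U)+\rho$, choose $N$ with $\clos{\ball(a,M+\rho)}\subset\ball(0,N)$, and let $W$ be the connected component of $U_j\cap\ball(0,N)$ containing $\ball(a,\rho)$. Since $U_j$ and $\ball(0,N)$ are pseudoconvex (Example~\ref{ballex}), $W$ is a bounded non-empty pseudoconvex domain by Proposition~\ref{psint}; by Theorem~\ref{pscty} it satisfies the continuity principle, so Lemma~\ref{bdedispsh} shows $z\mapsto-\ln\emetric(z,\bd W)$ is plurisubharmonic on $W$, hence on $\ball(a,\rho)$. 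It then remains to verify $\emetric(z,\bd W)=\emetric(z,\bd U_j)$ for $z\in\ball(a,\rho)$: since $U_j^c\subset W^c$ one has $\le$; for $\ge$, take $w\in W^c$ with $|z-w|=\emetric(z,W^c)\le M$, so $w\in\ball(0,N)$ and the open segment from $z$ to $w$ lies in $W\subset U_j$, and if $w$ were in $U_j$ then that segment together with $w$ would be a connected subset of $U_j\cap\ball(0,N)$ meeting $W$, forcing $w\in W$ — impossible — so $w\in U_j^c$ and $\emetric(z,U_j^c)\le|z-w|$.

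Putting these together, Proposition~\ref{decpsh} (a pointwise non-increasing limit of plurisubharmonic functions is plurisubharmonic) yields that $g$ is plurisubharmonic on $\ball(a,\rho)$; since $a$ was arbitrary, $g$ is plurisubharmonic on $U$, so $\phi$ is a continuous plurisubharmonic exhaustion function and $U$ is pseudoconvex. The main obstacle is the localisation in the key step: one must pass from the given $U_j$, which need not be bounded, to a bounded pseudoconvex domain to which Lemma~\ref{bdedispsh} applies, and simultaneously guarantee — via the component-and-segment argument — that truncating against a large ball does not alter the distance to the boundary near $a$.
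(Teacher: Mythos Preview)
Your proof is correct and follows essentially the same approach as the paper: reduce to bounded pseudoconvex domains (so that Lemma~\ref{bdedispsh} applies via Theorem~\ref{pscty}), and then pass to the limit using Proposition~\ref{decpsh}. The only organizational difference is that the paper replaces the whole sequence $\{U_j\}$ upfront by the bounded domains given by the components of $U_j\cap\ball(z_0,j)$ containing a fixed $z_0$, which makes the convergence $\emetric(z,\bd U_j)\to\emetric(z,\bd U)$ automatic and thereby avoids your segment-and-component argument; your per-point localization accomplishes the same thing with a bit more work.
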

\begin{proof}
If $U=\emptyset$ the result is trivial ($z\mapsto |z|^2$ is a continuous plurisubharmonic exhaustion function), 
so we assume this is not the case. By fixing $z_0\in U_1$ and replacing each $U_j$ by the component of $U_j\cap \ball(z_0,j)$ containing $z_0$ 
we may assume that each $U_j$ is bounded and non-empty (note that each such domain is pseudoconvex by Example~\ref{ballex} and Proposition~\ref{psint}).

As in the proof of Lemma~\ref{bdedispsh}, pseudoconvexity of $U$ will follow from plurisubharmonicity of $z\mapsto -\ln\emetric(z,\bd U)$.
Let $J\geq 1$ and $z\in U_J$. An elementary argument shows that the sequence $\{\emetric(z,\bd U_j)\}_{j\geq J}$ is non-decreasing and converges to $\emetric(z,\bd U)$, so by
continuity of $\ln$ we see that $\{-\ln\emetric(z,\bd U_j)\}_{j\geq J}$ is non-increasing
and converges to $-\ln\emetric(z,\bd U)$.
By Theorem~\ref{pscty} and Lemma~\ref{bdedispsh} we know that each $z\mapsto -\ln\emetric(z,\bd U_j)$ is plurisubharmonic, so from
Proposition~\ref{decpsh} it follows that $z\mapsto -\ln\emetric(z,\bd U)$ is plurisubharmonic
 on $U_J$. Since plurisubharmonicity is local and $U=\bigcup_{J\geq 1} U_J$ it follows
that $z\mapsto -\ln\emetric(z,\bd U)$ is plurisubharmonic on $U$.
\end{proof}
\begin{theorem}
\label{ctyps}
If a domain $U\subset\C^n$ satisfies the continuity principle then it is pseudoconvex.
\end{theorem}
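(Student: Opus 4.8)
The plan is to reduce the general (possibly unbounded) case to the bounded case handled by Lemma~\ref{bdedispsh}, and then glue using Lemma~\ref{incps2}. If $U=\emptyset$ the statement is trivial, so assume $U\neq\emptyset$ and fix a point $z_0\in U$. For each $j\geq 1$ let $U_j$ denote the connected component of $U\cap\ball(z_0,j)$ containing $z_0$; each $U_j$ is then a bounded non-empty domain.

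The first key step is to check that each $U_j$ satisfies the continuity principle. Since balls are pseudoconvex by Example~\ref{ballex}, they satisfy the continuity principle by Theorem~\ref{pscty}; as $U$ satisfies the continuity principle by hypothesis, Proposition~\ref{ctyint} applied to $U$ and $\ball(z_0,j)$ shows that every component of $U\cap\ball(z_0,j)$ — in particular $U_j$ — satisfies the continuity principle. Being a bounded non-empty domain satisfying the continuity principle, $U_j$ is pseudoconvex by Lemma~\ref{bdedispsh}.

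The second step is to verify that $\{U_j\}_{j\geq 1}$ is an increasing exhaustion of $U$. Since $\ball(z_0,j)\subset\ball(z_0,j+1)$ we have $U_j\subset U\cap\ball(z_0,j+1)$; as $U_j$ is connected and contains $z_0$ it lies in the component $U_{j+1}$, so $U_j\subset U_{j+1}$. For $\bigcup_{j\geq1}U_j=U$, take any $z\in U$; since a domain is path-connected there is a path in $U$ from $z_0$ to $z$, its (compact) image lies in some $\ball(z_0,j)$, hence in the connected set $U\cap\ball(z_0,j)$ containing $z_0$, hence in $U_j$, giving $z\in U_j$.

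Finally, Lemma~\ref{incps2} applied to the increasing sequence $\{U_j\}_{j\geq1}$ of pseudoconvex domains yields that $U=\bigcup_{j\geq1}U_j$ is pseudoconvex, completing the proof. There is no serious obstacle here: the substantive analytic work is already contained in Lemma~\ref{bdedispsh} (plurisubharmonicity of $z\mapsto-\ln\emetric(z,\bd U)$ on a bounded domain), and the only point requiring a moment's care is confirming that the continuity principle descends to the components $U_j$, which is exactly what Proposition~\ref{ctyint} provides.
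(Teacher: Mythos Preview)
Your proof is correct and follows essentially the same approach as the paper: exhaust $U$ by the components $U_j$ of $U\cap\ball(z_0,j)$ containing a fixed point, use Proposition~\ref{ctyint} to see each $U_j$ satisfies the continuity principle, apply Lemma~\ref{bdedispsh} to get pseudoconvexity of each $U_j$, and conclude via Lemma~\ref{incps2}. You have simply filled in more detail (e.g.\ why balls satisfy the continuity principle via Example~\ref{ballex} and Theorem~\ref{pscty}, and why the $U_j$ increase and exhaust $U$), but the structure is identical to the paper's argument.
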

\begin{proof}
If $U=\emptyset$ the result is trivial, so suppose $U\neq\emptyset$. Let $z_0\in U$ and for $j\geq 1$ let $U_j$ be the component of $U\cap\ball(z_0,j)$ containing $z_0$, so $U_j$ satisfies
the continuity principle by Proposition~\ref{ctyint}, $U_j\subset U_{j+1}$ for each $j$ and $U=\bigcup_{j\geq 1} U_j$. The result now follows from Lemmas~\ref{bdedispsh} and \ref{incps2}.
\end{proof}
Along with Theorem~\ref{holcp}, this result shows that domains of holomorphy are pseudoconvex. We will show later that the converse is also true and thus arrive at the solution to the Levi problem. 

Theorem~\ref{pscty}, together with the proofs of Lemma~\ref{bdedispsh} and Theorem~\ref{ctyps}, immediately implies the following useful fact:
\begin{corollary}
A domain $U\subset\C^n$ (with $\bd U\neq\emptyset$) is pseudoconvex if and only if $z\mapsto -\ln\emetric(z,\bd U)$ is plurisubharmonic.
\end{corollary}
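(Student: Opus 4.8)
The plan is essentially to assemble results already proved, so there is little new work. Write $\phi(z):=-\ln\emetric(z,\bd U)$, which is continuous on $U$ since $\emetric(\cdot,\bd U)$ is continuous and strictly positive there; in particular $\phi$ is automatically upper-semicontinuous, so ``$\phi$ is plurisubharmonic'' is a meaningful hypothesis.

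For the converse direction (the easy one), suppose $\phi$ is plurisubharmonic. Because $\bd U\neq\emptyset$, Example~\ref{exex} tells us that $f(z):=|z|^2-\ln\emetric(z,\bd U)=|z|^2+\phi(z)$ is a continuous exhaustion function for $U$. Since $z\mapsto|z|^2$ is (strictly) plurisubharmonic and a sum of two plurisubharmonic functions is plurisubharmonic, $f$ is a continuous plurisubharmonic exhaustion function for $U$; hence $U$ is pseudoconvex by definition.

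For the forward direction, suppose $U$ is pseudoconvex. By Theorem~\ref{pscty}, $U$ satisfies the continuity principle. Lemma~\ref{bdedispsh} cannot be applied to $U$ directly, since it is stated only for bounded domains, so I would first exhaust: fix $z_0\in U$ and let $U_j$ be the component of $U\cap\ball(z_0,j)$ containing $z_0$, so each $U_j$ is a bounded non-empty domain satisfying the continuity principle by Proposition~\ref{ctyint} (using that balls satisfy the continuity principle), with $U_j\subset U_{j+1}$ and $U=\bigcup_{j\geq1}U_j$. By Lemma~\ref{bdedispsh}, $z\mapsto-\ln\emetric(z,\bd U_j)$ is plurisubharmonic on $U_j$ for each $j$. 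Then, exactly as in the proof of Lemma~\ref{incps2}, for each fixed $J\geq1$ the sequence $\{-\ln\emetric(z,\bd U_j)\}_{j\geq J}$ is non-increasing on $U_J$ and converges pointwise to $\phi$, so Proposition~\ref{decpsh} shows $\phi$ is plurisubharmonic on $U_J$; since $J$ is arbitrary and plurisubharmonicity is a local property, $\phi$ is plurisubharmonic on $U$.

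The only real subtlety, and hence the ``main obstacle'', is the need in the forward direction to route through the bounded exhausting domains $U_j$ and take a monotone limit, since Lemma~\ref{bdedispsh} is available only in the bounded case; this is precisely the device from the proofs of Lemma~\ref{incps2} and Theorem~\ref{ctyps}, and everything else is a direct citation.
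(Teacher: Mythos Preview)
Your proposal is correct and follows essentially the same route as the paper: the paper simply remarks that the corollary is immediate from Theorem~\ref{pscty} together with the proofs of Lemma~\ref{bdedispsh} and Theorem~\ref{ctyps}, and what you have written is exactly the unpacking of that sentence (the ``converse'' direction is the first paragraph of the proof of Lemma~\ref{bdedispsh}, and your ``forward'' direction is the exhaustion-by-balls device of Theorem~\ref{ctyps} combined with the monotone-limit argument from the proof of Lemma~\ref{incps2}).
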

\subsection{Local pseudoconvexity}
Recall that convexity of a domain is a local property of the boundary. We may introduce such a local notion of pseudoconvexity:
\begin{definition}
Let $U\subset\C^n$ be a domain and $a\in\bd U$. Then $U$ is \textbf{locally pseudoconvex} at $a$ if there is a neighbourhood $V$ of $a$
such that every component of $U\cap V$ is pseudoconvex. If $U$ is locally pseudoconvex at each $a\in\bd U$ it is \textbf{locally pseudoconvex}.\index{locally pseudoconvex domain}\index{pseudoconvex domain}
\end{definition}
We will find that pseudoconvexity and local pseudoconvexity are equivalent, thus indicating that pseudoconvexity itself is a local property:\begin{theorem}
\label{pslocal}
Let $U\subset\C^n$ be a domain. Then $U$ is pseudoconvex if and only if $U$ is locally pseudoconvex.
\end{theorem}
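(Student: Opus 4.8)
The plan is to prove the two implications separately; the forward one is immediate and the converse is the substance.

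\emph{Pseudoconvex $\Rightarrow$ locally pseudoconvex.} Given $a\in\bd U$, take $V:=\ball(a,1)$, which is pseudoconvex by Example~\ref{ballex}; by Proposition~\ref{psint} every component of $U\cap V$ is pseudoconvex, so $U$ is locally pseudoconvex at $a$, hence (as $a\in\bd U$ was arbitrary) locally pseudoconvex.

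\emph{Locally pseudoconvex $\Rightarrow$ pseudoconvex.} If $\bd U=\emptyset$ then $U\in\{\emptyset,\C^n\}$ and $z\mapsto|z|^2$ is a continuous plurisubharmonic exhaustion function, so assume $\bd U\neq\emptyset$. I first reduce to bounded $U$. Fix $z_0\in U$ and let $U_j$ be the component of $U\cap\ball(z_0,j)$ containing $z_0$, so $\{U_j\}_{j\geq1}$ is an increasing exhaustion of $U$ by bounded domains. A point-set argument shows each $U_j$ is locally pseudoconvex: at a point of $\bd U_j\cap\bd U$ one combines local pseudoconvexity of $U$ with Proposition~\ref{psint} applied to the (convex, hence pseudoconvex) ball $\ball(z_0,j)$, using that every component of $U_j\cap W$ is a component of $U\cap\ball(z_0,j)\cap W$; at a point of $\bd U_j$ lying on $\bd\ball(z_0,j)$ but not on $\bd U$, $U_j$ agrees locally and componentwise with the pseudoconvex ball $\ball(z_0,j)$. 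Granting the bounded case, Lemma~\ref{incps2} then shows $U=\bigcup_jU_j$ is pseudoconvex.

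So let $U$ be bounded, hence $\bd U\neq\emptyset$, and set $\varphi(z):=-\ln\emetric(z,\bd U)$. For each $a\in\bd U$ choose a ball $B_a:=\ball(a,r_a)$ such that every component of $U\cap B_a$ is pseudoconvex; each such component $C$ is bounded and satisfies the continuity principle by Theorem~\ref{pscty}, so $z\mapsto-\ln\emetric(z,\bd C)$ is plurisubharmonic on $C$ by Lemma~\ref{bdedispsh}. Elementary topology shows that for $z\in U$ with $|z-a|<r_a/2$, writing $C$ for the component of $U\cap B_a$ containing $z$, one has $\emetric(z,\bd U)=\emetric(z,\bd C)$: the ball $\ball(z,\emetric(z,\bd U))$ and a nearest point of $\bd U$ to $z$ both lie in $B_a$, and $\bd C\subset\bd(U\cap B_a)$. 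Hence $\varphi$ is plurisubharmonic on $U\cap\ball(a,r_a/2)$ for every $a\in\bd U$. Since $\bd U$ is compact, finitely many balls $\ball(a_i,r_{a_i}/2)$ cover it, so there is $c>0$ with $\{z\in U:\emetric(z,\bd U)<c\}\subset\bigcup_i\ball(a_i,r_{a_i}/2)$, and therefore $\varphi$ is plurisubharmonic on the collar $\Omega:=\{z\in U:\emetric(z,\bd U)<c\}$. Now put $\chi:=\max(\varphi,-\ln(c/2))$ on $U$. Then $\chi$ is continuous, and it is plurisubharmonic on $U$ because it equals the constant $-\ln(c/2)$ on the open set $\{z\in U:\emetric(z,\bd U)>c/2\}$ and is a maximum of plurisubharmonic functions on the open set $\Omega$, and these two open sets cover $U$. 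Moreover $\chi$ is an exhaustion function: each sublevel set $\chi^{-1}((-\infty,r])$ is empty or (for $e^{-r}\leq c/2$) equals $\{z\in U:\emetric(z,\bd U)\geq e^{-r}\}$, which is closed in $\C^n$ and bounded, hence compact. So $U$ admits a continuous plurisubharmonic exhaustion function and is pseudoconvex, completing the proof.

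\emph{Main obstacle.} Most of this is bookkeeping once the earlier results are available; the point needing care is the local identity $\emetric(z,\bd U)=\emetric(z,\bd C)$ near a boundary point — together with $\bd C\subset\bd(U\cap B_a)$, which is what actually transfers plurisubharmonicity from the local pseudoconvex piece $C$ back to $U$ — and the verification that the domains $U_j$ in the reduction to the bounded case are indeed locally pseudoconvex.
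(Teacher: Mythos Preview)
Your proof is correct and follows essentially the same approach as the paper: show $-\ln\emetric(z,\bd U)$ is plurisubharmonic on a collar of $\bd U$ by identifying it locally with $-\ln\emetric(z,\bd C)$ on the pseudoconvex local pieces, take a maximum with a constant to obtain a global plurisubharmonic exhaustion, and handle the unbounded case by exhausting with balls and invoking Lemma~\ref{incps2}. The only cosmetic differences are that the paper treats the bounded case first and uses the compactness of $U\setminus Y$ (rather than a finite subcover of $\bd U$) to produce the constant in the maximum.
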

\begin{proof}
If $U=\emptyset$ or $U=\C^n$ the result is trivial, so suppose this is not the case.

If $U$ is pseudoconvex then for any $a\in\bd U$ each component of $U\cap\ball(a,1)$ is pseudoconvex by Corollary~\ref{psint}, so $U$ is locally pseudoconvex.

Now assume $U$ is locally pseudoconvex. 
First suppose $U$ is bounded. Let $a\in\bd U$ and let $V\subset\C^n$ be a neighbourhood of $a$ such that each component of $U\cap V$ is pseudoconvex. Let
$r_a>0$ so that $\ball(a,r_a)\subset V$, so each component of $U\cap V\cap \ball(a,r_a)=U\cap\ball(a,r_a)$ is pseudoconvex (by Corollary~\ref{psint}).
Consider such a component $W$, so the function $z\mapsto -\ln\emetric(z,\bd W)$ is plurisubharmonic on $W$. 
Clearly $\emetric(z,\bd W)=\min(\emetric(z,\bd U),\emetric(z,\bd\ball(a,r_a)))$,
so when $z\in\ball(a,r_a/2)\cap W$ we have $\emetric(z,\bd W)=\emetric(z,\bd U)$ by virtue of the fact that $\emetric(z,\bd U)\leq \emetric(z,a)<r_a/2<\emetric(z,\bd\ball(a,r_a))$. Thus
$z\mapsto -\ln\emetric(z,\bd U)$ is plurisubharmonic on $\ball(a,r_a/2)\cap W$. This is true for each component $W$ and each boundary point $a$, so $z\mapsto -\ln\emetric(z,\bd U)$ 
is plurisubharmonic on $X:=U\cap\bigcup_{a\in\bd U}\ball(a,r_a/2)$. Let $Y:=U\cap\bigcup_{a\in\bd U}\ball(a,r_a/4)$, so since $Y\subset U$ is open and $U$ is bounded, $K:=U\setminus Y$ 
is compact and hence $z\mapsto -\ln\emetric(z,\bd U)$ attains a maximum value $M$ on $K$. Define $f\colon U\to\R$ by $f(z):=\max(-\ln\emetric(z,\bd U),M)$, 
so $f$ is continuous, and it is plurisubharmonic on $X$ by Proposition~\ref{pshupper} and plurisubharmonic on the interior of $K$ because it is constant there.
Clearly $f$ is also an exhaustion function for $U$ (it tends to $+\infty$ as boundary points are approached). Therefore $U$ is pseudoconvex.

If $U$ is unbounded, fix $z_0\in U$ and for each $j\geq 1$ let $U_j$ be the connected component of $U\cap\ball(z_0,j)$ containing $z_0$. We have $U=\bigcup_{j\geq 1}U_j$ and $U_j\subset U_{j+1}$
for all $j\geq 1$, so by Lemma~\ref{incps2} it suffices to show each $U_j$ is pseudoconvex, and by the above argument it is enough
to show each $U_j$ is locally pseudoconvex. Let $a\in\bd U_j$, so either $a\in\bd U$ or $a\in\bd\ball(z_0,j)$.
If $a\in\bd U$ there is a neighbourhood $V$ such that each component of $U\cap V$ is pseudoconvex, so by Corollary~\ref{psint}
each component of $U\cap V\cap \ball(z_0,j)=U_j\cap V$ is pseudoconvex. Now suppose $a\not\in\bd U$, so $a\in\bd\ball(z_0,j)$. Let $r:=\emetric(a,\bd U)>0$,
$V:=\ball(a,r/2)$ and $W$ a component of $U_j\cap V$. Then for $z\in W$ we have $\emetric(z,\bd W)=\min(\emetric(z,\bd V),\emetric(z,\bd U_j))$, and 
$\emetric(z,\bd U_j)=\emetric(z,\bd\ball(z_0,j))$ (because $\emetric(z,\bd U_j)=\min(\emetric(z,\bd U),\emetric(z,\bd\ball(z_0,j)))$, and when $z\in W$
we have $\emetric(z,\bd\ball(z_0,j))<r/2<\emetric(z,\bd U)$). Therefore $-\ln\emetric(z,\bd W)=\max(-\ln\emetric(z,\bd V),-\ln\emetric(z,\bd \ball(z_0,j)))$ for $z\in W$,
and the functions $z\mapsto -\ln\emetric(z,\bd V)$ and $z\mapsto -\ln\emetric(z,\bd\ball(z_0,j))$ are plurisubharmonic on $W$ by pseudoconvexity of $V=\ball(a,r/2)$ and $\ball(z_0,j)$,
so $z\mapsto -\ln\emetric(z,\bd W)$ is plurisubharmonic by Proposition~\ref{pshupper}. Thus $W$ is pseudoconvex, so each component of $ U_j\cap V$ is pseudoconvex. Therefore
each $U_j$ is bounded and locally pseudoconvex and is hence pseudoconvex, so it follows
that $U$ is pseudoconvex. 
\end{proof}
\subsection{Levi pseudoconvexity}
As with convex domains, domains with twice-differentiable boundaries admit an additional local characterisation of pseudoconvexity. \begin{definition}
Let $U\subset\C^n$ be a domain and $a\in\bd U$. If $V\subset\C^n$ is a neighbourhood of $a$ and $f\colon V\to\R$ is such that $U\cap V=f^{-1}((-\infty,0))$ then
$f$ is a \textbf{local defining function} for $U$ at $a$. If $\clos{U}\subset V$ then $f$ is a \textbf{global defining function} for $U$.
\index{local defining function}\index{global defining function}

Let $k\geq 1$ (we allow $k=\infty$). The domain $U$ is said to have \textbf{$k$-times differentiable boundary} or \textbf{$\cts^k$ boundary} at a boundary point if there is a $\cts^k$
 local defining function whose gradient is non-zero at the point. If a domain has $\cts^k$ boundary at each boundary point it is said to have \textbf{$k$-times differentiable
boundary} or \textbf{$\cts^k$ boundary}.
If $a\in\bd U$ and $f\colon V\to\R$ is a $\cts^k$ local defining function for $U$ at $a$ then the \textbf{(complex) tangent space} to $\bd U$ at $a$ with respect to $f$ is
$ T^c_a(f):=\{\delta\in\C^n\colon\sprod{\delta}{\con{\cgrad{f}(a)}}=0\}$ (where $\cgrad:=(\partial/\partial z_1,\dots,\partial/\partial z_n)$ is the complex gradient).\index{differentiable boundary}\index{tangent space}
\end{definition}
Note that a global defining function immediately induces local defining functions at each boundary point.
We remark that when working with a domain known to have $k$-times differentiable boundary at some point, we require that \emph{all} defining functions be $\cts^k$ and have non-zero gradient at
the point.
In view of equation~\eqref{defnais} of Remark~\ref{defrem} (which applies in this complex setting by simply regarding subsets of $\C^n$ as subsets of $\R^{2n}$), by passing to subsets if necessary
we will always assume local defining functions $f\colon V\to\R$ for domains $U$ with $\cts^k$ boundaries satisfy the property that $\bd U\cap V=f^{-1}(\{0\})$ (which implies
$V\setminus\clos{U}=f^{-1}((0,\infty))$).

With the results of Section~\ref{chap:convexity} in mind we give the following definitions:
\begin{definition}
Let $U\subset\C^n$ be a domain with $\cts^2$ boundary. Then $U$ is \textbf{Levi pseudoconvex} at $a\in\bd U$ if 
there is a $\cts^2$ local defining function $f\colon V\to\R$ (where $V$ is a neighbourhood of $a$)
such that $\Delta_\delta f(a)\geq 0$ for all $\delta\in T^c_a(f)$. 
If this inequality is strict for all $\delta\in T^c_a(f)\setminus\{0\}$
then $U$ is \textbf{strictly pseudoconvex} at $a$. If $U$ is Levi pseudoconvex at each boundary point it is said to be
\textbf{Levi pseudoconvex}. If $U$ is bounded and strictly pseudoconvex at each boundary point it is said to be \textbf{strictly pseudoconvex}.\index{Levi pseudoconvex domain}\index{strictly pseudoconvex domain}
\end{definition}
First note that a strictly pseudoconvex domain is Levi pseudoconvex.
Also note the connection with plurisubharmonicity -- if at each boundary point of a domain there 
is a non-degenerate $\cts^2$ plurisubharmonic (respectively, strictly plurisubharmonic) local defining function then the domain is Levi pseudoconvex (respectively, strictly pseudoconvex).
\begin{definition}
Let $U\subset\C^n$ be a bounded domain that admits a $\cts^\infty$ strictly plurisubharmonic global defining function which is non-degenerate on $\bd U$.
We say that $U$ is a \textbf{smoothly bounded strictly pseudoconvex} domain.\index{smoothly bounded strictly pseudoconvex domain}
\end{definition}
It is clear that smoothly bounded strictly pseudoconvex domains have $\cts^\infty$ boundaries, and in fact the converse also holds -- strictly pseudoconvex domains with $\cts^\infty$
boundaries are smoothly bounded in the sense of the above definition (see \cite[page 59]{range}). 
Regardless of this fact, we will always show explicitly that a $\cts^\infty$ strictly plurisubharmonic global defining function exists when verifying that a strictly pseudoconvex
domain is smoothly bounded,
rather than simply showing that the boundary is $\cts^\infty$.

It may be verified through a technical argument that non-negativity (and positivity) of the quantity $\Delta_\delta f(a)$ is independent of the chosen local defining function (see \cite[page 56]{range}),
which means that the Levi or strict pseudoconvexity of a domain at a boundary point can be determined by considering a single defining function
at that point (rather than all possible defining functions).
\begin{example}
Let $U:=\ball(c,r)\subset\C^n$ and let $f\colon\C^n\to\R$ be given by $f(z):=|z-c|^2-r^2$, so $f$ is a global defining function for $U$. We have
$ \frac{\partial f}{\partial z_j}\big|_z=\con z_j-\con c_j$, so $\cgrad{f}(z)=\con{z}-\con{c}$, and because this is non-zero for $z\in\bd U$ it follows that $U$ has $\cts^2$ boundary.
Furthermore, $\frac{\partial^2 f}{\partial z_j\partial\con z_k}\big|_z$ is $1$ if $j=k$ and $0$ otherwise. Therefore, for any
$a\in\bd U$ and $\delta\in\C^n\setminus\{0\}$, $\Delta_\delta f(a)=|\delta|^2>0$. It follows that $U$ is strictly pseudoconvex.
\end{example}
\begin{example}
Let $U:=\pdisc(c,r)\subset\C^n$ ($n\geq 2$) where $r$ is a positive vector radius, and let $a\in\bd U$ such that $|a_1-c_1|=r_1$ and $|a_j-c_j|<r_j$ for $j=2,\dots,n$. Then
for a neighbourhood $V$ of $a$ the function $f\colon V\to\R$ given by $f(z):=|z_1-c_1|^2-r_1^2$ is a local defining function for $U$. Clearly $f$ is $\cts^2$ and $\cgrad{f}(a)=(\con a_1-\con c_1,0,\dots,0)\neq 0$,
so $U$ has $\cts^2$ boundary at $a$. But $\frac{\partial^2 f}{\partial z_j\partial\con z_k}\big|_a$ is $1$ if $j=k=1$ and $0$ otherwise, so $\Delta_\delta f(a)=|\delta_1|^2$ for all $\delta\in\C^n$.
But if $\delta\in T^c_a(f)$ then $\delta_1=0$ and thus $\Delta_\delta f(a)=0$. Hence $U$ is Levi pseudoconvex at $a$ (and it is also pseudoconvex by earlier results) but not strictly pseudoconvex.
\end{example}
Thus there are pseudoconvex domains which are not strictly pseudoconvex. However, we have the following approximation result:
\begin{proposition}
\label{slpapprox}
Let $U\subset\C^n$ be a pseudoconvex domain. Then there exists a sequence $\{U_j\}_{j\geq 1}$ of smoothly bounded 
strictly pseudoconvex domains 
such that $\clos{U_j}\subset U_{j+1}$ for all $j\geq 1$
and $U=\bigcup_{j\geq 1}U_j$.
\end{proposition}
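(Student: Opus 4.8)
The plan is to realize each $U_j$ as the connected component (containing a fixed base point) of a sublevel set $\{g_{k_j}<c_j\}$ of a $\cts^\infty$ strictly plurisubharmonic function $g_{k_j}$ that approximates, from above, the standard continuous exhaustion function of Example~\ref{exex}. We may assume $U\neq\emptyset$; and if $\bd U=\emptyset$, i.e.\ $U=\C^n$, the balls $\ball(0,j)$ already work (each has the $\cts^\infty$ strictly plurisubharmonic global defining function $z\mapsto|z|^2-j^2$, and $\clos{\ball(0,j)}\subset\ball(0,j+1)$), so assume also $\bd U\neq\emptyset$. Set $\phi(z):=|z|^2-\ln\emetric(z,\bd U)$. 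By Example~\ref{exex} this is a continuous exhaustion function for $U$, and it is plurisubharmonic, since $z\mapsto|z|^2$ is, $z\mapsto-\ln\emetric(z,\bd U)$ is by pseudoconvexity of $U$ (the corollary to Theorem~\ref{ctyps}), and sums of plurisubharmonic functions are plurisubharmonic. Applying Proposition~\ref{pshapprox} to $\phi$ gives a non-increasing sequence of $\cts^\infty$ strictly plurisubharmonic functions $g_k\colon V_k\to\R$, where $V_k$ are bounded domains with $V_k\subset V_{k+1}$ and $\bigcup_k V_k=U$, converging pointwise to $\phi$; by monotonicity $g_k\geq\phi$ on $V_k$, and by Dini's theorem $g_\ell\to\phi$ uniformly on each compact subset of $U$ as $\ell\to\infty$. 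In particular $\{g_k\leq c\}\subset\{\phi\leq c\}$, so whenever $\{\phi\leq c\}\subset V_k$ the set $\{g_k\leq c\}$ is a compact subset of $V_k$.

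Fix $z_0\in U$. The next step is an inductive choice: pick strictly increasing reals $t_j\to\infty$ with $t_1>\phi(z_0)+1$, and then choose inductively $k_1\leq k_2\leq\cdots$ and regular values $c_j$ of $g_{k_j}$ so that, for every $j$, (a) $\{\phi\leq t_j+2\}\subset V_{k_j}$ and $g_{k_j}<\phi+1$ on $\{\phi\leq t_j+2\}$, and (b) $t_j<c_j<t_j+1$. This is possible: condition (a) holds once $k_j$ is large enough (by $\bigcup_k V_k=U$ and Dini on the compact set $\{\phi\leq t_j+2\}$), and then Sard's theorem provides a regular value $c_j$ of $g_{k_j}$ in the interval $(t_j,t_j+1)$. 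The $c_j$ are then strictly increasing with $c_j\to\infty$, one has $c_1>t_1>\phi(z_0)+1$, and $\{\phi\leq c_j+1\}\subset\{\phi\leq t_j+2\}\subset V_{k_j}$. Define $U_j$ to be the connected component of $\{z\in V_{k_j}\colon g_{k_j}(z)<c_j\}$ containing $z_0$ (this set contains $z_0$ since $g_{k_j}(z_0)<\phi(z_0)+1<c_1\leq c_j$).

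It then remains to verify the three required properties. For $U=\bigcup_j U_j$: given $z\in U$, choose a path $\gamma$ from $z_0$ to $z$ in the domain $U$, put $M:=\max_\gamma\phi$, and pick $j$ with $c_j>M+1$; then $\gamma\subset\{\phi\leq M\}\subset\{\phi\leq c_j+1\}\subset V_{k_j}$ and $g_{k_j}<\phi+1\leq M+1<c_j$ on $\gamma$, so $\gamma$ is a connected subset of $\{z\in V_{k_j}\colon g_{k_j}(z)<c_j\}$ joining $z_0$ to $z$, whence $z\in U_j$. For $\clos{U_j}\subset U_{j+1}$: $\clos{U_j}\subset\{g_{k_j}\leq c_j\}\subset V_{k_j}\subset V_{k_{j+1}}$, and there $g_{k_{j+1}}\leq g_{k_j}\leq c_j<c_{j+1}$ (using $k_{j+1}\geq k_j$ and monotonicity), so $\clos{U_j}$ is a connected subset of $\{z\in V_{k_{j+1}}\colon g_{k_{j+1}}(z)<c_{j+1}\}$ containing $z_0$, hence lies in $U_{j+1}$. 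Finally, each $U_j$ is a smoothly bounded strictly pseudoconvex domain: since $c_j$ is a regular value of $g_{k_j}$, the complex gradient of $g_{k_j}$ is nonzero on $\{g_{k_j}=c_j\}$, which contains $\bd U_j$; near each point of this regular level set $\{g_{k_j}<c_j\}$ is locally connected, so the closures of distinct components of $\{g_{k_j}<c_j\}$ are pairwise disjoint; hence $\clos{U_j}$ avoids the closed set $F$ consisting of the closures of all other components, and on the neighbourhood $(\C^n\setminus F)\cap V_{k_j}$ of $\clos{U_j}$ the function $g_{k_j}-c_j$ is a $\cts^\infty$ strictly plurisubharmonic global defining function for $U_j$, nondegenerate on $\bd U_j$.

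I expect the main obstacle to be the bookkeeping in the inductive construction of the $k_j$ and $c_j$: one must simultaneously secure monotonicity (so the $U_j$ nest), the uniform estimate $g_{k_j}<\phi+1$ on ever-larger compacta (so the $U_j$ exhaust $U$), and that each $c_j$ is a regular value (so $\bd U_j$ is smooth and $g_{k_j}-c_j$ is a genuine global defining function rather than merely a local one), and then confirm that these constraints are mutually consistent. The only other point requiring a little care is the differential-topology observation that a relatively compact open set whose topological boundary lies in a regular level set has components with pairwise disjoint closures, which is what lets us pass from the sublevel set $\{g_{k_j}<c_j\}$ to its component $U_j$ while keeping an honest global defining function.
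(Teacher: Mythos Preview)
Your proof is correct and follows essentially the same route as the paper's: apply Proposition~\ref{pshapprox} to a plurisubharmonic exhaustion function, take sublevel sets of the smooth strictly plurisubharmonic approximants at regular values (you invoke Sard, the paper invokes an equivalent Morse-type lemma), and pass to the component containing a base point, checking that this component still admits a global defining function. The differences are cosmetic: you work with the concrete exhaustion $|z|^2-\ln\emetric(z,\bd U)$ rather than an abstract one, and you bring in Dini's theorem to secure the uniform estimate $g_{k_j}<\phi+1$ on compacta, whereas the paper gets by with pointwise convergence alone (a path from $z_0$ to $z$ is compact and the pre-component sublevel sets increase to $U$, so the path eventually lies in one of them). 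Your inline argument that distinct components of a regular sublevel set have disjoint closures is exactly what the paper packages as Lemma~\ref{globdefworks}.
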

To prove this we will require a technical lemma of real analysis which depends on a theorem of Morse (see \cite{morse} for a proof):
\begin{theorem}[Morse's theorem]
Let $f\colon U\subset\R^n\to\R$ (where $U$ is open) be $\cts^\infty$. Then the Lebesgue measure of $\{f(x)\colon x\in U,\grad{f}(x)=0\}$ is zero.
\end{theorem}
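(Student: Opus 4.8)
The plan is to prove this real-variable form of the Morse--Sard theorem by induction on the dimension $n$, after reducing to a local statement. Since Lebesgue measure is countably subadditive and $U$ is a countable union of compact cubes, it suffices to fix one compact cube $Q\subset U$ and show that the image under $f$ of the critical set $C:=\{x\in Q\colon \grad{f}(x)=0\}$ has measure zero in $\R$. The base case $n=1$ is a direct covering argument: at a critical point $x_0\in C$ Taylor's theorem gives $|f(x)-f(x_0)|\leq M|x-x_0|^2$ on $Q$, where $M$ bounds the second derivative; subdividing $Q$ into $N$ subintervals, the at most $N$ of them meeting $C$ each map into an interval of length $O(N^{-2})$, so $f(C)$ is covered by intervals of total length $O(N^{-1})\to 0$.

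For the inductive step I would stratify the critical set by order of degeneracy. Let $C_j$ be the set of points of $Q$ at which every partial derivative of $f$ of order at most $j$ vanishes, so $C=C_1\supseteq C_2\supseteq\cdots$ and hence
$$ C=(C_1\setminus C_2)\cup\cdots\cup(C_{n-1}\setminus C_n)\cup C_n. $$
The deepest stratum $C_n$ is controlled by the same covering argument, now using the order-$n$ Taylor expansion: at $x_0\in C_n$ all partials up to order $n$ vanish, so $|f(x)-f(x_0)|\leq M|x-x_0|^{n+1}$ with $M$ bounding the order-$(n+1)$ partials on $Q$. Subdividing $Q$ into $N^n$ subcubes of diameter $O(N^{-1})$ covers $f(C_n)$ by at most $N^n$ intervals each of length $O(N^{-(n+1)})$, for total length $O(N^{-1})\to 0$. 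This is precisely where the full strength of the $\cts^\infty$ hypothesis enters: the crude second-order bound only closes for $n\leq 2$, and one needs Taylor expansions of order comparable to $n$.

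For each intermediate stratum $C_j\setminus C_{j+1}$ with $1\leq j\leq n-1$ I would reduce the dimension via the implicit function theorem. At a point $x_0\in C_j\setminus C_{j+1}$ some partial of order $j+1$ is nonzero while those of order at most $j$ vanish, so there is a multi-index $\beta$ with $|\beta|=j$ such that $w:=\partial^\beta f$ satisfies $w(x_0)=0$ but $\grad{w}(x_0)\neq 0$. By the implicit function theorem, in a neighbourhood $W$ of $x_0$ the zero set $M:=\{w=0\}$ is a $\cts^\infty$ hypersurface of dimension $n-1$, and $C_j\cap W\subset M$ since $\partial^\beta f$ vanishes on $C_j$. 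Every point of $C_j$ is a critical point of $f$ and therefore also of the restriction $f|_M$, so $f(C_j\cap W)$ lies in the set of critical values of $f|_M$. Writing $M$ in a $\cts^\infty$ chart identifies $f|_M$ with a $\cts^\infty$ function on an open subset of $\R^{n-1}$, to which the inductive hypothesis applies; thus its set of critical values has measure zero. Covering $C_j\setminus C_{j+1}$ by countably many such neighbourhoods (second countability) gives that $f(C_j\setminus C_{j+1})$ has measure zero.

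Assembling the pieces, $f(C)$ is a finite union of measure-zero sets and hence has measure zero, completing the induction. The main obstacle is exactly the dimension-reduction step for the intermediate strata: one must verify that each degenerate critical set genuinely sits inside a smooth hypersurface, that restriction to that hypersurface preserves criticality so that the lower-dimensional case genuinely applies, and that the number of strata together with the available Taylor order are matched so the induction closes. The naive single covering estimate, by contrast, is insufficient beyond $n\leq 2$, which is what forces the stratified induction rather than a one-step argument.
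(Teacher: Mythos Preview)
The paper does not actually prove this theorem: it simply states the result and refers the reader to Morse's original article for a proof. So there is nothing to compare against at the level of argument.

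Your proposal is the standard proof of the Morse--Sard theorem in the codimension-one case, and it is correct. The three ingredients --- the direct Taylor-plus-covering estimate for the base case and for the deepest stratum $C_n$, the stratification $C=(C_1\setminus C_2)\cup\cdots\cup C_n$, and the implicit-function-theorem reduction of each $C_j\setminus C_{j+1}$ to a $\cts^\infty$ function on an $(n-1)$-dimensional hypersurface --- fit together exactly as you describe, and the induction closes. One minor point of wording: when you define $C_j$ as the set where ``every partial derivative of $f$ of order at most $j$'' vanishes, you clearly intend orders $1$ through $j$ (so that $C_1=C$); this is the usual convention and causes no difficulty.
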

This immediately implies the following:
\begin{lemma}
\label{morselemma}
Let $f\colon U\subset\C^n\to\R$ be $\cts^\infty$ and $V\subset\R$ a non-empty open set. Then there exists $y\in V$ such that $\cgrad{f}(z)\neq 0$ for all $z\in f^{-1}(\{y\})$.
\end{lemma}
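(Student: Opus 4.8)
The plan is to deduce this from Morse's theorem after the elementary observation that the complex gradient $\cgrad{f}$ vanishes at exactly the same points as the ordinary (real) gradient. So first I would write $z_j=x_j+iy_j$ and recall that $\pd{f}{z_j}=\tfrac12\left(\pd{f}{x_j}-i\pd{f}{y_j}\right)$; since $f$ is real-valued, this expression vanishes if and only if $\pd{f}{x_j}=\pd{f}{y_j}=0$. Consequently $\cgrad{f}(z)=0$ precisely when all $2n$ real first-order partial derivatives of $f$ vanish at $z$, that is, precisely when $\grad{f}(z)=0$ with $f$ regarded as a $\cts^\infty$ function on the open set $U\subset\R^{2n}$. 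In particular the set $\{z\in U\colon\cgrad{f}(z)=0\}$ coincides with the critical set of $f$ in the real sense.

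Given this, I would invoke Morse's theorem (as stated above) to conclude that the set of critical values $C:=\{f(z)\colon z\in U,\ \grad{f}(z)=0\}=\{f(z)\colon z\in U,\ \cgrad{f}(z)=0\}$ has Lebesgue measure zero in $\R$. Since $V$ is a non-empty open subset of $\R$ it has positive Lebesgue measure, so $V\not\subset C$, and we may choose $y\in V\setminus C$. By the definition of $C$ this means $\cgrad{f}(z)\neq0$ for every $z\in f^{-1}(\{y\})$, which is the assertion.

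There is no real obstacle here: the argument is essentially a translation of Morse's theorem through the dictionary between $\C^n$ and $\R^{2n}$, and the only points requiring (routine) care are the computation of $\pd{f}{z_j}$ in terms of the real partials and the trivial remark that a non-empty open subset of $\R$ is not Lebesgue-null.
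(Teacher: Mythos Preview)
Your argument is correct and is precisely the route the paper takes: the lemma is stated immediately after Morse's theorem with the remark that it ``immediately implies'' the lemma, which is exactly the observation (complex gradient vanishes iff real gradient vanishes, critical values have measure zero, a non-empty open interval is not null) that you spell out.
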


We need one more lemma, which shows that $\cts^1$ functions yield defining functions for components of their sublevel sets. 
\begin{lemma}
\label{globdefworks}
Let $f\colon U\subset\C^n\to\R$ be $\cts^1$, let $\alpha\in f(U)$ such that $\cgrad{f}(z)\neq 0$ for all $z\in f^{-1}(\{\alpha\})$ and let $V$ be a component of $f^{-1}((-\infty,\alpha))$.
Then $z\mapsto f(z)-\alpha$ is a global defining function for $V$ on some neighbourhood of $\clos V$.
\end{lemma}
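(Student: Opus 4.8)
The plan is to exhibit a neighbourhood $W$ of $\clos V$ on which the set $\{z: f(z)<\alpha\}$ is exactly $V$, using the non-degeneracy hypothesis to peel $V$ away from the other components of the sublevel set. Write $\Omega:=f^{-1}((-\infty,\alpha))$. Since $\C^n$ is locally connected, $\Omega$ is open and its component $V$ is open, so $V':=\Omega\setminus V$ (the union of the remaining components) is both open and closed in $\Omega$. The desired conclusion is that $g:=f-\alpha$ is $\cts^1$ with $\cgrad g\neq 0$ on $\bd V$ and $V\cap W=g^{-1}((-\infty,0))$ for some open $W\supset\clos V$.

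First I would record a boundary fact: if $z\in\bd V$ and $z\in U$, then $z$ is a limit of points of $V\subset\Omega$, so $f(z)\le\alpha$ by continuity, while $f(z)<\alpha$ would put $z\in\Omega$ and hence (since $V$ is closed in $\Omega$) force $z\in V$, contradicting $z\notin V$; thus $f(z)=\alpha$. So $\bd V\subset f^{-1}(\{\alpha\})\subset U$, whence $\clos V=V\cup\bd V\subset U$, and $\cgrad f\neq 0$ on $\bd V$ by hypothesis, i.e.\ $\cgrad g\neq 0$ on $\bd V$. (The step "$z\in\bd V\Rightarrow z\in U$" is automatic when $f$ is an exhaustion function, as it is in the intended application to Proposition~\ref{slpapprox}; in general one should read the lemma with the standing assumption $\bd V\cap\bd U=\emptyset$, which is what makes it literally correct.)

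The crucial step is to prove $\clos V\cap\clos{V'}=\emptyset$. Exactly as above, any point $z$ in this intersection satisfies $f(z)=\alpha$, hence $\cgrad f(z)\neq 0$, so the real gradient of $f$ at $z$ is non-zero. Relabelling the real coordinates $x_1,\dots,x_{2n}$ so that $\partial f/\partial x_{2n}(z)\neq 0$ and applying the inverse function theorem to $x\mapsto(x_1,\dots,x_{2n-1},f(x))$, one gets an open neighbourhood $N$ of $z$ carried diffeomorphically onto a ball, under which $\{f<\alpha\}$ corresponds to an open half-ball; in particular $N\cap\Omega$ is non-empty and connected. But $z\in\clos V\cap\clos{V'}$ forces $N$ to meet both $V$ and $V'$, and a connected subset of $\Omega$ meeting the component $V$ lies inside $V$, while one meeting $V'$ lies inside some component contained in $V'$; so $\emptyset\neq N\cap\Omega\subset V\cap V'=\emptyset$, a contradiction. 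This is the step where $\cgrad f\neq 0$ on $f^{-1}(\{\alpha\})$ is indispensable, and I expect it to be the main obstacle.

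Finally I would take $W:=U\setminus\clos{V'}$. This is open; it contains $\clos V$ since $\clos V\subset U$ and $\clos V\cap\clos{V'}=\emptyset$; and $\{z\in W:f(z)<\alpha\}=W\cap\Omega=\Omega\setminus\clos{V'}=V$, using $\Omega=V\sqcup V'$ together with $V\cap\clos{V'}=\emptyset$. Hence $W$ is a neighbourhood of $\clos V$ with $V\cap W=g^{-1}((-\infty,0))$ and $\cgrad g\neq 0$ on $\bd V$, so $z\mapsto f(z)-\alpha$ is a global defining function for $V$ on $W$, as required.
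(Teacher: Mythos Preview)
Your approach is correct and matches what the paper indicates: the paper omits the proof entirely, merely noting that it ``proceeds via the implicit function theorem as described in Remark~\ref{defrem}'', which is precisely the tool you invoke to obtain local connectedness of $\Omega$ near points of $f^{-1}(\{\alpha\})$ and hence the separation $\clos V\cap\clos{V'}=\emptyset$. Your caveat that the lemma tacitly needs $\bd V\subset U$ (equivalently $\clos V\subset U$, without which the conclusion does not even parse) is well taken; this holds in the applications (Proposition~\ref{slpapprox} and the proof of Proposition~\ref{globalbarrier}) because the relevant sublevel sets are relatively compact in the domain of $f$.
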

We omit the proof (which proceeds via the implicit function theorem
as described in Remark~\ref{defrem}) since it is a simple argument of real analysis and not especially instructive.
\begin{proof}[Proof of Proposition~\ref{slpapprox}]
If $U=\emptyset$ the assertion is trivial, so assume this is not the case.
Let $f\colon U\to\R$ be a continuous plurisubharmonic exhaustion function for $U$, so by Proposition~\ref{pshapprox} there is a sequence of bounded domains $\{V_j\}_{j\geq 1}$
with $V_j\subset V_{j+1}$ for all $j\geq 1$ and $U=\bigcup_{j\geq 1}V_j$, and a non-increasing sequence of $\cts^\infty$ strictly plurisubharmonic functions $f_j\colon V_j\to \R$ which converges pointwise 
to $f$.
Let $z_0\in V_1$, and suppose without loss of generality that $f_1(z_0)\leq 0$ (if this is not the case subtract $f_1(z_0)$ from $f$ and each $f_j$). Note 
$[0,\infty)\subset f(U)$ by the intermediate value theorem, since $f(z_0)\leq f_1(z_0)\leq 0$ and $f$ tends to $+\infty$ as boundary points are approached.
For each $j\geq 1$ the non-empty set $K_j:=f^{-1}((-\infty,j+1/2])$ is compact, so there is some $k_j\geq 1$ with $K_j\subset V_{k_j}$. By passing to a subsequence of $\{V_j\}_{j\geq 1}$
if necessary we may assume $K_j\subset V_j$ for all $j\geq 1$. For each $j\geq 1$ let $r_j\in (j-1/2,j+1/2)$ be such that $\cgrad{f_j}(z)\neq 0$ whenever $f_j(z)=r_j$ (the possibility of
this is assured by Lemma~\ref{morselemma}), and note $r_j\in f_j(V_j)$ by the intermediate value theorem, since $f_j(z_0)\leq 0<r_j$ and for some $z\in K_j\subset V_j$ we have $f(z)=j+1/2$ and thus
$f_j(z)\geq j+1/2>r_j$.

For each $j\geq 1$ set $U_j:=f_j^{-1}((-\infty,r_j))\subset K_j$. If $z\in \clos{U_j}$ for some $j\geq 1$ then $f_j(z)\leq r_j$, so $f_{j+1}(z)\leq f_j(z)\leq r_j<r_{j+1}$ and thus $z\in U_{j+1}$, so $\clos{U_j}\subset U_{j+1}$.
If $z\in U$ there exists $K\geq 1$ such that $r_K>f(z)$, so by convergence $f_j(z)\to f(z)$ there exists $J\geq 1$ such that $j\geq J$ implies $f_j(z)<r_K$, and in particular if we take $j\geq \max\{J,K\}$
we have $f_j(z)<r_K\leq r_j$, so $z\in U_j$. Therefore $U=\bigcup_{j\geq 1} U_j$. Now replace each $U_j$ with the component of $U_j$ containing $z_0$, so the properties
$\clos{U_j}\subset U_{j+1}$ for $j\geq 1$ and $U=\bigcup_{j\geq 1} U_j$ are retained, and now
$z\mapsto f_j(z)-r_j$ is a global defining function for each $U_j$ by Lemma~\ref{globdefworks}, meaning each $U_j$ is a smoothly bounded strictly pseudoconvex domain.
\end{proof}
Next we show that pseudoconvex domains with $\cts^2$ boundary are Levi pseudoconvex.
We will need a fact from geometry on smoothness of the so-called \emph{signed distance function}, which is a particularly convenient defining function:
\begin{proposition}
\label{sigsmooth}
Let $U\subset\C^n$ be a domain with $\cts^2$ boundary and define $\eta\colon\C^n\to\R$ by
$$ \eta(z) := \begin{cases} -\emetric(z,\bd U),& z\in\clos{U} \\ \emetric(z,\bd U),& z\in U^c.\end{cases}$$
Then $\eta$ is a local defining function for $U$ at each boundary point (in particular $\eta$ is $C^2$ in a neighbourhood of $\bd U$ and is non-degenerate on $\bd U$).
\end{proposition}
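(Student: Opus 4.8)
The plan is to reduce to an explicit local description of $\eta$ near an arbitrary boundary point $a\in\bd U$ via the nearest-point projection onto $\bd U$. The set-theoretic part is immediate: since $U$ is open $\eta<0$ on $U$, since $U^c\setminus\bd U$ is open $\eta>0$ there, and $\eta=0$ on $\bd U$, so globally $\eta^{-1}((-\infty,0))=U$ and $\eta^{-1}(\{0\})=\bd U$. Hence it suffices to prove that near each $a\in\bd U$ the function $\eta$ is $\cts^2$ with non-vanishing gradient; restricting $\eta$ to a small neighbourhood $V_a$ of $a$ then gives a $\cts^2$ local defining function for $U$ at $a$ in the sense above.

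Fix $a\in\bd U$ and pick a $\cts^2$ local defining function $\rho\colon V\to\R$ for $U$ at $a$ with $\grad{\rho}$ non-vanishing on $V$; shrink $V$ (via Remark~\ref{defrem}) so that $\bd U\cap V=\rho^{-1}(\{0\})$, and replace $\rho$ by $-\rho$ if necessary so that $\grad{\rho}$ points out of $U$ along $\bd U\cap V$. A point $p$ of $\bd U$ nearest to a point $z$ close to $a$ must satisfy $\rho(p)=0$ and $z-p=\lambda\,\grad{\rho}(p)$ for some scalar $\lambda$, so I would set $G(p,\lambda,z):=(\rho(p),\,z-p-\lambda\,\grad{\rho}(p))$. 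This $G$ is $\cts^1$ (only $\cts^1$, since $\grad{\rho}$ is merely $\cts^1$), vanishes at $(a,0,a)$, and its Jacobian in $(p,\lambda)$ there is invertible: a vector $(v,\mu)$ in the kernel satisfies $v=-\mu\,\grad{\rho}(a)$ and $\grad{\rho}(a)\cdot v=0$, forcing $\mu=0$, $v=0$. The implicit function theorem then yields $\cts^1$ maps $z\mapsto p(z)$, $z\mapsto\lambda(z)$ near $a$ with $p(a)=a$, $\lambda(a)=0$; a routine compactness argument (for $z$ near $a$ any near-minimiser of $|z-\cdot|$ over $\bd U$ lies near $a$, where $\bd U$ is a $\cts^2$ graph on which the minimiser is unique) identifies $p(z)$ as the true nearest point of $\bd U$, so $\emetric(z,\bd U)=|z-p(z)|=|\lambda(z)|\,|\grad{\rho}(p(z))|$, and tracking signs with the chosen orientation of $\grad{\rho}$ gives $\eta(z)=\lambda(z)\,|\grad{\rho}(p(z))|$ near $a$. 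In particular $\eta$ is $\cts^1$ near $\bd U$.

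To upgrade to $\cts^2$ I would use $\eta(z)^2=|z-p(z)|^2$. Differentiating $\rho(p(z))\equiv0$ shows that the derivative of $z\mapsto p(z)$ maps into the real hyperplane $\{w\colon w\cdot\grad{\rho}(p(z))=0\}$, while $z-p(z)=\lambda(z)\,\grad{\rho}(p(z))$ is orthogonal to that hyperplane; differentiating $\eta(z)^2=|z-p(z)|^2$ and using this orthogonality yields $\eta(z)\,\grad{\eta}(z)=z-p(z)$. Taking norms gives $|\grad{\eta}(z)|=1$ off $\bd U$, hence (by continuity of $\grad{\eta}$, known since $\eta\in\cts^1$) on a whole neighbourhood of $\bd U$, so $\eta$ is non-degenerate on $\bd U$. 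Moreover, where $\lambda(z)\neq0$,
$$ \grad{\eta}(z)=\frac{z-p(z)}{\eta(z)}=\frac{\grad{\rho}(p(z))}{|\grad{\rho}(p(z))|}, $$
and the right-hand side is a composition of the $\cts^1$ map $z\mapsto p(z)$ with the $\cts^1$ map $w\mapsto\grad{\rho}(w)/|\grad{\rho}(w)|$ (recall $\rho\in\cts^2$ and $\grad{\rho}$ non-vanishing), hence $\cts^1$; by continuity this formula for $\grad{\eta}$ extends across $\bd U$, so $\grad{\eta}$ is $\cts^1$ and $\eta$ is $\cts^2$ on a neighbourhood of $\bd U$. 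Shrinking to a neighbourhood $V_a$ of $a$ on which all of this holds finishes the proof.

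I expect the main obstacle to be precisely this bootstrap from $\cts^1$ to $\cts^2$: the implicit function theorem sees only the $\cts^1$ map $G$, so it delivers only a $\cts^1$ nearest-point projection, which by itself gives merely $\eta\in\cts^1$. The extra derivative comes entirely from the geometric identity $\grad{\eta}(z)=\grad{\rho}(p(z))/|\grad{\rho}(p(z))|$, whose proof rests on the two facts that the derivative of $z\mapsto p(z)$ lands in the tangent hyperplane at $p(z)$ and that $z-p(z)$ is normal to $\bd U$ there. Establishing these, together with the standard but slightly fiddly verification that the locally-defined critical point $p(z)$ is genuinely the global distance-minimiser, is the real content; everything else is bookkeeping.
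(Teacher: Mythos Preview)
The paper does not actually prove this proposition: it says only that the result ``is a consequence of the implicit function theorem, but the details are rather technical and not particularly relevant so we omit the proof and refer the interested reader to [Krantz, page 136].'' Your argument is correct and is essentially the standard one (as in Krantz or Gilbarg--Trudinger): obtain the nearest-point projection $p(z)$ as a $\cts^1$ map via the implicit function theorem, then use the orthogonality of $z-p(z)$ to the tangent space at $p(z)$ to derive $\eta\,\grad{\eta}=z-p(z)$ and hence $\grad{\eta}(z)=\grad{\rho}(p(z))/|\grad{\rho}(p(z))|$, which is $\cts^1$ as a composition of $\cts^1$ maps. You have also correctly identified the one genuinely non-routine point, namely that the implicit function theorem alone gives only $\cts^1$ regularity of $\eta$, and the extra derivative must be recovered from the explicit formula for $\grad{\eta}$ in terms of the $\cts^1$ unit normal. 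One small remark: the verification that the implicitly-defined critical point $p(z)$ coincides with the global nearest point deserves a line or two more (e.g.\ via strict convexity of $q\mapsto|z-q|^2$ restricted to the local $\cts^2$ graph, giving uniqueness of the local minimiser, combined with the observation that for $z$ close enough to $a$ any global minimiser must lie in the graph neighbourhood), but this is standard and you have flagged it.
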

This is a consequence of the implicit function theorem, but the details are rather technical and not particularly relevant so we omit the proof and refer the interested 
reader to \cite[page 136]{krantz}.
We will also need a lemma:
\begin{lemma}
\label{levilemma}
Suppose $f\colon U\to\R$ is $\cts^2$ and has non-vanishing gradient, and that for some bounded set $S$ with $\clos{S}\subset U$ we have
$\Delta_\delta f(z)\geq 0$ for all $z\in S$ and $\delta\in\C^n$ with $\sprod{\delta}{\con{\cgrad{f}(z)}}=0$. Then there exists $c\geq 0$ such that 
$\Delta_\delta f(z)\geq -c|\delta|\left|\sprod{\delta}{\con{\cgrad{f}(z)}}\right|$ for all $z\in S$ and $\delta\in\C^n$.
\end{lemma}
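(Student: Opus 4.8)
The plan is to convert the hypothesis into a uniform estimate by splitting an arbitrary direction $\delta\in\C^n$ into its part tangent to the complex hyperplane $\{\sprod{\cdot}{\con{\cgrad f(z)}}=0\}$ and its part normal to it, then bounding each resulting contribution to $\Delta_\delta f(z)$ using only continuity of the second derivatives of $f$ and of $\cgrad f$ on the compact set $\clos S$. Since $f\in\cts^2$, the functions $z\mapsto\partial^2 f/(\partial z_j\partial\con z_k)$ and $z\mapsto\partial f/\partial z_j$ are continuous on $U$; because $S$ is bounded with $\clos S\subset U$, the set $\clos S$ is compact, so I would fix an upper bound $M$ for $\sum_{j,k}|\partial^2 f/(\partial z_j\partial\con z_k)|$ on $\clos S$, and -- using that $\cgrad f$ is continuous and (since $\grad f$ is non-vanishing) nowhere zero -- a finite upper bound $M'$ and a strictly positive lower bound $m$ for $|\cgrad f|$ on $\clos S$. (If $S=\emptyset$ one takes $c=0$.)

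Next I would fix $z\in S$ and $\delta\in\C^n$, set $\nu:=\con{\cgrad f(z)}/|\cgrad f(z)|$, which is a unit vector satisfying $\sprod{\delta}{\con{\cgrad f(z)}}=|\cgrad f(z)|\,\sprod{\delta}{\nu}$ (as $|\con{\cgrad f(z)}|=|\cgrad f(z)|$), and decompose $\delta=a\nu+\tau$ with $a:=\sprod{\delta}{\nu}$ and $\tau:=\delta-a\nu$. Then $\sprod{\tau}{\nu}=0$, so $\tau\in T^c_z(f)$, and $|a|^2+|\tau|^2=|\delta|^2$; in particular $|a|=|\sprod{\delta}{\con{\cgrad f(z)}}|/|\cgrad f(z)|\le m^{-1}\,|\sprod{\delta}{\con{\cgrad f(z)}}|$ and $|\tau|\le|\delta|$. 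Since $f$ is real-valued the matrix $[\partial^2 f/(\partial z_j\partial\con z_k)]$ is Hermitian, so $\Delta_\delta f(z)$ is a Hermitian form in $\delta$, and substituting $\delta=a\nu+\tau$ gives the key identity
$$\Delta_\delta f(z)=\Delta_\tau f(z)+2\Re\!\Big[\con a\sum_{j,k}\frac{\partial^2 f}{\partial z_j\partial\con z_k}\Big|_z\tau_j\con\nu_k\Big]+|a|^2\,\Delta_\nu f(z).$$

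Here $\Delta_\tau f(z)\ge 0$ by the hypothesis, since $z\in S$ and $\tau\in T^c_z(f)$; the middle term is bounded in modulus by $2M|a|\,|\tau|\le 2M|a|\,|\delta|$ (estimating each $|\tau_j\con\nu_k|\le|\tau|$ and using $|\nu|=1$), and the last term by $M|a|^2$. Hence $\Delta_\delta f(z)\ge -2M|a|\,|\delta|-M|a|^2$, and inserting the bound on $|a|$ -- together with $M|a|^2=M|a|\cdot|a|\le MM'm^{-2}\,|\delta|\,|\sprod{\delta}{\con{\cgrad f(z)}}|$, which uses $|\sprod{\delta}{\con{\cgrad f(z)}}|\le|\cgrad f(z)|\,|\delta|\le M'|\delta|$ to estimate the second factor of $|a|$ -- yields $\Delta_\delta f(z)\ge -c|\delta|\,|\sprod{\delta}{\con{\cgrad f(z)}}|$ with $c:=2Mm^{-1}+MM'm^{-2}\ge 0$, as required.

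I expect the only delicate point to be the bookkeeping of conjugates in the Hermitian expansion: one must be careful that the purely tangential piece is \emph{exactly} $\Delta_\tau f(z)$, so that the hypothesis applies verbatim, and that the remaining two pieces are genuinely linear, respectively quadratic, in $a$. Everything else is soft -- no compactness-by-contradiction argument is needed, only that $\clos S$ is compact and that $\grad f$ (equivalently $\cgrad f$) does not vanish there, so that $|\cgrad f|$ stays bounded away from $0$; and the apparent mismatch between the $|a|^2$-term and the desired right-hand side $-c|\delta|\,|\sprod{\delta}{\con{\cgrad f(z)}}|$ disappears once one observes that $|\sprod{\delta}{\con{\cgrad f(z)}}|$ is itself $O(|\delta|)$ on $\clos S$.
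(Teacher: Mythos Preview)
Your proof is correct and follows essentially the same approach as the paper's: decompose $\delta$ orthogonally into its component along $\con{\cgrad f(z)}$ and its component in $T^c_z(f)$, apply the hypothesis to the tangential piece, and bound the cross and normal pieces using uniform bounds on the second derivatives and on $1/|\cgrad f|$ over the compact set $\clos S$. The only minor difference is that for the purely normal term $M|a|^2$ the paper estimates one factor of $|a|$ directly by $|\delta|$ (which you yourself note, since $|a|^2+|\tau|^2=|\delta|^2$), thereby avoiding any need for the upper bound $M'$ on $|\cgrad f|$; your detour via $M'$ is harmless but unnecessary.
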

\begin{proof}
If $S=\emptyset$ the assertion is trivial, so assume this is not the case.
Define
$$ d:=2\sum_{j,k=1}^n \left\|\frac{\partial^2f}{\partial z_j\partial\con z_k}\right\|_S<\infty, \qquad c:=2d\left\|\frac{1}{\left|\cgrad{f}\right|}\right\|_S<\infty.$$
Let $z\in S$ and $\delta\in\C^n$ be arbitrary. We will show the required inequality is satisfied for $c$ as above, which does not depend on $z$ or $\delta$.
Set $\nu:=\cgrad{f}(z)/\left|\cgrad{f}(z)\right|$. Let $\delta':=\sprod{\delta}{\con\nu}\con\nu$ and $\delta'':=\delta-\delta'$, so $\sprod{\delta''}{\con\nu}=0$.
Therefore, since $\Delta_{\delta''}f(z)\geq 0$, 
\begin{align*}
 \Delta_\delta f(z)=\Delta_{\delta''} f(z)+\Delta_{\delta'} f(z)+\sum_{j,k=1}^n \frac{\partial^2f}{\partial z_j\partial\con z_k}\bigg|_z \left(\delta''_j\con{\delta'}_k+\delta'_j\con{\delta''}_k\right) 
	\geq 0+s+t,
\end{align*}
where $s$ is the second term of the middle expression and $t$ is the third. Clearly $|s|/|\delta'|^2$ and $|t|/|\delta'\delta''|$ are at most $d$,
and since $|\delta'|,|\delta''|\leq |\delta|$ it follows
that $|s|\leq d|\delta'||\delta|$ and $|t|\leq d|\delta'||\delta|$. This implies that 
\begin{align*}
\Delta_\delta f(z)\geq -2d|\delta'||\delta|=-2d|\delta|\frac{1}{\left|\cgrad{f}(z)\right|}\left|\sprod{\delta}{\con{\cgrad{f}(z)}}\right| \geq -c|\delta|\left|\sprod{\delta}{\con{\cgrad{f}(z)}}\right|,
\end{align*}
as required.
\end{proof}
\begin{theorem}
\label{pslevi}
Let $U\subset\C^n$ be a pseudoconvex domain with $\cts^2$ boundary. Then $U$ is Levi pseudoconvex.
\end{theorem}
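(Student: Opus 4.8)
The plan is to show that the signed distance function $\eta$ of Proposition~\ref{sigsmooth} is, at every boundary point, a $\cts^2$ local defining function satisfying the Levi condition. If $\bd U=\emptyset$ then $U=\C^n$ and there is nothing to prove, so assume $\bd U\neq\emptyset$ and fix $a\in\bd U$. By Proposition~\ref{sigsmooth} there is a neighbourhood $W$ of $\bd U$ on which $\eta$ is $\cts^2$ and non-degenerate, with $\eta<0$ on $W\cap U$, $\eta=0$ on $\bd U$, and $\eta=-\emetric(\cdot,\bd U)$ on $W\cap U$. Since $U$ is pseudoconvex, the corollary characterising pseudoconvexity through the plurisubharmonicity of $z\mapsto-\ln\emetric(z,\bd U)$ shows that $g:=-\ln(-\eta)=-\ln\emetric(\cdot,\bd U)$ is plurisubharmonic on the open set $W\cap U$, and as $g$ is $\cts^2$ there, Proposition~\ref{leviformpsh} gives $\Delta_\delta g(z)\geq 0$ for every $z\in W\cap U$ and every $\delta\in\C^n$.

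The next step is a routine chain-rule computation of the Levi form of $g=-\ln(-\eta)$. Writing $\phi_\delta(z):=\sum_{j=1}^n\frac{\partial\eta}{\partial z_j}(z)\,\delta_j$ -- a $\cts^1$ function on $W$ with $\phi_\delta(a)=0$ precisely when $\delta\in T^c_a(\eta)$ -- one finds
$$ \Delta_\delta g(z)=\frac{\Delta_\delta\eta(z)}{|\eta(z)|}+\frac{|\phi_\delta(z)|^2}{|\eta(z)|^2},\qquad z\in W\cap U. $$
Multiplying the inequality $\Delta_\delta g(z)\geq 0$ by $|\eta(z)|>0$ therefore yields the key estimate
$$ \Delta_\delta\eta(z)\geq -\frac{|\phi_\delta(z)|^2}{|\eta(z)|},\qquad z\in W\cap U. $$

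Now fix $\delta\in T^c_a(\eta)$, so $\phi_\delta(a)=0$; since $\phi_\delta$ is $\cts^1$ near $a$ this gives $|\phi_\delta(z)|=O(|z-a|)$ as $z\to a$. The point is to let $z\to a$ in the key estimate \emph{along the inner normal} to $\bd U$ at $a$: denoting by $z_t$ the point of $U$ at Euclidean distance $t$ from $\bd U$ along that normal, we have $z_t\in W\cap U$ for small $t>0$, $\emetric(z_t,a)=t$, and $|\eta(z_t)|=\emetric(z_t,\bd U)=t$, so the right-hand side of the key estimate at $z_t$ is $O(t^2)/t=O(t)\to 0$. Since $z\mapsto\Delta_\delta\eta(z)$ is continuous ($\eta\in\cts^2$), letting $t\to 0^+$ forces $\Delta_\delta\eta(a)\geq 0$. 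As $a\in\bd U$ and $\delta\in T^c_a(\eta)$ were arbitrary, $\eta$ witnesses that $U$ is Levi pseudoconvex.

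The main obstacle is this last step: the crude bound $\Delta_\delta\eta(z)\geq-|\phi_\delta(z)|^2/|\eta(z)|$ has a $0/0$ right-hand side as $z\to a$ (both $\phi_\delta(z)\to\phi_\delta(a)=0$ and $|\eta(z)|\to 0$), so one genuinely has to choose the direction of approach. Approaching along the inner normal is exactly what makes $|\eta(z)|$ decay only linearly in $|z-a|$, while $|\phi_\delta(z)|^2$ decays quadratically because $\phi_\delta$ is $\cts^1$ and vanishes at $a$; this gap is precisely what pins the Levi form of $\eta$ to be non-negative on the complex tangent space. The first two steps are routine once Propositions~\ref{sigsmooth} and \ref{leviformpsh} and the cited corollary are in hand.
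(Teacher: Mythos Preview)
Your proof is correct and follows the same opening as the paper: the signed distance function $\eta$ from Proposition~\ref{sigsmooth}, plurisubharmonicity of $-\ln(-\eta)$ on $U$, and the chain-rule computation of its Levi form. The divergence comes in passing from the interior inequality to the boundary. The paper first restricts to $\delta$ with $\sprod{\delta}{\con{\cgrad\eta(z)}}=0$ (so the quadratic term drops out and $\Delta_\delta\eta(z)\geq 0$ directly), then invokes Lemma~\ref{levilemma} --- a compactness argument decomposing $\delta$ into tangential and normal parts --- to obtain the uniform lower bound $\Delta_\delta\eta(z)\geq -c|\delta|\big|\sprod{\delta}{\con{\cgrad\eta(z)}}\big|$ for \emph{all} $\delta$ on a fixed neighbourhood, after which one can let $z\to a$ along any path and finally set $\delta\in T^c_a(\eta)$. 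You instead keep the raw estimate $\Delta_\delta\eta(z)\geq -|\phi_\delta(z)|^2/|\eta(z)|$ and resolve the $0/0$ limit by approaching along the inner normal, where $|\eta(z_t)|=t$ exactly while $|\phi_\delta(z_t)|^2=O(t^2)$. Your route is more elementary --- it bypasses Lemma~\ref{levilemma} entirely --- at the price of relying on the tubular-neighbourhood fact that the inner-normal point at distance $t$ has $\emetric(\cdot,\bd U)=t$, which is standard for $\cts^2$ boundaries and implicit in Proposition~\ref{sigsmooth}. The paper's route, by contrast, yields a uniform estimate valid for arbitrary approach and arbitrary $\delta$, which is why the same Lemma~\ref{levilemma} can be reused verbatim in the converse direction (Theorem~\ref{leviisps}).
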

\begin{proof}
Let $\eta\colon\C^n\to\R$ be the signed distance function.
By pseudoconvexity of $U$
the function $f(z):=-\ln (-\eta(z))=-\ln\emetric(z,\bd U)$ is plurisubharmonic on $U$. Thus for $z\in U$ and $\delta\in\C^n$ with $\sprod{\delta}{\con{\cgrad{\eta}(z)}}=0$ we have, by the chain rule,
\begin{align*}
 0 \leq \Delta_\delta f(z)&=\sum_{j,k=1}^n \left(-\frac{\partial^2\eta}{\partial z_j\partial\con z_k}\bigg|_z \frac{\delta_j\con\delta_k}{\eta(z)}+\frac{\partial \eta}{\partial z_j}\bigg|_z\frac{\partial \eta}{\partial\con z_k}\bigg|_z\frac{\delta_j\con\delta_k}{\eta(z)^2}\right) =-\frac{1}{\eta(z)}\Delta_\delta \eta(z),\end{align*}
and since $\eta(z)<0$ it follows that $\Delta_\delta\eta(z)\geq 0$. Let $a\in\bd U$, and let $V$ be a neighbourhood of $a$ on which $\eta$ is $\cts^2$ and has non-vanishing gradient (the possibility of this
is assured by Proposition~\ref{sigsmooth} together with continuity of the gradient), and let $W\subset U\cap V$ be a bounded open set with $a\in\clos{W}\subset V$. By Lemma~\ref{levilemma} there exists $c\geq 0$
such that $\Delta_\delta\eta(z)\geq -c|\delta|\left|\sprod{\delta}{\con{\cgrad{\eta}(z)}}\right|$ for all $z\in W$ and $\delta\in\C^n$. Fix $\delta\in\C^n$, so 
because the first two derivatives of $\eta$ are continuous on $V$ this inequality holds as $z\to a$, meaning $\Delta_\delta\eta(a)\geq -c|\delta|\left|\sprod{\delta}{\con{\cgrad{\eta}(a)}}\right|$.
This holds for all $\delta\in\C^n$, so if $\delta\in T^c_a(\eta)$ then $\Delta_\delta\eta(a)\geq 0$ (as $\sprod{\delta}{\con{\cgrad{\eta}(a)}}=0$). This is true for all $a\in\bd U$, so
$U$ is Levi pseudoconvex.
\end{proof}
We conclude the section by demonstrating the converse, thus proving that for domains with twice-differentiable boundaries the concepts of Levi pseudoconvexity and pseudoconvexity are equivalent.
\begin{theorem}
\label{leviisps}
Let $U\subset\C^n$ be Levi pseudoconvex. Then $U$ is pseudoconvex.
\end{theorem}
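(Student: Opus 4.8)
The plan is to derive the result from Theorem~\ref{pslocal}, so it suffices to show that $U$ is locally pseudoconvex at every $a\in\bd U$ (the cases $\bd U=\emptyset$ and $U=\C^n$ are trivial, $z\mapsto|z|^2$ being then a continuous plurisubharmonic exhaustion function). Fix $a\in\bd U$. The tool I would use is the signed distance function $\eta$ of Proposition~\ref{sigsmooth}: shrinking its domain, I take a neighbourhood $V_0$ of $\bd U$ on which $\eta$ is a $\cts^2$ local defining function for $U$ with non-vanishing gradient. The point of preferring $\eta$ to an arbitrary defining function is that its level sets are the hypersurfaces parallel to $\bd U$, along which the Levi condition should propagate.

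First I would record the chain-rule identity --- the one appearing (for tangential $\delta$) in the proof of Theorem~\ref{pslevi}, now kept in full --- valid for $z\in U\cap V_0$ and all $\delta\in\C^n$:
\[
\Delta_\delta\bigl(-\ln(-\eta)\bigr)(z)=-\frac{1}{\eta(z)}\,\Delta_\delta\eta(z)+\frac{1}{\eta(z)^2}\,\bigl|\sprod{\delta}{\con{\cgrad{\eta}(z)}}\bigr|^{2}.
\]
Since $\eta<0$ on $U$, both $-1/\eta(z)$ and $1/\eta(z)^2$ are positive, so a bound of the shape $\Delta_\delta\eta(z)\ge-c|\delta|\,\bigl|\sprod{\delta}{\con{\cgrad{\eta}(z)}}\bigr|$ would, upon completing the square in $\bigl|\sprod{\delta}{\con{\cgrad{\eta}(z)}}\bigr|\big/(-\eta(z))$, give $\Delta_\delta(-\ln(-\eta))(z)\ge-\tfrac{c^2}{4}|\delta|^2$; then $g(z):=-\ln(-\eta(z))+\tfrac{c^2}{4}|z|^2$ is $\cts^2$ with non-negative Levi form and hence plurisubharmonic by Proposition~\ref{leviformpsh}. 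To obtain that bound I would apply Lemma~\ref{levilemma} to $\eta$ on a bounded set covering a one-sided neighbourhood of $a$, say $S:=\clos{\ball(a,\rho)}\cap\clos U$ with $\rho$ small enough that $\clos{\ball(a,\rho)}\subset V_0$; its hypothesis asks that $\Delta_\delta\eta(z)\ge 0$ whenever $z\in S$ and $\sprod{\delta}{\con{\cgrad{\eta}(z)}}=0$. At $z\in S\cap\bd U$ this is exactly Levi pseudoconvexity read off from the defining function $\eta$ (non-negativity of the Levi form is independent of the choice of defining function); at $z\in S\cap U$ with nearest boundary point $p$, one has $T^c_z(\eta)=T^c_p(\bd U)$ because $\cgrad{\eta}$ is constant along the normal ray, and the Hessian of $\eta$ along that ray is governed by the shape operator of $\bd U$ at $p$ through the parallel-surface relation for the distance function, so that $\Delta_\delta\eta(z)\ge\Delta_\delta\eta(p)\ge 0$. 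With Lemma~\ref{levilemma} in force on $S$, the function $g$ is plurisubharmonic on $U\cap\ball(a,\rho)\subset S$.

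It remains to build a local exhaustion function. Shrinking $\rho$ further so that $\ball(a,\rho)$ lies in the domain of $g$, take a component $W$ of $U\cap\ball(a,\rho)$ and set $\Phi:=\max\bigl(g,\,-\ln\emetric(\cdot,\bd\ball(a,\rho))\bigr)$ on $W$; this is continuous and, by Example~\ref{ballex} (the ball is pseudoconvex) with Proposition~\ref{pshupper}, plurisubharmonic. Since $\bd W\subset\bd U\cup\bd\ball(a,\rho)$ and $W$ is bounded, $\Phi$ tends to $+\infty$ at every boundary point of $W$ --- via the $g$-term when that point lies on $\bd U$ (there $-\eta\to 0$), via the distance-to-$\bd\ball(a,\rho)$ term otherwise --- so $\Phi$ is an exhaustion function for $W$ by Proposition~\ref{exasdf}, and $W$ is pseudoconvex. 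As this holds for each component of $U\cap\ball(a,\rho)$, the domain $U$ is locally pseudoconvex at $a$; $a$ being arbitrary, Theorem~\ref{pslocal} yields that $U$ is pseudoconvex.

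The one genuinely delicate step is the propagation claim: that $\Delta_\delta\eta(z)\ge 0$ holds for $\delta$ tangent to the level set of $\eta$ through an interior point $z$ near $\bd U$, not only for $z\in\bd U$. Plain continuity of the second derivatives of $\eta$ does not suffice --- it would give only $\Delta_\delta\eta(z)\ge-\varepsilon\bigl(\emetric(z,\bd U)\bigr)|\delta|^2$, and dividing by $-\eta(z)$ in the identity above then destroys any uniform lower bound on $\Delta_\delta(-\ln(-\eta))(z)$. What is needed is the sharper, curvature-theoretic fact that the Levi form of the signed distance function does not decrease as one moves inward along normals, a statement belonging to the geometry underlying Proposition~\ref{sigsmooth}. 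The algebraic identity, the use of Lemma~\ref{levilemma}, the square-completion, and the patching with the ball's exhaustion function are all routine.
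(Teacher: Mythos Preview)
Your architecture matches the paper's exactly: reduce to local pseudoconvexity via Theorem~\ref{pslocal}, obtain the lower bound of Lemma~\ref{levilemma} on a defining function, feed it into the chain-rule identity for $-\ln(-\cdot)$, complete the square, and patch with the ball's exhaustion function. The only genuine divergence is your choice of defining function, and that choice is precisely where your acknowledged gap lives.

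The paper sidesteps the propagation issue by \emph{not} using the signed distance function. After a rotation so that $\partial f/\partial x_{2n}|_a>0$, it invokes Remark~\ref{defrem} to write $\bd U$ locally as a graph and takes $h(z):=\Im(z_n)-g(\Re z_1,\dots,\Re z_n,\Im z_1,\dots,\Im z_{n-1})$ as the defining function. The point of this choice is that \emph{all} derivatives of $h$ are independent of $\Im(z_n)$, and every point $z$ of a suitable box $W'\times I'$ lies on a line $\{b+(0,\dots,0,i\alpha):\alpha\in\R\}$ through a boundary point $b$. Hence the inequality $\Delta_\delta h(b)\ge -c|\delta|\,|\sprod{\delta}{\con{\cgrad h(b)}}|$ obtained from Lemma~\ref{levilemma} applied on $\bd U\cap(W'\times I')$ holds verbatim at $z$, with no curvature analysis whatsoever. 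Your hard step becomes a one-line observation.

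For what it is worth, your propagation claim for $\eta$ is true, and the reason is clean once stated: along the inward normal the real Hessian of $\eta$ restricted to the (constant) tangent hyperplane satisfies the Riccati equation $H'(t)=-H(t)^2$, so for any fixed real tangent vector $v$ one has $\frac{d}{dt}H(t)(v,v)=-|H(t)v|^2\le 0$. Since $4\Delta_\delta\eta=\text{Hess}_\R\eta(\delta,\delta)+\text{Hess}_\R\eta(i\delta,i\delta)$ and both $\delta,i\delta\in T_p\bd U$ when $\delta\in T_p^c$, the Levi form of $\eta$ in direction $\delta$ is non-increasing in $t$ and hence $\ge 0$ for $t<0$. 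But this Riccati analysis is not part of Proposition~\ref{sigsmooth} and would need to be supplied; the paper's graph defining function buys you the result with no such investment.
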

\begin{proof}
By Theorem~\ref{pslocal} it is enough to show $U$ is locally pseudoconvex at each boundary point. 
Let $a\in\bd U$ and let $f\colon V\to\R$ be a local defining function for $U$ at $a$. 
Suppose, without loss of generality, that $\pd{f}{x_{2n}}\big|_a>0$ (regarding $f$ as a function of real variables), so as in Remark~\ref{defrem} 
there are open sets $W\subset\R^{2n-1}$ and $I\subset\R$ such that $a\in W\times I\subset V$ and a $\cts^2$ function $g\colon W\to I$ such that
$f(x)=0$ if and only if $x_{2n}=g(x_1,\dots,x_{2n-1})$. Moreover, $h\colon W\times I\subset\C^n\to\R$ defined by $h(z):=\Im(z_n)-g(\Re(z_1),\dots,\Re(z_n),\Im(z_1),\dots,\Im(z_{n-1}))$ 
is a local defining function for $U$ at each point $b\in\bd U\cap(W\times I)$.
In particular, since $U$ is Levi pseudoconvex,
for each such $b$ we have $\Delta_\delta h(b)\geq 0$ for all $\delta\in T^c_b(h)$. Let $W'\times I'$ be a neighbourhood of $a$ such that $g(W')\subset I'$,
$\clos{W'}\subset W$ and $\clos{I'}\subset I$,
so by Lemma~\ref{levilemma} there exists $c\geq 0$ such that $\Delta_{\delta}h(b)\geq -c|\delta|\left|\sprod{\delta}{\con{\cgrad{h}(b)}}\right|$ for all $b\in \bd U\cap (W'\times I')$ and
$\delta\in\C^n$. But the derivatives of $h$ are independent of $\Im(z_n)$, and any point $z\in W'\times I'\subset\C^n$ can be written in the form $z=b+(0,\dots,0,i\alpha)$ for some
$b\in \bd U\cap(W'\times I')$ and $\alpha\in\R$, 
so this inequality holds not only on $\bd U$ but at every point of $W'\times I'$. Thus for $z\in W'\times I'$ and $\delta\in\C^n$ with $|\delta|=1$
we have
\begin{equation} \Delta_\delta h(z)\geq -c\left|\sprod{\delta}{\con{\cgrad{h}(z)}}\right|\geq -2c\left|\sprod{\delta}{\con{\cgrad{h}(z)}}\right|.\label{levipseq}\end{equation}

Let $r>0$ so that $\ball(a,r)\subset W'\times I'$, and
let $\mu\colon\ball(a,r)\cap U\to\R$
be given by $\mu(z):=-\ln(-h(z))+c^2|z|^2$, so clearly $\mu$ is continuous and $\mu(z)\to+\infty$ as $z\to b$ for all $b\in \ball(a,r)\cap \bd U$. We will show that $\mu$ is plurisubharmonic on 
$\ball(a,r)\cap U$, and
since $z\mapsto -\ln\emetric(z,\bd\ball(a,r))$ is plurisubharmonic by pseudoconvexity of $\ball(a,r)$ it will follow that $z\mapsto \mu(z)-\ln\emetric(z,\bd\ball(a,r))$ is a plurisubharmonic
exhaustion function for each component of 
$\ball(a,r)\cap U$ (since it tends to $+\infty$ as the boundary points are approached), which will show that $U$ is locally pseudoconvex at $a$. By the chain rule we have, for any $\delta\in\C^n$
with $|\delta|=1$ and $z\in\ball(a,r)\cap U$,
$$
 \Delta_\delta \mu(z)	=-\frac{1}{h(z)}\Delta_\delta h(z)+\frac{1}{h(z)^2}\left|\sprod{\delta}{\con{\cgrad{h}(z)}}\right|^2+c^2 \nonumber 
	\geq -2c\left|\frac{\sprod{\delta}{\con{\cgrad{h}(z)}}}{h(z)}\right|+\left|\frac{\sprod{\delta}{\con{\cgrad{h}(z)}}}{h(z)}\right|^2+c^2 \geq 0
$$
where for the first $\geq $ we have used \eqref{levipseq} and the fact that $|h(z)|=-h(z)$ since $h(z)<0$, and for the second we have observed that the left hand side is a perfect square.
Therefore $\mu$ is plurisubharmonic on $\ball(a,r)\cap U$, so as described above it follows that $U$ is locally pseudoconvex at $a$. This is true for each $a\in\bd U$, meaning $U$ is pseudoconvex.
\end{proof}

\section{The Levi problem}
\label{chap:levi}
We have now demonstrated that the domains of holomorphy and the holomorphically convex domains are identical, that all of these domains are pseudoconvex, and that the pseudoconvex domains
are precisely those which are locally pseudoconvex, satisfy the continuity principle, and, in the case of domains with twice-differentiable boundaries, are Levi pseudoconvex.
In this section we demonstrate that pseudoconvex domains are necessarily domains of holomorphy, thus completing the solution to the Levi problem. We apply methods based on those from
\cite{bers}, \cite{boas}, \cite{krantz}, \cite{range}
and \cite{itca}.
\subsection{The inhomogeneous Cauchy-Riemann equations}
It is an elementary fact that for a real-differentiable function $f\colon U\subset\C^n\to\C$ to be complex-differentiable at a point $a\in U$ it is necessary and sufficient that the 
partial derivatives $\pd{f}{\con z_j}\big|_a$ be $0$ for $1\leq j\leq n$.
We may reinterpret this condition in terms of differential forms.

Recall that a real differential form is a smooth alternating covariant tensor field. That is, a real $m$-form on an open $U\subset\R^n$ is a smooth map $\tau\colon U\times(\R^n)^m\to\R$ such that, for each
fixed $p\in U$, the map $\tau(p,\cdot)\colon(\R^n)^m\to\R$ is linear in each argument and alternating. There are important $1$-forms $dx_j$ (for $1\leq j\leq n$) defined by $dx_j(p,y_1,\dots, y_n):=y_j$.
An $m$-form $\tau$ and a $p$-form $\sigma$ can be combined via the wedge product to form an $(m+p)$-form $\tau\wedge\sigma$, and we recall that $\wedge$ is associative and that 
$\tau\wedge\sigma = (-1)^{mp}\sigma\wedge\tau$. 
We may define complex forms in terms of these real forms:
\begin{definition}
Let $U\subset\C^n$ be a domain and $m\geq 0$ an integer. A \textbf{smooth (complex) $m$-form} on $U$ is a map $\omega\colon U\times (\C^n)^m\to\C$ 
given by $\omega = \tau+i\sigma$, where $\tau$ and $\sigma$ are smooth $m$-forms on $U$ when it is regarded as a subset of $\R^{2n}$. The space of such smooth complex $m$-forms is denoted
$\form{m}(U)$. For $1\leq j\leq n$ we write
$dz_j:=dx_j+idx_{n+j}\in\form{1}(U)$ and $d\con z_j:=dx_j-idx_{n+j}\in\form{1}(U)$. Let $p\geq 0$. The \textbf{wedge product} $\wedge\colon\form{m}(U)\times\form{p}(U)\to\form{m+p}(U)$ extends
the usual wedge product according to
$$ (\tau_1+i\sigma_1)\wedge (\tau_2+i\sigma_2):= (\tau_1\wedge\tau_2-\sigma_1\wedge\sigma_2)+i(\tau_1\wedge \sigma_2+\sigma_1\wedge\tau_2)$$
for any $\tau_1+i\sigma_1\in\form{m}(U)$, $\tau_2+i\sigma_2\in\form{p}(U)$. A \textbf{smooth $(0,m)$-form} on $U$ is a smooth complex $m$-form $\omega$
which can be written in the form
\begin{equation} \omega = \sum_{j_1=1,\dots,j_m=1}^n \omega_{j_1,\dots,j_m} d\con z_{j_1}\wedge\dots \wedge d\con z_{j_m},\qquad \omega_{j_1,\dots,j_m}\in\ccts^\infty(U)\label{formform}\end{equation}
(where $\ccts^\infty(U)$ denotes the set of functions $U\to\C$ which are $\cts^\infty$ when regarded as functions of $2n$ real variables mapping into $\R^2$).
\end{definition}
We may introduce an exterior derivative:
\begin{definition}
Let $U\subset\C^n$ be a domain and $f\in\ccts^\infty(U)$ a function. Define the smooth $(0,1)$-form $\con\partial f$ by $$ \con\partial f:=\sum_{j=1}^n \frac{\partial f}{\partial \con z_j} d\con z_j. $$
For a smooth $(0,m)$-form $\omega$ given by \eqref{formform}, define the smooth $(0,m+1)$-form $\con\partial\omega$ by $$ \con\partial\omega := \sum_{j_1=1,\dots,j_m=1}^n (\con\partial \omega_{j_1,\dots,j_m})\wedge d\con z_{j_1}\wedge\dots\wedge d\con z_{j_m}.$$\index{exterior derivative}
\end{definition}
We note the connection with the Cauchy-Riemann conditions -- a real-differentiable function $f\colon U\subset\C^n\to\C$ is holomorphic on $U$ if and only if $\con\partial f=0$.

In some instances we will see that it is important to find a function $\phi\in\ccts^\infty(U)$ such that $\con\partial\phi=\omega$, where $\omega$ is a prescribed smooth $(0,1)$-form. Roughly speaking,
this will allow us to adjust smooth functions in order to make them holomorphic. If $\con\partial\phi=\omega$ then applying $\con\partial$ to both sides we have $\con\partial^2\phi=\con\partial\omega$,
and it is readily verified that $\con\partial^2=0$, so a necessary condition for the equation $\con\partial\phi=\omega$ to be solvable (for $\phi$) is that $\con\partial\omega=0$.
In 1965 H\"ormander proved, using Hilbert space theory,
that when $U$ is a pseudoconvex domain this condition is also sufficient \cite{hormander}. For our purposes a weaker result suffices, where we assume the domain is smoothly bounded and strictly pseudoconvex:
\begin{theorem}[H\"ormander's theorem]
\label{hormander}
\index{H\"ormander's theorem}
Let $U\subset\C^n$ be a smoothly bounded strictly pseudoconvex domain and let $\omega$ be a smooth $(0,1)$-form on $U$ with $\con\partial \omega=0$. Then there exists
$\phi\in\ccts^\infty(U)$ with $\con\partial \phi=\omega$.
\end{theorem}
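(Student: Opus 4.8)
The plan is to follow H\"ormander's Hilbert-space method, reformulating the equation $\con\partial\phi=\omega$ as a problem about a densely defined closed operator between weighted $L^2$ spaces and reducing everything to a single a priori inequality. Fix a smooth weight $\varphi\colon\clos U\to\R$ and, for $q=0,1,2$, let $L^2_{(0,q)}(U,\varphi)$ be the Hilbert space of $(0,q)$-forms on $U$ whose coefficients are square-integrable against $e^{-\varphi}\,dV$. Regard $\con\partial$ as two densely defined closed operators, $T\colon L^2_{(0,0)}(U,\varphi)\to L^2_{(0,1)}(U,\varphi)$ and $S\colon L^2_{(0,1)}(U,\varphi)\to L^2_{(0,2)}(U,\varphi)$, each acting in the distributional sense with maximal domain, so that $S\circ T=0$ because $\con\partial^2=0$. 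The hypothesis $\con\partial\omega=0$ says that $\omega$ lies in the kernel of $S$, which contains the range of $T$; hence, by the standard functional-analytic lemma on pairs of Hilbert-space operators, solvability of $T\phi=\omega$ for every such $\omega$ follows once one establishes the estimate
$$\|\psi\|_\varphi^2\le C\left(\|T^*\psi\|_\varphi^2+\|S\psi\|_\varphi^2\right)\qquad\text{for all }\psi\in\dom{T^*}\cap\dom{S},$$
with $C$ independent of $\psi$ (here $T^*$ is the Hilbert-space adjoint and $\|\cdot\|_\varphi$ the weighted norm).

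The heart of the argument is this estimate, which comes from the Bochner--Kohn--Morrey integration-by-parts identity. For a $(0,1)$-form $\psi=\sum_j\psi_j\,d\con z_j$ smooth up to $\bd U$, one expands $\|T^*\psi\|_\varphi^2+\|S\psi\|_\varphi^2$ and integrates by parts in each variable, obtaining a sum of: (i) an interior term $\int_U\sum_{j,k}\frac{\partial^2\varphi}{\partial z_j\partial\con z_k}\psi_j\con\psi_k\,e^{-\varphi}\,dV$; (ii) a manifestly nonnegative interior term $\sum_{j,k}\int_U\left|\frac{\partial\psi_j}{\partial\con z_k}\right|^2e^{-\varphi}\,dV$; and (iii) a boundary integral over $\bd U$ whose integrand is the complex Hessian of a defining function $\rho$ contracted against $\psi$. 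The key point is that membership of $\psi$ in $\dom{T^*}$ forces $\psi$ to be complex-tangential along $\bd U$, i.e.\ $\sprod{\psi(z)}{\con{\cgrad{\rho}(z)}}=0$ for $z\in\bd U$; thus in (iii) the vector $\psi(z)$ lies in $T^c_z(\rho)$, and because $U$ is Levi pseudoconvex (by Theorem~\ref{pslevi}, or directly because the given global defining function is strictly plurisubharmonic) the boundary integral is $\ge 0$. The interior term (i) is made coercive by taking $\varphi$ strictly plurisubharmonic with large complex Hessian; since $U$ is bounded, $\varphi(z):=|z|^2$ already suffices, for then $\Delta_\delta\varphi\equiv|\delta|^2$ and (i) dominates $\|\psi\|^2$. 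Assembling (i)--(iii) yields the estimate.

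Before it can be applied, a density issue must be dealt with: the integration by parts is valid only for $\psi\in\cts^\infty(\clos U)$ satisfying the tangency condition, whereas $\dom{T^*}\cap\dom{S}$ a priori contains only $L^2$ forms. One therefore needs a Friedrichs-type approximation lemma: such smooth tangential forms are dense in $\dom{T^*}\cap\dom{S}$ in the graph norm $\psi\mapsto\|\psi\|_\varphi+\|T^*\psi\|_\varphi+\|S\psi\|_\varphi$, and here the smoothness of $\bd U$ is essential. With density in hand the estimate extends by continuity, the functional-analytic lemma produces $\phi\in L^2_{(0,0)}(U,\varphi)$ with $\con\partial\phi=\omega$ distributionally, and interior elliptic regularity (e.g.\ for the complex Laplacian $\con\partial\,\vartheta+\vartheta\,\con\partial$, $\vartheta$ being the formal adjoint of $\con\partial$, or hypoellipticity of the Cauchy--Riemann system) upgrades $\phi$ to an element of $\ccts^\infty(U)$ since $\omega$ is smooth.

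I expect the main obstacle to be the a priori estimate together with its density lemma --- in particular, arranging the integration by parts so that the boundary term appears in exactly the form to which Levi pseudoconvexity applies, and controlling all the cross terms that arise. This is precisely why the full proof is long, and why it is deferred here to \cite{hormander} and to the self-contained account in \cite[chapter 4]{krantz}.
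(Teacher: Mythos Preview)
The paper does not prove this theorem at all: immediately after stating it, the author writes ``We refer the reader to \cite[chapter 4]{krantz} for a proof,'' having already explained in the introduction that the proof is omitted for reasons of length. Your outline is a faithful sketch of exactly the argument presented there (and in H\"ormander's original paper): weighted $L^2$ spaces, the pair of closed operators $T,S$, reduction to the basic estimate via the functional-analytic lemma, the Bochner--Kohn--Morrey identity with its Levi-form boundary term, the Friedrichs density lemma, and interior regularity. So there is nothing to compare---your proposal \emph{is} the referenced proof, correctly summarised, and you even anticipate in your final paragraph that the details are lengthy enough to warrant deferral.
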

We refer the reader to \cite[chapter 4]{krantz} for a proof.
\subsection{The Oka-Weil approximation theorem}
We will require the Oka-Weil approximation theorem, which is an important theorem of complex analysis in its own right:
\begin{theorem}[Oka-Weil theorem]
\label{okaweil}
\index{Oka-Weil theorem}
Let $U\subset\C^n$ be a domain of holomorphy and $K\subset U$ a compact subset with $K=\hat K$ (where $\hat K:=\hat K_{\hol(U)}$ is the holomorphically convex hull of $K$). Then for all $f\in \hol(K)$ and
$\epsilon>0$ there exists $g\in \hol(U)$ such that $\|f-g\|_K<\epsilon$.
\end{theorem}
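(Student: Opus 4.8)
The plan is to reduce, using the theory of analytic polyhedra, to a situation that can be handled by Oka's embedding trick together with a power-series argument. Since $f\in\hol(K)$, it is holomorphic on some open neighbourhood $V\subset U$ of $K$; as $K=\hat K_{\hol(U)}$, Proposition~\ref{approxcapol} provides a compact analytic polyhedron $L$ with frame $h_1,\dots,h_m\in\hol(U)$ and $K\subset L\subset V$, and (choosing the scalings in its proof slightly more generously) we may take $\|h_j\|_K<1$ for each $j$. Then $K$ lies inside the open analytic polyhedron $\Omega:=\{z\in V:|h_j(z)|<1,\ 1\leq j\leq m\}$, whose closure is contained in the compact set $L$. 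It suffices to approximate $f$ uniformly on $K$ by elements of $\hol(U)$.

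Set $\Delta:=\ball(0,1)\subset\C$ and define $\iota\colon\Omega\to U\times\Delta^m$ by $\iota(z):=(z,h_1(z),\dots,h_m(z))$. Then $\iota$ is a biholomorphism of $\Omega$ onto $X:=\iota(\Omega)$, with inverse the projection to the first factor; since $\clos{\Omega}$ is compact and contained in $U$, the image $X$ is a closed complex submanifold of $U\times\Delta^m$, which is a domain of holomorphy by Proposition~\ref{prodhol} (polydiscs being domains of holomorphy). Moreover $\tilde f:=f\circ\iota^{-1}$ belongs to $\hol(X)$. The crux is the Oka extension lemma: every function holomorphic on a closed complex submanifold of a domain of holomorphy extends to a holomorphic function on the whole domain. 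Granting it, fix $F\in\hol(U\times\Delta^m)$ with $F|_X=\tilde f$.

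Finally, expand $F$ in its Taylor series in the disc variables: for a fixed $r<1$ put $c_\alpha(z):=(2\pi i)^{-m}\int_{|\zeta_1|=r}\!\cdots\!\int_{|\zeta_m|=r}F(z,\zeta)\,\zeta_1^{-\alpha_1-1}\cdots\zeta_m^{-\alpha_m-1}\,d\zeta$ for $\alpha\in\Np^m$; each $c_\alpha$ lies in $\hol(U)$ (being a contour integral of the holomorphic integrand $F(\cdot,\zeta)$), and the partial sums $F_N:=\sum_{|\alpha|\leq N}c_\alpha(z)w^\alpha$ converge to $F$ uniformly on compact subsets of $U\times\Delta^m$, in particular on the compact set $\iota(K)$. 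Putting $g_N:=F_N\circ\iota$, that is $g_N(z)=\sum_{|\alpha|\leq N}c_\alpha(z)\,h_1(z)^{\alpha_1}\cdots h_m(z)^{\alpha_m}$, we obtain $g_N\in\hol(U)$ with $\|g_N-f\|_K=\|F_N-\tilde f\|_{\iota(K)}=\|F_N-F\|_{\iota(K)}\to 0$, which is the desired approximation.

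The main obstacle is the Oka extension lemma. I would prove it by induction on the number of holomorphic relations defining the submanifold, peeling them off one at a time: the submanifold $X$ is contained in a larger closed submanifold obtained by dropping one relation, inside which $X$ appears as a single graph $\{w=\phi(z)\}$ over an open set of holomorphy; one first extends the given function from this graph across the corresponding disc factor -- the genuinely substantive, essentially one-variable step (Oka's lemma proper), carried out by a Cauchy-integral construction, equivalently by solving an additive Cousin problem on a domain of holomorphy -- and then applies the inductive hypothesis. The base case of no relations is trivial, and the remaining effort is routine but delicate bookkeeping: checking that every auxiliary set stays an open set of holomorphy (so that Propositions~\ref{apolhol} and~\ref{prodhol} remain applicable) and that the one-variable extension can be performed holomorphically in the ambient parameters.
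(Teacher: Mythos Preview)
Your strategy is the classical Oka route and is structurally very close to the paper's: both reduce to extending a holomorphic function across one extra disc factor and then reading off a power series in that variable. The paper, however, organizes this as a chain of Runge pairs between nested compact analytic polyhedra (Lemma~\ref{capolrunge1}, Corollary~\ref{capolrunge}, Proposition~\ref{okaclose}): peeling off a single inequality $|f|\leq 1$ yields a Runge inclusion by embedding into $K\times D$, solving a $\bar\partial$-equation on a strictly pseudoconvex approximation of $K\times D$ via Lemma~\ref{spsapproxcapol} and Theorem~\ref{hormander}, and expanding in $w$ via Lemma~\ref{sneakyapprox}; the passage from a single polyhedron out to all of $U$ is then handled by the telescoping Proposition~\ref{rungepair}. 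You instead embed once into $U\times\Delta^m$ and phrase the core step as a global extension lemma.

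The real gap is in your one-variable step. You describe it as ``a Cauchy-integral construction, equivalently \dots\ an additive Cousin problem on a domain of holomorphy.'' A bare Cauchy integral suffices only when the base is a polydisc; in your induction the base at stage $k$ is the open analytic polyhedron $W_{k-1}=\{z\in U:|h_j(z)|<1,\ j<k\}$, and extending from the graph of $h_k$ to $W_{k-1}\times\Delta$ is Cousin~I on an arbitrary domain of holomorphy --- a theorem of essentially the same depth as Oka--Weil itself (Oka's original 1937 argument uses his jointing lemma and is substantial). Within this paper's toolkit the only available device is Theorem~\ref{hormander}, which is stated only for smoothly bounded strictly pseudoconvex domains and does not apply directly to $W_{k-1}\times\Delta$. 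The paper's organization is designed precisely to avoid this: $\bar\partial$ is only ever solved on strictly pseudoconvex approximations of \emph{compact} polyhedra, and the escape to $U$ comes from telescoping, not from a global extension. (A minor further issue: your $X=\iota(\Omega)$ need not coincide with the full locus $\{w_j=h_j(z)\}$ in $U\times\Delta^m$, since $\{z\in U:|h_j(z)|<1\ \forall j\}$ can strictly contain $\Omega\subset V$; your extension lemma, as stated for the submanifold cut out by those relations, would then require $\tilde f$ to be defined on the whole locus, not just on $\iota(\Omega)$.)
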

Recall that for a function to be holomorphic on an arbitrary set $C$ (not necessarily open) it must be holomorphic on a neighbourhood of $C$.
Before we discuss the proof of this theorem we will introduce some useful terminology:
\begin{definition}
Let $A\subset B\subset\C^n$ be sets such that for any $f\in\hol(A)$, any compact $K\subset A$ and any $\epsilon>0$ there exists $g\in\hol(B)$ such that $\|f-g\|_K<\epsilon$. Then
we say $A$ \textbf{is Runge in} $B$ or $(A,B)$ is a \textbf{Runge pair}.\index{Runge pair}
\end{definition}
Note that if $A$ is compact, for $A$ to be Runge in $B$ it is necessary and sufficient that for all $f\in\hol(A)$ and $\epsilon>0$ there exists $g\in\hol(B)$ such that
$\|f-g\|_A<\epsilon$.
Thus the Oka-Weil approximation theorem states that if $K$ is a compact subset of a domain of holomorphy $U$ which satisfies $K=\hat K_{\hol(U)}$ then $(K,U)$ is a Runge pair. We have the following result which encapsulates the typical application of Runge pairs:
\begin{proposition}
\label{rungepair}
Let $\{K_j\}_{j\geq 1}$ be a sequence of compact sets in $\C^n$ such that $K_j\subset \inter K_{j+1}$ and $K_j$ is Runge in $K_{j+1}$ for all $j\geq 1$. 
Then $K_1$ is Runge in $U:=\bigcup_{j\geq 1}\inter K_j$.
\end{proposition}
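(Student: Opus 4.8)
The plan is to show that an arbitrary $f \in \hol(K_1)$ can be approximated uniformly on $K_1$ by some $g \in \hol(U)$, which by the remark after the definition of Runge pairs suffices (since $K_1$ is compact). The natural approach is an iterated-approximation argument: build a sequence $g_j \in \hol(\inter K_{j+1})$ (or better, $\hol(K_{j+1})$) such that $g_1$ approximates $f$ on $K_1$, and each $g_{j+1}$ approximates $g_j$ very closely on $K_j$, so tightly that the telescoping sum converges uniformly on every $K_j$. The limit is then holomorphic on each $\inter K_j$, hence on $U = \bigcup_{j \geq 1} \inter K_j$, and is close to $f$ on $K_1$.

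In detail, fix $\epsilon > 0$. First I would use that $K_1$ is Runge in $K_2$ to find $g_1 \in \hol(K_2)$ with $\|f - g_1\|_{K_1} < \epsilon/2$. Then inductively, given $g_j \in \hol(K_{j+1})$, apply the hypothesis that $K_{j+1}$ is Runge in $K_{j+2}$ to the function $g_j \in \hol(K_{j+1})$ (note $g_j$ is holomorphic on a neighbourhood of $K_{j+1}$, so it is a legitimate input) to obtain $g_{j+1} \in \hol(K_{j+2})$ with $\|g_{j+1} - g_j\|_{K_{j+1}} < \epsilon/2^{j+1}$. Now define $g := \lim_{j \to \infty} g_j$ pointwise on $U$; the limit exists because for any fixed $m$, the tail $\sum_{j \geq m} \|g_{j+1} - g_j\|_{K_m}$ is dominated by $\sum_{j \geq m} \epsilon/2^{j+1}$, so $\{g_j\}_{j \geq m}$ is uniformly Cauchy on $K_m$, and in particular uniformly convergent on $\inter K_m$. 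Since each $g_j$ with $j \geq m$ is holomorphic on a neighbourhood of $K_m$, the uniform limit $g$ is holomorphic on $\inter K_m$; as $U = \bigcup_{m \geq 1} \inter K_m$, we conclude $g \in \hol(U)$. Finally, on $K_1$ we estimate
$$ \|f - g\|_{K_1} \leq \|f - g_1\|_{K_1} + \sum_{j=1}^\infty \|g_{j+1} - g_j\|_{K_1} < \frac{\epsilon}{2} + \sum_{j=1}^\infty \frac{\epsilon}{2^{j+1}} = \epsilon, $$
using that $K_1 \subset K_{j+1}$ so the $K_1$-sup norm is at most the $K_{j+1}$-sup norm for each term. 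Since $\epsilon$ and $f$ were arbitrary, $K_1$ is Runge in $U$.

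The only mildly delicate point — the main obstacle, such as it is — is the bookkeeping of domains of holomorphicity: one must check that each intermediate function $g_j$ really is holomorphic on a neighbourhood of $K_{j+1}$ (so that it can be fed into the next application of the Runge hypothesis), and that the uniform limit $g$ picks up holomorphicity on all of $\inter K_m$ rather than merely on $K_m$ itself. Both follow immediately from the convention that "holomorphic on a set" means "holomorphic on a neighbourhood of the set" together with the standard fact that a locally uniform limit of holomorphic functions is holomorphic, applied on the open sets $\inter K_m$. No other subtlety arises; the argument is essentially the classical Runge exhaustion trick transplanted verbatim to $\C^n$.
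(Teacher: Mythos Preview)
Your proposal is correct and follows essentially the same approach as the paper: iterate the Runge hypothesis with geometrically shrinking errors to build a sequence that is uniformly Cauchy on each $K_m$, and take the (locally uniform, hence holomorphic) limit. The only difference is cosmetic indexing---the paper sets $g_1:=f\in\hol(K_1)$ and $g_j\in\hol(K_j)$ and writes the limit as a telescoping series, whereas you shift by one and phrase it as a sequence limit---but the argument is the same.
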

\begin{proof}
Let $f\in\hol(K_1)$ be a function and $\epsilon>0$. Let $g_1:=f\in\hol(K_1)$. Since $K_1$ is Runge in $K_2$ there is
$g_2\in\hol(K_2)$ such that $\|g_2-g_1\|_{K_1}<\epsilon/2$. Similarly, $K_2$ is Runge in $K_3$ so there is $g_3\in\hol(K_3)$ such that $\|g_3-g_2\|_{K_2}<\epsilon/4$. In this way
we obtain, for all $j\geq 1$, functions $g_j\in\hol(K_j)$ such that $\|g_{j+1}-g_{j}\|_{K_j}<\epsilon/2^j$.

Now consider the series $g(z):=g_1(z)+\sum_{j=1}^\infty (g_{j+1}(z)-g_j(z))$. 
For any $J\geq 1$ we may write 
$ g(z)=
g_J(z)+\sum_{j=J}^\infty (g_{j+1}(z)-g_j(z)),$
so the value of $g(z)$ depends only on the tail of the sequence $\{g_j(z)\}_{j\geq 1}$. 
In particular, since for all $z\in U$ there exists $J\geq 1$ such that $j\geq J$ implies $z\in K_j$, $g$ is well-defined on all of $U$ (provided the series converges). 
Let $J\geq 1$. 
For any $j\geq J$ we have $\|g_{j+1}-g_j\|_{K_J}\leq \|g_{j+1}-g_j\|_{K_j}<\epsilon/2^j$, so the series converges uniformly on $K_J$ by the Weierstrass $M$-test.
It follows that $g$ is holomorphic on $\inter K_J$. This is true for each $J\geq 1$, so $g\in\hol(U)$.
Moreover, for any $z\in K_1$ we have
$$ |f(z)-g(z)|=|g(z)-g_1(z)|=\left|\sum_{j=1}^\infty (g_{j+1}(z)-g_j(z))\right|< \sum_{j=1}^\infty \frac{\epsilon}{2^j}=\epsilon,$$
which implies $\|f-g\|_{K_1}<\epsilon$ as required.\end{proof}

Before we prove the Oka-Weil theorem we need some intermediate facts. For the following three results and their proofs let $D:=\clos{\ball(0,1)}\subset\C$.
\begin{lemma}
\label{spsapproxcapol}
Let $K\subset\C^n$ be a compact analytic polyhedron and $U$ a neighbourhood of $K\times D$. Then there is an open set $X$ consisting of a union of disjoint smoothly bounded strictly pseudoconvex domains such that $K\times D\subset X\subset U$.
\end{lemma}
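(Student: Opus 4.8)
The plan is to recognise $K\times D$ as a compact analytic polyhedron in $\C^{n+1}$, shrink $U$ to an open set of holomorphy, and then exhaust the finitely many relevant components by smoothly bounded strictly pseudoconvex domains.

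First I would note that, writing $K=\{z\in W\colon |f_j(z)|\le 1,\ 1\le j\le m\}$ with $W\subset\C^n$ open and $f_1,\dots,f_m\in\hol(W)$, the functions $g_j(z,w):=f_j(z)$ (for $1\le j\le m$) and $g_{m+1}(z,w):=w$ are holomorphic on $W\times\C$ and satisfy $K\times D=\{(z,w)\in W\times\C\colon |g_j(z,w)|\le 1,\ 1\le j\le m+1\}$, which is compact; hence $K\times D$ is a compact analytic polyhedron with frame in $\hol(W\times\C)$. Applying Proposition~\ref{capolapprox} then yields an open set of holomorphy $Y$ with $K\times D\subset Y\subset U$.

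The key observation is that, since $K\times D$ is compact, it meets only finitely many connected components of $Y$: if it met infinitely many, one could choose a point of $K\times D$ in each of them, extract a convergent subsequence, and note that its limit lies in a single component of $Y$ that would then contain points belonging to infinitely many distinct components --- a contradiction. Let $Y_1,\dots,Y_N$ be those components, so that $K\times D\subset\bigcup_{i=1}^N Y_i$. Each $Y_i$, being a component of the open set of holomorphy $Y$, is a domain of holomorphy and hence pseudoconvex by Theorems~\ref{holcp} and~\ref{ctyps}. Moreover, since connected components of an open set are open and closed in that open set, $\bd Y_i\cap Y=\emptyset$; thus $(K\times D)\cap\bd Y_i=\emptyset$, and therefore $(K\times D)\cap Y_i=(K\times D)\cap\clos{Y_i}$ is compact.

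Finally, for each $i$ I would apply Proposition~\ref{slpapprox} to the pseudoconvex domain $Y_i$ to obtain an increasing sequence of smoothly bounded strictly pseudoconvex domains with union $Y_i$; since the compact set $(K\times D)\cap Y_i$ is contained in that union, it lies in one of these domains, say $X_i$. As the $Y_i$ are pairwise disjoint, so are the $X_i\subset Y_i$, and $X:=\bigcup_{i=1}^N X_i$ is an open set which is a disjoint union of smoothly bounded strictly pseudoconvex domains with $K\times D=\bigcup_{i=1}^N\big((K\times D)\cap Y_i\big)\subset X\subset Y\subset U$, as required. I expect the only slightly delicate points to be the two topological facts --- that $K\times D$ meets only finitely many components of $Y$ and that it meets each one in a compact set --- which both rest on the clopenness of components of an open subset of $\C^{n+1}$; everything else is an assembly of Propositions~\ref{capolapprox} and~\ref{slpapprox}.
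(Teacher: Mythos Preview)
Your proof is correct and follows essentially the same route as the paper's: produce an open set of holomorphy containing $K\times D$ inside $U$, note that each relevant component is pseudoconvex, and then exhaust each component via Proposition~\ref{slpapprox}. The only differences are cosmetic: the paper applies Proposition~\ref{capolapprox} to $K\subset\C^n$ and then invokes Proposition~\ref{prodhol} on the product with a disc, whereas you recognise $K\times D$ itself as a compact analytic polyhedron in $\C^{n+1}$ and apply Proposition~\ref{capolapprox} directly (a slight streamlining); and your finiteness-of-components argument, while correct, is unneeded, since a possibly infinite disjoint union of smoothly bounded strictly pseudoconvex domains is still permitted by the statement.
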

\begin{proof}
Let $\alpha>1$ be a real number and $W\subset\C^n$ a neighbourhood of $K$ such that $W\times\ball(0,\alpha)\subset U$ (this is possible by compactness of $K$ and $D$).
By Proposition~\ref{capolapprox} there is an open set of holomorphy $V$ such that $K\subset V\subset W$. Each component of $V$ is a domain of holomorphy,
so by Proposition~\ref{prodhol} each component of $V\times \ball(0,\alpha)$ is a domain of holomorphy and hence a pseudoconvex domain. Consider such a component $Y$, so $Y\cap (K\times D)$
is compact (as $K\times D$ does not intersect the boundary of $Y$). By Proposition~\ref{slpapprox},
$Y$ is given by an increasing union of smoothly bounded strictly pseudoconvex domains,
so in particular there is a domain $X_Y$ of this union such that $Y\cap (K\times D)\subset X_Y\subset Y$. 
Repeating this procedure for each component $Y$ of $V\times\ball(0,\alpha)$
and taking the union $X$ 
of the strictly pseudoconvex domains $X_Y$ yields the desired open set $X\subset V\times\ball(0,\alpha)\subset U$ (note the domains $X_Y$ are disjoint because the components $Y$ of $V\times\ball(0,\alpha)$
are, by definition, disjoint).
\end{proof}
\begin{lemma}
\label{sneakyapprox}
Let $K\subset\C^n$ be compact and $G\in\hol(K\times D)$. Then for $(z,w)$ in a neighbourhood of $K\times D$, 
$G(z,w)=\sum_{j=1}^\infty a_j(z)w^j$ where for $j\geq 1$ we have $a_j\in\hol(K)$. Moreover, the series converges uniformly on $K\times D$.
\end{lemma}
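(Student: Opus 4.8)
The plan is to obtain the expansion by expanding $G$ in the variable $w$ slicewise, with Taylor coefficients depending holomorphically on $z$, and then to control the convergence through the Cauchy estimates made uniform in $z$ over a compact neighbourhood of $K$. First I would fix the geometry. Since $G\in\hol(K\times D)$ there is an open set $U\subset\C^{n+1}$ containing the compact set $K\times D$ on which $G$ is holomorphic, and by compactness of $K$ and $D$ there is a real number $\alpha>1$ together with an open neighbourhood $W$ of $K$ in $\C^n$ with $\clos W$ compact such that $\clos W\times\clos{\ball(0,\alpha)}\subset U$.

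For each fixed $z\in W$ the slice function $w\mapsto G(z,w)$ is holomorphic on the disc $\ball(0,\alpha)\subset\C$, hence is represented there by its Taylor series about $0$:
$$ G(z,w)=\sum_{j\geq 0} a_j(z)\,w^j\quad(|w|<\alpha),\qquad a_j(z)=\frac{1}{j!}\frac{\partial^j G}{\partial w^j}(z,0)=\frac{1}{2\pi i}\int_{|w|=r}\frac{G(z,w)}{w^{j+1}}\,dw\quad(0<r<\alpha). $$
Each $a_j$ is holomorphic in $z$ on $W$: it is a constant multiple of a partial derivative of the holomorphic function $G$ restricted to the slice $\{w=0\}$, and partial derivatives of holomorphic functions are again holomorphic (the same fact used in the proof of Proposition~\ref{simultext}). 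Restricting, $a_j\in\hol(K)$ for all $j$.

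For the convergence I would choose $r$ with $1<r<\alpha$ and a smaller open neighbourhood $W_0$ of $K$ with $\clos{W_0}\subset W$, so that $\clos{W_0}\times\clos{\ball(0,r)}$ is a compact subset of $U$ and $M:=\|G\|_{\clos{W_0}\times\clos{\ball(0,r)}}<\infty$. The Cauchy estimate applied to the integral formula above yields $|a_j(z)|\leq M r^{-j}$ for every $z\in\clos{W_0}$ and every $j\geq 0$. Fixing $\rho$ with $1<\rho<r$, for $(z,w)\in\clos{W_0}\times\clos{\ball(0,\rho)}$ we get $|a_j(z)w^j|\leq M(\rho/r)^j$ with $\rho/r<1$, so by the Weierstrass $M$-test the series $\sum_j a_j(z)w^j$ converges uniformly on $\clos{W_0}\times\clos{\ball(0,\rho)}$, a neighbourhood of $K\times D$; on that set its sum is $G$, being slicewise the Taylor expansion already obtained. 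Restricting to $K\times D$ gives the stated uniform convergence.

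I do not anticipate any serious obstacle; the one point needing care is that the Cauchy estimate $|a_j(z)|\leq Mr^{-j}$ must hold uniformly in $z$, which is exactly why I pass to the compact set $\clos{W_0}\times\clos{\ball(0,r)}$ and bound $G$ on it once and for all, rather than arguing disc by disc for each $z$.
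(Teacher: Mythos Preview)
Your proposal is correct and follows essentially the same route as the paper: slicewise Taylor expansion in $w$, identification of the coefficients as partial $w$-derivatives of $G$ restricted to $\{w=0\}$ (hence holomorphic in $z$), and uniform Cauchy estimates over a compact product neighbourhood feeding into the Weierstrass $M$-test. The only cosmetic difference is that you arrange the estimate to get uniform convergence on a full compact neighbourhood $\clos{W_0}\times\clos{\ball(0,\rho)}$ of $K\times D$, whereas the paper is content with uniform convergence on $K\times D$ itself; your version is marginally stronger but the argument is the same.
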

\begin{proof}
We have $G\in\hol(K_o\times D_o)$ where $K_o$ and $D_o:=\ball(0,\alpha)\subset\C$ (where $\alpha>1$) are neighbourhoods of $K$ and $D$ respectively.
For each $j\geq 1$ let $a_j(z):=\frac{1}{j!}\frac{\partial^j G}{\partial w^j}\big|_{(z,0)}$, so because the partial derivatives of a holomorphic function
are holomorphic it follows that $a_j\in\hol(K_o)\subset \hol(K)$. For each
fixed $z\in K_o$ the function $w\mapsto G(z,w)$ is holomorphic on $D_o$ (and by reducing $\alpha$ if necessary we may assume $w\mapsto G(z,w)$ is continuous
on $\clos{D_o}$), so $G(z,w)=\sum_{j=0}^\infty \frac{1}{j!}\frac{\partial^j G}{\partial w^j}\big|_{(z,0)}w^j=\sum_{j=0}^\infty a_j(z)w^j$ for all $z\in K_o$ and $w\in D_o$.

Next we show uniform convergence on $K\times D$. For fixed $z\in K$ the function $w\mapsto G(z,w)$ is continuous on $\clos{D_o}$ and
holomorphic on $D_o$, so by the Cauchy estimate 
$$|a_j(z)|= \left|\frac{1}{j!}\frac{\partial^j G}{\partial w^j}\bigg|_{(z,0)}\right|\leq \frac{\|G\|_{\{z\}\times \clos{D_o}}}{\alpha^j} \leq \frac{\|G\|_{K\times\clos{D_o}}}{\alpha^j}.$$
Thus on $K\times D$ the terms of the series for $G$ are bounded in norm by $\|G\|_{K\times\clos{D_o}}(w/\alpha)^j$, which are terms of a convergent geometric series (because $|w|\leq 1<\alpha$).
By the Weierstrass $M$-test it follows that the series for $G$ converges uniformly on $K\times D$.
\end{proof}
\begin{lemma}
\label{capolrunge1}
Let $U\subset\C^n$ be a domain, $K\subset U$ a compact analytic polyhedron, $f\in\hol(U)$ and $L:=\{z\in K\colon |f(z)|\leq 1\}$. Then $L$ is Runge in $K$.
\end{lemma}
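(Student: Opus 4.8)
The plan is the classical ``Oka trick'': lift everything one dimension, reduce the approximation to a $\con\partial$-problem on the compact analytic polyhedron $K\times D$, and invoke Lemmas~\ref{spsapproxcapol} and \ref{sneakyapprox} together with H\"ormander's theorem. First fix a frame: there is an open $\Omega\subset\C^n$ and $f_1,\dots,f_m\in\hol(\Omega)$ with $K=\{z\in\Omega\colon |f_j(z)|\le 1,\,1\le j\le m\}$ compact, and after replacing $\Omega$ by $\Omega\cap U$ we may assume $\Omega\subset U$, so $f\in\hol(\Omega)$ too. Given $g\in\hol(L)$ and $\epsilon>0$, choose a neighbourhood $V\subset\Omega$ of $L$ with $g\in\hol(V)$ and a cutoff $\chi\in\cts^\infty$ with support a compact subset of $V$ and $\chi\equiv 1$ on a neighbourhood $V_0$ of $L$; then $\tilde g:=\chi g$ (extended by zero) lies in $\ccts^\infty(\Omega)$ and agrees with $g$ on $V_0$, so the smooth $(0,1)$-form $\con\partial\tilde g$ vanishes on $V_0$.

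Now work in $\C^{n+1}$ with coordinates $(z,w)$. The set $\hat K:=K\times D$ is a compact analytic polyhedron, defined in $\Omega\times\C$ by the frame $f_1,\dots,f_m$ together with the coordinate $w$. Let $Z:=\{(z,w)\colon w=f(z)\}$ be the graph of $f$; since $w=f(z)$ and $|w|\le 1$ force $z\in L$, we have $Z\cap\hat K=\{(z,f(z))\colon z\in L\}\subset V_0\times\C$, and as $Z$ is closed and $\hat K$ compact there is a neighbourhood $\mathcal U\subset\Omega\times\C$ of $\hat K$ with $Z\cap\mathcal U\subset V_0\times\C$. Regarding $\con\partial\tilde g$ as a $(0,1)$-form on $\Omega\times\C$ (with no $d\con w$-component), define $\beta:=\con\partial\tilde g /(w-f(z))$. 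This is a genuine smooth $(0,1)$-form on $\mathcal U$: away from $Z$ it is a smooth form divided by a nowhere-vanishing holomorphic function, while on a neighbourhood of $Z\cap\mathcal U$ inside $\mathcal U$ the numerator $\con\partial\tilde g$ is identically $0$, so $\beta$ extends smoothly by zero there. A direct computation using holomorphy of $w-f(z)$ and $\con\partial^2=0$ shows $\con\partial\beta=0$ on $\mathcal U$.

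By Lemma~\ref{spsapproxcapol}, applied to the compact analytic polyhedron $K$ and the neighbourhood $\mathcal U$ of $K\times D$, there is an open set $X$ which is a disjoint union of smoothly bounded strictly pseudoconvex domains with $\hat K\subset X\subset\mathcal U$. Applying H\"ormander's theorem (Theorem~\ref{hormander}) on each component of $X$ and combining the solutions yields $Q\in\ccts^\infty(X)$ with $\con\partial Q=\beta$ on $X$. Then $G(z,w):=\tilde g(z)-(w-f(z))Q(z,w)$ satisfies $\con\partial G=\con\partial\tilde g-(w-f(z))\con\partial Q=\con\partial\tilde g-(w-f(z))\beta=0$, so $G\in\hol(X)\subset\hol(K\times D)$; moreover $G(z,f(z))=\tilde g(z)=g(z)$ for $z\in L$.

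Finally, by Lemma~\ref{sneakyapprox} we may write $G(z,w)=\sum_{j\ge 0}a_j(z)w^j$ with each $a_j\in\hol(K)$ and the series converging uniformly on $K\times D$. Put $h_M:=\sum_{j=0}^{M}a_j f^j$, a finite sum of products of functions holomorphic near $K$, so $h_M\in\hol(K)$. For $z\in L$ we have $|f(z)|\le 1$, hence $(z,f(z))\in K\times D$ and $\sum_{j\ge 0}a_j(z)f(z)^j=G(z,f(z))=g(z)$; uniform convergence on $K\times D$ then gives $\|g-h_M\|_L\le\sup_{(z,w)\in K\times D}\bigl|G(z,w)-\sum_{j=0}^{M}a_j(z)w^j\bigr|\to 0$, so $\|g-h_M\|_L<\epsilon$ for $M$ large. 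As $g\in\hol(L)$ and $\epsilon>0$ were arbitrary, $L$ is Runge in $K$. I expect the only delicate point to be the middle step: verifying that $\con\partial\tilde g/(w-f(z))$ is smooth and $\con\partial$-closed on an entire neighbourhood of $K\times D$, which is exactly where the choice of $\mathcal U$ with $Z\cap\mathcal U\subset V_0\times\C$ is used so that the numerator vanishes wherever the denominator does; everything else is bookkeeping, with Lemma~\ref{spsapproxcapol} and Theorem~\ref{hormander} supplying the analysis and Lemma~\ref{sneakyapprox} translating back to an approximating sequence on $L$.
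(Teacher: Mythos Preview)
Your proof is correct and takes essentially the same approach as the paper's: both lift to $\C^{n+1}$ via the Oka trick, form the $\con\partial$-closed $(0,1)$-form $g\,\con\partial\chi/(f(z)-w)$ (your $\beta$, up to sign), solve $\con\partial$ on a strictly pseudoconvex neighbourhood of $K\times D$ via Lemma~\ref{spsapproxcapol} and Theorem~\ref{hormander}, and then expand in powers of $w$ using Lemma~\ref{sneakyapprox} and truncate after substituting $w=f(z)$. The only cosmetic difference is how the neighbourhood on which the form is smooth is located: the paper does it explicitly as $(W\cup V)\times\ball(0,\alpha)$ with $|f|>\alpha>1$ on a neighbourhood $W$ of $K\setminus V$, whereas you argue abstractly via the graph $Z$ and compactness.
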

\begin{proof}
Let $g\in\hol(L)$ and $\epsilon>0$. We are to find $h\in\hol(K)$ such that $\|g-h\|_L<\epsilon$.

Since $g\in\hol(L)$ we have $g\in\hol(L_o)$ for some neighbourhood $L_o\subset U$ of $L$. 
Choose a bounded open $V$ such that $L\subset V\subset \clos V\subset L_o$. If $K\setminus V=\emptyset$ then $K\subset V\subset L_o$, meaning $g\in\hol(K)$, so if we set $h:=g$ then $h\in\hol(K)$
and $\|g-h\|_L=0<\epsilon$, as required. For the rest of the proof assume that $K\setminus V\neq\emptyset$.
Let $\chi\colon \C^n\to\R$ be $\cts^\infty$ and compactly supported in $L_o$ with $\chi(z)=1$ for $z\in V$. We have $\min_{z\in K\setminus V}\{|f(z)|\}>1$,
so there is a neighbourhood $W\subset U$ of $K\setminus V$ and a number $\alpha>1$ such that $|f(z)|>\alpha$ for all $z\in W$.
Let $\omega(z,w):=\frac{g(z)\con\partial\chi(z)}{f(z)-w}$ for $z\in W\cup V$ and $w\in\ball(0,\alpha)\subset\C$. If $f(z)=w$ then $z\not\in W$ and thus $z\in V$, so $\con\partial\chi=0$ in a neighbourhood
of $z$. Therefore $\omega$ is a smooth $(0,1)$-form, and because $f$ and $g$ are holomorphic we have $\con\partial\omega=0$ on $(W\cup V)\times \ball(0,\alpha)$.  

Clearly $(W\cup V)\times\ball(0,\alpha)$ 
is a neighbourhood of $K\times D$, so by Lemma~\ref{spsapproxcapol} there is an open set $X$ given by a union of disjoint smoothly bounded strictly pseudoconvex domains 
such that $K\times D\subset X\subset (W\cup V)\times\ball(0,\alpha)$.
Thus we may apply
H\"ormander's theorem to the restriction of $\omega$ to each component of $X$, and this will yield a function $\phi\in\ccts^\infty(X)$ such that
$\con\partial\phi = \omega$, so $g(z)\con\partial\chi(z)=(f(z)-w)\con\partial\phi(z,w)$. 

Let $G\colon X\to\C$ be given by $G(z,w):=g(z)\chi(z)-(f(z)-w)\phi(z,w)$, so $G$ is $\ccts^\infty$ and we see that
$\con\partial G(z,w)=g(z)\con\partial\chi(z)-(f(z)-w)\con\partial\phi(z,w)=0$, so $G\in\hol(X)$. 
By Lemma~\ref{sneakyapprox} there are functions $\{a_j\}_{j\geq 1}\subset\hol(K)$ such that $G(z,w)=\sum_{j=1}^\infty a_j(z)w^j$ in a neighbourhood of $K\times D$, with uniform convergence on $K\times D$.
For $z\in L$ we have $f(z)\in D$, and thus $g(z)=G(z,f(z))=\sum_{j=1}^\infty a_j(z)f(z)^j$ with uniform convergence on $L$.
Therefore for some large $m\geq 1$ the function $h(z):=\sum_{j=1}^m a_j(z)f(z)^j\in\hol(K)$ satisfies $\|g-h\|_L<\epsilon$, as required.

It follows that $L$ is Runge in $K$.
\end{proof}
This lemma admits the following easy generalisation:
\begin{corollary}
\label{capolrunge}
Let $U\subset\C^n$ be a domain, and let $K\subset U$ and $L\subset K$ be compact analytic polyhedra such that the frame of $L$ is in $\hol(U)$.
Then $L$ is Runge in $K$.
\end{corollary}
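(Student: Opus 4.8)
\textit{Proof proposal.} The plan is to obtain the conclusion from Lemma~\ref{capolrunge1} by peeling off the defining inequalities of $L$ one at a time and chaining the resulting Runge relations. Write the frame of $L$ as $\{g_1,\dots,g_p\}\subset\hol(U)$, so that by definition $L=\{z\in U\colon |g_j(z)|\leq 1,\ 1\leq j\leq p\}$; since $L\subset K$ this set also equals $\{z\in K\colon |g_j(z)|\leq 1,\ 1\leq j\leq p\}$. For $0\leq i\leq p$ put $L_i:=\{z\in K\colon |g_j(z)|\leq 1,\ 1\leq j\leq i\}$, so that $L_0=K$, $L_p=L$, and $L_i=\{z\in L_{i-1}\colon |g_i(z)|\leq 1\}$ for $1\leq i\leq p$ (the case $p=0$, where $L=K$, being trivial).

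First I would verify that each $L_i$ is a compact analytic polyhedron contained in $U$. Clearly $L_i\subset K\subset U$, and $L_i$ is closed in the compact set $K$, hence compact. Moreover, if $K=\{z\in W\colon |h_k(z)|\leq 1,\ 1\leq k\leq m\}$ for an open set $W\supset K$ and functions $h_k\in\hol(W)$, then on the open set $W\cap U$ (which contains $K$, and on which all the $h_k$ and all the $g_j$ are holomorphic) we have $L_i=\{z\in W\cap U\colon |h_k(z)|\leq 1\ \forall k,\ |g_j(z)|\leq 1\ \forall j\leq i\}$, which exhibits $L_i$ as a compact analytic polyhedron.

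Next, for each $1\leq i\leq p$ I would apply Lemma~\ref{capolrunge1} to the domain $U$, the compact analytic polyhedron $L_{i-1}\subset U$, and the function $g_i\in\hol(U)$: since $L_i=\{z\in L_{i-1}\colon |g_i(z)|\leq 1\}$, the lemma yields that $L_i$ is Runge in $L_{i-1}$. Finally I would invoke transitivity of the Runge property: if $A\subset B\subset C$ with $A$ Runge in $B$ and $B$ Runge in $C$, then $A$ is Runge in $C$ — given $f\in\hol(A)$, a compact $S\subset A$, and $\epsilon>0$, choose $g\in\hol(B)$ with $\|f-g\|_S<\epsilon/2$ and then, as $S\subset B$ is compact, choose $h\in\hol(C)$ with $\|g-h\|_S<\epsilon/2$, so $\|f-h\|_S<\epsilon$. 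Iterating this along the chain $L=L_p\subset L_{p-1}\subset\cdots\subset L_0=K$ gives that $L$ is Runge in $K$.

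The only points requiring care are checking that each intermediate $L_{i-1}$ genuinely qualifies as a compact analytic polyhedron (so that Lemma~\ref{capolrunge1} applies as stated) and the bookkeeping in the transitivity step; there is no substantial analytic difficulty here, since the entire analytic content is already packaged inside Lemma~\ref{capolrunge1}.
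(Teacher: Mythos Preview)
Your proof is correct and follows essentially the same approach as the paper: define the intermediate polyhedra $L_0=K,\,L_1,\dots,L_p=L$ by adjoining one frame function at a time, apply Lemma~\ref{capolrunge1} at each step, and chain the resulting Runge relations. The paper does the chaining with an explicit $\epsilon/m$ estimate rather than invoking transitivity abstractly, and is slightly less explicit than you are about why each $L_i$ is a compact analytic polyhedron, but there is no substantive difference.
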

\begin{proof}
There exist functions $f_1,\dots,f_m\in\hol(U)$ such that $L=\{z\in K\colon |f_j(z)|\leq 1,\,1\leq j\leq m\}$.
Let $L_0:=K$, $L_1:=\{z\in L_0\colon |f_1(z)|\leq 1\},\dots,L_m:=\{z\in L_{m-1}\colon |f_m(z)|\leq 1\}$. Note that each $L_j$ (for $0\leq j\leq m$) is a compact analytic polyhedron, and $L_m=L$.
Now let $g\in\hol(L)=\hol(L_m)$ and $\epsilon>0$. By Lemma~\ref{capolrunge1} there exists $h_{m-1}\in\hol(L_{m-1})$ such that $\|g-h_{m-1}\|_L<\epsilon/m$.
By Lemma~\ref{capolrunge1} again there exists $h_{m-2}\in\hol(L_{m-2})$ such that $\|h_{m-1}-h_{m-2}\|_{L_{m-1}}<\epsilon/m$ and thus $\|g-h_{m-2}\|_L<2\epsilon/m$. Repeating this argument
we obtain functions $h_j\in\hol(L_j)$ such that $\|g-h_j\|_L<(m-j)\epsilon/m$ for $0\leq j< m$. That is, there exists $h:=h_0\in\hol(L_0)=\hol(K)$ such
that $\|g-h\|_L<\epsilon$, so $L$ is Runge in $K$.
\end{proof}
This is the fundamental approximation result on which the proof of the Oka-Weil theorem is based, as it yields the following:
\begin{proposition}
\label{okaclose}
Let $U\subset\C^n$ be a domain of holomorphy and $K\subset U$ a compact analytic polyhedron with frame in $\hol(U)$. Then $K$ is Runge in $U$.
\end{proposition}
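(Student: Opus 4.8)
The plan is to exhaust $U$ by an increasing sequence of compact analytic polyhedra $\{L_j\}_{j\ge1}$ with $L_1=K$ and with each $L_j$ having frame in $\hol(U)$, and then to conclude by Proposition~\ref{rungepair}, using Corollary~\ref{capolrunge} to check that $L_j$ is Runge in $L_{j+1}$ for every $j$.

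First I would use that $U$, being a domain of holomorphy, is holomorphically convex, so the lemma preceding Theorem~\ref{hcdh} supplies a sequence $\{M_j\}_{j\ge1}$ of compact subsets of $U$ with $M_j\subset\inter M_{j+1}$, $M_j=\hat M_j$ and $U=\bigcup_{j\ge1}\inter M_j$. Since $K$ is compact and $\{\inter M_j\}_{j\ge1}$ is an increasing open cover of $U$, there is $N$ with $K\subset\inter M_N$; passing to the tail $\{M_{N+j-1}\}_{j\ge1}$ and reindexing, I may assume $K\subset\inter M_1$ while retaining the three listed properties.

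Next I would build the $L_j$ recursively. Put $L_1:=K$, which is a compact analytic polyhedron with frame in $\hol(U)$ (by hypothesis) and satisfies $L_1\subset\inter M_1$. Suppose $L_j$ has been obtained as a compact analytic polyhedron with frame in $\hol(U)$ and with $L_j\subset\inter M_j$. Since $M_j$ is compact with $M_j=\hat M_j$, and $\inter M_{j+1}\subset U$ is a neighbourhood of $M_j$, Proposition~\ref{approxcapol} furnishes a compact analytic polyhedron $L_{j+1}$ with frame in $\hol(U)$ such that $M_j\subset L_{j+1}\subset\inter M_{j+1}$. Then $L_j\subset\inter M_j\subset L_{j+1}$ with $\inter M_j$ open, so $L_j\subset\inter M_j\subset\inter L_{j+1}$; and $L_{j+1}\subset\inter M_{j+1}$ closes the induction. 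The sequence $\{L_j\}$ therefore satisfies $L_j\subset\inter L_{j+1}$ for all $j$, and $\bigcup_{j\ge1}L_{j+1}\supset\bigcup_{j\ge1}M_j=U$, so $U=\bigcup_{j\ge1}\inter L_j$.

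Finally, for each $j$ the sets $L_j\subset L_{j+1}$ are compact analytic polyhedra with the frame of $L_j$ in $\hol(U)$, so $L_j$ is Runge in $L_{j+1}$ by Corollary~\ref{capolrunge}. Combined with $L_j\subset\inter L_{j+1}$ and $U=\bigcup_{j\ge1}\inter L_j$, Proposition~\ref{rungepair} yields that $L_1=K$ is Runge in $U$, as desired. I expect no real obstacle: the substantive analysis has already been packaged into Lemma~\ref{capolrunge1} and Corollary~\ref{capolrunge} via H\"ormander's theorem, and the only delicate point is organising the exhaustion so that the first polyhedron is exactly $K$, the inclusions $L_j\subset\inter L_{j+1}$ hold, and $\bigcup_j L_j=U$ all at once.
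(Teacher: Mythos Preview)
Your proposal is correct and follows essentially the same approach as the paper's proof: exhaust $U$ by compact analytic polyhedra with frames in $\hol(U)$ starting from $K$, invoke Corollary~\ref{capolrunge} to get each consecutive pair Runge, and conclude via Proposition~\ref{rungepair}. The only cosmetic difference is that the paper first builds the $L_j$'s above the hulls $\hat K_j$ and then passes to a subsequence to force $L_j\subset\inter L_{j+1}$, whereas you build the $L_j$ recursively with that inclusion arranged from the start; both routes are equally valid.
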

\begin{proof}
Let $\{K_j\}_{j\geq 1}$ be a sequence of compact sets with $K\subset \inter K_j\subset K_j\subset \inter K_{j+1}$ for all $j\geq 1$ and $\bigcup_{j\geq 1} \inter K_j=U$ (take, for instance,
$K_j:=\{z\in U\colon \emetric(z,\bd U)\geq r/j \text{ and } |z|\leq R+j\}$ for each $j\geq 1$, where $r:=\emetric(K,\bd U)/2>0$ and $R:=\max_{z\in K}\{|z|\}$). For each $j\geq 1$ the hull $\hat K_j$ is compact
(because $U$ is holomorphically convex) and satisfies $\hat {\hat K}_j=\hat K_j$, so by Proposition~\ref{approxcapol} there exist
compact analytic polyhedra $L_j$ with frames in $\hol(U)$ such that $\hat K_j\subset L_j\subset U$ for each $j\geq 1$. Clearly $U=\bigcup_{j\geq 1}\inter L_j$ (since $\inter K_j\subset \inter L_j\subset U$ for
all $j\geq 1$), so by passing
to a subsequence if necessary we may assume $L_j\subset \inter L_{j+1}$ for each $j\geq 1$. Let $L_0:=K$, and consider the sequence $\{L_j\}_{j\geq 0}$. For each
$j\geq 0$ we have $L_j\subset \inter L_{j+1}$, where $L_j$ and $L_{j+1}$ are compact analytic polyhedra with frames in $\hol(U)$, so 
$L_j$ is Runge in $L_{j+1}$ by Corollary~\ref{capolrunge}. But we also have $U=\bigcup_{j\geq 0} \inter L_j$, so by Proposition~\ref{rungepair} it follows that
$L_0=K$ is Runge in $U$.
\end{proof}
The Oka-Weil theorem is now easily proved:
\begin{proof}[Proof of Theorem~\ref{okaweil}]
We have a domain of holomorphy $U\subset\C^n$ and a compact subset $K\subset U$ with $K=\hat K$, and we are to show $K$ is Runge in $U$.
Let $f\in\hol(K)$ and $\epsilon>0$. It is enough to find $g\in\hol(U)$ so that $\|f-g\|_K<\epsilon$. Since $f\in\hol(K)$ there is a neighbourhood
$K_o$ of $K$ such that $f\in\hol(K_o)$. By Proposition~\ref{approxcapol} there is a compact analytic polyhedron $L$ with frame in $\hol(U)$ such that $K\subset L\subset K_o$.
By Proposition~\ref{okaclose} we know that $L$ is Runge in $U$, so since $f\in\hol(L)$ there is a function $g\in\hol(U)$ such that $\|f-g\|_K<\epsilon$, as required.
\end{proof}
We have a useful corollary:
\begin{corollary}
\label{okaweilcor}
Let $U\subset\C^n$ be a domain of holomorphy, $K\subset U$ a compact subset and $V\subset U$ a neighbourhood of $K$ such that $\hat K\cap \bd V=\emptyset$. Then
$\hat K\subset V$. \end{corollary}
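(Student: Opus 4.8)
The plan is to apply the Oka-Weil theorem (Theorem~\ref{okaweil}) directly to the compact set $\hat K$ itself. This is legitimate because $U$, being a domain of holomorphy, is holomorphically convex, so $\hat K$ is compact, and since $\hat{\hat K}=\hat K$ the pair $(\hat K,U)$ is a Runge pair by Theorem~\ref{okaweil}. The remaining issue is purely one of separating the ``bad'' part of $\hat K$ (if any) from $K$ by a holomorphic function.

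First I would record the topological decomposition. Since $V$ is open and $\hat K\cap\bd V=\emptyset$, we may write $\hat K=K_1\sqcup K_2$ with $K_1:=\hat K\cap V$ and $K_2:=\hat K\setminus\clos V$. Each of $K_1$ and $K_2$ is open in $\hat K$, hence also closed in $\hat K$, and therefore compact; moreover $K\subset K_1$ because $K\subset\hat K$ and $K\subset V$. Thus it suffices to prove $K_2=\emptyset$, for then $\hat K=K_1\subset V$. (If $K=\emptyset$ the statement is trivial, so one may also assume $K\neq\emptyset$, although the argument below does not require this.)

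To show $K_2=\emptyset$, suppose for contradiction that $p\in K_2$. The disjoint compact sets $K_1,K_2\subset\C^n$ have disjoint open neighbourhoods $W_1\supset K_1$ and $W_2\supset K_2$, and on the neighbourhood $W_1\cup W_2$ of $\hat K$ I define $h$ to be $0$ on $W_1$ and $1$ on $W_2$; being locally constant, $h\in\hol(\hat K)$. Applying Theorem~\ref{okaweil} to $\hat K$ with $\epsilon=1/3$ yields $g\in\hol(U)$ with $\|h-g\|_{\hat K}<1/3$. Then, since $K\subset K_1\subset W_1$ where $h\equiv 0$, we get $\|g\|_K\leq\|g-h\|_K+\|h\|_K<1/3$, while $|g(p)|\geq|h(p)|-|g(p)-h(p)|>2/3$. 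Hence $|g(p)|>\|g\|_K$, contradicting $p\in\hat K$. Therefore $K_2=\emptyset$ and $\hat K\subset V$.

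The step I expect to require the most care is not an analytic difficulty but the structural observation that $\hat K\cap\bd V=\emptyset$ forces the clopen splitting $\hat K=K_1\sqcup K_2$: one cannot appeal to connectedness of $\hat K$ (which generally fails), so the component of $\hat K$ possibly lying outside $\clos V$ must instead be peeled off by the locally constant function $h$ and then approximated away via Oka-Weil. The only other point worth checking is that $\hat K$ is genuinely compact and satisfies $\hat{\hat K}=\hat K$, so that Theorem~\ref{okaweil} applies to it — both of which are recorded in the earlier discussion of holomorphically convex hulls.
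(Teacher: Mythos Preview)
Your proof is correct and follows essentially the same approach as the paper: define a locally constant function equal to $0$ near $K$ and $1$ on the part of $\hat K$ outside $\clos V$, approximate it on $\hat K$ via Oka--Weil (using $\hat{\hat K}=\hat K$), and contradict the definition of the hull. The paper is marginally more direct in that it uses $V$ and $\clos V^{\,c}$ themselves as the two open pieces (since $\hat K\cap\bd V=\emptyset$ already gives $\hat K\subset V\cup\clos V^{\,c}$), rather than first extracting $K_1,K_2$ and then choosing fresh neighbourhoods $W_1,W_2$, but this is a cosmetic difference.
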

\begin{proof}
Let $W:=\clos{V}^c$ be the exterior of $V$, so $\hat K\subset V\cup W$.
Let $f\in\hol(V\cup W)$ be identically equal to $0$ on $V$ and to $1$ on $W$, so by the Oka-Weil theorem (and the
fact that $\hat{\hat K}=\hat K$) there exists $g\in\hol(U)$ with $\|f-g\|_{\hat K}<1/2$. We have $f\equiv 0$ on $K$, so $\|g\|_K< 1/2$.
If $z\in W\cap\hat K$ then $f(z)=1$, so $|g(z)|>1/2> \|g\|_K$ and hence $z\not\in\hat K$, which is a contradiction. Thus $W\cap\hat K=\emptyset$, so $\hat K\subset V$.
\end{proof}
\subsection{The Behnke-Stein theorem}
Recall from Section~\ref{chap:hol} the Behnke-Stein theorem:
\newtheorem*{behnke}{Theorem~\ref{behnke}}
\begin{behnke}[Behnke-Stein theorem]
\index{Behnke-Stein theorem}
Let $\{U_j\}_{j\geq 1}$ be a sequence of domains of holomorphy such that $U_j\subset U_{j+1}$ for all $j\geq 1$. Then $U:=\bigcup_{j\geq 1} U_j$ is a domain of holomorphy.
\end{behnke}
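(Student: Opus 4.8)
The plan is to produce a single function $f\in\hol(U)$ that is essentially unbounded on $\bd U$ and then invoke Proposition~\ref{unbdedhol}; the construction will run parallel to the proof of Theorem~\ref{hcdh}, the difference being that the separating functions are first manufactured on the partial domains $U_j$ (which are domains of holomorphy, hence holomorphically convex) and then pulled back into $\hol(U)$ by Oka--Weil approximation. If $U=\emptyset$ or $U=\C^n$ there is nothing to prove, so assume otherwise; passing to a subsequence of $\{U_j\}$ (which preserves all hypotheses) I would arrange that the standard compact exhaustion $L_j:=\{z\in U:\emetric(z,\bd U)\ge 1/j,\ |z|\le j\}$ of $U$ satisfies $L_j\subset U_j$ for every $j$.

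The key construction is an exhaustion $\{K_j\}_{j\ge1}$ of $U$ by compact sets with $L_j\subset K_j\subset\inter K_{j+1}$, $\bigcup_j\inter K_j=U$, each $K_j$ equal to its holomorphically convex hull in a suitable partial domain $U_{m(j)}$, and, crucially, each $K_j$ Runge in $U$. One builds this inductively: since $U_j$ is holomorphically convex, the $\hol(U_j)$-convex hull of a large compact piece of $U_j$ is compact and equals its own hull, hence is Runge in $U_j$ by the Oka--Weil theorem (Theorem~\ref{okaweil}); interlocking these hulls so that $K_j\subset\inter K_{j+1}$ and the interiors exhaust $U$, and then applying Proposition~\ref{rungepair} to the resulting chain, propagates the Runge property from the $U_j$ up to $U$. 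This is exactly where the increasing-union hypothesis is used (a single inclusion of domains of holomorphy gives nothing of the kind), and it is the main obstacle: one must interleave the choice of the $K_j$ with an adaptive reindexing of the $U_j$ so that the approximation available inside each $U_j$ can be chained all the way to $U$, and checking that this can be done is the delicate core of the argument.

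Granting such an exhaustion, I would then follow the proof of Theorem~\ref{hcdh}. Let $B_k:=\ball(a_k,\emetric(a_k,\bd U))$ range over the balls centred at the rational points $a_k\in U$, and let $\{Q_j\}$ be a sequence of these balls in which every $B_k$ occurs infinitely often; after a further passage to a subsequence of $\{K_j\}$, pick for each $j$ a point $z_j\in Q_j\cap(\inter K_{j+1}\setminus K_j)$. Since $z_j\notin K_j$ and $K_j$ is its own $\hol(U_{m(j)})$-hull, with $z_j\in U_{m(j)}$, there is $g_j\in\hol(U_{m(j)})$ with $|g_j(z_j)|>\|g_j\|_{K_j}$; approximating $g_j$ uniformly on the compact set $K_{j+1}$ (which contains both $K_j$ and $z_j$ and is Runge in $U$) and rescaling, I obtain $f_j\in\hol(U)$ with $\|f_j\|_{K_j}\le 1<|f_j(z_j)|$.

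Finally, set $f(z):=\sum_{k\ge1}k^{-2}f_k(z)^{p_k}$ with the exponents $p_k\in\N$ chosen inductively so that the estimate~\eqref{holmiraest} holds, forcing $f(z_j)\to\infty$. For $z\in K_j$ one has $|f_k(z)|\le1$ for all $k\ge j$, so the Weierstrass $M$-test makes the series holomorphic on each $\inter K_j$, hence on $U=\bigcup_j\inter K_j$; and the density of the balls $B_k$ yields, just as in Theorem~\ref{hcdh}, that $f$ is essentially unbounded on $\bd U$. Proposition~\ref{unbdedhol} then shows that $U$ is a domain of holomorphy.
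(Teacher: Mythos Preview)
Your overall architecture matches the paper's: build an exhaustion $\{K_j\}$ of $U$ by compacts with $K_j$ Runge in $K_{j+1}$, then invoke Proposition~\ref{rungepair}. The endpoint you choose differs slightly: the paper feeds the Runge exhaustion into Lemma~\ref{bslemma} to prove holomorphic convexity of $U$ directly, while you recycle the construction of Theorem~\ref{hcdh} to produce an essentially unbounded function. Either termination works once the exhaustion is in hand, so that divergence is harmless.

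The genuine gap is that you have not built the exhaustion. You say the interlocking of the hulls ``is the delicate core of the argument'' and then write ``Granting such an exhaustion\ldots''. But this is exactly where all the content lies, and the difficulty is real: if $K_j$ is taken to be the $\hol(U_{m(j)})$-convex hull of some large compact piece, Oka--Weil makes $K_j$ Runge in $U_{m(j)}$, but to chain via Proposition~\ref{rungepair} you need $K_j$ Runge in $K_{j+1}$, which forces $K_{j+1}\subset U_{m(j)}$ --- and simultaneously you need $K_j\subset\inter K_{j+1}$ and $\bigcup_j\inter K_j=U$. These constraints pull in opposite directions and do not fall out of a na\"ive inductive choice. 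The paper handles this by first reducing to the bounded case (Lemma~\ref{berslemma}), then shrinking the $U_j$ slightly (Lemma~\ref{vlemma}) so that $\clos{U_j}\subset U_{j+1}$, then extracting a subsequence $\{V_k\}$ satisfying the metric inequality~\eqref{bsineq}, and finally using Corollary~\ref{okaweilcor} to force the hull $K_k:=(\hat{\clos{V_{k-1}}})_{\hol(V_{k+1})}$ back inside $V_k$. None of these steps is automatic, and your sketch gives no indication of how to achieve them. A secondary loose end: in your separating step you need $g_j\in\hol(U_{m(j)})$ to be holomorphic on a neighbourhood of $K_{j+1}$ before you can approximate it there, so the indexing must also guarantee $K_{j+1}\subset U_{m(j)}$, which is the same constraint again.
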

To prove the Behnke-Stein theorem we will invoke the following lemmas (note that if $U=\emptyset$ or $U=\C^n$ the assertions of the results in this subsection
are trivial, so in the proofs we will assume this is not the case):
\begin{lemma}
\label{bslemma}
Let $\{K_j\}_{j\geq 1}$ be a sequence of compact sets in $\C^n$ such that $K_j\subset K_{j+1}$ and $K_j$ is Runge in $K_{j+1}$ for all $j\geq 1$. Let $U:=\bigcup_{j\geq 1} \inter K_j$, and suppose
there is a sequence $\{U_j\}_{j\geq 1}$ of domains of holomorphy such that $K_j\subset U_j\subset U_{j+1}\subset U$ for each $j\geq 1$. Then $U$ is a domain of holomorphy.
\end{lemma}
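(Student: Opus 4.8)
The plan is to prove $U$ is a domain of holomorphy by producing a function $f\in\hol(U)$ that is essentially unbounded on $\bd U$ and appealing to Proposition~\ref{unbdedhol}; the cases $U=\emptyset$ and $U=\C^n$ being trivial as noted. First I would pass to a subsequence of $\{K_j\}_{j\geq 1}$, carrying along the corresponding subsequence of $\{U_j\}_{j\geq 1}$, so as to arrange $K_j\subset\inter K_{j+1}$ for all $j$; this is possible because each compact $K_j$ lies in $\bigcup_i\inter K_i=U$, it preserves the sandwich $K_j\subset U_j\subset U_{j+1}\subset U$, and it preserves the Runge hypothesis since ``Runge in'' is transitive. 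By Proposition~\ref{rungepair}, each $K_m$ is then Runge in $U$. As in the proof of Theorem~\ref{hcdh}, fix the balls $B_k$ about the rational points of $U$ and an enumeration $\{Q_j\}_{j\geq 1}$ in which each $B_k$ appears infinitely often; as shown there, it is enough to find $f\in\hol(U)$ and points $z_j\in Q_j$ with $f(z_j)\to\infty$. Note that each $Q_j$ is a ball whose closure meets $\bd U$, so $Q_j$ is not relatively compact in $U$.

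I would build $f=\sum_{j\geq 1}f_j$ by induction, along with indices $n_1<n_2<\cdots$ and points $z_j\in Q_j$, writing $g_{j-1}:=f_1+\dots+f_{j-1}$ and arranging: (a) $n_j$ is large enough that $z_{j-1}\in\inter K_{n_j}$; (b) $f_j\in\hol(U)$ with $\|f_j\|_{K_{n_j}}<2^{-j}$; (c) $|f_j(z_j)|\geq j+1+|g_{j-1}(z_j)|$. Since $\bigcup_j\inter K_{n_j}=U$ and $\|f_j\|_{K_{n_j}}<2^{-j}$, the series converges locally uniformly, so $f\in\hol(U)$; and since $z_j\in\inter K_{n_{j+1}}\subset K_{n_{j'}}$ for $j'>j$, the tail $\sum_{j'>j}|f_{j'}(z_j)|$ is at most $2^{-j}$, whence $|f(z_j)|\geq (j+1)-2^{-j}\to\infty$.

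The substance is the inductive step: given $n_j$, I must find $z_j\in Q_j$ and $h\in\hol(U)$ with $\|h\|_{K_{n_j}}<1<|h(z_j)|$, for then $f_j:=h^M$ with $M$ large fulfils (b) and (c). Each $U_N$ is a domain of holomorphy, hence holomorphically convex, and for $N\geq n_j$ we have $K_{n_j}\subset K_N\subset U_N$, so $\hat{(K_{n_j})}_{\hol(U_N)}$ is compact. The key claim is that $C:=\bigcup_{N\geq n_j}\hat{(K_{n_j})}_{\hol(U_N)}$ is relatively compact in $U$: these hulls are uniformly bounded (the coordinate functions are holomorphic on every $U_N$), and by Lemma~\ref{hulldist} applied to $U_N$, $\pmetric\big(\hat{(K_{n_j})}_{\hol(U_N)},\bd U_N\big)=\pmetric(K_{n_j},\bd U_N)\geq\pmetric(K_{n_j},\bd U_{n_j})>0$, the last inequality using that the distance of a fixed point to $\bd U_N$ is non-decreasing in $N$ (because $U_N\subset U_{N+1}$); since moreover $\pmetric(w,\bd U)\geq\pmetric(w,\bd U_N)$ for $w\in U_N$, every point of $C$ lies at distance at least $\pmetric(K_{n_j},\bd U_{n_j})$ from $\bd U$, so $C$ is bounded and bounded away from $\bd U$. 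As $Q_j$ is not relatively compact in $U$, some $z_j\in Q_j$ lies outside $\hat{(K_{n_j})}_{\hol(U_N)}$ for every $N\geq n_j$; pick $N\geq n_j$ with $z_j\in\inter K_N$. Then there is $g\in\hol(U_N)$ with $|g(z_j)|>\|g\|_{K_{n_j}}$, and replacing $g$ by a scalar multiple of a sufficiently high power of $g$ we may assume $\|g\|_{K_{n_j}}\leq 1/4$ and $|g(z_j)|\geq 2$. Finally $g\in\hol(K_N)$ and $K_N$ is Runge in $U$, so there is $h\in\hol(U)$ with $\|h-g\|_{K_N}<1/4$; since $K_{n_j}\cup\{z_j\}\subset K_N$, this $h$ has $\|h\|_{K_{n_j}}<1<|h(z_j)|$, and setting $n_{j+1}:=\max(N,n_j+1)$ completes the step.

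The single hard point is the relative compactness of $C=\bigcup_{N}\hat{(K_{n_j})}_{\hol(U_N)}$ in $U$ --- i.e.\ the uniform positive lower bound on the distances of these hulls to $\bd U$ --- which is precisely where the hypotheses that the $U_N$ are nested domains of holomorphy enter, via Lemma~\ref{hulldist}; the remainder is routine bookkeeping.
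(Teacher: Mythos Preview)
Your argument is correct. The passage to a subsequence with $K_j\subset\inter K_{j+1}$ is fine, Proposition~\ref{rungepair} gives each $K_m$ Runge in $U$, the relative compactness of $C=\bigcup_{N\geq n_j}\hat{(K_{n_j})}_{\hol(U_N)}$ via Lemma~\ref{hulldist} and the nesting $U_{n_j}\subset U_N\subset U$ goes through exactly as you say, and the bookkeeping producing $f=\sum f_j$ with $f(z_j)\to\infty$ is sound.

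However, you are working harder than necessary. The paper uses the same two ingredients---Lemma~\ref{hulldist} applied to the $U_k$, and the Runge property of the $K_j$ in $U$---but instead of constructing an essentially unbounded function from scratch it simply verifies holomorphic convexity of $U$ and invokes the already-proved Cartan--Thullen theorem. Concretely: given compact $K\subset U$ and $a\in U$ with $\pmetric(a,\bd U)<\pmetric(K,\bd U)$, one chooses $j$ with $\{a\}\cup K\subset K_j$ and then $k>j$ with $\pmetric(a,\bd U_k)<\pmetric(K,\bd U_k)$; Lemma~\ref{hulldist} gives $a\notin\hat K_{\hol(U_k)}$, so some $f\in\hol(U_k)\subset\hol(K_j)$ separates $a$ from $K$, and a Runge approximation carries this to $g\in\hol(U)$ with $|g(a)|>\|g\|_K$. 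Thus $\pmetric(\hat K,\bd U)\geq\pmetric(K,\bd U)$ for every compact $K$, and $U$ is holomorphically convex. Your approach in effect unwinds the proof of Theorem~\ref{hcdh} inside this argument; the paper's route avoids that by appealing to Theorem~\ref{hcdh} as a black box, which makes the proof about a third the length. The underlying idea---your ``single hard point''---is identical in both.
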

\begin{proof}
Using the fact that $U=\bigcup_{j\geq 1} \inter K_j$ we may pass to subsequences if necessary and assume, in addition to the hypotheses, that $K_j\subset\inter K_{j+1}$ for all $j\geq 1$.
By Proposition~\ref{rungepair} it follows that $K_j$ is Runge in $U$ for all $j\geq 1$.

Note that $U=\bigcup_{j\geq 1} U_j$, so $U$ is connected and hence a domain. We will show that $U$ is holomorphically convex. Let $K\subset U$ be compact, and let $a\in U$ with $\pmetric(a,\bd U)<\pmetric(K,\bd U)$.
We can show $a\not\in\hat K$. Let $j\geq 1$ be sufficiently large that $\{a\}\cup K\subset K_j$, and let $k>j$ be sufficiently large that
$\pmetric(a,\bd U_k)<\pmetric(K,\bd U_k)$ (this is possible because as $k\to\infty$ we have $\pmetric(a,\bd U_k)\to\pmetric(a,\bd U)$
and $\pmetric(K,\bd U_k)\to\pmetric(K,\bd U)$). By Lemma~\ref{hulldist} we have $a\not\in\hat K_{\hol(U_k)}$, so there is $f\in\hol(U_k)\subset \hol(K_j)$ such that
$\epsilon:=|f(a)|-\|f\|_K>0$. Since $K_j$ is Runge in $U$ there is $g\in\hol(U)$ such that $\|f-g\|_{K_j}<\epsilon/2$. Therefore $|g(a)|-\|g\|_K>|f(a)|-\|f\|_K-2\epsilon/2=0$,
so $a\not\in\hat K$. Therefore $\pmetric(\hat K,\bd U)\geq \pmetric(K,\bd U)>0$, so $\hat K$ is compact. This is true for each compact $K\subset U$, so $U$ is holomorphically convex and
hence a domain of holomorphy.
\end{proof}
We have another lemma, which will allow us to consider only bounded domains in the proof of the Behnke-Stein theorem:
\begin{lemma}
\label{berslemma}
Let $U\subset\C^n$ be a domain, let $z_0\in U$, and suppose the component of $U\cap\pdisc(z_0,r)$ containing $z_0$ is a domain of holomorphy for all $r>0$. Then $U$ is a domain of holomorphy. 
\end{lemma}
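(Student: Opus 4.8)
The plan is to reduce the statement to Lemma~\ref{bslemma}. Since $z_0\in U$ we have $U\neq\emptyset$, and the case $U=\C^n$ is trivial, so I would assume $\bd U\neq\emptyset$. For $j\geq 1$ let $V_j$ be the component of $U\cap\pdisc(z_0,j)$ containing $z_0$; by hypothesis each $V_j$ is a domain of holomorphy. I would first record two easy facts: $\{V_j\}_{j\geq 1}$ is increasing (each $V_j$ is connected, contains $z_0$, and lies in $U\cap\pdisc(z_0,j+1)$), and $U=\bigcup_{j\geq 1}V_j$ (any $z\in U$ is joined to $z_0$ by a path in $U$ whose image, being compact, lies in some $\pdisc(z_0,j)$). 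Thus $U$ is an increasing union of domains of holomorphy, and the task becomes to manufacture the compact sets and nested domains of holomorphy required by Lemma~\ref{bslemma}.

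For the compact sets I would take the standard exhaustion $A_j:=\{z\in U\colon \emetric(z,\bd U)\geq 1/j,\ \|z\|\leq j\}$ of $U$ (so $A_j\subset\inter A_{j+1}$, $\clos{A_j}\subset U$ and $\bigcup_{j\geq 1}\inter A_j=U$), and, after passing to a subsequence of $\{V_j\}_{j\geq 1}$ (which preserves all of the above), assume $A_j\subset V_j$; write $W_j:=V_j$. The key definition is then
\[ K_j:=\big(\widehat{A_j}\big)_{\hol(W_{j+1})}, \]
the holomorphically convex hull of $A_j$ computed inside the \emph{next} domain $W_{j+1}$. Since $W_{j+1}$ is a domain of holomorphy it is holomorphically convex, so $K_j$ is compact, $A_j\subset K_j\subset W_{j+1}$, and $K_j$ equals its own hull in $\hol(W_{j+1})$.

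The heart of the argument is to prove $K_j\subset W_j$ for every $j$. Here the point is that the coordinate functions $z\mapsto z_k-(z_0)_k$ are holomorphic on $W_{j+1}$, hence bounded on $K_j$ by their supremum over $A_j$; since $A_j$ is a compact subset of the open polydisc defining $W_j$, this confines $K_j$ to a polydisc $\pdisc(z_0,\rho)$ of radius $\rho$ strictly smaller than the one defining $W_j$. Combined with $K_j\subset W_{j+1}\subset U$, this makes $K_j$ disjoint from $\bd W_j$ (whose points lie on $\bd U$ or on the bounding polytorus), so Corollary~\ref{okaweilcor}, applied with the domain of holomorphy $W_{j+1}$, the compact set $A_j$, and the neighbourhood $W_j$, yields $K_j\subset W_j$. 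Granting this, the remaining hypotheses of Lemma~\ref{bslemma} are routine: $K_j\subset K_{j+1}$ follows from $A_j\subset A_{j+1}$ together with restriction of $\hol(W_{j+2})$ to $W_{j+1}$; $\bigcup_{j\geq 1}\inter K_j=U$ follows from $\inter A_j\subset\inter K_j\subset K_j\subset U$; and $K_j$ is Runge in $K_{j+1}$ because the Oka-Weil theorem (Theorem~\ref{okaweil}) makes $K_j$ Runge in $W_{j+1}$, while $K_{j+1}\subset W_{j+1}$ by the claim, so the approximating functions are already holomorphic near $K_{j+1}$. Together with the domains of holomorphy $W_j$ (for which $K_j\subset W_j\subset W_{j+1}\subset U$) this is precisely the input to Lemma~\ref{bslemma}, which then gives that $U$ is a domain of holomorphy.

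I expect the step $K_j\subset W_j$ to be the only real obstacle. It asserts that the Oka-Weil hull computed in the larger domain $W_{j+1}$ does not leak out of the smaller domain $W_j$, and this is exactly what lets the approximation available on $W_{j+1}$ be pushed down to $K_{j+1}$ when verifying the Runge property. It is also the one place where the hypothesis is used in an essential way: because each $W_j$ is a component of $U$ intersected with a polydisc, the coordinate functions supply precisely the bound needed to invoke Corollary~\ref{okaweilcor}.
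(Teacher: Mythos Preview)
Your proposal is correct and follows essentially the same route as the paper: define an exhausting sequence of compacts, take their $\hol(W_{j+1})$-hulls as the $K_j$, use the polydisc structure together with Corollary~\ref{okaweilcor} to establish $K_j\subset W_j$, deduce the Runge property from Oka--Weil, and feed everything into Lemma~\ref{bslemma}. The only cosmetic differences are that the paper avoids passing to a subsequence by choosing $L_j$ with $\pmetric(z,z_0)\leq j-1$ so that $L_j\subset U_j$ directly, and it bounds the hull inside the smaller polydisc via Lemma~\ref{hulldist} rather than the explicit coordinate-function argument you give (which amounts to the same thing).
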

\begin{proof}
For each $j\geq 1$ let $U_j$ be the component of $U\cap\pdisc(z_0,j)$ containing $z_0$. We first make an observation. Suppose $K\subset U_j$ is compact, so by holomorphic convexity
of $U_{j+1}$ the hull $\hat K_{\hol(U_{j+1})}$ is compact. We also know
$\hat K_{\hol(U_{j+1})}\subset \hat K_{\hol(\pdisc(z_0,j+1))}$ since $U_{j+1}\subset\pdisc(z_0,j+1)$. Furthermore, 
$\pmetric(\hat K_{\hol(\pdisc(z_0,j+1))},\bd\pdisc(z_0,j+1))=\pmetric(K,\bd\pdisc(z_0,j+1))>1$ (we have used Lemma~\ref{hulldist} for the equality)
which implies $\hat K_{\hol(\pdisc(z_0,j+1))}\subset\pdisc(z_0,j)$. Therefore $\hat K_{\hol(U_{j+1})}\subset \pdisc(z_0,j)$, and obviously $\hat K_{\hol(U_{j+1})}\subset U_{j+1}\subset U$,
meaning $\hat K_{\hol(U_{j+1})}\subset U\cap\pdisc(z_0,j)$. It follows that $\hat K_{\hol(U_{j+1})}\cap \bd U_j=\emptyset$ (since $U_j$ is a component of $U\cap\pdisc(z_0,j)$), so by
Corollary~\ref{okaweilcor} we have $\hat K_{\hol(U_{j+1})}\subset U_j$. That is, if $K\subset U_j$ is compact then $\hat K_{\hol(U_{j+1})}$ is a compact subset of $U_j$.

Let $\alpha:=\pmetric(z_0,\bd U)/2$. 
For each $j\geq 1$ let $L_j\subset U_j$ 
be the component of $\{z\in U\colon \pmetric(z,\bd U)\geq \alpha/j\text{ and }\pmetric(z,z_0)\leq j-1\}$
containing $z_0$, so $L_j$ is compact, and let $K_j:=(\hat L_j)_{\hol(U_{j+1})}\subset U_j$ (where we have used the observation at the start of the proof).
Clearly $U=\bigcup_{j\geq 1}\inter L_j$ and $\inter L_j\subset \inter K_j\subset U$ for all $j\geq 1$, so $U=\bigcup_{j\geq 1}\inter K_j$. 
We also have $K_j\subset K_{j+1}$ and $K_j\subset U_j\subset U_{j+1}\subset U$ for all $j\geq 1$, and that each $U_j$ is a domain of holomorphy,
so by Lemma~\ref{bslemma} to show that $U$ is a domain of holomorphy it is enough to show that
$K_j$ is Runge in $K_{j+1}$ for each $j$. But for each $j\geq 1$, $K_j$ coincides with its $\hol(U_{j+1})$-convex hull, so $K_j$ is Runge in $U_{j+1}$ by the Oka-Weil theorem,
and so obviously $K_j$ is Runge in $K_{j+1}\subset U_{j+1}$.
Therefore $U$ is a domain of holomorphy.
\end{proof}
We need one more lemma:
\begin{lemma}
\label{vlemma}
Let $U\subset\C^n$ be a domain of holomorphy and let $z_0\in U$. Then for all $\epsilon>0$ with $\epsilon<\pmetric(z_0,\bd U)$, the component of $\{z\in U\colon \pmetric(z,\bd U)>\epsilon\}$
containing $z_0$ is a domain of holomorphy.
\end{lemma}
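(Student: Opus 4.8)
The plan is to realise the set $V_\epsilon:=\{z\in U\colon\pmetric(z,\bd U)>\epsilon\}$ as the interior of an intersection of translates of $U$, each of which is a domain of holomorphy, and then invoke Proposition~\ref{holinvbi} (biholomorphic invariance) together with Proposition~\ref{intscthol} (a connected component of the interior of an intersection of domains of holomorphy is a domain of holomorphy). Since $\epsilon<\pmetric(z_0,\bd U)$ we have $z_0\in V_\epsilon$, so the component of $V_\epsilon$ containing $z_0$ is a genuine nonempty domain, and it is this domain we must show is a domain of holomorphy. (As remarked above we may assume $U\neq\emptyset$ and $U\neq\C^n$, so $\bd U\neq\emptyset$.)

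First I would fix, for each $w\in\C^n$ with $\|w\|\leq\epsilon$, the translate $U_w:=\{u-w\colon u\in U\}$. The map $z\mapsto z-w$ is a biholomorphic self-map of $\C^n$, so Proposition~\ref{holinvbi} shows each $U_w$ is a domain of holomorphy. Set $A:=\bigcap_{\|w\|\leq\epsilon}U_w$. Unwinding the definition, $z\in A$ if and only if $z+w\in U$ for every $w$ with $\|w\|\leq\epsilon$, i.e.\ if and only if $\clos{\pdisc(z,\epsilon)}\subset U$.

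The core of the argument is then the identity $V_\epsilon=\inter A$. For $V_\epsilon\subset\inter A$: if $z\in V_\epsilon$ then $\pmetric(z,b)>\epsilon$ for every $b\in\bd U$, so the connected set $\clos{\pdisc(z,\epsilon)}$ meets $U$ but avoids $\bd U$ and is hence contained in $U$, giving $z\in A$; since $\pmetric(\cdot,\bd U)$ is continuous the set $V_\epsilon$ is open, so in fact $V_\epsilon\subset\inter A$. For $\inter A\subset V_\epsilon$: given $z\in\inter A$, choose $\delta>0$ with $\pdisc(z,\delta)\subset A$; writing a vector $u-z$ of $L^\infty$-norm at most $\epsilon+\delta/2$ coordinatewise as $w+v$ with $\|w\|\leq\delta/2$ and $\|v\|\leq\epsilon$ shows $u\in\clos{\pdisc(z+w,\epsilon)}\subset U$, so $\clos{\pdisc(z,\epsilon+\delta/2)}\subset U$, whence $\pmetric(z,\bd U)\geq\epsilon+\delta/2>\epsilon$ and $z\in V_\epsilon$. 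Granting this identity, the component of $V_\epsilon$ containing $z_0$ is a connected component of the interior of $\bigcap_{\|w\|\leq\epsilon}U_w$, so Proposition~\ref{intscthol} finishes the proof.

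The only part that is not a citation or a routine manipulation of $L^\infty$-polydiscs is the identity $V_\epsilon=\inter A$, and within it the inclusion $\inter A\subset V_\epsilon$, where one must bootstrap from the information that all the slightly shifted polydiscs $\clos{\pdisc(z+w,\epsilon)}$ (with $\|w\|<\delta$) lie in $U$ to the conclusion that a single genuinely larger polydisc about $z$ lies in $U$; the coordinatewise splitting of an $L^\infty$ vector into pieces of norm at most $\epsilon$ and at most $\delta/2$ is exactly what makes that go through. Everything else — openness of $V_\epsilon$, connectedness of closed polydiscs, and the partition $\C^n=U\sqcup\bd U\sqcup\inter{U^c}$ used to place the polydisc inside $U$ — is straightforward.
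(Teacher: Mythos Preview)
Your proof is correct, and it takes a genuinely different route from the paper's. The paper argues directly that the component $V$ is holomorphically convex: for compact $K\subset V$ it uses Lemma~\ref{hulldist} to see that $\pmetric(\hat K_{\hol(U)},\bd U)=\pmetric(K,\bd U)>\epsilon$, so the hull stays inside $\{z\in U:\pmetric(z,\bd U)>\epsilon\}$, and then invokes Corollary~\ref{okaweilcor} (a consequence of the Oka--Weil theorem) to force $\hat K_{\hol(U)}$ into the single component $V$. Your approach instead identifies $V_\epsilon$ with the interior of $\bigcap_{\|w\|\leq\epsilon}(U-w)$ and applies Proposition~\ref{intscthol} together with biholomorphic invariance. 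The payoff of your route is that it avoids the Oka--Weil machinery entirely: Proposition~\ref{intscthol} rests only on Lemma~\ref{hulldist}, so your argument has lighter logical dependencies. The paper's route, on the other hand, is slightly more direct once that machinery is in place and generalises readily to other ``shrinkings'' of $U$ defined by distance-type conditions, without needing to exhibit them as intersections of copies of $U$.
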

\begin{proof}
Suppose $0<\epsilon<\pmetric(z_0,\bd U)$.
Let the component of $\{z\in U\colon \pmetric(z,\bd U)>\epsilon\}$ containing $z_0$ be $V$. Let $K\subset V$ be compact, so by holomorphic convexity of $U$ and Lemma~\ref{hulldist}
the hull $\hat K_{\hol(U)}$ satisfies $\pmetric(\hat K_{\hol(U)},\bd U)=\pmetric(K,\bd U)>\epsilon$, meaning $\hat K_{\hol(U)}\subset \{z\in U\colon\pmetric(z,\bd U)>\epsilon\}$. 
It follows that $\hat K_{\hol(U)}\cap \bd V\neq\emptyset$,
so by Corollary~\ref{okaweilcor} we see that $\hat K_{\hol(U)}$ is a compact subset of $V$,
and obviously $\hat K_{\hol(V)}\subset\hat K_{\hol(U)}$, so $\hat K_{\hol(V)}$ is compact. This is true for each $K\subset V$, so $V$ is holomorphically convex and hence a domain of holomorphy.
\end{proof}
Finally we can prove the Behnke-Stein theorem:
\begin{proof}[Proof of Theorem~\ref{behnke}]
We have a sequence $\{U_j\}_{j\geq 1}$ of domains of holomorphy in $\C^n$ such that $U_j\subset U_{j+1}$ for all $j\geq 1$. We are to show $U:=\bigcup_{j\geq 1} U_j$ is a domain of holomorphy.
First suppose $U$ is bounded. Choose $z_0\in U_1$, set $\epsilon:=\pmetric(z_0,\bd U_1)/2$ and replace each $U_j$ with the component of $\{z\in U_j\colon \pmetric(z,\bd U_j)>\epsilon/j\}$ containing
$z_0$, so we still have that each $U_j$ is a domain of holomorphy (by Lemma~\ref{vlemma}) and that $U=\bigcup_{j\geq 1}U_j$, but now $\clos{U_j}\subset U_{j+1}$ for all $j\geq 1$ and in particular
$\pmetric(U_j,\bd U)>0$.

Now we construct a subsequence $\{V_k\}_{k\geq 1}$ of $\{U_j\}_{j\geq 1}$ satisfying, for all $k\geq 2$, the inequality
\begin{equation} \max_{z\in\bd V_k} \pmetric(z,\bd V_{k+1})<\pmetric(V_{k-1},\bd V_{k+1}).\label{bsineq}\end{equation}
Let $V_1:=U_1$, choose $j_2>1$ sufficiently large that $\{z\in U\colon\pmetric(z,\bd U)\geq \pmetric(V_1,\bd U)\}\subset U_{j_2}$ and let $V_2:=U_{j_2}$,
so $\max_{z\in\bd V_2}\pmetric(z,\bd U)<\pmetric(V_1,\bd U)$. As $j\to\infty$ we have $\pmetric(z,\bd U_j)\to\pmetric(z,\bd U)$ (for every $z\in \bd V_2$) 
and $\pmetric(V_1,\bd U_j)\to\pmetric(V_1,\bd U)$, so since $\bd V_2$ is compact it follows from basic metric space theory that 
there exists $J>j_2$ such that $j\geq J$ implies $\max_{z\in\bd V_2}\pmetric(z,\bd U_j)<\pmetric(V_1,\bd U_j)$. 
Now, as with our choice of $j_2$, choose $j_3\geq J$ sufficiently large that $\{z\in U\colon\pmetric(z,\bd U)\geq \pmetric(V_2,\bd U)\}\subset U_{j_3}$ and let $V_3:=U_{j_3}$,
so \eqref{bsineq} is satisfied for $k=2$ and $\max_{z\in\bd V_3}\pmetric(z,\bd U)<\pmetric(V_2,\bd U)$.

Now we proceed inductively.
Let $m\geq 3$, and suppose we have sets $\{V_k\}_{1\leq k\leq m}$ (with $V_k=U_{j_k}$ for each $k$) such that \eqref{bsineq} is satisfied for $2\leq k\leq m-1$ and 
$\max_{z\in\bd V_m}\pmetric(z,\bd U)<\pmetric(V_{m-1},\bd U)$ (the above argument yields such a construction in the base case $m=3$). 
As above, there exists $J>j_m$ such that $j\geq J$ implies $\max_{z\in\bd V_m}\pmetric(z,\bd U_j)<\pmetric(V_{m-1},\bd U_j)$. Again, 
as above we may choose $j_{m+1}\geq J$ so that $\{z\in U\colon\pmetric(z,\bd U)\geq \pmetric(V_m,\bd U)\}\subset U_{j_{m+1}}$ and set $V_{m+1}:=U_{j_{m+1}}$,
which yields the desired properties \eqref{bsineq} and $\max_{z\in\bd V_{m+1}}\pmetric(z,\bd U)<\pmetric(V_m,\bd U)$. Thus we have constructed the appropriate sets $\{V_k\}_{1\leq k\leq m+1}$.

Repeating this process we obtain the subsequence $\{V_k\}_{k\geq 1}$ satisfying \eqref{bsineq} for all $k\geq 2$. For $k\geq 1$ let $L_k:=\clos{V_k}$ and for 
$k\geq 2$ let $K_k:=(\hat L_{k-1})_{\hol(V_{k+1})}$, so $K_k$ is a compact subset of $V_{k+1}$ by holomorphic convexity of $V_{k+1}$. Let $k\geq 2$ and suppose
$z\in\bd V_k$. Then by \eqref{bsineq} $\pmetric(z,\bd V_{k+1})<\pmetric(V_{k-1},\bd V_{k+1})=\pmetric(L_{k-1},\bd V_{k+1})=\pmetric(K_k,\bd V_{k+1})$ (for the last equality we have
used Lemma~\ref{hulldist}), so $z\not\in K_k$. That is, $K_k\cap \bd V_k=\emptyset$. Since $L_{k-1}\subset V_k$, Corollary~\ref{okaweilcor} implies that $K_k\subset V_k$.

Thus $K_k\subset K_{k+1}$ and $K_k\subset V_k\subset V_{k+1}\subset U$ for all $k\geq 2$, and clearly $U=\bigcup_{k\geq 2} \inter K_k$, so by Lemma~\ref{bslemma} to show $U$ is a domain of holomorphy
it is enough to show $K_k$ is Runge in $K_{k+1}$ for all $k\geq 2$. But $K_k$ coincides with its $\hol(V_{k+1})$-convex hull, so by the Oka-Weil approximation theorem
$K_k$ is Runge in $V_{k+1}$, and because $K_{k+1}\subset V_{k+1}$ it follows that $K_k$ is Runge in $K_{k+1}$. Therefore $U$ is a domain of holomorphy.

Now suppose $U$ is unbounded, and let $z_0\in U_1$ and $r>0$. For each $j\geq 1$ let $W_j$ be 
the component of $U_j\cap \pdisc(z_0,r)$ containing $z_0$, so $W_j\subset W_{j+1}$ for all $j\geq 1$, each $W_j$ is a domain of holomorphy by Proposition~\ref{intscthol},
and $\bigcup_{j\geq 1} W_j$ is the component of $U\cap \pdisc(z_0,r)$ containing $z_0$. The above argument shows this (bounded) component is a domain of holomorphy, and this is true for each $r>0$,
so by Lemma~\ref{berslemma} we conclude that $U$ is a domain of holomorphy.
\end{proof}
Observe that the proof of this relatively simple fact about domains of holomorphy is long and complicated, while the corresponding result for pseudoconvex domains
is almost trivial (see Lemma~\ref{incps2}). This indicates the power of a more easily verified characterisation of domains of holomorphy -- if we had already solved the Levi
problem, the Behnke-Stein theorem would follow immediately from Lemma~\ref{incps2}. 
\subsection{Construction of unbounded functions}
Finally we are in a position to complete the solution to the Levi problem. First we show that
smoothly bounded strictly pseudoconvex domains are domains of holomorphy, and for this it is enough (by Theorem~\ref{usefuldh}) to show that at every boundary point of such a domain
there is a function holomorphic on the domain and tending to $\infty$ at that point. The following result ensures that such functions always exist:
\begin{proposition}
\label{globalbarrier}
Let $U\subset\C^n$ be a smoothly bounded strictly pseudoconvex domain  and let $a\in\bd U$. Then there exists $f\in\hol(U)$ tending to $\infty$ at $a$.
\end{proposition}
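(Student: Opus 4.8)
The plan is to manufacture a local holomorphic barrier at $a$ from the Levi polynomial of a smooth strictly plurisubharmonic defining function, and then to globalise it by solving an inhomogeneous Cauchy--Riemann equation via H\"ormander's theorem. Let $\rho$ be a $\cts^\infty$ strictly plurisubharmonic global defining function for $U$ which is non-degenerate on $\bd U$, and form the \emph{Levi polynomial} of $\rho$ at $a$,
$$ P(z):=\sum_{j=1}^n \pd{\rho}{z_j}\bigg|_a (z_j-a_j)+\frac12\sum_{j,k=1}^n \frac{\partial^2\rho}{\partial z_j\partial z_k}\bigg|_a(z_j-a_j)(z_k-a_k), $$
a holomorphic polynomial with $P(a)=0$. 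A second-order Taylor expansion of $\rho$ about $a$, collecting the holomorphic (and antiholomorphic) terms into $2\Re P$ and leaving the mixed part, gives $\rho(z)=2\Re P(z)+\Delta_{z-a}\rho(a)+o(|z-a|^2)$; since $\rho$ is strictly plurisubharmonic, $\Delta_\delta\rho(a)\geq c_0|\delta|^2$ for some $c_0>0$, so there are $c>0$ and $r>0$ with $2\Re P(z)\leq\rho(z)-c|z-a|^2$ on $\ball(a,r)$. As $\rho\leq 0$ on $\clos{U}$, this forces $\Re P(z)\leq-\tfrac{c}{2}|z-a|^2<0$ on $(\clos{U}\cap\ball(a,r))\setminus\{a\}$; hence $P$ is zero-free on $U\cap\ball(a,r)$ (recall $a\notin U$) while $P(a)=0$, so $1/P$ is holomorphic on $U\cap\ball(a,r)$ and tends to $\infty$ at $a$. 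This is the barrier, defined only near $a$.

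To spread it over all of $U$, choose a real $\chi\in\cts^\infty$ with compact support in $\ball(a,r)$ and $\chi\equiv 1$ on $\ball(a,r/2)$. Then $\chi/P$ is well-defined and $\ccts^\infty$ on $U$ (it is $0$ off $\operatorname{supp}\chi$, and $P\neq 0$ on $U\cap\operatorname{supp}\chi$), and $v:=\con\partial(\chi/P)=(\con\partial\chi)/P$ is a smooth $(0,1)$-form on $U$: it vanishes where $\chi$ is locally constant, and is smooth and bounded on $\operatorname{supp}(\con\partial\chi)$, where $P$ is bounded away from $0$. Since $\con\partial v=\con\partial^2(\chi/P)=0$, H\"ormander's theorem (Theorem~\ref{hormander}) supplies $\phi\in\ccts^\infty(U)$ with $\con\partial\phi=v$. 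Set $f:=\chi/P-\phi$; then $\con\partial f=v-v=0$, so $f\in\hol(U)$, and on $U\cap\ball(a,r/2)$, where $v=0$, the function $\phi$ is holomorphic and $f=1/P-\phi$.

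It remains to verify that $f$ indeed tends to $\infty$ at $a$, and this step --- not the Taylor estimate or the $\con\partial$-solution --- is the one I expect to demand the most care, since a priori the correction $\phi$ could cancel the blow-up of $1/P$. The resolution is that $\phi$, being holomorphic on $U\cap\ball(a,r/2)$ and produced as a $\con\partial$-solution on a smoothly bounded strictly pseudoconvex domain, remains bounded as $z\to a$; granting a bound $M$ for $|\phi|$ on some $U\cap\ball(a,\epsilon)$, we obtain $|f(z)|\geq|1/P(z)|-M\to\infty$, as required. This is exactly the place where the hypotheses "smoothly bounded" and "strictly pseudoconvex" are used essentially. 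With the functions $f=f_a$ in hand, Theorem~\ref{usefuldh} then gives that $U$ is a domain of holomorphy --- but that deduction is the next step, not part of this proposition.
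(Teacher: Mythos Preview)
Your construction of the Levi polynomial $P$ and the cutoff $\chi$ is correct and matches the paper's Lemma~\ref{spsunbdedloc}. The gap is precisely where you flag it: you solve $\con\partial\phi=v$ on $U$ itself, obtaining $\phi\in\ccts^\infty(U)$, and then assert that $\phi$ is bounded near $a$. This is not justified by the version of H\"ormander's theorem available to you (Theorem~\ref{hormander}), which gives no boundary regularity whatsoever. The fact that $\phi$ is holomorphic on $U\cap\ball(a,r/2)$ does not help, since holomorphic functions on that set can certainly blow up at $a$ --- indeed $1/P$ is such a function. Your parenthetical ``granting a bound $M$'' is exactly the missing step, and nothing in the argument supplies it. Boundary regularity for $\con\partial$-solutions is a genuine theorem (Kohn's theory), but it is far deeper than the H\"ormander existence result and is not at your disposal here.

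The paper's fix is simple and you already have all the ingredients for it. Your form $v=(\con\partial\chi)/P$ is actually smooth not just on $U$ but on a neighbourhood of $\clos{U}$: you showed $\Re P<0$ on $(\clos{U}\cap\ball(a,r))\setminus\{a\}$, and $\con\partial\chi$ vanishes near $a$ and outside $\ball(a,r)$. So instead of solving on $U$, enlarge to a smoothly bounded strictly pseudoconvex domain $Y\supset\clos{U}$ (take $Y$ to be a component of $\rho^{-1}((-\infty,\epsilon))$ for small $\epsilon>0$, using Lemmas~\ref{morselemma} and \ref{globdefworks}) on which $v$ is still smooth and $\con\partial$-closed. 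H\"ormander on $Y$ gives $\phi\in\ccts^\infty(Y)$, hence $\phi$ is continuous at $a\in Y$, so $\phi(z)\to\phi(a)\in\C$ and $f(z)=\chi(z)/P(z)-\phi(z)\to\infty$ with no further work.
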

To prove this we require an intermediate result, which in turn requires Taylor's theorem expressed in a particular form.
Let $V\subset\C^n$ be open, $a\in V$ and $f\colon V\to \R$ a $\cts^2$ function. Regarding $f$ as a function of real variables, applying Taylor's theorem, and then writing 
the derivatives with respect to real variables in terms of those with respect to complex variables, we find that
 \begin{equation}
  f(z)=f(a)+2\Re\sprod{z-a}{\con{\cgrad{f}(a)}}+\Re \left(\Lambda_{z-a} f(a)\right)+\Delta_{z-a} f(a)+r(z), \label{dfexp} \end{equation}
 where $\Lambda_{z-a} f(a):=\sum_{j,k=1}^n \frac{\partial^2f}{\partial z_j\partial z_k}\big|_a(z_j-a_j)(z_k-a_k)$ and $r(z)/|z-a|^2\to 0$ as $z\to a$.
\begin{lemma}
\label{spsunbdedloc}
Let $U\subset\C^n$ be a smoothly bounded strictly pseudoconvex domain and let $a\in\bd U$. Then there is a function $g\in\hol(\C^n)$ and a neighbourhood $V$ of $a$ such that
$g(a)=0$ and $g(z)\neq 0$ for $z\in\clos{U}\cap V\setminus\{a\}$.
\end{lemma}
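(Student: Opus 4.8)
The plan is to take for $g$ the \emph{Levi polynomial} of a defining function at $a$. By hypothesis $U$ is smoothly bounded and strictly pseudoconvex, so it admits a $\cts^\infty$ strictly plurisubharmonic global defining function $\rho$, defined on some neighbourhood of $\clos U$; thus $U=\rho^{-1}((-\infty,0))$, $\rho(a)=0$, and $\Delta_\delta\rho(z)>0$ for all $z$ and all $\delta\in\C^n\setminus\{0\}$. I would set
$$ g(z):=\sprod{z-a}{\con{\cgrad\rho(a)}}+\tfrac12\,\Lambda_{z-a}\rho(a), $$
with $\Lambda_{z-a}\rho(a)=\sum_{j,k=1}^n\frac{\partial^2\rho}{\partial z_j\partial z_k}\big|_a(z_j-a_j)(z_k-a_k)$ as in \eqref{dfexp}. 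Being a holomorphic polynomial, $g\in\hol(\C^n)$, and $g(a)=0$ is immediate.

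Next I would appeal to the Taylor expansion \eqref{dfexp} for $\rho$ about $a$: since $\rho(a)=0$ and $2\Re g(z)=2\Re\sprod{z-a}{\con{\cgrad\rho(a)}}+\Re(\Lambda_{z-a}\rho(a))$, it becomes
$$ \rho(z)=2\Re g(z)+\Delta_{z-a}\rho(a)+r(z),\qquad r(z)/|z-a|^2\to0\ \text{as}\ z\to a. $$
Since $\delta\mapsto\Delta_\delta\rho(a)$ is a positive definite Hermitian form, compactness of the unit sphere in $\C^n$ yields a constant $c>0$ with $\Delta_{z-a}\rho(a)\geq c|z-a|^2$ for every $z$. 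I would then choose the neighbourhood $V$ of $a$ small enough to lie in the domain of $\rho$ and to satisfy $|r(z)|\leq\frac{c}{2}|z-a|^2$ for $z\in V$, so that $\rho(z)\geq 2\Re g(z)+\frac{c}{2}|z-a|^2$ on $V$.

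To finish: if $z\in\clos U\cap V$ and $g(z)=0$, then $\Re g(z)=0$ gives $\rho(z)\geq\frac{c}{2}|z-a|^2\geq0$, while $z\in\clos U$ forces $\rho(z)\leq0$; hence $\rho(z)=0$ and $z=a$. Thus $g$ and $V$ have the required properties. The only points needing care are the derivation of the uniform quadratic lower bound $\Delta_{z-a}\rho(a)\geq c|z-a|^2$ from pointwise positivity of the Levi form, and the (elementary but essential) use of $\rho\leq0$ on $\clos U$; the rest is a direct application of \eqref{dfexp}.
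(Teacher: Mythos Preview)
Your proof is correct and follows essentially the same approach as the paper's: both take $g$ to be the Levi polynomial of the defining function at $a$, use the Taylor expansion \eqref{dfexp} together with the uniform quadratic lower bound on the Levi form, and conclude that $\Re g(z)<0$ (equivalently, your contrapositive) on $\clos U\cap V\setminus\{a\}$. The only cosmetic differences are a harmless factor of $2$ in the normalization of $g$ and your phrasing of the final step as a contrapositive rather than a direct inequality.
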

\begin{proof}
Let $f\colon W\to\R$ be a $\cts^\infty$ strictly plurisubharmonic global defining function for $U$, and for $z\in\C^n$ set $g(z):=2\sprod{z-a}{\con{\cgrad{f}(a)}}+\Lambda_{z-a} f(a)$,
so clearly $g\in\hol(\C^n)$ and $g(a)=0$. By Taylor's theorem we have $f(z) = \Re(g(z))+\Delta_{z-a} f(a) + r(z)$, where $r(z)/|z-a|^2\to 0$ as $z\to a$. Since $f$ is strictly plurisubharmonic we know
that $\Delta_\delta f(a)>0$ for all $\delta\in\C^n\setminus\{0\}$, and since $\delta\mapsto \Delta_\delta f(a)$ is continuous and $\{\delta\in\C^n\colon|\delta|=1\}$ is compact there is some $M>0$ such that
$\Delta_\delta f(a)\geq M$ for all $\delta\in\C^n$ with $|\delta|=1$. Since $r(z)/|z-a|^2\to 0$ as $z\to 0$ there is a neighbourhood $V$ of $a$ such that $|r(z)|/|z-a|^2<M/2$ for $z\in V$.

Suppose $z\in \clos{U}\cap V\setminus\{a\}$. Then
\begin{align*}
 0\geq f(z) = \Re(g(z)) + \Delta_{z-a} f(a) + r(z) &\geq \Re(g(z)) + |z-a|^2 M - |z-a|^2M/2\\
	&= \Re(g(z))+|z-a|^2 M/2,
\end{align*}
and since $|z-a|^2M/2>0$ it follows that $\Re(g(z))<0$ and thus $g(z)\neq 0$, as required.
\end{proof}
Now we can prove Proposition~\ref{globalbarrier}.
\begin{proof}[Proof of Proposition~\ref{globalbarrier}]
By Lemma~\ref{spsunbdedloc} there exists a function $g\in\hol(\C^n)$ and a neighbourhood $V$ of $a$ such that $g(a)=0$ and $g(z)\neq 0$ for $z\in \clos{U}\cap V\setminus\{a\}$.
Passing to a subset of $V$ we may assume that $g(z)\neq 0$ for $z\in\clos{U}\cap \clos{V}\setminus\{a\}$, so in particular there is a neighbourhood $Y$
of $\clos{U}\setminus\{a\}$ such that $g$ is non-vanishing on $V\cap Y$.
Let $W$ be a bounded neighbourhood of $a$ such that $\clos{W}\subset V$, and replace $Y$ with $Y\cup W$, so now $Y$ is a neighbourhood of $\clos{U}$ and $g$
is non-vanishing on $V\cap (Y\setminus W)$.
 
Let $\chi\colon\C^n\to\R$ be a $\cts^\infty$ function compactly supported in $V$ and satisfying $\chi(z)=1$ for $z\in W$. 
Let $\omega:=\frac{\con\partial\chi}{g}$, so because $\con\partial\chi$ is zero on a neighbourhood of $W\cup V^c$ and $g$ is non-vanishing 
on $V\cap (Y\setminus W)$ we see that $\omega$ is a smooth $(0,1)$-form on $Y$. Furthermore, since $g$ is holomorphic we have $\con\partial\omega=0$.

Now
we will replace $Y$ with a smoothly bounded strictly pseudoconvex domain. Let $g\colon Z\to\R$ (where $Z$ is a neighbourhood of $\clos{U}$) be a $\cts^\infty$ strictly plurisubharmonic global defining
function for $U$, so $g^{-1}((-\infty,\epsilon))\subset Y$ for sufficiently small $\epsilon>0$. 
In view of Lemmas~\ref{morselemma} and \ref{globdefworks} by decreasing $\epsilon$ if necessary we may assume
$z\mapsto g(z)-\epsilon$ is a global defining function for the component of $g^{-1}((-\infty,\epsilon))$ containing $U$.
Replace $Y$ with this component,
so $Y$ is now a smoothly bounded strictly pseudoconvex domain such that $\omega$ is smooth on $Y$ and
$\clos{U}\subset Y$.

By H\"ormander's theorem there exists $\phi\in\ccts^\infty(Y)$ with $\con\partial\phi = \frac{\con\partial\chi}{g}$, so the function $f\colon U\to\C$ given by
$f(z):=\frac{\chi(z)}{g(z)}-\phi(z)$ is $\ccts^\infty$ (as $g$ is non-vanishing on $U$), and since $\con\partial f = \frac{\con\partial\chi}{g}-\con\partial\phi=0$ it follows that $f\in\hol(U)$.
As $z\to a$ (with $z\in U$) we have $g(z)\to 0$, $\chi(z)\to 1$ and $\phi(z)\to\phi(a)\in\C$, so $f(z)\to\infty$. Thus $f$ tends to $\infty$ at $a$, as required.
\end{proof}
In conjunction with Theorem~\ref{usefuldh}, this immediately implies:
\begin{corollary}
\label{spsdh}
If $U\subset\C^n$ is a smoothly bounded strictly pseudoconvex domain then $U$ is a domain of holomorphy.
\end{corollary}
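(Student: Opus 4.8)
The plan is to deduce this immediately from the two results that directly precede it. By Theorem~\ref{usefuldh}, to show that a domain $U\subset\C^n$ is a domain of holomorphy it suffices to exhibit, for each boundary point $a\in\bd U$, a function $f_a\in\hol(U)$ tending to $\infty$ at $a$ (i.e. with $f_a(z_j)\to\infty$ for every sequence $\{z_j\}_{j\geq 1}\subset U$ with $z_j\to a$). So the entire task reduces to producing such barrier functions.

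First I would fix an arbitrary $a\in\bd U$. Since $U$ is assumed to be a smoothly bounded strictly pseudoconvex domain, Proposition~\ref{globalbarrier} applies verbatim and yields a function $f_a\in\hol(U)$ tending to $\infty$ at $a$. As $a$ was arbitrary, this construction works at every boundary point.

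Then I would simply invoke Theorem~\ref{usefuldh} with this family $\{f_a\}_{a\in\bd U}$ to conclude that $U$ is a domain of holomorphy, which is exactly the assertion. (If $\bd U=\emptyset$, i.e. $U=\C^n$ or $U=\emptyset$, the conclusion is trivial, though in fact a smoothly bounded domain is bounded and nonempty so this degenerate case does not arise.)

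There is no real obstacle here: all of the substantive work — the construction of the local barrier via Taylor's theorem and strict plurisubharmonicity (Lemma~\ref{spsunbdedloc}), the globalisation of that barrier using H\"ormander's theorem on the $\con\partial$-equation (Proposition~\ref{globalbarrier}), and the passage from pointwise barriers to holomorphic convexity (Theorem~\ref{usefuldh}, via Theorem~\ref{hcdh} and Proposition~\ref{unbdedhol}) — has already been carried out. The corollary is just the composition of these facts, so the ``proof'' is a single sentence chaining Proposition~\ref{globalbarrier} into Theorem~\ref{usefuldh}.
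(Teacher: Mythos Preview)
Your proposal is correct and matches the paper's own argument exactly: the corollary is stated as an immediate consequence of Proposition~\ref{globalbarrier} combined with Theorem~\ref{usefuldh}, which is precisely the chaining you describe.
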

The required result now follows easily:
\begin{theorem}
If $U\subset\C^n$ is a pseudoconvex domain then $U$ is a domain of holomorphy.
\end{theorem}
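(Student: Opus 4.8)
The plan is to assemble the theorem directly from the machinery developed in this section. First I would dispose of the degenerate case: if $U=\emptyset$ then $U$ is trivially a domain of holomorphy (or vacuously so), so assume $U\neq\emptyset$.

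The main step is to invoke the approximation result Proposition~\ref{slpapprox}: since $U$ is pseudoconvex there is a sequence $\{U_j\}_{j\geq 1}$ of smoothly bounded strictly pseudoconvex domains with $\clos{U_j}\subset U_{j+1}$ for all $j\geq 1$ and $U=\bigcup_{j\geq 1}U_j$. By Corollary~\ref{spsdh} each $U_j$ is a domain of holomorphy, so $\{U_j\}_{j\geq 1}$ is an increasing sequence of domains of holomorphy whose union is $U$. Applying the Behnke--Stein theorem (Theorem~\ref{behnke}) to this sequence yields that $U$ is a domain of holomorphy.

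There is no real obstacle left at this point: the entire difficulty has been front-loaded into Proposition~\ref{slpapprox} (which rests on Proposition~\ref{pshapprox} and Morse's theorem), Corollary~\ref{spsdh} (which rests on Proposition~\ref{globalbarrier}, and hence on H\"ormander's theorem), and the Behnke--Stein theorem (which rests on the Oka--Weil theorem). So the proof of the final statement is simply the observation that these three results chain together. It may be worth remarking, as the paper does elsewhere, that this completes the circle of equivalences: combined with Theorem~\ref{holcp}, Theorem~\ref{ctyps}, Theorem~\ref{pslocal}, Theorem~\ref{pslevi} and Theorem~\ref{leviisps}, we now have that for a domain $U\subset\C^n$ the properties of being a domain of holomorphy, being holomorphically convex, being pseudoconvex, being locally pseudoconvex, satisfying the continuity principle, and (when $\bd U$ is $\cts^2$) being Levi pseudoconvex are all equivalent, which is the solution to the Levi problem.
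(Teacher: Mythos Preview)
Your proposal is correct and follows exactly the paper's own proof: invoke Proposition~\ref{slpapprox} to exhaust $U$ by smoothly bounded strictly pseudoconvex domains, apply Corollary~\ref{spsdh} to each, and conclude via the Behnke--Stein theorem. The additional remarks you include (the empty case and the summary of equivalences) are fine but not needed for the argument itself.
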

\begin{proof}
By Proposition~\ref{slpapprox} there exists a sequence $\{U_j\}_{j\geq 1}$ of smoothly bounded strictly pseudoconvex domains 
such that $U_j\subset U_{j+1}$ for all $j\geq 1$ and $U=\bigcup_{j\geq 1}U_j$, and by Corollary~\ref{spsdh} each $U_j$ is a domain of holomorphy, so by the Behnke-Stein theorem $U$ is a domain
of holomorphy.
\end{proof}
With the results of previous sections in mind we see that the domains of holomorphy, holomorphically convex domains, pseudoconvex domains,
locally pseudoconvex domains and domains satisfying the continuity principle are
identical. In the case of domains with twice-differentiable boundaries, these domains are precisely the Levi pseudoconvex domains. This completes the solution to the Levi problem.
\index{Levi problem}
\subsection{Generalisations}
\label{sec:concl}
Here we have considered the Levi problem and its related concepts 
in their simplest setting, namely for domains in $\C^n$. Such domains are the most basic type of \emph{complex manifold} -- these are
topological manifolds which admit atlases with biholomorphic transition maps, and on these spaces one may naturally define the notion of a holomorphic function
(see, for example, \cite[subsection 12]{itca}). The natural generalisation of a domain
of holomorphy to general complex manifolds is a \emph{Stein manifold}, which is a complex manifold satisfying holomorphic convexity, defined as in Section~\ref{chap:hol}, and
holomorphic separability, which means that for any two points in the manifold there is a holomorphic function attaining different values at the points (see \cite[page 223]{itca}
for an introduction).
The Levi problem
in this setting asks for necessary and sufficient conditions for a complex manifold to be Stein. In 1953 Oka solved the Levi problem for Riemann domains spread over $\C^n$ (which are special
types of complex manifold) \cite{oka}, and in 1958 Grauert generalised this result to general complex manifolds \cite{grauert58}.  
This was further generalised by Narasimhan in 1961 who proved that a complex space is a Stein space if and only if it admits a strictly plurisubharmonic exhaustion function \cite{narasimhan}. 
A \emph{complex space} is a generalisation of a complex manifold which allows the presence of singularities, and a \emph{Stein space} is the corresponding generalisation of a Stein manifold (see
chapters 5 and 7 of
the textbook \emph{Analytic Functions of Several Complex Variables} by Gunning and Rossi \cite{gunning} for an introduction). In the intervening years solutions to the Levi problem
for even more general spaces have been presented -- see, for example, the article \cite{siu} for a more detailed survey of these results and their implications.

\section*{Acknowledgements}
The author would like to thank Alexander Isaev for his extensive guidance and feedback.

\nocite{henlei,henkinbook}
\begin{flushleft}
\bibliographystyle{abbrv}
\bibliography{bib_extra}
\end{flushleft}
{\obeylines
\noindent Department of Quantum Science, The Australian National University, Canberra, 

\noindent ACT 0200, Australia, e-mail: {\tt harry.slatyer@anu.edu.au}}
\printindex
\end{document}